\numberwithin{equation}{section}
\renewcommand{\email}[2][]{%
  \ifx\emails\@empty\relax\else{\g@addto@macro\emails{,\space}}\fi%
  \@ifnotempty{#1}{\g@addto@macro\emails{\textrm{(#1)}\space}}%
  \g@addto@macro\emails{#2}%
}
\newtheorem{theorem}{Theorem}[section]
\newtheorem{lemma}[theorem]{Lemma}
\newtheorem{proposition}[theorem]{Proposition}
\theoremstyle{definition}
\newtheorem{definition}[theorem]{Definition}}
\theoremstyle{remark}
\newtheorem{remark}[theorem]{Remark}}
\def\Re{ \mathsf{Re}}
\def\Im{ \mathsf{Im}}
\newcommand{\weyl}{W^\circ}
\newcommand{\topc}{K}
\newcommand{\g}{G}
\newcommand{\tsigma}{d}
\def\Rt{R^{\mathsf{top}}}
\def\Rb{ R^{\mathsf{bot}}}
\def\Nt{ N^{\mathsf{top}}}
\def\Nb{ N^{\mathsf{bot}}}
\def\Na{ N^{\mathsf{acc}}}
\def\Ns{ N^{\mathsf{sep}}}
\title{Tightness of $(H, H^{RW})$-Gibbsian line ensembles}
\author[E. Dimitrov]{Evgeni Dimitrov}
\address{E. Dimitrov, Department of Mathematics, Rm. 517, Columbia University, New York, NY 10027} 
\email{esd2138@columbia.edu}
\author[X. Wu]{Xuan Wu}
\address{X. Wu,  Department of Mathematics, Eckhart 319, University of Chicago, Chicago, IL 60637}
\email{xuanw@uchicago.edu}
\begin{document}

\maketitle

\begin{abstract}
We develop a black-box theory, which can be used to show that a sequence of Gibbsian line ensembles is tight, provided that the one-point marginals of the lowest labeled curves of the ensembles are tight and globally approximate an inverted parabola. Our theory is applicable under certain technical assumptions on the nature of the Gibbs property and the underlying random walk measure. As a particular application of our general framework we show that a certain sequence of Gibbsian line ensembles, which naturally arises in the log-gamma polymer, is tight in the ubiquitous KPZ class $1/3: 2/3$ scaling, and also that all subsequential limits satisfy the Brownian Gibbs property, introduced by Corwin and Hammond in (Invent. Math. 195, 441-508, 2014).

One of the core results proved in the paper, which underlies many of our arguments, is the construction of a continuous grand monotone coupling of Gibbsian line ensembles with respect to their boundary data (entrance and exit values, and bounding curves). {\em Continuous} means that the Gibbsian line ensemble measure varies continuously as one varies the boundary data, {\em grand} means that all uncountably many measures (one for each boundary data) are coupled to the same probability space, and {\em monotone} means that raising the values of the boundary data likewise raises the associated measure. Our continuous grand monotone coupling generalizes an analogous construction, which was recently implemented by Barraquand, Corwin and Dimitrov in (arXiv:2101.03045), from line ensembles with a single curve to ones with an arbitrary finite number of curves.
\end{abstract}

\tableofcontents

%
\section{Introduction and main results}\label{Section1}

%
\subsection{Preface}\label{Section1.1} Over the last two decades there has been a considerable surge in the development and study of the theory of {\em Gibbsian line ensembles}. One way to think of a Gibbsian line ensemble is as the collection of trajectories of a finite or countably infinite number of independent random walkers, whose joint distribution is reweighed by a Radon-Nikodym derivative proportional to the exponential of the sum of local interaction energies between the different walkers. Here, {\em local} means that the interaction energy of each walker depends only on the location of nearby walkers both in terms of time and label. 

Line ensembles generally come in two flavors -- continuous and discrete, depending on whether the underlying path measure is given by independent Brownian motions or discrete time random walks. One can impose various different interactions between the different walkers, resulting in different Gibbs measures, and one of simplest examples of a Gibbsian line ensemble is a collection of random walks conditioned not to touch or cross each other. In this case the local interaction energy is infinity or zero depending on whether the touching or crossing occurs or does not. Such Gibbsian line ensembles arise in discrete settings (when the path measure is given by Bernoulli random walks) as level lines of random lozenge or domino tilings, and in continuous settings (when the path measures is given by Brownian motions) in {\em Dyson Brownian motion} \cite{Dys62} and its edge scaling limit -- the {\em parabolic Airy line ensemble} \cite{CorHamA}. Line ensembles, whose local distribution is described by non-crossing Brownian motions (or bridges), are referred to as {\em Brownian Gibbsian line ensembles} or equivalently are said to possess the {\em Brownian Gibbs property} -- a term coined in \cite{CorHamA}.

There are various discrete line ensembles that arise in statistical mechanics models in integrable probability and satisfy a Gibbs property that is a discrete analogue of the Brownian Gibbs property. Examples include random lozenge tilings \cite{Petrov14}, domino tilings \cite{Jo02}, the multi-layer polynuclear growth model \cite{Spohn}, geometric and exponential last passage percolation \cite{KJ}, Hall-Littlewood plane partitions \cite{CD}, and the log-gamma polymer \cite{Sep12}. Given their prominence in integrable probability, there is a general interest in understanding the scaling limits of Gibbsian line ensembles and it is believed that under general conditions they should converge to the parabolic Airy line ensemble. The convergence to this line ensemble is not coincidental and is predicted by the Kardar-Parisi-Zhang (KPZ) scaling theory, for which the parabolic Airy ensemble is of special significance, specifically through its role in the construction of the {\em Airy sheet} in \cite{DOV18}.

The convergence to the parabolic Airy ensemble for various discrete line ensembles, including geometric and exponential last passage percolation, has been recently established in \cite{DNV19}. A distinguished feature of the models covered in \cite{DNV19} is that they have the structure of a {\em determinantal point process}. The determinantal structure is what allows the authors in \cite{DNV19} to establish finite dimensional convergence of the discrete line ensembles they consider to the parabolic Airy ensemble and the Gibbs property is what allows them to improve this convergence to one that is uniform over compact sets. Beyond the works on determinantal point processes, there has been limited progress in demonstrating that Gibbsian line ensembles converge to the parabolic Airy line ensemble with the notable exception of the KPZ line ensemble \cite{QuaSar,virag2020heat, Wu21}.\\

Our present study is prompted by our interest in demonstrating that a certain sequence of Gibbsian line ensemble, which arises in the log-gamma polymer of \cite{Sep12}, converges to the parabolic Airy line ensemble. We refer to each such ensemble as a {\em log-gamma line ensemble} and we will give a proper definition of it in Section \ref{Section1.2.2}. Here we mention that the way this ensemble appears in the log-gamma polymer is through a connection of the latter with the Whittaker processes \cite{COSZ}, and that the structure of the local energy in this model is considerably more complicated than that of non-touching or crossing random walks. The proof of convergence of a sequence of log-gamma line ensembles to the parabolic Airy line ensemble goes through the following two steps:
\begin{enumerate}
\item Show that the lowest-indexed curves of the log-gamma line ensembles converge in the finite dimensional sense to the {\em parabolic Airy process} (the top curve in the parabolic Airy line ensemble).
\item Show that the sequence of log-gamma line ensembles is tight and all subsequential limits satisfy the Brownian Gibbs property.
\end{enumerate}
The fact that the above two statements imply the convergence of a sequence of log-gamma line ensembles to the parabolic Airy line ensemble follows from a recent result due to the first author and Matetski in\cite{DimMat}, which characterizes the parabolic Airy line ensemble as the unique line ensemble that has the parabolic Airy process as its top curve and satisfies the Brownian Gibbs property. See \cite[Section 1]{DimMat} for more details. 

In the present paper, we implement the second step of the above program and the first step will be completed in the upcoming work \cite{ED21+}. We mention that the above framework is the one that allowed the proof of the convergence of the KPZ line ensemble to the parabolic Airy line ensemble, with the first step established independently in \cite{QuaSar} and \cite{virag2020heat}, and the second step established by the second author in \cite{Wu21}.\\

The primary aim of our work is to develop a black-box theory (Theorem \ref{Thm1}) which proves tightness for sequences of discrete Gibbsian line ensembles and that all subsequential limits satisfy the Brownian Gibbs property. We develop this theory for a general class of line ensembles in which the underlying random walk measure has continuous jumps and scales diffusively to Brownian motion and in which the interaction energy is such that a key stochastic monotonicity property holds (Lemma \ref{MonCoup}). Section \ref{Section1.2.1} contains a statement of our black-box theory (various definitions and terminology are introduced in more detail in the main text). The secondary aim of our work is to apply our black-box theory to a sequence of log-gamma line ensembles, and show that under the KPZ $1/3: 2/3$ scaling this sequence of ensembles is tight. In Section \ref{Section1.2.2} we recall the definition of the log-gamma polymer as well as describe the nature of the Gibbsian line ensemble into which it embeds. Combining this embedding with the one-point convergence proved recently in \cite{BCDA}, we can apply our black-box theory and arrive at the advertised tightness result, see Theorem \ref{ThmLG}.

%
\subsection{Main results}\label{Section1.2} In this section we present our main results. Our black-box theory is described in Section \ref{Section1.2.1} and in Section \ref{Section1.2.2} we discuss our application to the log-gamma polymer.

%
\subsubsection{Tightness for general Gibbsian line ensembles}\label{Section1.2.1} Our discussion in this section will closely follow that of \cite[Section 1.1]{BCDB} and \cite[Section1.2]{DREU}.

In this paper we deal with discrete time, continuous valued Gibbsian line ensembles -- see Figure \ref{LineEnsembleFig} for an illustration and Section \ref{Section2.2} for a precise definition. Informally, these are measures on collections of curves $\mathfrak{L}= (L_1, \dots, L_K)$ so that each $L_i$ is the linear interpolation of a function from $\llbracket T_0, T_1 \rrbracket$ to $\mathbb{R}$, $\topc\geq 2$ and $\llbracket T_0, T_1 \rrbracket\subset \mathbb{Z}$ is an integer interval. Here and throughout the paper we write $\llbracket a,b \rrbracket = \{a, a+1, \dots, b\}$ for two integers $b \geq a$. The key property, which these measures enjoy, is a resampling invariance, which we refer to as the {\em (partial) $(H,H^{RW})$-Gibbs property}. The function $H^{RW}$ is the {\em random walk Hamiltonian}, and the function $H$ is the {\em interaction Hamiltonian}. We describe this Gibbs property informally here. For any
$k_1\leq k_2$ with $k_1,k_2 \in \llbracket 1,\topc-1 \rrbracket$ and any $a<b$ with $a, b\in\llbracket T_0, T_1 \rrbracket$, the law of the curves $L_{k_1},\ldots, L_{k_2}$ on the interval $\llbracket a, b \rrbracket$ is a reweighing of random walk bridges according to a specific Radon-Nikodym derivative. The random walk bridges have starting and ending values that match the values of $L_{k_1},\ldots, L_{k_2}$ at $a$ and $b$, respectively, and have jump increments with density, given by $G(x)=e^{-H^{RW}(x)}$. The Radon-Nikodym derivative, which reweighs this measure, is proportional to
\begin{equation}
\prod_{i=k_1-1}^{k_2}\prod_{m=a}^{b-1} e^{-H\left(L_{i+1}(m+1)-L_i(m)\right)}.
\label{RadonNikodymIntroeq}
\end{equation}
The fact that we assume that $k_2\leq \topc-1$ means that we are fixing the curve indexed by $\topc$ and not resampling it. This is why we use the term {\em partial} in describing this Gibbs property. 

The above definition implies that on a given domain, the law of the inside of the line ensemble is determined only by the boundary values of the domain, and is independent of what lies outside. In this sense, this is similar to a spatial version of the Markov property. Section \ref{Section2.2} contains a precise definition of the $(H,H^{RW})$-Gibbsian line ensembles that we have described above.

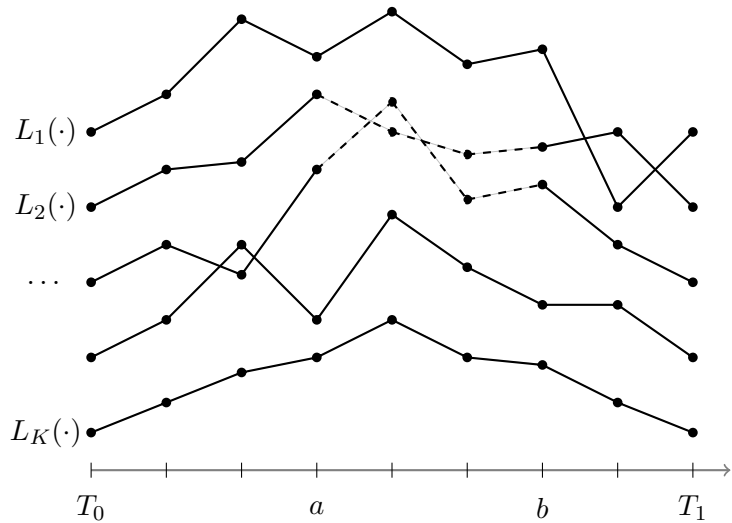
\begin{figure}
\begin{tikzpicture}[scale=1]
\filldraw[thick] (-1,4.5) circle(0.05) -- (0,5) circle(0.05) -- (1,6) circle(0.05)  -- (2,5.5) circle(0.05) -- (3,6.1) circle(0.05) -- (4,5.4) circle(0.05) --  (5,5.6) circle(0.05) -- (6,3.5) circle(0.05) -- (7,4.5) circle(0.05);
\filldraw[thick] (-1,3.5) circle(0.05) -- (0,4) circle(0.05) -- (1,4.1) circle(0.05)  -- (2,5) circle(0.05);
\filldraw[thick, dashed] (2,5) -- (3,4.5) circle(0.05) -- (4,4.2) circle(0.05) -- (5,4.3);
\filldraw[thick]  (5,4.3) circle(0.05) -- (6,4.5) circle(0.05)-- (7,3.5) circle(0.05);
\filldraw[thick]  (-1,2.5) circle(0.05) -- (0,3) circle(0.05) -- (1,2.6) circle(0.05)  -- (2,4) circle(0.05);
\filldraw[thick, dashed] (2,4)  -- (3,4.9) circle(0.05) -- (4,3.6) circle(0.05) -- (5,3.8) ;
\filldraw[thick] (5,3.8) circle(0.05)-- (6,3) circle(0.05)-- (7,2.5) circle(0.05);
\filldraw[thick] (-1,1.5) circle(0.05) -- (0,2) circle(0.05) -- (1,3) circle(0.05)  -- (2,2) circle(0.05) -- (3,3.4) circle(0.05) -- (4,2.7) circle(0.05) -- (5,2.2) circle(0.05) -- (6,2.2) circle(0.05)-- (7,1.5) circle(0.05);
\filldraw[thick] (-1,0.5) circle(0.05) -- (0,0.9) circle(0.05) -- (1,1.3) circle(0.05)  -- (2,1.5) circle(0.05) -- (3,2) circle(0.05) -- (4,1.5) circle(0.05) -- (5,1.4) circle(0.05) -- (6,0.9) circle(0.05)-- (7,0.5) circle(0.05);
\draw[gray, thick, -> ] (-1,0)  -- (7.5,0);
\foreach \x in {-1,...,7}
\draw (\x, 0.1) -- (\x,-0.1);
\draw (-1,-0.5) node{$T_0$};
\draw (7,-0.5) node{$T_1$};
\draw (2,-0.5) node{$a$};
\draw (5,-0.5) node{$b$};
\draw (-1.6,4.5) node{$ L_1(\cdot)$};
\draw (-1.6,3.5) node{$ L_2(\cdot)$};
\draw (-1.6,2.5) node{$\dots$};
\draw (-1.6,0.5) node{$ L_\topc(\cdot)$};
\end{tikzpicture}
\caption{A discrete line ensemble $\mathfrak{L}$. A special case of the partial $(H,H^{RW})$-Gibbs property is that the distribution of lines $L_i$, indexed by $i\in \llbracket 2,3 \rrbracket$, on the interval $\llbracket a,b\rrbracket$ (corresponding to the portion of lines that are dashed in the picture) is absolutely continuous with respect to the law of random walk bridges (with jump increment density $G(x)=e^{-H^{RW}(x)}$) joining $L_i(a)$ to $L_i(b)$, with Radon-Nikodym derivative proportional to \eqref{RadonNikodymIntroeq} with $k_1 = 2$ and $k_2 = 3$.}
\label{LineEnsembleFig}
\end{figure}

\medskip

In the remainder of this section we fix a sequence $\mathfrak{L}^N = (L_1^N, \dots, L_N^N)$ of $(H,H^{RW})$-Gibbsian line ensembles on $\llbracket a_N, b_N \rrbracket$ where $a_N \leq 0$ and $b_N \geq 0$ are integers. Our interest is in understanding the asymptotic behavior of $\mathfrak{L}^N$ as $N \rightarrow \infty$ (i.e. when the number of curves tends to infinity). Below we we list several assumptions on the sequence $\mathfrak{L}^N$, which rely on parameters $\alpha > 0$, $p \in \mathbb{R}$ and $\lambda >0$ (these are all fixed), as well as the Hamiltonians $H$ and $H^{RW}$. The parameter $\alpha$ is related to the fluctuation exponent of the line ensemble and the assumptions below will indicate that $L_1^N(0)$ fluctuates on order $N^{\alpha/2}$. The parameter $p$ is the global slope of the line ensemble and the parameter $\lambda$ is related to the global curvature of the line ensemble, and the assumptions below will indicate that once the slope is removed the line ensemble approximates the parabola $-\lambda x^2$. We now turn to formulating our assumptions precisely.

\vspace{2mm}

{\bf \raggedleft Assumption 1.} We assume that there is a function $\psi: \mathbb{N} \rightarrow (0, \infty)$ such that $\lim_{N \rightarrow \infty} \psi(N) = \infty$ and $a_N < -\psi(N) N^{\alpha}$ while $b_N > \psi(N) N^{\alpha}$.
\begin{remark}
The significance of Assumption 1 is that the sequence of intervals $[a_N, b_N]$ (on which the line ensemble $\mathfrak{L}^N$ is defined) on scale $N^{\alpha}$ asymptotically cover the entire real line. The nature of $\psi$ is not important and any function converging to infinity along the integers works for our purposes.
\end{remark}

\vspace{2mm}

{\bf \raggedleft Assumption 2.}  There is a function $\phi: (0, \infty) \rightarrow (0,\infty)$ such that for any $\epsilon > 0$ we have 
\begin{equation}\label{S1globalParabola}
 \sup_{n \in \mathbb{Z}} \limsup_{N \rightarrow \infty} \mathbb{P} \left( \left|N^{-\alpha/2}(L_1^N( \lfloor n N^{\alpha} \rfloor ) - p n N^{\alpha} + \lambda n^2 N^{\alpha/2}) \right| \geq \phi(\epsilon) \right) \leq \epsilon.
\end{equation}

\begin{remark}
The $n = 0$ case of (\ref{S1globalParabola}) indicates that $N^{-\alpha/2} L_1(0)$ is a tight sequence of random variables and so $\alpha/2$ is the fluctuation exponent of the ensemble. The transversal exponent is $\alpha$ and is reflected in the way time (the argument in $L_1^N$) is scaled -- it is twice $\alpha/2$ as expected by Brownian scaling. The essence of Assumption 2 is that if one removes a global line with slope $p$ from $L_1^N$ and rescales by $N^{\alpha/2}$ vertically and $N^{\alpha}$ horizontally the resulting curve asymptotically approximates the parabola $-\lambda x^2$. The way the statement is formulated, this approximation needs to happen uniformly over the integers. The choice of $\mathbb{Z}$ is not important and one can replace $\mathbb{Z}$ with any subset of $\mathbb{R}$ that has arbitrarily large positive and negative points. The choice of $\mathbb{Z}$ is made for convenience. Equation (\ref{S1globalParabola}) indicates that for each $n \in \mathbb{Z}$ the sequence of random variables $X_n^N = N^{-\alpha/2}(L_1^N( \lfloor n N^{\alpha}\rfloor ) - p n N^{\alpha} + \lambda n^2 N^{\alpha/2}) $ is tight, but it says a bit more. Namely, it states that if $\mathcal{M}_n$ is the family of all possible subsequential limits of $\{X_n^N\}_{N \geq 1}$ then $\cup_{n \in \mathbb{Z}} \mathcal{M}_n$ is itself a tight family of distributions on $\mathbb{R}$.  A simple case when Assumption 2 is satisfied is when $X_n^N$ converges to the same random variable for all $n$ as $N \rightarrow \infty$. In this case the family $\cup_{n \in \mathbb{Z}} \mathcal{M}_n$ contains only a single measure and so is naturally tight. 
\end{remark}

\vspace{2mm}
{\bf \raggedleft Assumption 3.} $H^{RW}$ and $H$ satisfy the conditions in Definitions \ref{AssHR} and \ref{AssH}, respectively.
\begin{remark} The conditions in Definitions \ref{AssHR} and \ref{AssH} are somewhat technical and required to apply two crucial results in our arguments. The first result, see Theorem \ref{KMT}, states that we can strongly couple random walk bridges with jump increment density $G(x)=e^{-H^{RW}(x)}$ to Brownian bridges of appropriate variance. In order to use this result $H^{RW}$ needs to satisfy the conditions of Definitions \ref{AssHR}, which for example require that $H^{RW}$ is convex on $\mathbb{R}$, and the random variable with density $G(x)$ has a finite moment generating function in a real neighborhood of the origin. The second result, see Lemma \ref{MonCoup}, states that we can monotonically couple $(H,H^{RW})$-Gibbsian line ensembles with respect to their boundary data. In order to use this result $H$ needs to satisfy the conditions of Definitions \ref{AssH}, which for example require that $H$ is convex and increasing on $\mathbb{R}$.
\end{remark}

The final thing we need to do is embed all of our line ensembles $\mathfrak{L}^N$ in the same space. The latter is necessary as we want to talk about tightness of these line ensembles that presently are defined on different state spaces (remember that the number of curves and interval of definition $[a_N, b_N]$ are changing with $N$). We consider $\mathbb{N} \times \mathbb{R}$ with the product topology coming from the discrete topology on $\mathbb{N}$ and the usual topology on $\mathbb{R}$. We let $C(\mathbb{N} \times \mathbb{R})$ be the space of continuous functions on $\mathbb{N} \times \mathbb{R}$ with the topology of uniform convergence over compacts and corresponding Borel $\sigma$-algebra. For each $N \in \mathbb{N}$ and $\sigma_p$ as in Definition \ref{AssHR}, we let 
$$f^N_i(s) =  \sigma_p^{-1} N^{-\alpha/2}(L^N_i(sN^{\alpha}) - p s N^{\alpha}), \mbox{ for $s\in [-\psi(N) ,\psi(N)]$ and $i = 1,\dots, N$,}$$
and extend $f^N_i$ to $\mathbb{R}$ by setting for $i = 1, \dots, N$
$$f^N_i(s) = f^N_i(-\psi(N)) \mbox{ for $s \leq -\psi(N)$ and } f^N_i(s) = f_i^N(\psi(N)) \mbox{ for $s \geq \psi(N)$}.$$
If $i \geq N+1$ we define $f^N_i(s) = 0$ for $s \in \mathbb{R}$. With the above we have that $\mathcal{L}^N$, defined by 
\begin{equation}\label{DefLineEns}
\mathcal{L}^N(i,s) = f^N_i(s),
\end{equation}
 is a random variable taking value in $C(\mathbb{N} \times \mathbb{R})$ and we let $\mathbb{P}_N$ denote its distribution. We remark that the particular extension we chose for $f^N_i$ outside of $[-\psi(N) ,\psi(N)]$ and for $i \geq N+1$ is immaterial since all of our tightness results are formulated for the topology of uniform convergence over compacts. Consequently, only the behavior of these functions on compact intervals and for finite indices matters and not what these functions do near infinity, which is where the modification happens as $\lim_{N \rightarrow \infty} \psi(N) = \infty$ by assumption. 

We are now ready to state our main result, whose proof can be found in Section \ref{Section3.1}.
\begin{theorem}\label{Thm1} Under Assumptions 1, 2 and 3 the sequence $\mathbb{P}_N$ is tight. Moreover, if $\mathcal{L}^\infty$ denotes any subsequential limit of $\mathcal{L}^N$ then $\mathcal{L}^\infty$ satisfies the Brownian Gibbs property of Section \ref{Section1.1} (see also Definition \ref{DefPBGP}).
\end{theorem}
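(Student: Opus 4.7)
The plan is to use the characterization of tightness in $C(\mathbb{N}\times\mathbb{R})$ with the topology of uniform convergence on compacts: for each integer $k\geq 1$ and each $M>0$, tightness of $\mathcal{L}^N(k,\cdot)$ on $[-M,M]$ reduces by Arzel\`a--Ascoli to (i) one-point tightness of $\mathcal{L}^N(k,s_0)$ at some $s_0\in[-M,M]$, and (ii) a uniform-in-$N$ bound on the modulus of continuity of $\mathcal{L}^N(k,\cdot)|_{[-M,M]}$. Once tightness is established, the Brownian Gibbs property of any subsequential limit $\mathcal{L}^\infty$ will be obtained by passing to the limit in the discrete $(H,H^{RW})$-Gibbs resampling identity, using the KMT-type strong coupling of Theorem \ref{KMT} and the monotone coupling of Lemma \ref{MonCoup} to control the relevant Radon--Nikodym derivatives.

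One-point tightness at the top curve ($k=1$) is directly supplied by Assumption 2, together with the parabolic approximation it provides over macroscopic time scales. For lower curves $k\geq 2$, I would proceed inductively. The upper tail of $\mathcal{L}^N(k,s)$ is controlled via the $(H,H^{RW})$-Gibbs property and Lemma \ref{MonCoup}: since $H$ is increasing by the Assumption 3 conditions of Definition \ref{AssH}, the lower curves cannot exceed the one above them by too much without paying a prohibitive Radon--Nikodym cost in \eqref{RadonNikodymIntroeq}, and this intuition is made quantitative by comparing with an explicit reference ensemble whose boundary data has been raised in a controlled way. The lower tail is the subtler direction. My approach here would be to exploit the grand monotone coupling of Lemma \ref{MonCoup} to compare $\mathcal{L}^N$ with an ensemble whose boundary data has been lowered to a definite level, and then to argue that if the $k$-th curve sits far below the parabola at some point, a Gibbsian resampling on a macroscopic interval $[-T,T]$ combined with the KMT coupling of Theorem \ref{KMT} would drag the top curve well below its typical value and contradict Assumption 2.

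For the modulus-of-continuity estimate, I would fix $k\geq 1$ and a compact interval $I\subset\mathbb{R}$, use Step 1 to pin the one-point values of $L_{k-1}^N$, $L_k^N$, $L_{k+1}^N$ at the endpoints of a slightly larger interval $I'\supset I$, and then condition on the boundary data on $I'$. The $(H,H^{RW})$-Gibbs property represents $\mathcal{L}^N(k,\cdot)|_{I}$ as a random walk bridge with jump density $e^{-H^{RW}}$ reweighted by the factor in \eqref{RadonNikodymIntroeq}. Theorem \ref{KMT} couples this bridge to a Brownian bridge of variance $\sigma_p^2$ up to a uniform error that is negligible at scale $N^{\alpha/2}$, which provides a uniformly good modulus for the unweighted bridge; the Radon--Nikodym reweighting is a uniformly bounded positive perturbation on the high-probability event from Step 1, again by the convexity and monotonicity of $H$. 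Combining these inputs and taking the supremum over $k$ in a finite range and over a countable cover of $\mathbb{R}$ by compacts yields tightness of $\mathbb{P}_N$ in $C(\mathbb{N}\times\mathbb{R})$.

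The last step is to verify the Brownian Gibbs property for a subsequential limit $\mathcal{L}^\infty$. For each fixed $k_1\leq k_2$, compact interval $[a,b]$, and bounded continuous test functional of the curves outside the resampling window, I would apply the discrete $(H,H^{RW})$-Gibbs identity to $\mathbb{P}_N$ in the rescaled coordinates of \eqref{DefLineEns}. As $N\to\infty$, the rescaled random walk bridges converge to Brownian bridges of variance $\sigma_p^2$ by Theorem \ref{KMT}, while the reweighting factor \eqref{RadonNikodymIntroeq} is evaluated at differences of order $N^{\alpha/2}$; because $H$ is convex and increasing, this factor concentrates on the non-crossing event and converges to the indicator of non-intersection, yielding the Brownian Gibbs property in Definition \ref{DefPBGP}. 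Uniform integrability here is supplied by the monotone coupling of Lemma \ref{MonCoup} and the tightness already established. The principal obstacle throughout is the lower bound in the one-point tightness for $k\geq 2$, since the Gibbs property only prescribes conditional laws given boundary data rather than absolute positions; bridging this gap requires carefully transferring the parabolic upper bound on the top curve into positional information on lower curves via the grand monotone coupling.
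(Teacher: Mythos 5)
Your high-level strategy is the right one and matches the paper's: establish one-point tightness and a modulus-of-continuity bound (Lemma \ref{2Tight}), then pass to the limit in the discrete Gibbs identity using the KMT coupling (Proposition \ref{KMT}) and the monotone coupling (Lemma \ref{MonCoup}). You also correctly identify the lower tail of the lower-indexed curves as the principal new difficulty and correctly suggest that a dip of $L_k^N$ should drag the top curve below the parabola, contradicting Assumption 2. However, there is one structural gap that the argument as sketched would not survive, and a second point that is under-acknowledged.

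The structural gap is that you never control the normalizing constant (the acceptance probability $Z_{H,H^{RW}}$ of Definition \ref{Pfree}). In your modulus-of-continuity step you write the conditioned law as a random walk bridge reweighted by \eqref{RadonNikodymIntroeq} and assert that the reweighting is ``a uniformly bounded positive perturbation.'' The Radon--Nikodym derivative in \eqref{RND} is $W_H/Z_{H,H^{RW}}$: the numerator $W_H\in[0,1]$ is indeed bounded, but the denominator $Z_{H,H^{RW}}$ is not a priori bounded away from $0$, and if it degenerates the conditioned law has no useful comparison to the free bridge. Controlling $Z_{H,H^{RW}}$ away from $0$ with high probability is precisely the content of the paper's Lemma \ref{keyL3} (equation \eqref{AccP}), and its proof (Section \ref{Section7.3}) is the single most technical piece of the inductive scheme. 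Without an analogue of this lemma, the modulus-of-continuity step in your Step 3 does not close, and neither does the weak convergence to avoiding Brownian bridges in your Step 4: there you also tacitly need the curves of $\mathcal{L}^\infty$ to be almost surely strictly separated at the endpoints $a,b$ so that the conditional avoiding law $\mathbb{P}^{a,b,\vec x,\vec y,f,g}_{avoid}$ is even well-defined, and that non-degeneracy is again carried by the separation statement \eqref{Sep} of Lemma \ref{keyL3}.

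The under-acknowledged point is the logical ordering of the one-point bounds. You propose to prove the upper and then lower tail for $L_k^N$ ``inductively,'' but the max bound at index $k+1$, the min bound at index $k+1$, and the acceptance-probability bound at index $k+1$ cannot be obtained one after the other independently: to bound the dip of $L_{k+1}^N$ you need the acceptance probability for the first $k$ curves; to bound the acceptance probability for $k+1$ curves you need both the max and min control at level $k+1$; and to bound the max of $L_{k+2}^N$ you need both the max and min at level $k+1$. The paper encodes this as the three interlocking implications \eqref{Imp1}--\eqref{Imp3} with base case \eqref{Imp4}. Your sketch reads as if each tail bound for curve $k$ can be derived using only bounds for curve $k-1$, which is not sufficient; you would want to plan the induction jointly over all three types of estimate from the start.
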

\begin{remark} In simple words, Theorem \ref{Thm1} states that if one has a sequence of $(H,H^{RW})$-Gibbsian line ensembles with a mild but uniform control of the one-point marginals of the top curves $L_1^N$ then the sequence of line ensembles needs to be tight. The idea of utilizing the Gibbs property of a line ensemble to improve one-point tightness of the top curves to tightness of the entire line ensembles has appeared previously in several different contexts. For line ensembles whose underlying path structure is Brownian it first appeared in the seminal work of \cite{CorHamA} and more recently in \cite{CIW19b,CIW19a}. For discrete Gibbsian line ensembles partial results of this kind appear in \cite{CD} and \cite{BCDB}.
\end{remark}
\begin{remark}
Under assumptions that are analogous to Assumptions 1,2 and 3 above, \cite[Theorem 1.2]{BCDB} established tightness of the top curves $\mathcal{L}^N_1$ of the line ensembles $\mathcal{L}^N$ in (\ref{DefLineEns}). Theorem \ref{Thm1} generalizes \cite[Theorem 1.2]{BCDB} in that tightness is proved not just for the top curves $\mathcal{L}^N_1$, but for the full line ensembles $\mathcal{L}^N$. The essential new input that is required to improve the tightness statement is the uniform global parabolic behavior of the curves $\mathcal{L}^N_1$ in Assumption 2. The place this assumption is used in the proof is to show that it is impossible for the curves $\mathcal{L}^N_k$ of index $k \geq 2$ to dip to negative infinity along subsequences with positive probability. We believe that one can establish the tightness of $\mathcal{L}_1^N$ under the weaker (than Assumption 2) assumption that the sequence of random variables $N^{-\alpha/2}(L_1^N(\lfloor n N^{\alpha}\rfloor) - p n N^{\alpha} )$ is tight for each $n \in \mathbb{Z}$. However, we are not aware of a way to prove the tightness of the full ensembles $\mathcal{L}^N$ without assuming some uniform global parabolic shape for $\mathcal{L}_1^N$, such as the one assured by (\ref{S1globalParabola}).
\end{remark}

\begin{remark}
An important technical innovation in our current work is a new approach to proving stochastic monotonicity (see Lemma \ref{MonCoup}), which extends \cite[Lemma 2.10]{BCDB} from line ensembles with a single curve to ones with an arbitrary finite number of curves. This extension was predicted in \cite{BCDB}, and we refer the interested reader to \cite[Section 1.1.1]{BCDB} for an at length discussion of the various properties and advantages of this coupling result and a comparison to other similar results in the literature. 

Here we briefly mention that our construction, which is the content of Lemma \ref{MonCoup}, provides a continuous grand monotone coupling of Gibbsian line ensembles with respect to their boundary data (entrance and exit values, and bounding curves). {\em Continuous} means that the Gibbsian line ensemble measure varies continuously as one varies the boundary data, {\em grand} means that all uncountably many measures (one for each boundary data) are coupled to the same probability space, and {\em monotone} means that raising the values of the boundary data likewise raises the associated measure. This result applies to a general class of Gibbsian line ensembles where the underlying random walk measure is discrete time, continuous valued and log-convex, and the interaction Hamiltonian is nearest neighbor and convex. 
\end{remark}

 As mentioned, the proof of Theorem \ref{Thm1} is presented in Section \ref{Section3.1}, and follows from the more general (and technical) Theorem \ref{PropTightGood}. Theorem \ref{PropTightGood} is the main technical result of the paper and its proof is based on three key lemmas -- Lemma \ref{keyL1}, Lemma \ref{keyL2} and Lemma \ref{keyL3}. These three lemmas form the backbone of our argument and their proof is the content of Section \ref{Section4}. We mention that the proof of these lemmas is based on an involved inductive argument where various estimates are obtained for the lowest-indexed curve of a sequence of line ensembles, which are then used to deduce similar estimates for the second curve and so on. Compared to previous results in the literature our inductive argument most closely resembles the proof of \cite[Propositions 6.1, 6.2 and 6.3]{CorHamK}; however, there are several unique challenges in our setting that we need to overcome. 

Firstly, in \cite{CorHamK} the authors consider a positive temperature limit of a sequence of line ensembles, while in our case we are dealing with a zero temperature limit. This difference requires that we not only show that along subsequences our line ensembles do not go to high or too low, but also asymptotically become ordered {\em and} well-separated. An analogous challenge was overcome by the second author in \cite{Wu21}, where the zero temperature limit of the KPZ line ensemble was investigated. The disadvantage we face in our work is that we are not working with a nice object like the KPZ line ensemble, for which various estimates and properties (like stationarity) are readily available. Instead, we are dealing with {\em generic} sequences of line ensembles, with almost nothing but one-point tightness for their lowest-indexed curve to work with. The reason we assume so little is partially due to the fact that this is what is essentially known for the log-gamma line ensemble -- our integrable model of interest (defined in the next section). Given our limited input, we have to develop our inductive argument in a delicate way so as to interweave between estimates on the maxima and minima of the various curves, estimates that show that the curves also become ordered and separated from each other with high probability. This interweaving sits behind the complex structure of the inductive argument and the technical nature of Section \ref{Section4}.

An additional challenge we face, compared to \cite{CorHamK}, is that we are dealing with discrete (rather than continuous) line ensembles. The latter difference demands that we derive various estimates on random walk bridges, which are generally harder objects to work with than Brownian bridges. The way we obtain the estimates we require in this paper is by using a strong (KMT-type) coupling result, Proposition \ref{KMT}, which was established by the authors of this paper in \cite{DW19}. This strong coupling result works in tandem with our monotone coupling lemma, Lemma \ref{MonCoup}, to essentially translate any estimate we require for a collection of {\em interacting} random walk bridges, to one involving {\em independent} Brownian bridges. Most of the estimates we derive are presented in Section \ref{Section7}, and while the arguments in Section \ref{Section7.1} and \ref{Section7.2} have analogues in the literature, we mention that the arguments in Section \ref{Section7.3} are new and more closely tuned to our setting. Section \ref{Section7.3} is devoted to the proof of  Lemma \ref{LAccProb}, which roughly states that with high probability finitely many interacting random walk bridges arrange themselves in configurations that allow for a strong comparison to independent Brownian bridges. This statement is crucial for the proof of Lemma \ref{keyL3} (one of the three key lemmas mentioned above) and its proof, which occupies Section \ref{Section7.3}, is based on a delicate bootstrapping argument that involves various estimates established throughout Section \ref{Section7}.

%
\subsubsection{Application to the log-gamma polymer}\label{Section1.2.2} We begin by introducing the log-gamma polymer model, following \cite[Section 6]{BCDB}.

 Recall that a continuous random variable $X$ is said have the inverse-gamma distribution with parameter $\theta > 0 $ if its density is given by
\begin{equation}\label{invGammaDens}
f_\theta(x) = \frac{{\bf 1} \{ x > 0 \} }{\Gamma(\theta)} \cdot x^{-\theta - 1} \cdot \exp( - x^{-1}).
\end{equation}

Let us fix $N \in \mathbb{N}$ and $\theta > 0$. We let $d = \left( d_{i,j} : i \geq 1, 1\leq j \leq N \right)$ denote the semi-infinite random matrix such that $d_{i,j}$ are i.i.d. random variables with density $f_\theta$ as in (\ref{invGammaDens}). In addition, for $n \geq 1$ we denote by $d^{[1,n]}$ the $n \times N$ matrix $\left( d_{i,j} : 1\leq i \leq n, 1\leq j \leq N \right)$. A {\em directed lattice path} is a sequence of vertices $(x_1, y_1), \dots, (x_k, y_k) \in \mathbb{Z}^2$ such that $x_1 \leq x_2 \leq \cdots \leq x_k$, $y_1 \leq y_2 \leq \cdots \leq y_k$ and $\big(x_i - x_{i-1}\big) + \big(y_i - y_{i-1}\big) = 1$ for $i = 2, \dots, k$. In words, a directed lattice path is an up-right path on $\mathbb{Z}^2$, which makes unit steps in the coordinate directions. A collection of paths $\pi = (\pi_1, \dots, \pi_{\ell})$ is said to be {\em  non-intersecting} if the paths $\pi_1, \dots, \pi_\ell$ are pairwise vertex-disjoint. For $1 \leq \ell \leq k \leq N$ we let $\Pi_{n,k}^\ell$ denote the set of $\ell$-tuples $\pi = (\pi_1, \dots, \pi_{\ell})$ of non-intersecting directed lattice paths in $\mathbb{Z}^2$ such that for $1 \leq r \leq \ell$, $\pi_r$ is a lattice path from $(1,r)$ to $(n, k+ r - \ell)$.

Given an $\ell$-tuple $\pi = (\pi_1, \dots, \pi_{\ell})$ we define its {\em weight} to be
\begin{equation}\label{PathWeight}
w(\pi) = \prod_{r = 1}^\ell \prod_{(i,j) \in \pi_r} d_{i,j}.
\end{equation}
For $1 \leq \ell \leq k \leq N$ we define
\begin{equation}\label{PartitionFunct}
\tau_{k, \ell}(n) = \sum_{ \pi \in \Pi_{n,k}^\ell} w(\pi).
\end{equation}
Note that if $0 \leq n < \ell \leq k \leq N$ then $ \Pi_{n,k}^\ell = \varnothing$ and so, as by convention, we set $\tau_{k, \ell}(n)  = 0$. If $\ell = k$ then $\Pi_{n,k}^\ell $ consists of a unique element, and in fact we have
$$\tau_{k,\ell}(n) = \delta_{k,\ell} \cdot \tau_{k, n} (n) \mbox{ for } 0 \leq n < \ell \leq k \leq N,$$
where $\delta_{k,\ell}$ is the Kronecker delta.

Given $\tau_{k,\ell}(n)$ we define the array $z(n) = \{z_{k, \ell}(n): 1 \leq k \leq N \mbox{ and } 1 \leq \ell \leq \min( k, n) \}$ through
\begin{equation}\label{ZfromTau}
z_{k,1}(n) z_{k,2}(n) \cdots z_{k, \ell}(n) = \tau_{k, \ell }(n).
\end{equation}

We next explain how one can construct an $(H,H^{RW})$-Gibbsian line ensemble from $z_{N,i}$, with 
\begin{equation}\label{S1Hamiltonian}
\g_\theta(x) = e^{-H^{RW}(x)}, \hspace{3mm}H^{RW}(x) = \theta x + e^{-x} + \log \Gamma (\theta) \mbox{ and } H(x) = e^x. 
\end{equation}
The precise statement is contained in the following proposition.
\begin{proposition}\label{PropLOG} \cite[Proposition 6.4]{BCDB}
Let $H,H^{RW}$ be as in (\ref{S1Hamiltonian}). Fix $\topc, N \in \mathbb{N}$ with $N \geq \topc \geq 2$. Let $T_0, T_1 \in \mathbb{N}$ be such that $T_0 < T_1$ and $T_0 \geq \topc$. Then we can construct a probability space $\mathbb{P}$ that supports a $\llbracket 1, \topc \rrbracket \times \llbracket T_0, T_1 \rrbracket$-indexed line ensemble $\mathfrak{L} = (L_{1}, \dots, L_\topc)$ such that:
\begin{enumerate}
\item the $\mathbb{P}$-distribution of $(L_i(j): (i,j) \in \llbracket 1, \topc \rrbracket \times \llbracket T_0, T_1 \rrbracket )$ is the same as that of $ ( \log(z_{N,i}(j)): (i,j) \in \llbracket 1, \topc \rrbracket \times \llbracket T_0, T_1 \rrbracket )$ as in (\ref{ZfromTau});
\item $\mathfrak{L}$ satisfies the partial $(H,H^{RW})$-Gibbs property of Section \ref{Section1.2.1} (see also Definition \ref{DefLGGP}).
\end{enumerate}
\end{proposition}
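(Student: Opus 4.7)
My plan is to derive the proposition from the explicit description of the joint law of the partition functions provided by the geometric RSK (a.k.a.\ COSZ) bijection of \cite{COSZ}. Applied to the matrix $d^{[1,N]}$ with independent inverse-gamma entries of density $f_\theta$, this bijection produces a triangular pattern whose entries are precisely the variables $z_{k,\ell}(n)$ in (\ref{ZfromTau}). Since geometric RSK is volume preserving on the positive orthant, the joint density of the full pattern can be read off by pushing forward the product of inverse-gamma densities. The density that emerges is of ``Whittaker type'' and decomposes into a product of two kinds of factors: factors associated to increments along a fixed row of the pattern, which after passing to logarithms take the form $\exp(-\theta (y_1 - y_0) - e^{-(y_1-y_0)})/\Gamma(\theta) = G(y_1 - y_0)$ with $G = e^{-H^{RW}}$; and factors associated to neighboring rows, which take the form $\exp(-e^{y-y'}) = \exp(-H(y-y'))$. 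In other words, the joint density already exhibits the product structure appearing in the partial $(H,H^{RW})$-Gibbs property.

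The first step is therefore to write down the COSZ density for the full pattern $(z_{k,\ell}(n))_{k,\ell,n}$ in closed form, following for example the presentation in \cite{COSZ}, and to verify that after taking logarithms the exponents match exactly the Hamiltonians $H^{RW}(x) = \theta x + e^{-x} + \log \Gamma(\theta)$ and $H(x) = e^x$ prescribed in (\ref{S1Hamiltonian}). The second step is to set $L_i(j) := \log z_{N,i}(j)$ for $(i,j) \in \llbracket 1, \topc \rrbracket \times \llbracket T_0, T_1 \rrbracket$ and marginalize the joint density over all pattern entries outside this rectangle. The hypothesis $T_0 \geq \topc$ guarantees that $z_{N,i}(j) > 0$ throughout the rectangle, so the marginal density is well defined and (1) of the proposition is immediate from construction.

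The third step verifies the Gibbs property. Fix $1 \leq k_1 \leq k_2 \leq \topc-1$ and $T_0 \leq a < b \leq T_1$, and condition on all variables of $\mathfrak{L}$ outside the open rectangle $\llbracket k_1, k_2 \rrbracket \times \llbracket a+1, b-1 \rrbracket$ --- equivalently, on the ``boundary data'' consisting of the endpoints $L_i(a), L_i(b)$ for $k_1 \leq i \leq k_2$, the curve $L_{k_1-1}$ (or its absence if $k_1 = 1$), and the curve $L_{k_2+1}$, each restricted to $\llbracket a, b \rrbracket$. Because the density established in Step 1 factors into single-cell contributions, the conditional density of the interior variables is proportional to
\begin{equation*}
\prod_{i=k_1}^{k_2}\prod_{j=a}^{b-1} G\bigl(L_i(j+1)-L_i(j)\bigr) \cdot \prod_{i=k_1-1}^{k_2}\prod_{m=a}^{b-1} \exp\!\bigl(-H(L_{i+1}(m+1)-L_i(m))\bigr),
\end{equation*}
which is the reweighing of random walk bridges described in (\ref{RadonNikodymIntroeq}). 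This is exactly the partial $(H,H^{RW})$-Gibbs property and establishes (2).

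The main obstacle is bookkeeping in Step 1: one must carefully write out the push-forward density under geometric RSK and confirm that each factor lines up with the specific forms of $H^{RW}$ and $H$, including the correct treatment of the index $i=0$ (no curve above $L_1$) and the interaction between $L_{k_2}$ and the fixed bottom curve $L_{k_2+1}$. The key conceptual point, however, is that the Whittaker-type density is already a nearest-neighbor product in the ``row/interaction'' split, so the Gibbs property is manifest once the density is in hand. Since this computation has already been carried out in \cite[Proposition 6.4]{BCDB}, the proof reduces to citing that result and checking that the setup matches verbatim.
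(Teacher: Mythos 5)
Your proposal is correct and matches the paper: the paper gives no proof of Proposition~\ref{PropLOG} itself but simply cites \cite[Proposition 6.4]{BCDB}, and your sketch (geometric RSK / COSZ pushforward giving a locally factorized Whittaker-type density, whose nearest-neighbor product structure is precisely the partial $(H,H^{RW})$-Gibbs property after taking logarithms) is an accurate outline of what sits behind that citation. You correctly note at the end that the formal proof reduces to the citation, which is exactly what the paper does.
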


In view of Proposition \ref{PropLOG} we know that  $ ( \log(z_{N,i}(j)): (i,j) \in \llbracket 1, \topc \rrbracket \times \llbracket T_0, T_1 \rrbracket )$ form a sequence (in $N$) of $(H,H^{RW})$-Gibbsian line ensembles and we seek to apply our black-box result, Theorem \ref{Thm1}, to deduce an appropriate tightness statement for it. In order to formulate a precise statement we need some additional notation.

\begin{definition}
Let $\Psi(x)$ denote the digamma function, defined by
\begin{equation}\label{digammaS1}
\Psi(z) = \frac{\Gamma'(z)}{\Gamma(z)} = - \gamma_{E} + \sum_{n = 0}^\infty \left[\frac{1}{n + 1} - \frac{1}{n+z} \right],
\end{equation}
with $\gamma_{E}$ denoting the Euler constant.
Define the function
\begin{equation}\label{DefLittleg}
g_\theta(z) = \frac{\sum_{n =0}^\infty \frac{1}{(n+\theta - z)^2}}{ \sum_{n = 0}^\infty \frac{1}{(n+z)^2}} = \frac{\Psi'(\theta -z)}{\Psi'(z)},
\end{equation}
and observe that it is a smooth, strictly increasing bijection from $(0, \theta)$ to $(0, \infty)$. The inverse function $g_\theta^{-1}: (0, \infty) \rightarrow (0,\theta)$ is also a strictly increasing smooth bijection. For $x \in (0,\infty)$, define the function
\begin{equation}\label{HDefLLN}
h_\theta(x) = x \cdot  \Psi(g_\theta^{-1}(x)) + \Psi( \theta - g_\theta^{-1}(x)),
\end{equation}
which is easily seen to be a smooth function on $(0, \infty)$.
\end{definition}

For each $N \in \mathbb{N}$, $i \in \llbracket 1, N \rrbracket$ and $j \in \llbracket -N, N \rrbracket$ we define 
\begin{equation}\label{LinesLG}
L_i^N(j) = \log(z_{2N,i}(2N + j)) +  2N h_{\theta}(1),
\end{equation}
and set $\mathfrak{L}^N = (L_1^N, \dots, L_N^N)$. 
\begin{remark}
We mention that we chose the subscript of $z$ in (\ref{LinesLG}) to be $2N$ rather than $N$ to ensure that $L_i^N(j)$ are well-defined for all $j \in \llbracket -N, N \rrbracket$ since from (\ref{ZfromTau}) the quantity $z_{k,\ell}(n)$ is only well-defined if $1 \leq \ell \leq \min(k, n)$. 
\end{remark}

We also define the $C(\mathbb{N}\times \mathbb{R})$-valued random variables $\mathcal{L}^N$ to be as in (\ref{DefLineEns}) for the line ensemble $\mathfrak{L}^N$ we just defined. Here, we take $\alpha = 2/3$, $p = -h'_{\theta}(1)$, $\psi(N) = (1/2)N^{1/3}$ and $\sigma_p =  \sqrt{\Psi'(\theta/2)}$. We denote by $\mathbb{P}^{LG}_N$ the distribution of $\mathcal{L}^N$.

With the above notation, we state the main result we prove about the log-gamma polymer.
\begin{theorem}\label{ThmLG}The sequence $\mathbb{P}^{LG}_N$ is tight. Moreover, if $\mathcal{L}^\infty$ denotes any subsequential limit of $\mathcal{L}^N$ then $\mathcal{L}^\infty$ satisfies the Brownian Gibbs property of Section \ref{Section1.1} (see also Definition \ref{DefPBGP}).
\end{theorem}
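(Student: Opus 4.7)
The plan is to reduce Theorem \ref{ThmLG} to an application of the black-box Theorem \ref{Thm1}. Since Proposition \ref{PropLOG} already exhibits the required $(H,H^{RW})$-Gibbsian structure for $\mathfrak{L}^N$, all that remains is to verify Assumptions 1, 2, and 3 with the specific choices $\alpha = 2/3$, $p = -h'_\theta(1)$, $\lambda$ determined by the second derivative of the log-gamma limit shape, $\psi(N) = (1/2)N^{1/3}$, and $\sigma_p = \sqrt{\Psi'(\theta/2)}$.

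Assumption 1 is immediate: the line ensemble is defined on $\llbracket -N, N\rrbracket$, so $a_N = -N$ and $b_N = N$, while $\psi(N)N^{\alpha} = (1/2)N$, and the inequalities $-N<-(1/2)N$ and $N>(1/2)N$ hold. Assumption 3 is a direct check on the Hamiltonians in \eqref{S1Hamiltonian}. The function $H^{RW}(x)=\theta x + e^{-x}+\log\Gamma(\theta)$ is smooth and convex on $\mathbb{R}$, while $H(x)=e^x$ is smooth, convex and strictly increasing on $\mathbb{R}$; these are essentially the substantive content of Definitions \ref{AssHR} and \ref{AssH}. The one less transparent requirement in Definition \ref{AssHR} is that the density $G(x)=e^{-H^{RW}(x)}$ have a moment generating function in a real neighborhood of the origin: a change of variable $u=e^{-x}$ yields $\mathbb{E}[e^{tX}] = \Gamma(\theta-t)/\Gamma(\theta)$, finite for $t\in(-\infty,\theta)$, which covers a neighborhood of $0$. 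Any remaining technical conditions (smoothness, growth, Lipschitz-type estimates on derivatives) follow from elementary calculus applied to $\theta x+e^{-x}$ and $e^x$.

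The main work lies in Assumption 2, i.e.\ in establishing the uniform parabolic control
\[
\sup_{n\in\mathbb{Z}}\limsup_{N\to\infty}\mathbb{P}\Bigl(\bigl|N^{-1/3}\bigl(L_1^N(\lfloor nN^{2/3}\rfloor)-pnN^{2/3}+\lambda n^2 N^{1/3}\bigr)\bigr|\geq \phi(\epsilon)\Bigr)\leq \epsilon.
\]
Unpacking \eqref{LinesLG}, the quantity $L_1^N(\lfloor nN^{2/3}\rfloor)$ is $\log z_{2N,1}(2N+\lfloor nN^{2/3}\rfloor) + 2Nh_\theta(1)$, which equals $\log\tau_{2N,1}(2N+\lfloor nN^{2/3}\rfloor) + 2Nh_\theta(1)$, the centered log-partition function of the log-gamma polymer from $(1,1)$ to $(2N+\lfloor nN^{2/3}\rfloor, 2N)$. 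The free energy of this polymer obeys a law of large numbers whose limit shape, viewed as a function of the endpoint ratio, is smooth and strictly concave; the parameters $p$ and $\lambda$ are chosen to match its first two Taylor coefficients at the characteristic direction associated with $1{:}1$ endpoint ratios and weight parameter $\theta$. Consequently $pnN^{2/3}-\lambda n^2N^{1/3}$ is the deterministic second-order expansion of the centered free energy along the perturbed endpoint, and the remaining $N^{1/3}$-fluctuation is governed by the one-point convergence established in \cite{BCDA}. For each fixed $n$ this delivers distributional convergence of $X_n^N := N^{-1/3}(L_1^N(\lfloor nN^{2/3}\rfloor)-pnN^{2/3}+\lambda n^2N^{1/3})$ to a GUE Tracy--Widom variable, and in particular tightness of each $X_n^N$.

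The genuinely subtle point, and the main obstacle I anticipate, is the \emph{uniformity in $n\in\mathbb{Z}$} required by Assumption 2: we must show that the family $\bigcup_{n\in\mathbb{Z}}\mathcal{M}_n$ of subsequential limits is tight, not merely that each $\mathcal{M}_n$ is. The plan here is twofold. First, for each fixed $n$ the endpoint $(2N+\lfloor nN^{2/3}\rfloor,2N)$ sits on a characteristic of the log-gamma polymer at a macroscopic shift of order $N^{2/3}$, and the one-point result of \cite{BCDA} is proved uniformly enough to cover such an $N^{2/3}$-neighborhood of the diagonal direction, yielding convergence of $X_n^N$ to the same Tracy--Widom distribution for every $n$. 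Second, one must check that the remainder in the Taylor expansion of the deterministic free energy is $o(N^{1/3})$ uniformly for $|n|\leq \psi(N)$, which follows from smoothness of the limit shape and the scale $|\lfloor nN^{2/3}\rfloor/(2N)|\leq\psi(N)N^{-1/3}/2\to 0$. Once the family $\{X_n^N\}$ is seen to be uniformly tight in this sense, Assumption 2 holds, and invoking Theorem \ref{Thm1} yields both tightness of $\mathbb{P}^{LG}_N$ and the Brownian Gibbs property for every subsequential limit $\mathcal{L}^\infty$, completing the proof.
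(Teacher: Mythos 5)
Your proposal is correct and takes essentially the same route as the paper: verify Assumptions 1--3 for the choices $\alpha = 2/3$, $p = -h_\theta'(1)$, $\lambda = (1/4)h_\theta''(1)$, $\psi(N) = (1/2)N^{1/3}$, $\sigma_p = \sqrt{\Psi'(\theta/2)}$, cite \cite[Section 6.3]{BCDB} for Assumption 3, and extract Assumption 2 from a Taylor expansion of $h_\theta$ in the one-point Tracy--Widom convergence of \cite[Theorem 1.2]{BCDA}, then invoke Theorem \ref{Thm1}. One clarification is worth making about the ``genuinely subtle point'' you flag: the supremum in (\ref{S1globalParabola}) sits \emph{outside} the $\limsup_{N\to\infty}$, so you do not need a version of \cite{BCDA} that is uniform over $N^{2/3}$-neighborhoods of the characteristic direction, nor an estimate of the Taylor remainder uniformly for $|n|\leq \psi(N)$. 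What is needed is only that, for each fixed $n$, the random variable $X_n^N$ converges to the \emph{same} limit law $X$; then $\lim_{N\to\infty}\mathbb{P}(|X_n^N|\geq \phi(\epsilon)) = \mathbb{P}(|X|\geq\phi(\epsilon))$ for every $n$, and the family $\cup_n \mathcal{M}_n$ is a singleton (this is precisely the ``simple case'' noted in the remark following Assumption 2). Your subsequent conclusion is correct, but you can discard the two uniformity claims as unnecessary -- for fixed $n$ the horizontal displacement $nN^{-1/3}/2 \to 0$, so the pointwise Taylor expansion suffices and the pointwise statement of \cite{BCDA} is exactly what is applied.
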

\begin{remark} The formulation of Theorem \ref{ThmLG} is for a line ensemble built from the values of $z_{k,\ell}(n)$ around the point $(k, n) = (2N, 2N)$. In particular, this point lies on the line of slope $1$, which is why the argument of $h_{\theta}$ in (\ref{LinesLG}) is equal to $1$. One could formulate an analogous statement for line ensembles built from the values of $z_{k,\ell}(n)$ around the point $(k, n) = (\alpha N,  \beta N)$ for any $\alpha, \beta > 0$, but we chose $\alpha = \beta = 2$ for simplicity.
\end{remark}
\begin{remark} The limit taken in Theorem \ref{ThmLG} is sometimes referred to as a {\em strong noise limit} of the polymer. There is also a {\em weak noise limit}, which involves simultaneously scaling the parameter $\theta$ of the log-gamma polymer together with $N$. Such a limit transition was studied by the second author in \cite{Wu19} where it was again shown that under the weak noise scaling the line ensembles built from the log-gamma polymer form a tight sequence; however, the subsequential limits satisfy the {\em $H$-Brownian Gibbs property} of \cite{CorHamK}, rather than the Brownian Gibbs property. We refer the interested reader to \cite{Wu19} for the details.
\end{remark}

\subsection*{Outline}
Section \ref{Section2} contains a number of foundational definitions and results about continuous and discrete Gibbsian line ensembles. Section \ref{Section2.1} sets up the notation for continuous line ensembles, and the Brownian Gibbs property, while Section \ref{Section2.2} defines the $(H,H^{RW})$-Gibbsian line ensembles we work with in this paper. Section \ref{Section2.3} contains the statement of our continuous grand monotone coupling result (Lemma \ref{MonCoup}), whose proof is deferred to Section \ref{Section6}. Finally, Section \ref{Section2.4} contains additional technical assumptions (see Definition \ref{AssHR}) that we make on the random walk Hamiltonian $H^{RW}$ to be able to strongly couple it to a Brownian bridge.

Section \ref{Section3} contains the main technical result we prove about sequences of $(H,H^{RW})$-Gibbsian line ensembles -- this is Theorem \ref{PropTightGood}. In Sections \ref{Section3.1} and \ref{Section3.2} we use Theorem \ref{PropTightGood} to prove Theorems \ref{Thm1} and \ref{ThmLG} from Section \ref{Section1.2} above. In Sections \ref{Section3.3} and \ref{Section3.4} we prove Theorem \ref{PropTightGood} by utilizing three key lemmas --  Lemmas \ref{keyL1} , \ref{keyL2}  and \ref{keyL3}. These lemmas are proved in Section \ref{Section4}. Section \ref{Section7} contains the proofs of various technical results we require throughout the paper.

\subsection*{Acknowledgments} The authors would like to thank Ivan Corwin for many useful comments on earlier drafts of the paper. ED was partially supported by the Minerva Foundation Fellowship and the NSF grant DMS:2054703.

%
\section{Gibbsian line ensembles}\label{Section2}
In this section we recall the definition of a (continuous) line ensemble and the (partial) Brownian Gibbs property. We also introduce the notion of a discrete $(H,H^{RW})$-Gibbsian line ensemble and establish some of its properties.

%
\subsection{Continuous line ensembles}\label{Section2.1}
In this section we introduce the notions of a {\em line ensemble} and the {\em (partial) Brownian Gibbs property}. Our exposition in this section closely follows those of \cite[Section 2]{DimMat} and  \cite[Section 2.1]{DREU}, which in turn go back to \cite[Section 2]{CorHamA}. \\

Given two integers $p \leq q$, we let $\llbracket p, q \rrbracket$ denote the set $\{p, p+1, \dots, q\}$. If $p > q$ then $\llbracket p,q\rrbracket$ is the empty set. Given an interval $\Lambda \subset \mathbb{R}$ we endow it with the subspace topology of the usual topology on $\mathbb{R}$. We let $(C(\Lambda), \mathcal{C})$ denote the space of continuous functions $f: \Lambda \rightarrow \mathbb{R}$ with the topology of uniform convergence over compacts, see \cite[Chapter 7, Section 46]{Munkres}, and Borel $\sigma$-algebra $\mathcal{C}$. Given a set $\Sigma \subset \mathbb{Z}$ we endow it with the discrete topology and denote by $\Sigma \times \Lambda$ the set of all pairs $(i,x)$ with $i \in \Sigma$ and $x \in \Lambda$ with the product topology. We also denote by $\left(C (\Sigma \times \Lambda), \mathcal{C}_{\Sigma}\right)$ the space of continuous functions on $\Sigma \times \Lambda$ with the topology of uniform convergence over compact sets and Borel $\sigma$-algebra $\mathcal{C}_{\Sigma}$. Typically, we will take $\Sigma = \llbracket 1, N \rrbracket$ (we use the convention $\Sigma = \mathbb{N}$ if $N = \infty$) and then we write  $\left(C (\Sigma \times \Lambda), \mathcal{C}_{|\Sigma|}\right)$ in place of $\left(C (\Sigma \times \Lambda), \mathcal{C}_{\Sigma}\right)$.

\begin{definition}\label{CLEDef}
Let $\Sigma \subset \mathbb{Z}$ and $\Lambda \subset \mathbb{R}$ be an interval. A {\em $\Sigma$-indexed line ensemble $\mathcal{L}$ on $\Lambda$} is a random variable defined on a probability space $(\Omega, \mathcal{F}, \mathbb{P})$ that takes values in $\left(C (\Sigma \times \Lambda), \mathcal{C}_{\Sigma}\right)$. Intuitively, $\mathcal{L}$ is a collection of random continuous curves (sometimes referred to as {\em lines}), indexed by $\Sigma$,  each of which maps $\Lambda$ in $\mathbb{R}$. We will often slightly abuse notation and write $\mathcal{L}: \Sigma \times \Lambda \rightarrow \mathbb{R}$, even though it is not $\mathcal{L}$ which is such a function, but $\mathcal{L}(\omega)$ for every $\omega \in \Omega$. For $i \in \Sigma$ we write $\mathcal{L}_i(\omega) = (\mathcal{L}(\omega))(i, \cdot)$ for the curve of index $i$ and note that the latter is a map $\mathcal{L}_i: \Omega \rightarrow C(\Lambda)$, which is $(\mathcal{C}, \mathcal{F})-$measurable. If $a,b \in \Lambda$ satisfy $a < b$ we let $\mathcal{L}_i[a,b]$ denote the restriction of $\mathcal{L}_i$ to $[a,b]$.
\end{definition}

In the paper we will require several basic statements about line ensembles and the space $C (\Sigma \times \Lambda)$, which we summarize in the lemmas below.
\begin{lemma}\label{Polish}\cite[Lemma 2.2]{DREU} There is a metric $d$ on $C (\Sigma \times \Lambda)$ such that $(C (\Sigma \times \Lambda), d)$ is complete and separable. Moreover, the metric topology of this space is the same as that of the topology of uniform convergence over compacts.
\end{lemma}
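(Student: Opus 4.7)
The plan is to prove the lemma by the standard Fréchet construction of a compatible metric on a space of continuous functions on a $\sigma$-compact space. First I would observe that $\Sigma \times \Lambda$, with the product topology coming from the discrete topology on $\Sigma \subset \mathbb{Z}$ and the subspace topology on the interval $\Lambda \subset \mathbb{R}$, is locally compact Hausdorff and $\sigma$-compact: one may enumerate $\Sigma = \{i_1, i_2, \dots\}$, pick an exhaustion $\Lambda = \bigcup_{n\geq 1} [a_n, b_n]$ of $\Lambda$ by nested compact intervals, and then define
\[
K_n = \{i_1, \dots, i_n\} \times [a_n, b_n],
\]
so that each $K_n$ is compact, $K_n \subset K_{n+1}$, and $\bigcup_n K_n = \Sigma \times \Lambda$, with the additional property that every compact set in $\Sigma \times \Lambda$ is contained in some $K_n$.

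Next I would define, for $f, g \in C(\Sigma \times \Lambda)$ and $n \in \mathbb{N}$, the pseudo-metrics $d_n(f,g) = \sup_{(i,x) \in K_n} |f(i,x) - g(i,x)|$ (finite by continuity and compactness of $K_n$), and set
\[
d(f,g) = \sum_{n=1}^\infty 2^{-n}\, \frac{d_n(f,g)}{1 + d_n(f,g)}.
\]
The sum converges uniformly, and standard arguments show $d$ is a genuine metric. To check that $d$ induces the topology of uniform convergence on compacts, I would verify both inclusions of basic neighborhoods: if $f^{(k)} \to f$ in $d$ then $d_n(f^{(k)}, f) \to 0$ for each $n$, which gives uniform convergence on each $K_n$, and hence on every compact set by the exhaustion property; conversely, uniform convergence on each $K_n$ implies $d(f^{(k)}, f) \to 0$ by dominated convergence in the series.

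Completeness is then essentially immediate: a $d$-Cauchy sequence $\{f^{(k)}\}$ is $d_n$-Cauchy for each $n$, so it is uniformly Cauchy on each $K_n$ and converges uniformly on $K_n$ to some continuous function $f_n \colon K_n \to \mathbb{R}$. By uniqueness of uniform limits, $f_{n+1}$ restricts to $f_n$ on $K_n$, so the $f_n$ glue into a continuous function $f \in C(\Sigma \times \Lambda)$ with $d_n(f^{(k)}, f) \to 0$ for every $n$, hence $d(f^{(k)}, f) \to 0$.

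The last step, separability, is where the mild subtlety lies because $\Sigma$ may be infinite. I would build a countable dense set $\mathcal{D}$ by taking, for each $n$, the set $\mathcal{D}_n$ of functions that (a) vanish on $\{i : i \notin \{i_1, \dots, i_n\}\} \times \Lambda$, (b) are piecewise linear in the $\Lambda$-variable with finitely many breakpoints at rational points of $[a_n, b_n]$ and constant extensions outside, and (c) take rational values at all breakpoints; then set $\mathcal{D} = \bigcup_n \mathcal{D}_n$. Each $\mathcal{D}_n$ is countable, so $\mathcal{D}$ is countable. To see $\mathcal{D}$ is dense, fix $f \in C(\Sigma \times \Lambda)$ and $\varepsilon > 0$, choose $n$ large enough that $\sum_{m>n} 2^{-m} < \varepsilon/2$, and then by uniform continuity of $f$ on $K_n$ approximate $f\big|_{K_n}$ to within $\varepsilon/2$ in sup-norm by an element of $\mathcal{D}_n$; the resulting element has $d$-distance at most $\varepsilon$ from $f$. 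The main thing to be careful about is (b)--(c) producing a legitimately continuous function on all of $\Sigma \times \Lambda$ after the constant extensions; this is automatic since the discrete topology on $\Sigma$ reduces continuity to continuity of each of the finitely many non-trivial slices.
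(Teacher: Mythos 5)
Your proof is correct, and it is essentially the standard construction underlying \cite[Lemma 2.2]{DREU} (which this paper simply cites without reproving): build a compact exhaustion $K_n = \{i_1,\dots,i_n\}\times[a_n,b_n]$ of $\Sigma\times\Lambda$ that absorbs every compact set, use the Fr\'echet metric $d = \sum_n 2^{-n} d_n/(1+d_n)$ for metrizability and completeness, and use rational piecewise-linear functions supported on finitely many slices for separability. The one place where care is genuinely needed --- and you correctly identify it --- is that the dense family must consist of globally defined continuous functions on $\Sigma\times\Lambda$ and not merely sup-norm approximants on each $K_n$; your device of zero-extension across unused indices together with constant extension outside $[a_n,b_n]$ handles this, and the fact that $d_m \leq d_n$ for $m\leq n$ plus the trivial bound $d_m/(1+d_m)<1$ for $m>n$ closes the $\varepsilon$-estimate. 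Nothing is missing.
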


\begin{definition}
Given a sequence $\{ \mathcal{L}^n: n \in \mathbb{N} \}$ of random $\Sigma$-indexed line ensembles we say that $\mathcal{L}^n$ {\em converge weakly} to a line ensemble $\mathcal{L}$, and write $\mathcal{L}^n \implies \mathcal{L}$ if for any bounded continuous function $f: C (\Sigma \times \Lambda) \rightarrow \mathbb{R}$ we have that 
$$\lim_{n \rightarrow \infty} \mathbb{E} \left[ f(\mathcal{L}^n) \right] = \mathbb{E} \left[ f(\mathcal{L}) \right].$$

We also say that $\{ \mathcal{L}^n: n \in \mathbb{N} \}$ is {\em tight} if for any $\epsilon > 0$ there exists a compact set $K \subset C (\Sigma \times \Lambda)$ such that $\mathbb{P}(\mathcal{L}^n \in K) \geq 1- \epsilon$ for all $n \in \mathbb{N}$.

We call a line ensemble {\em non-intersecting} if $\mathbb{P}$-almost surely $\mathcal{L}_i(r) > \mathcal{L}_j(r)$  for all $i < j$ and $r \in \Lambda$.
\end{definition}

We will require in the paper the following necessary and sufficient condition for tightness of a sequence of line ensembles.
\begin{lemma}\label{2Tight}\cite[Lemma 2.4]{DREU}
Let $\Sigma \subset \mathbb{Z}$ and $\Lambda\subset\mathbb{R}$ be an interval. Suppose that $\{a_n\}_{n = 1}^\infty, \{b_n\}_{n = 1}^\infty$ are sequences of real numbers such that $a_n < b_n$, $[a_n, b_n] \subset \Lambda$, $a_{n+1} \leq a_n$, $b_{n+1} \geq b_n$ and $\cup_{n = 1}^\infty [a_n, b_n] = \Lambda$. Then $\{\mathcal{L}^n\}$ is tight if and only if for some $a_0 \in \Lambda$ and every $i\in\Sigma$ we have
\begin{enumerate}[label=(\roman*)]
\item $\lim_{a\to\infty} \limsup_{n\to\infty}\, \mathbb{P}(|\mathcal{L}^n_i(a_0)|\geq a) = 0$ 
\item For all $\epsilon>0$ and $k \in \mathbb{N}$,  $\lim_{\delta\to 0} \limsup_{n\to\infty}\, \mathbb{P}\bigg(\sup_{\substack{x,y\in [a_k,b_k], \\ |x-y|\leq\delta}} |\mathcal{L}^n_i(x) - \mathcal{L}^n_i(y)| \geq \epsilon\bigg) = 0.$
\end{enumerate}
\end{lemma}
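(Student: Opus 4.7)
The plan is to combine Prokhorov's theorem with the Arzelà-Ascoli characterization of compact subsets of $C(\Sigma \times \Lambda)$. By Lemma \ref{Polish}, the space is Polish, so tightness of $\{\mathcal{L}^n\}$ is equivalent to the following: for every $\epsilon > 0$ there exists a compact $K \subset C(\Sigma \times \Lambda)$ with $\mathbb{P}(\mathcal{L}^n \in K) \geq 1-\epsilon$ for all $n$. Since $\Sigma$ carries the discrete topology and $\Lambda = \bigcup_k [a_k, b_k]$ is exhausted by compact intervals, a set $K$ is compact in this topology if and only if, for every $i \in \Sigma$ and every $k \in \mathbb{N}$, the family $\{f(i,\cdot)|_{[a_k, b_k]} : f \in K\}$ is uniformly bounded and uniformly equicontinuous on $[a_k, b_k]$ (standard Arzelà-Ascoli applied slice by slice, combined with a diagonal extraction across $i$ and $k$).

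For the forward direction, I would fix $\epsilon > 0$ and let $K$ be a compact set with $\mathbb{P}(\mathcal{L}^n \in K) \geq 1-\epsilon$ for all $n$. Then the values $\{f(i, a_0) : f \in K\}$ are bounded, which after sending $\epsilon\to 0$ yields (i). The uniform equicontinuity of the slices on each $[a_k, b_k]$ directly yields (ii).

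For the reverse direction, the plan is to build such a compact set $K$ from the quantitative inputs (i) and (ii) using a geometric probability budget. Enumerating $\Sigma = \{i_j\}_{j \geq 1}$, I would choose, for each $j \in \mathbb{N}$, a constant $M_j > 0$ such that $\mathbb{P}(|\mathcal{L}^n_{i_j}(a_0)| \geq M_j) \leq \epsilon \cdot 2^{-j-1}$ uniformly in $n$ (using (i) to get this for $n$ large, then enlarging $M_j$ to absorb the finitely many small $n$). For each triple $(j,k,m) \in \mathbb{N}^3$, I would use (ii) to select $\delta_{j,k,m} > 0$ with
\begin{equation*}
\mathbb{P}\Bigl( \sup_{\substack{x,y \in [a_k, b_k] \\ |x-y| \leq \delta_{j,k,m}}} |\mathcal{L}^n_{i_j}(x) - \mathcal{L}^n_{i_j}(y)| \geq 1/m \Bigr) \leq \epsilon \cdot 2^{-j-k-m-2}
\end{equation*}
uniformly in $n$ (again absorbing finitely many $n$ into the choice of $\delta_{j,k,m}$). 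Let $K$ be the closed subset of $C(\Sigma \times \Lambda)$ consisting of those $f$ satisfying $|f(i_j, a_0)| \leq M_j$ and the above modulus bound for every $(j,k,m)$. A union bound over all the exceptional events gives $\mathbb{P}(\mathcal{L}^n \in K) \geq 1-\epsilon$, and Arzelà-Ascoli shows $K$ is compact: the pointwise bound at $a_0$ together with the explicit modulus of continuity on $[a_k, b_k]$ propagates to uniform boundedness on $[a_k, b_k]$ whenever $a_0 \in [a_k, b_k]$, which holds for $k$ large enough since $\bigcup_k [a_k, b_k] = \Lambda$, while the monotonicity $[a_k, b_k] \subset [a_{k+1}, b_{k+1}]$ handles the smaller $k$ automatically. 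The main obstacle is precisely this bookkeeping, fitting countably many controls into a finite probability budget and converting pointwise control at $a_0$ into uniform control on each $[a_k, b_k]$, but both steps are handled by the geometric weighting $\epsilon \cdot 2^{-j-k-m}$ and the Arzelà-Ascoli propagation just described.
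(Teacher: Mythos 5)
Your argument is correct and follows the standard Arzel\`a--Ascoli route for tightness criteria in $C(\Sigma\times\Lambda)$; the present paper cites this lemma from \cite{DREU} without reproducing a proof, and the approach you describe is the natural (and presumably the cited) one: slicewise equicontinuity and boundedness on each $[a_k,b_k]$, a geometrically weighted probability budget over the countable family of controls, closedness of the resulting set $K$ because evaluation at $(i_j,a_0)$ and the restricted-modulus functionals are continuous in the uniform-on-compacts topology, compactness of $K$ by Arzel\`a--Ascoli on each slice together with a diagonal extraction, and propagation of the pointwise bound at $a_0$ to a uniform bound on $[a_k,b_k]$ via the modulus estimate with $m=1$.

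Two minor remarks, neither a gap. First, you do not actually need Prokhorov: the statement you quote as ``equivalent to tightness'' is simply the definition of tightness adopted in the paper (Section \ref{Section2.1}), so the reduction is definitional rather than a theorem. Second, in the forward direction the phrase ``after sending $\epsilon\to 0$'' hides a small amount of bookkeeping: one should take a sequence $\epsilon_m\downarrow 0$, the corresponding compact sets $K_{\epsilon_m}$ with bounds $M(K_{\epsilon_m})$ and moduli $\delta(K_{\epsilon_m})$, and then note that $\limsup_n \mathbb P(\cdot)\le\epsilon_m$ for $a>M(K_{\epsilon_m})$ (resp.\ $\delta<\delta(K_{\epsilon_m})$), yielding the stated double limits; this is routine but worth writing out.
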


We next turn to formulating the (partial) Brownian Gibbs property -- we do this in Definition \ref{DefPBGP} after introducing some relevant notation and results. If $W_t$ denotes a standard one-dimensional Brownian motion, then the process
$$\tilde{B}(t) =  W_t - t W_1, \hspace{5mm} 0 \leq t \leq 1,$$
is called a {\em Brownian bridge (from $\tilde{B}(0) = 0$ to $\tilde{B}(1) = 0 $) with diffusion parameter $1$.} For brevity we call the latter object a {\em standard Brownian bridge}.

Given $a, b,x,y \in \mathbb{R}$ with $a < b$ we define a random variable on $(C([a,b]), \mathcal{C})$ through
\begin{equation}\label{BBDef}
B(t) = (b-a)^{1/2} \cdot \tilde{B} \left( \frac{t - a}{b-a} \right) + \left(\frac{b-t}{b-a} \right) \cdot x + \left( \frac{t- a}{b-a}\right) \cdot y, 
\end{equation}
and refer to the law of this random variable as a {\em Brownian bridge (from $B(a) = x$ to $B(b) = y$) with diffusion parameter $1$.} Given $k \in \mathbb{N}$ and $\vec{x}, \vec{y} \in \mathbb{R}^k$ we let $\mathbb{P}^{a,b, \vec{x},\vec{y}}_{free}$ denote the law of $k$ independent Brownian bridges $\{B_i: [a,b] \rightarrow \mathbb{R} \}_{i = 1}^k$ from $B_i(a) = x_i$ to $B_i(b) = y_i$ all with diffusion parameter $1$.

The following definition introduces the notion of an $(f,g)$-avoiding Brownian line ensemble, which in simple terms is a collection of $k$ independent Brownian bridges, conditioned on not-crossing each other and staying above the graph of $g$ and below the graph of $f$, where $f,g$ are continuous.
\begin{definition}\label{DefAvoidingLaw}
Let $k \in \mathbb{N}$ and $\weyl_k$ denote the open Weyl chamber in $\mathbb{R}^{k}$, i.e.
$$\weyl_k = \{ \vec{x} = (x_1, \dots, x_k) \in \mathbb{R}^k: x_1 > x_2 > \cdots > x_k \}.$$
(In \cite{CorHamA} the notation $\mathbb{R}_{>}^k$ was used for this set.)
Let $\vec{x}, \vec{y} \in \weyl_k$, $a,b \in \mathbb{R}$ with $a < b$, and $f: [a,b] \rightarrow (-\infty, \infty]$ and $g: [a,b] \rightarrow [-\infty, \infty)$ be two continuous functions. The latter condition means that either $f: [a,b] \rightarrow \mathbb{R}$ is continuous or $f = \infty$ everywhere, and similarly for $g$. We also assume that $f(t) > g(t)$ for all $t \in[a,b]$, $f(a) > x_1, f(b) > y_1$ and $g(a) < x_k, g(b) < y_k.$

With the above data we define the {\em $(f,g)$-avoiding Brownian line ensemble on the interval $[a,b]$ with entrance data $\vec{x}$ and exit data $\vec{y}$} to be the $\Sigma$-indexed line ensemble $\mathcal{Q}$ with $\Sigma = \llbracket 1, k\rrbracket$ on $\Lambda = [a,b]$ and with the law of $\mathcal{Q}$ equal to $\mathbb{P}^{a,b, \vec{x},\vec{y}}_{free}$ (the law of $k$ independent Brownian bridges $\{B_i: [a,b] \rightarrow \mathbb{R} \}_{i = 1}^k$ from $B_i(a) = x_i$ to $B_i(b) = y_i$) conditioned on the event 
$$E  = \left\{ f(r) > B_1(r) > B_2(r) > \cdots > B_k(r) > g(r) \mbox{ for all $r \in[a,b]$} \right\}.$$ 
It is worth pointing out that $E$ is an open set of positive measure and so we can condition on it -- see \cite[Definition 2.7]{DREU}. In particular, given measurable subsets $A_1, \dots, A_k$ of $C([a,b])$ we have that 
$$\mathbb{P}(\mathcal{Q}_i \in A_i \mbox{ for $i = 1, \dots, k$} ) = \frac{\mathbb{P}^{a,b, \vec{x},\vec{y}}_{free} \left( \{ B_i \in A_i \mbox{ for $i = 1, \dots, k$}\} \cap E \right) }{\mathbb{P}^{a,b, \vec{x},\vec{y}}_{free}(E)}.$$
We denote the probability distribution of $\mathcal{Q}$ as $\mathbb{P}_{avoid}^{a,b, \vec{x}, \vec{y}, f, g}$ and write $\mathbb{E}_{avoid}^{a,b, \vec{x}, \vec{y}, f, g}$ for the expectation with respect to this measure. 
\end{definition}

The following definition introduces the partial Brownian Gibbs property from \cite{DimMat}.
\begin{definition}\label{DefPBGP}
Fix a set $\Sigma = \llbracket 1 , N \rrbracket$ with $N \in \mathbb{N}$ or $N  = \infty$ and an interval $\Lambda \subset \mathbb{R}$.  A $\Sigma$-indexed line ensemble $\mathcal{L}$ on $\Lambda$ is said to satisfy the {\em partial Brownian Gibbs property} if and only if it is non-intersecting and for any finite $K = \{k_1, k_1 + 1, \dots, k_2 \} \subset \Sigma$ with $k_2 \leq N - 1$ (if $\Sigma \neq \mathbb{N}$), $[a,b] \subset \Lambda$ and any bounded Borel-measurable function $F: C(K \times [a,b]) \rightarrow \mathbb{R}$ we have $\mathbb{P}$-almost surely
\begin{equation}\label{PBGPTower}
\mathbb{E} \left[ F(\mathcal{L}|_{K \times [a,b]}) {\big \vert} \mathcal{F}_{ext} (K \times (a,b))  \right] =\mathbb{E}_{avoid}^{a,b, \vec{x}, \vec{y}, f, g} \bigl[ F(\tilde{\mathcal{Q}}) \bigr],
\end{equation}
where we recall that $D_{K,a,b} = K \times (a,b)$ and $D_{K,a,b}^c = (\Sigma \times \Lambda) \setminus D_{K,a,b}$, and
$$\mathcal{F}_{ext} (K \times (a,b)) = \sigma \left \{ \mathcal{L}_i(s): (i,s) \in D_{K,a,b}^c \right\}$$
is the $\sigma$-algebra generated by the variables in the brackets above, $ \mathcal{L}|_{K \times [a,b]}$ denotes the restriction of $\mathcal{L}$ to the set $K \times [a,b]$, $\vec{x} = (\mathcal{L}_{k_1}(a), \dots, \mathcal{L}_{k_2}(a))$, $\vec{y} = (\mathcal{L}_{k_1}(b), \dots, \mathcal{L}_{k_2}(b))$, $f = \mathcal{L}_{k_1 - 1}[a,b]$ with the convention that $f = \infty$ if $k_1 - 1 \not \in \Sigma$, and $g = \mathcal{L}_{k_2 +1}[a,b]$.

If $N = \infty$ or if $N < \infty$ and (\ref{PBGPTower}) also holds for $k_2 = N$ (and then $g = -\infty$) we drop the term ``partial'' and say that $\mathbb{P}$ satisfies the {\em Brownian Gibbs property} (this term was coined in \cite{CorHamA}).
\end{definition}
\begin{remark}\label{RemMeas} We mention that by \cite[Lemma 3.4]{DimMat}, we have that the right side of (\ref{PBGPTower}) is measurable with respect to the $\sigma$-algebra 
$$ \sigma \left\{ \mathcal{L}_i(s) : \mbox{  $i \in K$ and $s \in \{a,b\}$, or $i \in \{k_1 - 1, k_2 +1 \}$ and $s \in [a,b]$} \right\}.$$
This implies the measurability with respect to $\mathcal{F}_{ext} (K \times (a,b))$, so that (\ref{PBGPTower}) makes sense.
\end{remark}

%
\subsection{Discrete $(H,H^{RW})$-Gibbsian line ensembles}\label{Section2.2}
In this section we introduce the notion of a discrete line ensemble and the $(H,H^{RW})$-Gibbs property. Our discussion will parallel that of \cite[Section 2]{Wu19} and \cite[Section 2]{BCDB}, which in turn goes back to \cite[Section 2.1]{CorHamK}.

\begin{definition}\label{YVec} For a finite set $J \subset \mathbb{Z}^2$ we let $Y(J)$ denote the space of functions $f: J \rightarrow \mathbb{R}$ with the Borel $\sigma$-algebra $\mathcal{D}$ coming from the natural identification of ${Y}(J)$ with $\mathbb{R}^{|J|}$. We think of an element of $Y(J)$ as a $|J|$-dimensional vector whose coordinates are indexed by $J$. In particular, if $f(j) = x_j \in \mathbb{R}$ for $j \in J$ we will denote this vector by $(x_j: j \in J)$. 

Analogously, we let $Y^+(J)$ denote the space of functions $f: J \rightarrow (-\infty, \infty]$ with the Borel $\sigma$-algebra $\mathcal{D}^+$ coming from the natural identification of ${Y}(J)$ with $(-\infty, \infty]^{|J|}$. Similarly, we let $Y^-(J)$ denote the space of functions $f: J \rightarrow [-\infty, \infty)$ with the Borel $\sigma$-algebra $\mathcal{D}^-$ coming from the natural identification of ${Y}^-(J)$ with $[-\infty, \infty)^{|J|}$. We think of an element of $Y^{\pm}(J)$ as a $|J|$-dimensional vector whose coordinates are indexed by $J$. 

When $J = \emptyset$ we let $Y(J) = Y^-(J) = Y^+(J)$ denote the set with a single element $\{\omega_0\}$ with the discrete topology and $\sigma$-algebra. 
\end{definition}

\begin{definition}\label{DefDLE}
Let $k_1, k_2, T_0, T_1 \in \mathbb{Z}$ with $k_1 \leq k_2$, $T_0 < T_1$  and denote $\Sigma = \llbracket k_1, k_2 \rrbracket$. A $\Sigma$-{\em indexed discrete line ensemble $\mathfrak{L}$ on $\llbracket T_0, T_1 \rrbracket$ }  is a random variable defined on a probability space $(\Omega, \mathcal{B}, \mathbb{P})$, taking values in $Y(\Sigma \times \llbracket T_0, T_1 \rrbracket)$ as in Definition \ref{YVec} such that $\mathfrak{L}$ is a $(\mathcal{B}, \mathcal{D})$-measurable function.
\end{definition}

The way we think of a $\Sigma$-indexed discrete line ensemble $\mathfrak{L}$ is as a random $(k_2 - k_1 +1) \times (T_1 - T_0 + 1)$ matrix, whose rows are indexed by $\Sigma$ and whose columns are indexed by $\llbracket T_0, T_1 \rrbracket$. For $i \in \Sigma$ we let $L_i(\omega)$ denote the $i$-th row of this random matrix, and then $L_i$ is a $Y(\llbracket T_0, T_1 \rrbracket)$-valued random variable on $(\Omega, \mathcal{B}, \mathbb{P})$. Conversely, if we are given $k_2 - k_1 +1$ random $Y(\llbracket T_0, T_1 \rrbracket)$-valued random variables $L_{k_1}, \dots, L_{k_2}$ defined on the same probability space, then we can define a $\Sigma$-indexed discrete line ensemble $\mathfrak{L}$ on $\llbracket T_0, T_1 \rrbracket$ through $\mathfrak{L}(\omega)(i,j) = L_i(\omega)(j)$. Consequently, a $\Sigma$-indexed discrete line ensemble $\mathfrak{L}$ on $\llbracket T_0, T_1\rrbracket$ is equivalent to having $k_2 - k_1 +1$ random $Y(\llbracket T_0, T_1 \rrbracket)$-valued random variables $L_{k_1}, \dots, L_{k_2}$ on the same probability space and depending on the context we will switch between these two formulations. For $i \in \Sigma$ and $j \in \llbracket T_0, T_1 \rrbracket$  we denote by $L_i(j): \Omega \rightarrow \mathbb{R}$ the function $L_i(j)(\omega) = L_i(\omega)(j)$ and observe that the latter are real random variables on $(\Omega, \mathcal{B}, \mathbb{P})$. If $A \subset \Sigma \times \llbracket T_0, T_1 \rrbracket$ we write $\mathfrak{L}\vert_{A} : \Omega \rightarrow Y(A)$ to denote the function $\mathfrak{L} \vert_A(\omega)(a) = \mathfrak{L}(\omega)(a)$ for $a \in A$. If $\llbracket a, b \rrbracket \subset \llbracket T_0, T_1 \rrbracket$ and $i \in \Sigma$ we denote the random vector $(L_i(a), \dots, L_i(b)) \in Y(\llbracket a, b\rrbracket)$ by $L_i\llbracket a, b \rrbracket$.

Observe that one can view an indexed set of real numbers $L(j)$ for $j \in \llbracket T_0, T_1 \rrbracket $ as a continuous curve by linearly interpolating the points $(j, L(j))$ -- see Figure \ref{LineEnsembleFig} for an illustration of such an interpolation for a discrete line ensemble. This allows us to define $ (\mathfrak{L}(\omega)) (i, s)$ for non-integer $s \in [T_0,T_1]$ by linear interpolation and to view discrete line ensembles as line ensembles in the sense of Definition \ref{CLEDef}. Specifically, by linear interpolation we can extend $L_i(\omega)$ to a continuous curve on $[T_0, T_1]$ and in this way we can view it as a random variable on $(\Omega, \mathcal{B}, \mathbb{P})$ taking values in $(C[T_0,T_1], \mathcal{C})$  -- the space of continuous functions on $[T_0,T_1]$ with the uniform topology and Borel $\sigma$-algebra $\mathcal{C}$ (see e.g. Chapter 7 in \cite{Bill}). We will denote this random continuous curve by $L_i[T_0,T_1]$. We will often slightly abuse notation and suppress the $\omega$ from the above notation as one does for usual random variables, writing for example  $\{L_i(j) \in A\}$ in place of either $\{\omega \in \Omega: L_i(j)(\omega) \in A\}$ or $\{\omega \in \Omega: L_i(\omega)(j) \in A\}$ (notice that these sets are the same and in general the definitions are consistent so that the suppression of $\omega$ does not lead to any ambiguity).

\begin{definition}\label{DefBridge} Let $H^{RW}: \mathbb{R} \rightarrow \mathbb{R}$ be a continuous function and $\g(x) = e^{-H^{RW}(x)}$. We assume that $\g(x)$ is bounded and $\int_{\mathbb{R}} \g(x) dx = 1$. Let $Y_1, Y_2, \dots$ be i.i.d. random variables with density $\g(\cdot)$ and let $S^x_n =x + Y_1 + \cdots + Y_n$ denote the random walk with jumps $Y_m$ started from $x$. We denote by $\g^x_n(\cdot)$ the density of $S^x_n$ and note that
\begin{equation}\label{RWN}
\g^x_n(y) = \g^0_n(y - x) = \int_\mathbb{R} \cdots \int_{\mathbb{R}} \g(y_1) \cdots \g(y_{n-1}) \cdot \g(y - x - y_1 - \cdots - y_{n-1}) dy_1 \cdots dy_{n-1}.
\end{equation}
Given $x, y \in \mathbb{R}$ and $a, b\in \mathbb{Z}$ with $a < b$ we let $S(x,y; a, b) = \{ S_m(x,y;a,b) \}_{m = a}^b$ denote the process with the law of $\{S_m^x\}_{m = 0}^{b-a}$, conditioned so that $S_{b-a}^x = y$. We call this process an $H^{RW}$ {\em random walk bridge} between the points $(a,x)$ and $(b,y)$. Explicitly, viewing $S(x,y; a, b)$ as a random vector taking values in $Y(\llbracket a,b \rrbracket)$ we have that its distribution is given by the density
\begin{equation}\label{RWB}
\g(y_a, \dots, y_b; x,y; a,b) = \frac{\delta_x(y_a) \cdot \delta_y(y_b) \cdot \prod_{m = a+1}^{b} \g(y_m - y_{m-1}) }{\g_{b-a}^x(y)},
\end{equation}
 where we recall that $\delta_z$ is the Dirac delta measure at $z$. As before we can also view $S(x,y; a, b)$ as a random continuous curve between the points $(a,x)$ and $(b,y)$ once we linearly interpolate the points $(m, S_m(x,y; a, b))$ for $m \in \llbracket a, b \rrbracket$.
\end{definition}

\begin{definition}\label{Pfree}
Let $H^{RW}$ be as in Definition \ref{DefBridge}. Fix $k_1 \leq k_2$, $a < b$ with $k_1, k_2, a,b \in \mathbb{Z}$ and two vectors $\vec{x}, \vec{y} \in \mathbb{R}^{k_2 - k_1 + 1}$. A $\llbracket k_1, k_2 \rrbracket$-indexed discrete line ensemble $\mathfrak{L} = (L_{k_1}, \dots, L_{k_2})$ on $ \llbracket a, b\rrbracket$ is called a {\em free $H^{RW}$ bridge line ensemble} with entrance data $\vec{x}$ and exit data $\vec{y}$ if its law $\mathbb{P}_{H^{RW}}^{k_1, k_2, a, b, \vec{x}, \vec{y}}$ is that of $k_2 - k_1 + 1$ independent $H^{RW}$ random walk bridges indexed by $\llbracket k_1, k_2 \rrbracket$ with the $i$-th bridge $L_{k_1 + i-1}$ being between the points $(a, x_i)$ and $(b, y_i)$ for $i \in \llbracket 1, k_2 -k_1 + 1\rrbracket$, see \eqref{RWB}. We write $\mathbb{E}_{H^{RW}}^{k_1, k_2, a, b, \vec{x}, \vec{y}}$ for the expectation with respect to this measure. When the parameters $k_1, k_2, a,b, \vec{x}, \vec{y}$ are clear from the context we will drop them from the notation and simply write $\mathbb{P}_{H^{RW}}$ and $\mathbb{E}_{H^{RW}}$. Observe that the measure remains unchanged upon replacing $(k_1, k_2)$ with $(k_1 + m, k_2+m)$ for some $m \in \mathbb{Z}$, except for a reindexing of the $L_i$'s.

An {\em interaction Hamiltonian} $H$ is defined to be any continuous function $H: [-\infty, \infty) \rightarrow [0, \infty)$ such that $H(- \infty) = 0$. Suppose we are given a sequence of interaction Hamiltonians $H_m$ for $m \in \llbracket a, b - 1 \rrbracket$ (abbreviated by $\vec{H} = (H_a, H_{a+1}, \dots, H_{b-1})$) and two functions $f: \llbracket a, b \rrbracket \rightarrow \mathbb{R} \cup \{ \infty \}$ and $g : \llbracket a,b \rrbracket \rightarrow \mathbb{R} \cup \{-\infty\}$. We define the $\llbracket k_1, k_2 \rrbracket$-indexed $( \vec{H} , H^{RW})$ line ensemble on $\llbracket a, b \rrbracket$ with entrance data $\vec{x}$ and exit data $\vec{y}$ and boundary data $(f,g)$ to be the law $\mathbb{P}_{\vec{H},H^{RW}}^{k_1, k_2, a,b, \vec{x}, \vec{y}, f, g}$ on $\mathfrak{L} = (L_{k_1}, \dots, L_{k_2})$ with $L_{k_1}, \dots, L_{k_2} : \llbracket a, b \rrbracket \rightarrow \mathbb{R}$ given in terms of the following Radon-Nikodym derivative (with respect to the free $H^{RW}$ bridge line ensemble $\mathbb{P}_{H^{RW}}^{k_1, k_2, a, b, \vec{x}, \vec{y}}$):
\begin{equation}\label{RND}
 \frac{d \mathbb{P}_{\vec{H},H^{RW}}^{k_1, k_2, a ,b, \vec{x}, \vec{y},f,g}}{d\mathbb{P}_{H^{RW}}^{k_1, k_2, a, b, \vec{x}, \vec{y}}} (L_{k_1}, \dots, L_{k_2}) = \frac{ W_{\vec{H}}^{k_1, k_2, a ,b,f,g} (L_{k_1}, \dots, L_{k_2}) }{Z_{\vec{H},H^{RW}}^{k_1, k_2, a ,b, \vec{x}, \vec{y},f,g}}.
\end{equation}
Here we call $L_{k_1 - 1} = f$ and $L_{k_2 + 1} = g$ and define the {\em Boltzmann weight}
\begin{equation}\label{WH}
W_{\vec{H}}^{k_1, k_2, a ,b,f,g} (L_{k_1}, \dots, L_{k_2}) : = \exp \left( - \sum_{i = k_1 - 1}^{k_2}  \sum_{ m = a}^{b-1} H_m (L_{i + 1}(m + 1) - L_{i}(m)) \right),
\end{equation}
and the {\em normalizing constant} (called the {\em acceptance probability})
\begin{equation}\label{AccProb}
Z_{\vec{H},H^{RW}}^{k_1, k_2, a ,b, \vec{x}, \vec{y},f,g} := \mathbb{E}_{H^{RW}}^{k_1, k_2, a, b, \vec{x}, \vec{y}} \Big[ W_{\vec{H}}^{k_1, k_2, a ,b,f,g} (L_{k_1}, \dots, L_{k_2})  \Big],
\end{equation}
where we recall that on the right side in \eqref{AccProb} the vectors $L_{k_1}, \dots, L_{k_2}$ are distributed according to the measure $\mathbb{P}_{H^{RW}}^{k_1, k_2, a, b, \vec{x}, \vec{y}}$. Notice that by our assumption on $f$ and $g$ we have that the argument of $H_m$ in \eqref{WH} is always in $[-\infty, \infty)$ and so $W_{ \vec{H}}^{k_1, k_2, a ,b,f,g}  \in (0,1]$ almost surely, which implies that $Z_{\vec{H},H^{RW}}^{k_1, k_2, a ,b, \vec{x}, \vec{y},f,g} \in (0,1]$ and we can indeed divide by this quantity in (\ref{RND}). We write the expectation with respect to $\mathbb{P}_{\vec{H},H^{RW}}^{k_1, k_2, a ,b, \vec{x}, \vec{y},f,g}$ as $\mathbb{E}_{\vec{H},H^{RW}}^{k_1, k_2, a ,b, \vec{x}, \vec{y},f,g}$.

For the most part, we will consider the case when $\vec{H}$ is such that $H_m(x) = H(x)$ for $m \in S$ and $H_m(x) = 0$ for $m \in \llbracket a, b-1 \rrbracket \setminus S$, where $S \subset \llbracket a, b-1 \rrbracket$ and $H$ is some fixed interaction Hamiltonian. In this case we will write $ W_{H,S}^{k_1, k_2, a ,b,f,g} (L_{k_1}, \dots, L_{k_2})$, $Z_{H,H^{RW},S}^{k_1, k_2, a ,b, \vec{x}, \vec{y},f,g}$, $\mathbb{P}_{H,H^{RW},S}^{k_1, k_2, a ,b, \vec{x}, \vec{y},f,g}$ and $\mathbb{E}_{H,H^{RW}, S}^{k_1, k_2, a ,b, \vec{x}, \vec{y},f,g}$. If $S = \llbracket a, b-1 \rrbracket$ we will drop it from the notation and simply write  $ W_{H}^{k_1, k_2, a ,b,f,g} (L_{k_1}, \dots, L_{k_2})$, $Z_{H,H^{RW}}^{k_1, k_2, a ,b, \vec{x}, \vec{y},f,g}$, $\mathbb{P}_{H,H^{RW}}^{k_1, k_2, a ,b, \vec{x}, \vec{y},f,g}$ and $\mathbb{E}_{H,H^{RW}}^{k_1, k_2, a ,b, \vec{x}, \vec{y},f,g}$.
\end{definition}

We record the following result, which states that for any bounded measurable function $h$ we have that $\mathbb{E}_{\vec{H},H^{RW}}^{k_1, k_2, a ,b, \vec{x}, \vec{y},f,g} \left[ h(\mathfrak{L}) \right]$ is bounded and measurable in the boundary data $\vec{x}, \vec{y}, f,g$. This result is a straightforward generalization of \cite[Lemma 7.2]{BCDB} and so we omit its proof.
\begin{lemma}\label{ContinuousGibbsCond} Let $H^{RW}$ be as in Definition \ref{Pfree}. Suppose that $a,b,k_1, k_2 \in \mathbb{Z}$ with $a < b $, $ k_1 \leq k_2 $ and that $\vec{H} = (H_{a}, \dots, H_{b-1})$ is a sequence of interaction Hamiltonians as in Definition \ref{Pfree}. In addition, suppose that $h: Y( \llbracket k_1 ,k_2 \rrbracket \times \llbracket a,b \rrbracket) \rightarrow \mathbb{R}$ is a bounded Borel-measurable function (recall that $Y(J)$ was defined in Definition \ref{YVec}). Let $V_L =  \llbracket k_1, k_2 \rrbracket  \times \{a \}$, $V_R = \llbracket k_1, k_2 \rrbracket  \times \{b \}$, $V_T =  \{k_1 - 1\} \times \llbracket a,b \rrbracket$ and $V_B = \{k_2 + 1 \} \times \llbracket a, b\rrbracket$ and define the set
\begin{equation*}
\begin{split}
 \mathcal{S} = \left\{ (\vec{x}, \vec{y}, \vec{u},\vec{v}) \in Y(V_L) \times Y(V_R) \times Y^+(V_T) \times Y^-(V_B) \right \},
\end{split}
\end{equation*}
where we endow $\mathcal{S}$ with the product topolgy and corresponding Borel $\sigma$-algebra. Then the function $G_h:  \mathcal{S} \rightarrow \mathbb{R}$, given by
\begin{equation}
G_h(\vec{x}, \vec{y}, \vec{u},\vec{v}) = \mathbb{E}_{\vec{H}, H^{RW}}^{a,b,\vec{x}, \vec{y}, \vec{u} ,\vec{v}} \left[ h(\mathfrak{L})\right],
\end{equation}
is bounded and measurable. In the above equations the random variable over which we are taking the expectation is denoted by $\mathfrak{L}$.
\end{lemma}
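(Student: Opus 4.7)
\emph{Boundedness} is immediate: since $h$ is bounded and $\mathbb{P}_{\vec{H}, H^{RW}}^{a,b,\vec{x}, \vec{y}, \vec{u}, \vec{v}}$ is a probability measure for each fixed $(\vec{x}, \vec{y}, \vec{u}, \vec{v})$, we get $|G_h| \leq \|h\|_\infty$ uniformly on $\mathcal{S}$.

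For measurability, the plan is to unfold the Radon--Nikodym derivative \eqref{RND} to write
\begin{equation*}
G_h(\vec{x}, \vec{y}, \vec{u}, \vec{v}) \;=\; \frac{N(\vec{x}, \vec{y}, \vec{u}, \vec{v})}{Z_{\vec{H}, H^{RW}}^{k_1, k_2, a, b, \vec{x}, \vec{y}, \vec{u}, \vec{v}}},
\end{equation*}
where $N(\vec{x}, \vec{y}, \vec{u}, \vec{v}) = \mathbb{E}_{H^{RW}}^{k_1, k_2, a, b, \vec{x}, \vec{y}}\bigl[h(\mathfrak{L}) \cdot W_{\vec{H}}^{k_1, k_2, a, b, \vec{u}, \vec{v}}(\mathfrak{L})\bigr]$ and the denominator is the acceptance probability from \eqref{AccProb}. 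Using the explicit bridge density \eqref{RWB} for each of the $k_2 - k_1 + 1$ independent random walk bridges comprising $\mathbb{P}_{H^{RW}}^{k_1, k_2, a, b, \vec{x}, \vec{y}}$, I would rewrite both $N$ and $Z$ as absolute Lebesgue integrals over a Euclidean space of dimension $(k_2 - k_1 + 1)(b - a - 1)$, whose coordinates are the ``interior'' heights of the bridges at $m \in \llbracket a+1, b-1 \rrbracket$.

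Next, I would verify that the integrand in each case is jointly Borel measurable in the integration variables together with the boundary data $(\vec{x}, \vec{y}, \vec{u}, \vec{v}) \in \mathcal{S}$. The jump density $\g$ is continuous by continuity of $H^{RW}$, and the normalizer $\g_{b-a}^{x_i}(y_i)$ appearing in \eqref{RWB} is continuous and strictly positive where the bridge is defined. The Boltzmann weight $W$ depends on $(\vec{u}, \vec{v})$ only through terms of the form $H_m(L_{k_1}(m+1) - u_m)$ and $H_m(v_{m+1} - L_{k_2}(m))$; continuity of each $H_m$ on $[-\infty, \infty)$ together with the convention $H_m(-\infty) = 0$ ensures that these terms depend jointly continuously on $u_m \in (-\infty, \infty]$ and $v_{m+1} \in [-\infty, \infty)$, with the limiting cases $u_m = +\infty$ and $v_{m+1} = -\infty$ simply contributing $e^0 = 1$. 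Since $h$ is Borel measurable by hypothesis, Fubini's theorem yields Borel measurability of both $N$ and $Z$ as real-valued functions on $\mathcal{S}$.

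Finally, since $Z \in (0,1]$ everywhere (as noted directly below \eqref{AccProb}), the ratio $N/Z$ is well-defined and Borel measurable, which completes the proof. The only point of substance beyond routine bookkeeping is the joint continuity of $W$ at extended-valued boundary data; this is handled cleanly by the requirement in Definition \ref{Pfree} that each interaction Hamiltonian satisfies $H_m(-\infty) = 0$, so that the expected main obstacle dissolves into the definitions.
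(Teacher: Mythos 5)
Your proof is correct, and the approach is the natural one. A point worth noting for context: the paper itself omits the proof of this lemma entirely, stating only that it is ``a straightforward generalization of \cite[Lemma 7.2]{BCDB}.'' So there is no in-paper argument to compare against; your proposal effectively fills that gap.

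Two small bookkeeping points that you state a bit loosely but that do go through. First, when invoking Fubini for $N$, one should note that $|h(\cdot)W_{\vec H}(\cdot)| \leq \|h\|_\infty W_{\vec H}(\cdot)$, and the integral of the right-hand side against the bridge density is exactly $\|h\|_\infty Z \leq \|h\|_\infty$, so the integrand is genuinely absolutely integrable for every fixed boundary data; then writing $h W_{\vec H} = (h W_{\vec H})^+ - (h W_{\vec H})^-$ and applying Tonelli to each non-negative piece gives measurability of $N$. (For $Z$ Tonelli suffices directly.) Second, for the measurability of the integrand in the $(\vec x,\vec y)$-coordinates one also needs that the bridge normalizer $(x,y)\mapsto \g^x_{b-a}(y)=\g^0_{b-a}(y-x)$ is Borel and strictly positive; both hold because $\g=e^{-H^{RW}}>0$ is continuous and bounded, so each convolution power $\g^0_n$ for $n\geq 1$ is continuous and strictly positive on $\mathbb{R}$, and division by it is therefore measurable. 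With these two remarks spelled out, the disintegration into a finite-dimensional Lebesgue integral with jointly Borel integrand, followed by Tonelli/Fubini and the positivity $Z\in(0,1]$, yields the claim exactly as you outline.
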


The key definition of this section is the following partial $(\vec{H}, H^{RW})$-Gibbs property.
\begin{definition}\label{DefLGGP}
Let $H^{RW}$ be as in Definition \ref{Pfree}. Fix $\topc \in \mathbb{N}$, two integers $T_0 < T_1$, a sequence $\vec{H} = (H_{T_0}, \dots, H_{T_1-1})$ of interaction Hamiltonians as in Definition \ref{Pfree} and set $\Sigma = \llbracket 1,\topc \rrbracket$. Suppose that $\mathbb{P}$ is the probability distribution of a $\Sigma$-indexed discrete line ensembles $\mathfrak{L} = (L_1, \dots, L_\topc)$ on $\llbracket T_0, T_1 \rrbracket$ and adopt the convention that $L_0 = \infty$. A $\Sigma$-indexed line ensemble $\mathfrak{L}$ on $\llbracket T_0, T_1 \rrbracket$ has the {\em the partial $(\vec{H}, H^{RW})$-Gibbs property} if and only if for any $\mathbf{k}  = \llbracket k_1, k_2 \rrbracket \subset \llbracket 1, \topc-1 \rrbracket$ and $\llbracket a, b\rrbracket \subset \llbracket T_0, T_1 \rrbracket$ and any bounded Borel-measurable function $F$ from $Y( \mathbf{k}  \times \llbracket a, b \rrbracket)$ (here $Y$ is as in Definition \ref{YVec}) to $\mathbb{R}$ we have $\mathbb{P}$-almost surely
\begin{equation}\label{GibbsEq}
\mathbb{E} \Big[ F \big( \mathfrak{L}\vert_{\mathbf{k}  \times \llbracket a, b \rrbracket} \big)  \big{\vert} \mathcal{F}_{ext} (\mathbf{k}  \times \llbracket a + 1, b - 1 \rrbracket) \Big]  = \mathbb{E}_{\vec{H} \llbracket a, b-1 \rrbracket ,H^{RW}}^{k_1, k_2, a ,b, \vec{x}, \vec{y},f,g}  \big[F( \tilde{\mathfrak{L}}) \big],
\end{equation}
where
\begin{equation}\label{GibbsCond}
 \mathcal{F}_{ext} (\mathbf{k}  \times \llbracket a + 1, b - 1 \rrbracket) := \sigma \left( L_i(s): (i,s) \in \Sigma \times \llbracket T_0, T_1 \rrbracket \setminus \mathbf{k}  \times  \llbracket a + 1, b - 1 \rrbracket \right).
\end{equation}
is the $\sigma$-algebra generated by the variables in the brackets above, $f = L_{k_1 - 1}$, $g = L_{k_2 + 1}$, $\vec{x} = (L_{k_1}(a), \dots L_{k_2}(a))$, $\vec{y} = (L_{k_1}(b), \dots L_{k_2}(b))$ and $\vec{H}\llbracket a,b -1 \rrbracket = (H_{a}, H_{a+1}, \dots, H_{b-1})$. On the right side of (\ref{GibbsEq}) the variable $\tilde{\mathfrak{L}}$ has law $\mathbb{P}_{\vec{H} \llbracket a, b-1 \rrbracket,H^{RW}}^{k_1, k_2, a ,b, \vec{x}, \vec{y},f,g} $.

If (\ref{GibbsEq}) also holds for $k_2 = K$ (and then $g = -\infty$) we drop the term ``partial'' and say that $\mathbb{P}$ satisfies the $(\vec{H}, H^{RW})$-Gibbs property. If $H_m(x) = H(x)$ for all $m \in \llbracket T_0, T_1 - 1\rrbracket$, where $H$ is a fixed interaction Hamiltonian, we say that $\mathbb{P}$ satisfies the (partial) $(H, H^{RW})$-Gibbs property.
\end{definition}
\begin{remark} \label{CondGibbs} From Lemma \ref{ContinuousGibbsCond} the right side of (\ref{GibbsEq}) is measurable with respect to  $\mathcal{F}_{ext} (\mathbf{k}  \times \llbracket a + 1, b - 1 \rrbracket)$ and thus equation (\ref{GibbsEq}) makes sense. Note that any $\{1 \}$-indexed line ensemble on $\llbracket T_0, T_1\rrbracket$ satisfies the partial $(\vec{H}, H^{RW})$-Gibbs property (bot not necessarily the $(\vec{H}, H^{RW})$-Gibbs property).
\end{remark}
\begin{remark} \label{restrict} From Definition \ref{DefLGGP} it is clear that for $\topc^\prime \leq \topc$ and $\llbracket a,b \rrbracket \subset \llbracket T_0, T_1\rrbracket$, we have that the induced law on $L_i(j)$ for $(i,j) \in \llbracket 1, \topc^\prime \rrbracket \times \llbracket a,b \rrbracket $ from $\mathbb{P}$ satisfies the partial $(\vec{H}\llbracket a, b-1 \rrbracket, H^{RW})$-Gibbs property as an $\llbracket 1, \topc^\prime \rrbracket$-indexed line ensemble on $ \llbracket a, b \rrbracket$. Note that if $K' < K$ then the above induced law satisfies necessarily only the partial $(\vec{H}\llbracket a, b-1 \rrbracket, H^{RW})$-Gibbs property even if $\mathbb{P}$ satisfies the $(H, H^{RW})$-Gibbs property. In plain words, the partial $(\vec{H}, H^{RW})$-Gibbs property is stable under projections, while the $(\vec{H}, H^{RW})$-Gibbs property is not. 
\end{remark}
We end this section with the following result, which is a special case of \cite[Lemma 2.8]{BCDB}.
\begin{lemma}\label{HHRWSG} Let $k \in \mathbb{N}$, $T_0, T_1 \in \mathbb{Z}$ and $T_0 < T_1$. Fix $\vec{x} \in \mathbb{R}^k$, $\vec{y} \in \mathbb{R}^k$ and $\vec{z} \in Y^-(\llbracket T_0, T_1 \rrbracket)$. Then $\mathbb{P}_{H,H^{RW}}^{1, k, T_0 ,T_1, \vec{x}, \vec{y},\infty, \vec{z}}$ from Definition \ref{Pfree} satisfies the partial $(H, H^{RW})$-Gibbs property.
\end{lemma}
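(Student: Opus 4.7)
The plan is to verify the partial $(H, H^{RW})$-Gibbs property directly by writing out the explicit density of $\mathbb{P}_{H,H^{RW}}^{1, k, T_0 ,T_1, \vec{x}, \vec{y},\infty, \vec{z}}$ with respect to Lebesgue measure, factoring it into a piece depending on the interior variables and a piece depending on the exterior variables, and then reading off the conditional distribution. Since all measures are absolutely continuous (up to the Dirac factors enforcing the endpoints) with explicit densities, the computation reduces to a finite-dimensional elementary manipulation, and no analytic subtleties beyond measurability (supplied by Lemma \ref{ContinuousGibbsCond}) are needed.

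Concretely, I would first observe that under $\mathbb{P}_{H,H^{RW}}^{1, k, T_0 ,T_1, \vec{x}, \vec{y},\infty, \vec{z}}$ the density of $\mathfrak{L} = (L_1, \dots, L_k)$ with respect to Lebesgue measure is proportional to
\begin{equation*}
\prod_{i=1}^{k} \delta_{x_i}(L_i(T_0)) \delta_{y_i}(L_i(T_1)) \prod_{m=T_0+1}^{T_1} \g\bigl(L_i(m) - L_i(m-1)\bigr) \cdot \exp\!\Bigl(-\sum_{i=0}^{k} \sum_{m=T_0}^{T_1-1} H\bigl(L_{i+1}(m+1) - L_i(m)\bigr)\Bigr),
\end{equation*}
with the conventions $L_0 = \infty$, $L_{k+1} = \vec{z}$, and $H(-\infty) = 0$. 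Fix $\mathbf{k} = \llbracket k_1, k_2 \rrbracket \subset \llbracket 1, k-1 \rrbracket$ and $\llbracket a, b \rrbracket \subset \llbracket T_0, T_1 \rrbracket$, and split the density into two factors depending on whether the arguments involve variables in the interior set $R = \mathbf{k} \times \llbracket a+1, b-1 \rrbracket$ or not. For each $i \in \mathbf{k}$, the random-walk product $\prod_{m = T_0+1}^{T_1} \g(L_i(m) - L_i(m-1))$ splits into three subproducts over $\llbracket T_0+1, a \rrbracket$, $\llbracket a+1, b \rrbracket$ and $\llbracket b+1, T_1 \rrbracket$; only the middle one involves the interior variables, and it only couples to the exterior through the anchor values $L_i(a), L_i(b)$.

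The main bookkeeping task is to identify which summands in the Boltzmann exponent involve interior variables. A summand $H(L_{i+1}(m+1) - L_i(m))$ depends on an interior variable exactly when $i \in \llbracket k_1-1, k_2 \rrbracket$ and $m \in \llbracket a, b-1 \rrbracket$, which is precisely the index range in \eqref{WH} with $f = L_{k_1-1}\llbracket a, b \rrbracket$ and $g = L_{k_2+1}\llbracket a, b \rrbracket$ (with the convention $f \equiv \infty$ if $k_1 = 1$). The condition $k_2 \leq k-1$ in the partial Gibbs property is used here to ensure $g = L_{k_2+1}$ is a genuine curve drawn from $\mathfrak{L}$ rather than $-\infty$. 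Multiplying the middle bridge factors of the random-walk product by this interior Boltzmann factor gives, up to a function of only the exterior variables, exactly the unnormalized density of $\mathbb{P}_{H,H^{RW}}^{k_1, k_2, a, b, \vec{x}', \vec{y}', f, g}$ with $\vec{x}' = (L_{k_1}(a), \dots, L_{k_2}(a))$ and $\vec{y}' = (L_{k_1}(b), \dots, L_{k_2}(b))$.

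Finally, conditioning on $\mathcal{F}_{ext}(\mathbf{k} \times \llbracket a+1, b-1 \rrbracket)$ freezes all exterior variables, and by the factorization above the conditional density of the interior variables equals the interior factor divided by its Lebesgue integral over the interior variables. That integral is, by definition \eqref{AccProb}, the normalizing constant $Z_{H,H^{RW}}^{k_1, k_2, a, b, \vec{x}', \vec{y}', f, g}$, so the conditional law on the interior is exactly $\mathbb{P}_{H,H^{RW}}^{k_1, k_2, a, b, \vec{x}', \vec{y}', f, g}$. Testing against a bounded measurable $F$ and invoking Lemma \ref{ContinuousGibbsCond} to ensure the right-hand side of \eqref{GibbsEq} is measurable in the boundary data completes the verification. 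The only step requiring care is the bookkeeping of which boundary Hamiltonian terms (those indexed by $i = k_1 - 1$ and $i = k_2$) appear in the interior factor versus the exterior factor, but these are handled cleanly because each such term is a function of one interior and one exterior variable and is therefore fully determined by the interior variable once we condition on the exterior.
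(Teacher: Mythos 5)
The paper does not supply a proof for this lemma; it simply cites it as a special case of a result in a companion paper (\cite[Lemma~2.8]{BCDB}). So there is no in-paper argument to compare against, but your direct density-factorization argument is essentially the standard way one verifies spatial Markov (Gibbs) properties for measures with explicit finite-dimensional densities, and it is correct. The key observations are all present: the full density factors into a product over interior-supported bridge and Boltzmann terms and exterior-supported ones, the exterior factors cancel in the conditional ratio, and the normalizing Lebesgue integral of the interior factor is precisely $Z_{H,H^{RW}}^{k_1,k_2,a,b,\vec{x}',\vec{y}',f,g}$ from \eqref{AccProb}, so the conditional law is $\mathbb{P}_{H,H^{RW}}^{k_1,k_2,a,b,\vec{x}',\vec{y}',f,g}$ with $f = L_{k_1-1}$ (with $L_0 = \infty$ when $k_1=1$) and $g = L_{k_2+1}$; measurability of the right side comes from Lemma~\ref{ContinuousGibbsCond}.

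One small imprecision worth noting: the claim that a summand $H(L_{i+1}(m+1)-L_i(m))$ involves an interior variable \emph{exactly} when $i\in\llbracket k_1-1,k_2\rrbracket$, $m\in\llbracket a,b-1\rrbracket$ is a slight overstatement. The corner terms $(i,m)=(k_2,a)$ and $(i,m)=(k_1-1,b-1)$ lie in this index range but involve only exterior variables (for instance $(k_2,a)$ gives $H(L_{k_2+1}(a+1)-L_{k_2}(a))$, where $L_{k_2+1}(a+1)$ has row $k_2+1\notin\mathbf{k}$ and $L_{k_2}(a)$ has time $a\notin\llbracket a+1,b-1\rrbracket$). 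Likewise the assertion that ``each such term is a function of one interior and one exterior variable'' fails for those corners, where both arguments are exterior. This does not damage the argument: these extra exterior-only factors appear identically in the interior numerator and in the acceptance-probability denominator $Z$ (which is an expectation of the same Boltzmann weight over the interior free bridges), so they cancel. But it would be cleaner to phrase the bookkeeping as: the index range in \eqref{WH} \emph{contains} all summands involving at least one interior variable, and including the two extra corner terms is harmless because they are constant given the exterior $\sigma$-algebra.
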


%
\subsection{Continuous grand monotone coupling lemma}\label{Section2.3} The goal of this section is to state a continuous grand monotone coupling lemma for  $Y(\llbracket 1, k \rrbracket \times \llbracket T_0, T_1 \rrbracket )$-valued random variables whose laws are given by $\mathbb{P}_{\vec{H}, H^{RW}}^{1, k, T_0 ,T_1, x, y, \infty ,\vec{z}}$ as in Definition \ref{Pfree}. This result is given as Lemma \ref{MonCoup} and is one of the main results we prove about line ensembles satisfying the partial $(\vec{H},H^{RW})$-Gibbs property. The proof of Lemma \ref{MonCoup} is presented in Section \ref{Section6}.

\begin{lemma}\label{MonCoup} Let $T, k \in \mathbb{N}$ satisfy $T \geq 2$ and assume that $ H^{RW}$ is as in Definition \ref{Pfree}. We also let $\vec{H} = (H_0, \dots, H_{T-2})$ be a sequence of interaction Hamiltonians as in Definition \ref{Pfree}. Then the following statements hold.
\begin{enumerate}[label={\Roman*)}]
 \item (Grand coupling) There exists a probability space $( \Omega^{k,T}, \mathcal{F}^{k,T},\mathbb{P}^{k,T})$ that supports random vectors $\ell^{k,T,\vec{x},\vec{y}, \vec{z}} \in  Y(\llbracket 1, k \rrbracket \times \llbracket 0 , T - 1 \rrbracket )$ for all $\vec{x},\vec{y} \in \mathbb{R}^k$ and $\vec{z} \in Y^-(\llbracket 0, T-1\rrbracket)$ such that under $\mathbb{P}^{k,T}$ the random vector $\ell^{k,T,\vec{x},\vec{y}, \vec{z}}$ has law $\mathbb{P}_{\vec{H}, H^{RW}}^{ 1,k, 0, T - 1,\vec{x}, \vec{y}, \infty, \vec{z}}$ as in Definition \ref{Pfree}.
 \item (Monotone coupling) Moreover, if we further suppose that $H_m$ and $H^{RW}$ are convex and $H_m$ are increasing for $m \in \llbracket 0 , T-2 \rrbracket$, then for any fixed $\vec{x},\vec{y},\vec{x}',\vec{y}' \in \mathbb{R}^k$ with $x_i \leq x_i'$ and $y_i \leq y_i'$ for $i \in \llbracket 1, k \rrbracket$ and $\vec{z}, \vec{z}' \in Y^-(\llbracket 0, T-1\rrbracket)$ with $\vec{z}(i) \leq \vec{z}'(i)$ for $i \in \llbracket 0, T-1 \rrbracket$ we have $\mathbb{P}^{k,T}$-almost surely that $\ell^{k,T,\vec{x},\vec{x}, \vec{z}}(i,j) \leq \ell^{k,T,\vec{x}',\vec{y}', \vec{z}'}(i,j)$ for $(i,j) \in \llbracket 1, k \rrbracket \times \llbracket 0, T-1 \rrbracket$.
 \item (Continuous coupling) If $T \geq 3$ the probability space $( \Omega^{k,T}, \mathcal{F}^{k,T},\mathbb{P}^{k,T})$ in part I can be taken to be $(0,1)^{k(T-2)}$ with the Borel $\sigma$-algebra and Lebesgue measure. If $T = 2$ then $( \Omega^{k,T}, \mathcal{F}^{k,T},\mathbb{P}^{k,T})$ can be taken to be the space with a single point $\omega_0$, discrete $\sigma$-algebra and the measure that assigns unit mass to the point $\omega_0$. 

Furthermore, the construction in part I can be made so that the map $\Phi^{k,T}: \mathbb{R}^k \times \Omega^{k,T} \times \mathbb{R}^k \times Y^-(\llbracket 0, T-1\rrbracket) \rightarrow Y(\llbracket 1, k \rrbracket \times \llbracket 0, T-1 \rrbracket) \times Y^-(\llbracket 0, T-1\rrbracket)$ defined by
$$\Phi^{k,T}(\vec{x},\omega, \vec{y}, \vec{z}) = (\ell^{k,T,\vec{x},\vec{y}, \vec{z}}(\omega), \vec{z})$$ 
is a homeomorphism between the spaces $ \mathbb{R}^k \times \Omega^{k,T} \times \mathbb{R}^k \times Y^-(\llbracket 0, T-1\rrbracket) $ and $Y(\llbracket 1, k \rrbracket \times \llbracket 0, T-1 \rrbracket)  \times Y^-(\llbracket 0, T-1\rrbracket)$. In the last statement $\Omega^{k,T} = (0,1)^{k(T-2)}$ and we endow it with the subspace topology from $\mathbb{R}^{k(T-2)}$ (and discrete topology if $T = 2$) and the space $\mathbb{R}^k \times \Omega^{k,T} \times \mathbb{R}^k \times Y^-(\llbracket 0, T-1\rrbracket) $ has the product topology.
\end{enumerate}
\end{lemma}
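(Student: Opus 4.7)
The plan is to construct $\ell^{k,T,\vec{x},\vec{y},\vec{z}}$ by iterated inverse-transform sampling. For $T = 2$ the configuration is entirely forced by $\vec{x}$ and $\vec{y}$, so we take $\Omega^{k,2} = \{\omega_0\}$ and everything is trivial. For $T \geq 3$, let $n = k(T-2)$, fix once and for all an enumeration $(i_1, j_1), \ldots, (i_n, j_n)$ of $\llbracket 1, k\rrbracket \times \llbracket 1, T-2\rrbracket$, and for $\omega = (U_1, \ldots, U_n) \in \Omega^{k,T} = (0,1)^n$ define $\ell^{k,T,\vec{x},\vec{y},\vec{z}}(\omega)$ by setting $\ell(\cdot, 0) = \vec{x}$, $\ell(\cdot, T-1) = \vec{y}$ and inductively letting
\begin{equation*}
\ell(i_m, j_m) = F_m^{-1}\bigl(U_m;\, \ell(i_1, j_1), \ldots, \ell(i_{m-1}, j_{m-1}), \vec{x}, \vec{y}, \vec{z}\bigr),
\end{equation*}
where $F_m$ is the conditional CDF of the value at $(i_m, j_m)$ under $\mathbb{P}_{\vec{H}, H^{RW}}^{1,k,0,T-1,\vec{x},\vec{y},\infty,\vec{z}}$ given the values at the previously enumerated sites and the boundary data.

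For Parts I and III, the crucial observation is that the joint density of the interior points under $\mathbb{P}_{\vec{H}, H^{RW}}^{1,k,0,T-1,\vec{x},\vec{y},\infty,\vec{z}}$ is proportional to
\begin{equation*}
\prod_{i=1}^{k}\prod_{m=0}^{T-2} \g(\ell(i, m+1) - \ell(i, m)) \,\cdot\, \prod_{i=1}^{k}\prod_{m=0}^{T-2} e^{-H_m(\ell(i+1, m+1) - \ell(i, m))},
\end{equation*}
with the convention $\ell(k+1, \cdot) = \vec{z}$ (the missing $i=0$ interaction factor equals $e^{-H_m(-\infty)} = 1$ since $\ell(0, \cdot) = \infty$). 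Because $\g = e^{-H^{RW}} > 0$ everywhere on $\mathbb{R}$ and $e^{-H_m} \in (0, 1]$, this density and all of its marginals (obtained by integrating out the future coordinates) are strictly positive, continuous, and vary continuously in the boundary data; the continuity at entries $\vec{z}(m) = -\infty$ follows from $H_m(-\infty) = 0$ together with dominated convergence. Consequently each $F_m$ is strictly increasing in $v$ and jointly continuous in its remaining arguments, the inverses $F_m^{-1}$ inherit these properties, and the classical chain-rule identity for sequential inverse-transform sampling shows that $\ell^{k,T,\vec{x},\vec{y},\vec{z}}(\omega)$ has the correct law when $\omega$ is Lebesgue on $(0,1)^n$; this proves Part I. The map $\Phi^{k,T}$ is continuous by construction, and its inverse, which extracts $\vec{x} = \ell(\cdot, 0)$ and $\vec{y} = \ell(\cdot, T-1)$ and recovers $U_m = F_m(\ell(i_m, j_m) \,|\, \cdots)$, is continuous for the same reasons, proving Part III.

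For Part II, the additional hypotheses make the joint log-density concave in $\ell$ and endow it with a ferromagnetic FKG structure: a direct computation yields
\begin{equation*}
\frac{\partial^2}{\partial \ell(i_1, j_1)\,\partial \ell(i_2, j_2)} \log p \;\geq\; 0
\end{equation*}
for any two distinct interior sites (the mixed partial vanishes unless the sites share a single $\g$ or $e^{-H_m}$ factor, in which case it equals $(H^{RW})''$ or $H_m''$ at the relevant difference, non-negative by convexity). The analogous non-negativity holds for the mixed partials with respect to each coordinate of $\vec{x}, \vec{y}, \vec{z}$, where the monotonicity of $H_m$ is what supplies the correct sign in the $\vec{z}$ direction. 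The FKG inequality for log-concave measures on $\mathbb{R}^n$ with non-negative mixed partials in the log-density (the continuous MTP2/Holley condition) then implies that every conditional distribution of a subset of coordinates given another is stochastically non-decreasing in each conditioning entry. In particular each $F_m$ is non-increasing in its conditioning arguments, each $F_m^{-1}$ is non-decreasing, and an induction on $m$ yields coordinate-wise monotonicity of $\omega \mapsto \ell^{k,T,\vec{x},\vec{y},\vec{z}}(\omega)$ in $(\vec{x}, \vec{y}, \vec{z})$, which gives Part II.

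The main obstacle I anticipate is verifying the FKG/associated structure and propagating it through all relevant conditional CDFs while simultaneously tracking continuity and monotonicity in the two distinct interaction geometries of the model — $H^{RW}$ coupling $\ell(i, j)$ to $\ell(i, j \pm 1)$ and $H_m$ coupling $\ell(i, j)$ to $\ell(i \pm 1, j \mp 1)$ — and in the $[-\infty,\infty)$-valued parameter $\vec{z}$. A related subtlety is that Holley/FKG comparison theorems are typically formulated for bounded increasing test functions on a compact state space, so extending the stochastic-monotonicity conclusion to the unbounded state space $\mathbb{R}$ and to the marginal CDFs used in the inverse-transform step will require a standard truncation/monotone-convergence argument.
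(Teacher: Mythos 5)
Your construction is identical to the paper's: both fix the enumeration $(i_1,j_1),\dots,(i_n,j_n)$ of the interior lattice $\llbracket 1,k\rrbracket\times\llbracket 1,T-2\rrbracket$ (ordered by $<_{\mathbb{Z}^2}$ in the paper), plug i.i.d.\ $\mathrm{Unif}(0,1)$'s into the iterated inverse-transform, and obtain continuity of $\Phi^{k,T}$ and its inverse from continuity of the successive conditional CDFs and their inverses. Parts I and III of your argument therefore mirror the paper step by step.

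Where you genuinely diverge is Part II. The paper verifies stochastic monotonicity of each conditional CDF directly: it defines the partially integrated kernel $\hat h_{k,n}^{\vec{x},\vec{y},\vec{z}}(s;\vec{v};P)$ and proves by a nested induction on $k$ and on the site index $p_2$ the two-point inequality $\hat h(s;\vec v)\hat h(w;\vec v')\le\hat h(s;\vec v')\hat h(w;\vec v)$ for $w\le s$ and $\vec v\le\vec v'$ (equation (\ref{IndIneq})), which is exactly total positivity of order two; the ratio inequality (\ref{conv2}) and hence the CDF comparison (\ref{S6FIneq}) then fall out by integration. You instead invoke the general MTP2/Holley framework: observe that every factor $\g(\cdot)$ and $e^{-H_m(\cdot)}$ is log-supermodular in its pair of arguments when $H^{RW}$ and $H_m$ are convex, conclude the joint density is MTP2, and appeal to the Karlin--Rinott preservation theorems to get the monotone conditional CDFs. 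The underlying mathematical content is the same (the paper's Lemma \ref{MCKF} is essentially a hands-on proof that the TP2 structure survives marginalization), but your presentation is more conceptual and shorter, at the cost of relying on a black-box theorem whose hypotheses one must then verify on the extended state space. The paper's bespoke induction is longer but entirely self-contained and handles the $[-\infty,\infty)$ boundary within the same computation.

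Two things to be more careful about. First, your phrase that \emph{the FKG inequality} delivers monotone conditional laws is imprecise: FKG (positive association of increasing functions) is a strictly weaker consequence of MTP2 and does not by itself give stochastic monotonicity of conditionals; you need MTP2 together with its stability under conditioning and marginalization. Second, your claim that the monotonicity of $H_m$ supplies the sign ``in the $\vec{z}$ direction'' is not quite right: when all the $\vec{z}$-entries are finite, your own mixed-partial computation shows that convexity of $H_m$ alone gives the supermodularity $e^{-H_m(d-y)}e^{-H_m(b-x)}\ge e^{-H_m(d-x)}e^{-H_m(b-y)}$ for $x\le y$, $b\le d$. The place where the increasing hypothesis on $H_m$ is genuinely indispensable is when one of $b,d$ equals $-\infty$ (say $b=-\infty$, $d$ finite): the inequality degenerates to $e^{-H_m(d-y)}\ge e^{-H_m(d-x)}$, i.e.\ $H_m(d-y)\le H_m(d-x)$, which is precisely monotonicity of $H_m$. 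This is exactly what the paper's inequality (\ref{S6HConv}) is formulated to cover and is the one spot your smooth MTP2 argument cannot reach without an explicit limiting/truncation step; your final paragraph correctly anticipates the need for this, so the gap is one of execution rather than conception.
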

\begin{remark}
 Lemma \ref{MonCoup} was recently proved in \cite[Lemma 2.10]{BCDB} in the special case $k = 1$ and $H_i = H$ for all $i \in \llbracket 0, T-2 \rrbracket$. Lemma \ref{MonCoup} expands upon \cite[Lemma 2.10]{BCDB} by constructing a grand continuous monotone coupling for line ensembles with arbitrarily many curves and general sequences of interaction Hamiltonians $\vec{H}$.
\end{remark}

\begin{remark}
We observe that when $\vec{z} = (- \infty)^T$ and $H_m = 0$ for $m \in \llbracket 0, T-2 \rrbracket$ we have that $\mathbb{P}_{\vec{H}, H^{RW}}^{1,k, 0,T-1,\vec{x}, \vec{y}, \infty,\vec{z}}$ is precisely $\mathbb{P}_{H^{RW}}^{1, k, 0, T-1, \vec{x}, \vec{y}}$ -- the law of $k$ independent $H^{RW}$ random walk bridges between the points $(0,x_i)$ and $(T-1,y_i)$ for $i \in \llbracket 1, k \rrbracket$.
\end{remark}

\begin{remark}
We mention here that part II of Lemma \ref{MonCoup} (upon reindexing) provides a monotone coupling for random variables distributed according to $\mathbb{P}_{\vec{H}, H^{RW}}^{1,k, T_0 ,T_1, x, y, \infty ,\vec{z}}$. In \cite[Section 6]{Wu19} the second author presents an argument that establishes an analogous coupling statement for the case when $H_m$ are all equal to the same interaction Hamiltonian $H$. The approach taken in \cite{Wu19} goes through approximating a $\mathbb{P}_{\vec{H}, H^{RW}}^{1, k, T_0 ,T_1, \vec{x}, \vec{y}, \infty ,\vec{z}}$-distributed discrete line ensemble $\mathfrak{L}$ by a sequence of discrete line ensembles $\mathfrak{L}^n$ that take values on compact lattices and showing the monotonicity statement for each of those using Markov chain Monte Carlo (MCMC) methods. One then deduces the monotonicity statement for the limiting line ensemble $\mathfrak{L}$ from the one for $\mathfrak{L}^n$.

Presently, the work in \cite[Section 6]{Wu19} shows that two line ensembles with fixed boundary data (that are ordered) can be monotonically coupled. Instead, part II of Lemma \ref{MonCoup} shows that {\em all } line ensembles with {\em all possible} boundary data can be simultaneously monotonically coupled. One might be able to improve the MCMC argument in the discrete setting to prove part II of Lemma \ref{MonCoup}; however, there is some delicacy in showing that the convergence of $\mathfrak{L}^n$ to $\mathfrak{L}$ happens simultaneously for all boundary data, since the latter form an uncountable set.

Rather than justifying the discrete approximation and MCMC approach in \cite{Wu19} we will directly construct a monotone coupling in the continuum. In particular, this approach allows us to establish the full-strength form of part II, as well as parts I and III of Lemma \ref{MonCoup}, with the latter two being completely new (apart from the $k = 1$ case considered in \cite{BCDB}). 
\end{remark}

%
\subsection{Strong coupling of $H^{RW}$ random walk bridges}\label{Section2.4} From Definition \ref{Pfree} $\mathbb{P}_{H^{RW}}^{1,1, T_0,T_1, x,y}$ is the law of an $H^{RW}$ random walk bridge between the points $(T_0, x)$ and $(T_1,y)$, see (\ref{RWB}). To simplify the notation, we denote such measures by $\mathbb{P}^{T_0,T_1,x,y}_{H^{RW}}$ and write $\mathbb{E}^{T_0,T_1,x,y}_{H^{RW}}$ for their expectation. In this section we state a strong coupling between random walk bridges and Brownian bridges from \cite{DW19} -- recalled here as Proposition \ref{KMT}. In order to apply this coupling result we need to make several assumptions on the function $H^{RW}$, summarized in the following definition.
\begin{definition}\label{AssHR} We make the following five assumptions on $H^{RW}$.

{\bf \raggedleft Assumption 1.} We assume that $H^{RW}: \mathbb{R} \rightarrow \mathbb{R}$ is a continuous convex function and $\g(x) = e^{-H^{RW}(x)}$. We assume that $\g(x)$ is bounded and $\int_{\mathbb{R}} \g(x) dx = 1$.\\

If $X$ is a random variable with density $\g$ we denote
\begin{equation}\label{S2S1E1}
M_X(t) := \mathbb{E} \big[ e^{t X} \big], \hspace{3mm}  \phi_X(t) := \mathbb{E} \big[ e^{itX} \big], \hspace{3mm} \Lambda(t) :=\log M_X(t), \hspace{3mm} \mathcal{D}_\Lambda := \{ x: \Lambda(x) < \infty\}.
\end{equation}

{\bf \raggedleft Assumption 2.} We assume that $\mathcal{D}_\Lambda$ contains an open neighborhood of $0$.

It is easy to see that $\mathcal{D}_{\Lambda}$ is an interval. We denote $(A_{\Lambda}, B_{\Lambda})$ the interior of $\mathcal{D}_\Lambda$ where $A_{\Lambda} < 0$ and $B_{\Lambda} > 0$ by Assumption 2. We write $M_X(u)$ for all $u \in D = \{u \in \mathbb{C}: A_\Lambda< \Re(u) < B_\Lambda \}$ to mean the (unique) analytic extension of $M_X(x)$ to $D$, afforded by \cite[Lemma 2.1]{DW19}.\\

{\bf \raggedleft Assumption 3.} We assume that the function $\Lambda(\cdot)$ is lower semi-continuous on $\mathbb{R}$.\\

Under Assumptions 1,2 and 3 for a given $p \in \mathbb{R}$ the quantity  $\Lambda''( (\Lambda')^{-1}(p))$ is well-defined -- see \cite[Section 2.1]{DW19}. For brevity we write $\sigma_p^2 := \Lambda''( (\Lambda')^{-1}(p))$. \\

{\raggedleft \bf Assumption 4.} We assume that for every $B_\Lambda > t> s > A_\Lambda$ there exist constants $K(s,t)>0$ and $p(s,t) > 0$ such that $\left|M_X(z)\right| \leq \frac{K(s,t)}{(1 + |\Im(z)|)^{p(s,t)}}$, provided $s\leq \Re(z) \leq t$.\\

{\bf \raggedleft Assumption 5.} We suppose that there are constants $ D, d > 0$ such that at least one of the following statements holds
\begin{equation}\label{S2S1E2}
\mbox{1. }\g(x) \leq De^{-dx^2} \mbox{ for all $x \geq 0$ \qquad or\qquad  2. }\g(x) \leq De^{-dx^2} \mbox{ for all $x \leq 0$}.
\end{equation}
\end{definition}
\begin{remark} The interested reader is referred to \cite[Remark 2.14]{BCDB} for a brief and to \cite[Section 2.3]{DW19} for a detailed discussion of the significance of the above five assumptions.
\end{remark}

If $W_t$ denotes a standard one-dimensional Brownian motion and $\sigma > 0$, then the process
$$B^{\sigma}_t = \sigma (W_t - t W_1), \hspace{5mm} 0 \leq t \leq 1,$$
is called a {\em Brownian bridge (conditioned on $B_0 = 0, B_1 = 0$)  with diffusion parameter $\sigma$.}  With the above notation we state the strong coupling result we use.
\begin{proposition}\label{KMT}
Suppose $H^{RW}$ satisfies the assumptions of Definition \ref{AssHR}. Let $p \in \mathbb{R}$ and $\sigma_p^2$ be as in that definition. There exist constants $0 < C, a, \alpha < \infty$ (depending on $p$ and $H^{RW}$) such that for every positive integer $T$, there is a probability space on which are defined a Brownian bridge $B^\sigma$ with diffusion parameter $\sigma = \sigma_p$ and a family of random curves $\ell^{(T,z)}$ on $[0,T]$, which is parameterized by $z \in \mathbb{R}$ such that $\ell^{(T,z)}$  has law $\mathbb{P}^{0,T,0,z}_{H^{RW}}$ and
\begin{equation}\label{KMTeq}
\mathbb{E}\big[ e^{a \Delta(T,z)} \big] \leq C e^{\alpha (\log T)}e^{|z- p T|^2/T}, \mbox{ where $\Delta(T,z)=  \sup_{0 \leq t \leq T} \big| \sqrt{T} B^\sigma_{t/T} + \frac{t}{T}z - \ell^{(T,z)}(t) \big|.$}
\end{equation}
Here we recall that $\ell^{(T,z)}(s)$ was defined for non-integer $s$ by linear interpolation.
\end{proposition}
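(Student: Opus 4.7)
The plan is to prove this via a classical Koml\'os--Major--Tusn\'ady (KMT) dyadic construction, adapted from free random walks to bridges. The key structural observation is that the law $\mathbb{P}^{0,T,0,z}_{H^{RW}}$ is invariant under any Esscher tilt of the increment density $\g$, since conditioning on a fixed terminal value $S_T = z$ absorbs any global drift. This invariance is exactly what will allow the entire family $\{\ell^{(T,z)}\}_{z \in \mathbb{R}}$ to live on a single probability space for each fixed $T$.

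First I would reduce to a centered situation by tilting $\g$ by the parameter $\theta_p := (\Lambda')^{-1}(p)$, so the tilted walk has mean $p$ and increment variance $\sigma_p^2 = \Lambda''(\theta_p)$. The classical KMT theorem, applied to this tilted i.i.d.\ walk, produces on a single probability space a Brownian motion $W^{\sigma_p}$ of variance $\sigma_p^2$ and partial sums $S_n$ satisfying $\sup_{0 \le t \le T} |S_t - pt - W^{\sigma_p}_t| \le \kappa \log T$ with exponential moments in $\kappa$. Here Assumption 2 (finite mgf around $0$) gives the exponential moment control, while Assumption 4 (polynomial decay of the characteristic function) is what drives the Edgeworth expansion underlying the dyadic KMT construction. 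Then, forming the free bridge $S_t - (t/T) S_T$ on $[0,T]$ and comparing it to the Brownian bridge $\sqrt{T} B^\sigma_{t/T} = W^{\sigma_p}_t - (t/T) W^{\sigma_p}_T$, the sup-norm difference is still $O(\log T)$ with exponential moments.

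The harder step is enforcing the terminal condition $\ell^{(T,z)}(T) = z$ for all $z \in \mathbb{R}$ simultaneously on the same probability space. I would do this by disintegrating the joint density of $(S_0, \ldots, S_T)$ into a regular conditional law given $S_T$ and then applying a monotone (inverse-CDF) coupling in the endpoint variable: letting $U$ be an independent uniform on $(0,1)$ and $F_T$ denote the cdf of $S_T$ under the tilted measure, $F_T^{-1}(U)$ gives one realization of the endpoint, and for any other $z$ the bridge $\ell^{(T,z)}$ is constructed by using the same conditional-given-endpoint regular distribution but with endpoint $z$. The prefactor $e^{|z - pT|^2/T}$ in the target bound arises as the Radon--Nikodym ratio between the density of $S_T$ at $z$ and the Gaussian density of $W^{\sigma_p}_T + pT$ at $z$, which by Cram\'er-type local central limit estimates (a consequence of Assumptions 2--4) is precisely of order $e^{|z - pT|^2/T}$ up to polynomial corrections in $T$.

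The main obstacle is controlling the coupling error uniformly for $z$ far outside the normal window $|z - pT| = O(\sqrt{T})$. In that regime the tilting parameter $\theta^*(z/T)$ matching the mean to $z/T$ may exit $\mathcal{D}_\Lambda$, so the KMT machinery cannot be directly invoked there. This is exactly where Assumption 5 enters: the one-sided sub-Gaussian bound on $\g$ provides good tail estimates on $S_T$ in this regime, and more importantly the Gaussian weight $e^{|z - pT|^2/T}$ on the right-hand side of \eqref{KMTeq} already absorbs the cost of reaching these atypical endpoints, so one only needs a polynomial sup-norm bound in this range rather than a sharp logarithmic one. Assumption 3 (lower semi-continuity of $\Lambda$) is used to ensure a usable large-deviation rate function when stitching the typical and atypical $z$-regimes together. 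Combining the two regimes yields \eqref{KMTeq} with the stated $e^{\alpha \log T}$ polynomial prefactor.
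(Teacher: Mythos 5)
The paper's own ``proof'' of Proposition~\ref{KMT} is a one-paragraph reduction: it cites \cite[Theorem 2.3]{DW19} and merely checks that Assumptions 1--5 of Definition~\ref{AssHR} imply the hypotheses there, so the coupling itself is imported rather than reproved. Your proposal instead sketches a from-scratch argument, and several of your ingredients do mirror what ultimately goes into a proof of this type: the Esscher tilt to $\theta_p = (\Lambda')^{-1}(p)$, a quantile (inverse-CDF) coupling, Assumption~4 driving an Edgeworth-type refinement of the CLT, Assumption~5 handling the far-$|z-pT|$ regime, and the factor $e^{|z-pT|^2/T}$ arising as a density ratio via a local limit theorem.

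There is, however, a genuine gap at the step where you pass from free random walks to bridges. You propose to first couple the free walk $(S_t)$ with a Brownian motion $(W_t)$ via classical KMT, and then obtain the bridge coupling by ``forming the free bridge $S_t-(t/T)S_T$'' and separately ``enforcing the terminal condition.'' The inequality $\sup_t\bigl|(S_t-(t/T)S_T)-(W_t-(t/T)W_T)\bigr|\le 2\sup_t|S_t-pt-W_t|$ is true, but the pinned process $\{S_t-(t/T)S_T\}$ is \emph{not} distributed as a random walk bridge: for non-Gaussian increments it is not independent of $S_T$, so subtracting the linear interpolant of $S_T$ does not produce the conditional law given $S_T=0$. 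The only honest way to get the bridge law is to condition on $S_T=z$, and conditioning does not preserve the free-walk KMT bound --- the conditional distribution of $(W_t)$ given $S_T=z$ can be badly distorted, most severely exactly in the regime $|z-pT|\gg\sqrt{T}$ where you most need control. For this reason the KMT-for-bridges literature, including \cite{DW19}, does not deduce the bridge coupling from the free-walk one. Instead one constructs the bridge coupling \emph{directly} by a dyadic midpoint recursion: quantile-couple the midpoint $\ell^{(T,z)}(T/2)$ --- whose density is an explicit ratio of convolution powers of $\g$ --- to the corresponding Gaussian midpoint, bound the one-step error via Edgeworth estimates (Assumption 4), and recurse on the two halves, with the errors telescoping to $O(\log T)$. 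Your step of ``disintegrating and applying an inverse-CDF coupling in the endpoint variable'' does not repair the gap: coupling the two endpoint random variables says nothing about the interior of the conditional path. Replacing the free-walk-then-condition step with this dyadic midpoint scheme would put you on essentially the route of \cite{DW19}.
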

\begin{proof}
This result is a special case of \cite[Theorem 2.3]{DW19}. Indeed, Assumptions 1-5 in Definition \ref{AssHR} imply Assumptions C1-C5 in \cite[Section 2.1]{DW19}. In addition, Assumption C6 in \cite[Section 2.1]{DW19} is satisfied in view of \cite[Lemma 7.2]{DW19} and the fact that $H^{RW}$ is convex.
\end{proof}

We end this section by recalling a useful result from \cite{BCDB}. For a function $f \in C[a,b]$ we define its {\em modulus of continuity} by
\begin{equation}\label{MOCS4}
w(f,\delta) = \sup_{\substack{x,y \in [a,b]\\ |x-y| \leq \delta}} |f(x) - f(y)|.
\end{equation}
\begin{lemma}\label{MOCLemmaS4}\cite[Lemma 2.25]{BCDB} Let $\ell$ have distribution $\mathbb{P}^{0,T,0,y}_{H^{RW}}$ with $H^{RW}$ satisfying the assumptions in Definition \ref{AssHR}. Let $M > 0$ and $p \in \mathbb{R}$ be given. For each positive $\epsilon$ and $\eta$, there exist a $\delta > 0$ and $W = W(M, p, \epsilon, \eta) \in \mathbb{N}$ such that  for $T \geq W$ and $|y - pT| \leq MT^{1/2}$ we have
\begin{equation}\label{MOCeqS4}
\mathbb{P}^{0,T,0,y}_{H^{RW}}\Big( w\big({f^\ell},\delta\big) \geq \epsilon \Big) \leq \eta,
\end{equation}
where $f^\ell(u) = T^{-1/2}\big(\ell(uT) - puT\big)$  for $u \in [0,1]$.
\end{lemma}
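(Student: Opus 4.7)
The plan is to deduce Lemma \ref{MOCLemmaS4} directly from the strong coupling Proposition \ref{KMT} together with the classical modulus-of-continuity estimate for Brownian bridges. The strategy is to transfer the modulus-of-continuity question for the random walk bridge $\ell$ to one for a Brownian bridge $B^{\sigma}$, with a small error controlled by the KMT coupling.

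First I would invoke Proposition \ref{KMT} with $z = y$ to construct on a common probability space a Brownian bridge $B^{\sigma}$ (with $\sigma = \sigma_p$) and a curve $\ell^{(T,y)}$ whose law is $\mathbb{P}^{0,T,0,y}_{H^{RW}}$, such that the exponential moment bound \eqref{KMTeq} holds. Since $\ell$ and $\ell^{(T,y)}$ have the same law, it suffices to control $w(f^\ell,\delta)$ in this coupling. Writing out the definitions, for $u,v \in [0,1]$ with $|u-v|\leq \delta$ one has
\begin{equation*}
f^\ell(u)-f^\ell(v) = (B^{\sigma}_u - B^{\sigma}_v) + (u-v)\,T^{-1/2}(y-pT) + T^{-1/2}E(u,v),
\end{equation*}
where $|E(u,v)| \leq 2\Delta(T,y)$ by the triangle inequality applied to the supremum in \eqref{KMTeq}. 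Using the hypothesis $|y-pT|\leq MT^{1/2}$, the middle term is bounded by $M\delta$, so
\begin{equation*}
w(f^\ell,\delta) \;\leq\; w(B^{\sigma},\delta) + M\delta + 2T^{-1/2}\Delta(T,y).
\end{equation*}

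Next I would pick $\delta$ small. Choose $\delta$ so small that $M\delta \leq \epsilon/3$ and, using the standard fact that a Brownian bridge with diffusion parameter $\sigma_p$ has $\mathbb{P}(w(B^{\sigma},\delta)\geq \epsilon/3) \leq \eta/2$ for $\delta$ sufficiently small (this follows from tightness of Brownian bridge in $C[0,1]$, or equivalently from L\'evy's modulus of continuity applied to the representation $B^{\sigma}_u = \sigma(W_u - uW_1)$). With this $\delta$ fixed, it remains to control the coupling error $2T^{-1/2}\Delta(T,y) \geq \epsilon/3$, i.e. $\Delta(T,y)\geq \epsilon T^{1/2}/6$. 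By Markov's inequality and \eqref{KMTeq},
\begin{equation*}
\mathbb{P}\bigl(\Delta(T,y)\geq \epsilon T^{1/2}/6\bigr) \;\leq\; e^{-a\epsilon T^{1/2}/6}\cdot C e^{\alpha \log T} e^{|y-pT|^2/T} \;\leq\; C\, T^{\alpha}\, e^{M^2}\, e^{-a\epsilon T^{1/2}/6},
\end{equation*}
where I used $|y-pT|^2/T \leq M^2$. Since the right-hand side tends to $0$ as $T\to\infty$, there is a threshold $W = W(M,p,\epsilon,\eta)$ such that this probability is at most $\eta/2$ for all $T\geq W$. Combining the two bounds via a union bound yields \eqref{MOCeqS4}.

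I do not anticipate any serious obstacle: the argument is a standard KMT-plus-Brownian-modulus template. The only minor point to handle carefully is that $f^\ell$ is defined by linear interpolation, so one must check that the supremum in $\Delta(T,y)$ (also taken over the continuum after linear interpolation of $\ell^{(T,y)}$) legitimately controls the increments $\ell(uT)-\ell(vT)$ for all $u,v \in [0,1]$; this is automatic from the definition but worth a sentence. The rest is bookkeeping of the constants $M, \epsilon, \eta$ and choosing $\delta$ and $W$ in the correct order.
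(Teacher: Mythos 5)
Your proof is correct, and it is the standard KMT-plus-Brownian-bridge-modulus argument; the paper itself gives no proof here (it cites \cite[Lemma 2.25]{BCDB}), but this is exactly the template used for the other $H^{RW}$ random-walk-bridge estimates in Section \ref{Section7.1} (compare the proof of Lemma \ref{S7LStayInBand} and the Chebyshev-style bound \eqref{S7Chebyshev}). The decomposition of $f^\ell(u)-f^\ell(v)$ into a Brownian-bridge increment, a drift term bounded by $M\delta$, and a coupling error bounded by $2T^{-1/2}\Delta(T,y)$ is exactly right, the order of quantifiers (choose $\delta$ from the Brownian modulus and the $M\delta$ bound, then choose $W$ to kill the coupling error) is sound, and your observation about linear interpolation of $\ell^{(T,y)}$ being consistent with the supremum in $\Delta(T,y)$ is the correct thing to flag.
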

\begin{remark}
In words, Lemma \ref{MOCLemmaS4} states that if $\ell$ is a random walk bridge that is started from $(0,0)$ and terminates at $(T,y)$ with $y$ close to $pT$ (i.e. with well-behaved endpoints) then the modulus of continuity of $\ell$ is also well-behaved with high probability.
\end{remark}

%
\section{Tightness of $(H,H^{RW})$-Gibbsian line ensembles}\label{Section3}
In this section we describe a general framework that can be used to prove tightness for a sequence of line ensembles that satisfy the $(H,H^{RW})$-Gibbs property. We start by summarizing our assumptions on $H$ in the following definition.
\begin{definition}\label{AssH} We let $H: [-\infty, \infty) \rightarrow [0,\infty)$ be continuous, increasing and convex. We assume that $\lim_{x \rightarrow \infty} H(x) = \infty$ and $\lim_{x \rightarrow \infty} x^2 H(-x) = 0$.
\end{definition}

We next introduce the following useful definition.
\begin{definition}\label{Def1} Let $H^{RW}$ be as in Definition \ref{AssHR} and $H$ as in Definition \ref{AssH}. Fix $K \in \mathbb{N}$, $\alpha, \lambda > 0$ and $p \in \mathbb{R}$.
Suppose we are given a sequence $\{T_N\}_{N = 1}^\infty$ with $T_N \in \mathbb{N}$ and that $\{\mathfrak{L}^N\}_{N = 1}^\infty$, is a sequence of $\llbracket 1, K \rrbracket $-indexed discrete line ensembles $\mathfrak{L}^N = (L^N_1, L^N_2, \dots, L^N_K)$ on $\llbracket -T_N, T_N \rrbracket$. We say that the sequence $\big\{\mathfrak{L}^N\big\}_{N=1}^{\infty}$ is $(\alpha,p,\lambda)$--{\em good} if there exists $N_0(\alpha, p, \lambda) \in \mathbb{N}$ such that 
\begin{itemize}
\item $\mathfrak{L}^N$ satisfies the partial $(H, H^{RW})$-Gibbs property of Definition \ref{DefLGGP} for $N \geq N_0$;
\item there is a function $\psi: \mathbb{N} \rightarrow (0, \infty)$ such that $\lim_{n \rightarrow \infty} \psi(n) = \infty$ and for each $N \geq N_0$ we have $T_N > \psi(N) N^{\alpha}$,
\item there are functions $\phi_1 : \mathbb{Z} \times (0,\infty) \rightarrow (0, \infty)$ and $\phi_2 : (0, \infty) \rightarrow (0, \infty)$ such that for any $\epsilon > 0$, $n \in \mathbb{Z}$ and $N \geq \phi_1(n, \epsilon)$
\end{itemize}
\begin{equation}\label{UnifTight}
 \mathbb{P} \left( \left| N^{-\alpha/2} \left( L_1^N(\lfloor nN^{\alpha} \rfloor ) - p n N^{\alpha} \right) + \lambda n^2   \right| \geq \phi_2(\epsilon) \right) \leq \epsilon.
\end{equation}
\end{definition}
\begin{remark} Let us briefly explain the meaning of Definition \ref{Def1}. In order for a sequence $\mathfrak{L}^N$ of $\llbracket 1, K \rrbracket$-indexed discrete line ensembles on $\llbracket -T_N, T_N \rrbracket$ to be $(\alpha,p,\lambda)$-good, we want several conditions to be satisfied. Firstly, we want for all large enough $N$, the discrete line ensemble $\mathfrak{L}^N$ to satisfy the $(H, H^{RW})$-Gibbs property. The second condition ensures that the intervals $[-T_N, T_N]$ grow on scale $N^{\alpha}$ fast enough to cover the entire real line. The third condition requires that for each $n \in \mathbb{N}$ the sequence of random variables $N^{-\alpha/2} \left( L_1^N(\lfloor nN^{\alpha} \rfloor) - p n N^{\alpha} \right)$ is tight, but moreover we want the weak subsequential limits of these variables to globally look like the parabola $- \lambda n^2$. For example, the existence of $\phi_1, \phi_2$ satisfying (\ref{UnifTight}) would be assured if $ N^{-\alpha/2} \left( L_1^N(\lfloor nN^{\alpha} \rfloor ) - p n N^{\alpha} \right) + \lambda n^2$ for each $n \in \mathbb{N}$ converge to the same random variable (this will be the case in our applications).
\end{remark}

The main technical result of this section is as follows.
\begin{theorem}\label{PropTightGood}Fix $ K \in \mathbb{N}$ with $K \geq 2$, $\alpha, \lambda > 0$ and $p \in \mathbb{R}$ and let $\mathfrak{L}^N = (L^N_1, L^N_2, \dots, L^N_K)$ be an $(\alpha, p, \lambda)$-good sequence of $\llbracket 1, K \rrbracket$-indexed discrete line ensembles as in Definition \ref{Def1}.  Set
$$f^N_i(s) =  \sigma_p^{-1}N^{-\alpha/2}(L^N_i(sN^{\alpha}) - p s N^{\alpha}), \mbox{ for $s\in [-\psi(N) ,\psi(N)]$ and $i = 1,\dots, K -1$,}$$
where $\sigma_p$ is as in Definition \ref{AssHR}, and extend $f^N_i$ to $\mathbb{R}$ by setting for $i = 1, \dots, K - 1$
$$f^N_i(s) = f^N_i(-\psi(N)) \mbox{ for $s \leq -\psi(N)$ and } f^N_i(s) = f_i^N(\psi(N)) \mbox{ for $s \geq \psi(N)$}.$$
Let $\mathbb{P}_N$ denote the law of $\mathcal{L}^N = (f_1^N, \dots, f_{K-1}^N)$ as a $\llbracket 1, K-1 \rrbracket$-indexed line ensembles on $\mathbb{R}$ as in Definition \ref{CLEDef}. Then:
\begin{enumerate}[label=(\roman*)]
	\item The sequences $\mathbb{P}_N$ is tight.
	\item Any subsequential limit ${\mathcal{L}}^\infty = ({f}_1^{\infty}, \dots, {f}_{K-1}^{\infty})$ of ${\mathbb{P}}_N$ satisfies the partial Brownian Gibbs property of Definition \ref{DefPBGP}.
\end{enumerate}
\end{theorem}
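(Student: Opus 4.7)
The plan is to reduce both claims to the necessary-and-sufficient tightness criterion of Lemma~\ref{2Tight} and to a Skorohod-representation argument that pushes the discrete partial $(H,H^{RW})$-Gibbs property through to the continuous partial Brownian Gibbs property. Throughout, the workhorse tools will be the monotone coupling of Lemma~\ref{MonCoup} and the strong KMT-type coupling of Proposition~\ref{KMT}. The substantive probabilistic input, which I would isolate as three auxiliary lemmas (the Lemmas~\ref{keyL1}, \ref{keyL2}, and~\ref{keyL3} promised for Section~\ref{Section4}), consists of uniform-in-$N$ high-probability estimates on each compact rescaled interval $[-M,M]$: an upper bound on $\sup f_i^N$, a lower bound on $\inf f_i^N$, and an ordering/separation estimate furnishing a uniform lower bound on the Gibbs acceptance probability \eqref{AccProb}.

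For tightness, I would verify conditions (i) and (ii) of Lemma~\ref{2Tight} for each $i\in\llbracket 1,K-1\rrbracket$. Condition~(i), tightness of $f_i^N(0)$, follows when $i=1$ directly from \eqref{UnifTight} at $n=0$, and for $i\geq 2$ from combining the auxiliary upper and lower bounds on a compact interval around $0$. For condition~(ii), given $M,\epsilon>0$ and index $i$, I would condition on $\mathcal{F}_{ext}$ of $\{i\}\times\llbracket -MN^{\alpha},MN^{\alpha}\rrbracket$. By the partial Gibbs property, the conditional law of $L_i^N$ on this interval is a free $H^{RW}$ bridge between its endpoints, reweighted by $W_H/Z$ involving the neighboring curves $L_{i-1}^N$ and $L_{i+1}^N$. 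Since $W_H\leq 1$, on the favorable event provided by the auxiliary lemmas---where the endpoints are bounded in rescaled coordinates and $Z$ is bounded below---the Radon--Nikodym density of the reweighted law against the free bridge law is at most $Z^{-1}=O(1)$. The modulus-of-continuity control for the free bridge, obtained by coupling to a Brownian bridge of diffusion parameter $\sigma_p$ via Proposition~\ref{KMT} and then invoking Lemma~\ref{MOCLemmaS4}, therefore transfers to $L_i^N$ up to a multiplicative factor in probability, closing the tightness argument.

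For the partial Brownian Gibbs property of a subsequential limit $\mathcal{L}^\infty$, I would realize a subsequence $\mathcal{L}^{N_m}\to\mathcal{L}^\infty$ almost surely by Skorohod's representation theorem. Fixing $\mathbf{k}=\llbracket k_1,k_2\rrbracket\subset\llbracket 1,K-2\rrbracket$, $[a,b]\subset\mathbb{R}$ and a bounded continuous test function~$F$, the discrete $(H,H^{RW})$-Gibbs property \eqref{GibbsEq} for $\mathfrak{L}^{N_m}$ expresses the conditional expectation of $F$ as the ratio $\mathbb{E}_{H^{RW}}[W_H\cdot F]/\mathbb{E}_{H^{RW}}[W_H]$ under the free bridge law with the $\mathfrak{L}^{N_m}$-boundary data. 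I would pass this identity to the limit in three pieces: (a)~the rescaled boundary data $(\vec{x}^{N_m},\vec{y}^{N_m},f^{N_m},g^{N_m})$ converges almost surely by construction; (b)~the rescaled free $H^{RW}$ bridges converge to Brownian bridges of diffusion parameter~$\sigma_p$ via Proposition~\ref{KMT}; (c)~a rescaled positive gap $f_{i+1}^N(s)-f_i^N(s)>0$ corresponds in raw coordinates to an argument of $H$ of order $\sigma_p N^{\alpha/2}$, which Definition~\ref{AssH} drives to $+\infty$, while a rescaled negative gap drives $H$ to~$0$; thus $W_H$ converges to the indicator of pointwise non-crossing. The uniform lower bound on the acceptance probability keeps the denominator bounded away from zero in the limit, and the limiting ratio then matches $\mathbb{E}^{a,b,\vec{x}^\infty,\vec{y}^\infty,f^\infty,g^\infty}_{avoid}[F(\tilde{\mathcal{Q}})]$ from Definition~\ref{DefPBGP}.

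The main obstacle is not in the reduction just described but in establishing the three auxiliary lemmas---especially the uniform lower bound on $\inf_{[-M,M]}f_i^N$ for $i\geq 2$, since no local mechanism prevents lower-indexed curves from drifting to $-\infty$ and the only global input is the parabolic shape \eqref{UnifTight} of the top curve. Propagating this global parabolicity down through the indices forces an interwoven induction in which supremum, infimum, ordering, and separation estimates for consecutive curves must be bootstrapped together, in the spirit of the argument of \cite{CorHamK} but complicated by the discrete-time setting and the absence of the stationarity available for the KPZ line ensemble in \cite{Wu21}.
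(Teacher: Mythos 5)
Your overall architecture matches the paper's: reduce part (i) to the two tightness criteria of Lemma~\ref{2Tight}, use the three auxiliary lemmas of Section~\ref{Section4} (max control, min control, acceptance probability/separation) together with the monotone coupling of Lemma~\ref{MonCoup} and the KMT coupling of Proposition~\ref{KMT}, and obtain part (ii) by Skorokhod representation, passing the discrete Gibbs identity to the limit via convergence of rescaled reweighted $H^{RW}$-bridges to avoiding Brownian bridges. Part (ii) as you sketch it is essentially the content of Lemma~\ref{scaledavoidBB} combined with the ambient argument in Section~\ref{Section3.4}; you should also flag the $\pi$-$\lambda$/monotone class step needed to upgrade from finite-dimensional test functions to the full equality of conditional expectations in Definition~\ref{DefPBGP}, and the almost-sure non-intersection of $\mathcal{L}^\infty$, which is a separate (easy) consequence of Lemma~\ref{keyL3}, but these are minor omissions in a sketch.

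The substantive deviation is in your modulus-of-continuity argument for part (i). You propose to condition on $\mathcal{F}_{ext}$ of $\{i\}\times\llbracket \cdot,\cdot\rrbracket$, i.e.\ to resample the single curve $L_i^N$ between the \emph{ceiling} $L_{i-1}^N$ and the \emph{floor} $L_{i+1}^N$. The acceptance probability $Z$ for this resampling is two-sided: it is the chance a free $H^{RW}$-bridge from $L_i^N(t_N^-)$ to $L_i^N(t_N^+)$ stays (softly) both below $L_{i-1}^N$ and above $L_{i+1}^N$ throughout the interval. Bounding this below requires ordering and separation of the two \emph{bounding} curves everywhere on the interval, not just at the endpoints, which is not what your auxiliary acceptance-probability lemma (the analogue of Lemma~\ref{keyL3}) is set up to furnish: \eqref{AccP} controls $Z_{H,H^{RW}}(\cdot,\cdot,\cdot,\cdot,\infty,\cdot)$, i.e.\ a one-sided acceptance probability with $f=\infty$. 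The paper instead conditions on $\mathcal{F}_{ext}$ of $\llbracket 1,K-1\rrbracket\times\llbracket\cdot,\cdot\rrbracket$, resampling all of $L_1^N,\dots,L_{K-1}^N$ above the fixed $L_K^N$. Then $Z$ is exactly the one-sided object of Lemma~\ref{keyL3} with $k=K-1$, the numerator is bounded using $W_H\leq 1$ and projection onto the $i$-th coordinate (which under the free measure $\mathbb{P}_{H^{RW}}^{1,K-1,\cdot}$ is just an independent free $H^{RW}$-bridge), and one applies Lemma~\ref{MOCLemmaS4} exactly as you envisage. The two routes reach the same endpoint, but your single-curve conditioning introduces a genuinely harder two-sided partition function that the auxiliary lemmas, as naturally formulated, do not control; switching to the all-curves resampling eliminates the difficulty cleanly and is the better route.
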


%
\subsection{Proof of Theorem \ref{Thm1}}\label{Section3.1} The proof we present here is similar to the proof of \cite[Theorem 1.1]{DREU}. We use the same notation and assumptions as in the statement of the theorem. For clarity we split the proof into two steps.\\

{\bf \raggedleft Step 1.} In this step we prove that $\mathcal{L}^N$ is tight. In view of Lemma \ref{2Tight} to establish the tightness of $\mathcal{L}^N$ it suffices to show that for every $k \in \mathbb{N}$  
\begin{enumerate}[label=(\roman*)]
\item $\lim_{a\to\infty} \limsup_{N\to\infty}\, \mathbb{P}(|\mathcal{L}^N_k(0)|\geq a) = 0$.
\item For all $\epsilon>0$ and $m \in \mathbb{N}$,  $\lim_{\delta\to 0} \limsup_{N\to\infty}\, \mathbb{P}\bigg(\sup_{\substack{x,y\in [-m,m], \\ |x-y|\leq\delta}} |\mathcal{L}^N_k(x) - \mathcal{L}^N_k(y)| \geq \epsilon\bigg) = 0.$
\end{enumerate}

Let $T_N = \min (-a_N, b_N)$ and for $N \geq k +1$ let $\tilde{\mathfrak{L}}^N = (\tilde{L}_1^N , \tilde{L}_2^N, \dots, \tilde{L}_{k+1}^N)$ denote the $\llbracket 1, k+1\rrbracket$-indexed discrete line ensemble on $\llbracket - T_N, T_N \rrbracket$, obtained from $\mathfrak{L}^N$ by restriction to the top $k+1$ lines and the interval $\llbracket -T_N, T_N \rrbracket$. Since by assumption we have that $\mathfrak{L}^N$ satisfies the partial $(H,H^{RW})$-Gibbs property on $\llbracket a_N, b_N \rrbracket$, we conclude that $\tilde{\mathfrak{L}}^N$ satisfies the partial $(H,H^{RW})$-Gibbs property on $\llbracket -T_N, T_N \rrbracket$ for each $N \geq k+1$, cf. Remark \ref{restrict}.

We next observe that Assumptions 1,2 and 3 in Section \ref{Section1.2} imply that $\{\tilde{\mathfrak{L}}^N\}_{N \geq k+1}$ is an $(\alpha, p, \lambda)$-good in the sense of Definition \ref{Def1}. Specifically, we have that the conditions in Definition \ref{Def1} are satisfied with $N_0 = k+1$, $K = k$, $\alpha, p ,\lambda, H, H^{RW}$ as in the statement of the theorem, $T_N$ as above and $\psi$ as in Assumption 1 in Section \ref{Section1.2}. For the functions $\phi_1, \phi_2$ we may set $\phi_2(\epsilon) = \phi(\epsilon/2)$, where $\phi$ is as in Assumption 2 in Section \ref{Section1.2}, which we recall required that
$$ \sup_{n \in \mathbb{Z}} \limsup_{N \rightarrow \infty} \mathbb{P} \left( \left|N^{-\alpha/2}(L_1^N( \lfloor n N^{\alpha} \rfloor) - p n N^{\alpha} + \lambda n^2 N^{\alpha/2}) \right| \geq \phi(\epsilon) \right) \leq \epsilon.$$
The last equation and the fact that $\phi_2(\epsilon) = \phi(\epsilon/2)$ implies that for each $n \in \mathbb{Z}$ and $\epsilon > 0$ there exists $A(n, \epsilon) \in \mathbb{N}$ such that for $N \geq A(n, \epsilon) $ we have
$$\mathbb{P} \left( \left|N^{-\alpha/2}(L_1^N(\lfloor n N^{\alpha}\rfloor ) - p n N^{\alpha} + \lambda n^2 N^{\alpha/2}) \right| \geq \phi_2(\epsilon) \right) \leq \epsilon,$$
and then we can set $\phi_1(n, \epsilon) = A(n, \epsilon) $.

Since $\{\tilde{\mathfrak{L}}^N\}_{N \geq k+1}$ is an $(\alpha, p, \lambda)$-good sequence we have from Theorem \ref{PropTightGood} that the line ensembles $\{ \tilde{f}^N_i \}_{i = 1}^{k}$, as in the statement of that theorem for the line ensembles $ \tilde{\mathfrak{L}}^N$, form a tight sequence in $(C( \llbracket 1, k \rrbracket \times \mathbb{R}), \mathcal{C}_{k})$. We may now apply the ``only if'' part of Lemma \ref{2Tight} to $\{ \tilde{f}^N_i \}_{i = 1}^{k}$ and conclude that statements (i) and (ii) from the beginning of this step hold for $\tilde{f}^N_k,$ which in turn implies they hold for $\mathcal{L}^N_k$, since by construction $\mathcal{L}^N_k$ and $\tilde{f}^N_k$ have the same distribution.\\

{\bf \raggedleft Step 2.} We next suppose that $\mathcal{L}^\infty$ is any subsequential limit of $\mathcal{L}^N$ and that $n_m \uparrow \infty$ is a sequence such that $\mathcal{L}^{n_m}$ converges weakly to $\mathcal{L}^\infty$. We want to show that $\mathcal{L}^\infty$ satisfies the Brownian Gibbs property. Suppose that $a, b \in \mathbb{R}$ with $a < b$ and $K = \{k_1, k_1+1, \dots, k_2\} \subset \mathbb{N}$ are given. We wish to show that $\mathcal{L}^\infty$ is almost surely non-intersecting and for any bounded Borel-measurable function $F: C(K \times [a,b]) \rightarrow \mathbb{R}$ almost surely
\begin{equation}\label{BGPTowerV2S2}
\mathbb{E} \left[ F\left(\mathcal{L}^\infty|_{K \times [a,b]} \right)  {\big \vert} \mathcal{F}_{ext} (K \times (a,b))  \right] =\mathbb{E}_{avoid}^{a,b, \vec{x}, \vec{y}, f, g} \bigl[ F(\tilde{\mathcal{Q}}) \bigr],
\end{equation}
where we use the same notation as in Definition \ref{DefPBGP}. In particular, we recall that 
$$\mathcal{F}_{ext} (K \times (a,b)) = \sigma \left \{ \mathcal{L}^\infty_i(s): (i,s) \in D_{K,a,b}^c \right\} \mbox{, with $D_{K,a,b}^c = (\mathbb{N} \times \mathbb{R}) \setminus K \times (a,b).$ } $$

Let $k \geq k_2 +1$ and consider the map $\Pi_{k}: C( \mathbb{N} \times \mathbb{R}) \rightarrow C( \llbracket 1, k  \rrbracket \times \mathbb{R})$ given by $[\Pi_{k}(g)](i,t) = g(i,t)$, which is continuous, and so $\Pi_{k} [\mathcal{L}^{n_m}]$ converge weakly to $\Pi_{k}[\mathcal{L}^\infty]$ as random variables in $C( \llbracket 1, k  \rrbracket \times \mathbb{R})$. If $\{\tilde{f}^N_i \}_{i = 1}^{k}$ are as in Step 1, then we know by construction that the restriction of $\{\tilde{f}^N_i \}_{i = 1}^{k}$ to $[- \psi(N), \psi(N)]$ has the same distribution as the restriction of $\Pi_{k} [\mathcal{L}^{N}]$ to the same interval. Since $\psi(N) \rightarrow \infty$ by assumption and $\Pi_{k} [\mathcal{L}^{n_m}]$ converge weakly to $\Pi_{k}[\mathcal{L}^\infty]$ we conclude that $\{\tilde{f}^{n_m}_i \}_{i = 1}^{k}$ converge weakly to $\Pi_{k}[\mathcal{L}^\infty]$ (here we used that the topology is that of uniform convergence over compacts). In particular, by the second part of Theorem \ref{PropTightGood} we conclude that $\Pi_{k} [\mathcal{L}^{\infty}]$ satisfies the partial Brownian Gibbs property as a $\llbracket 1, k\rrbracket$-indexed line ensemble on $\mathbb{R}$. The latter implies that $\Pi_{k} [\mathcal{L}^{\infty}]$ is non-intersecting almost surely and almost surely 
\begin{equation}\label{BGPTowerV2S3}
\mathbb{E} \left[ F\left(\mathcal{L}^\infty|_{K \times [a,b]} \right)  {\big \vert} \tilde{\mathcal{F}}_{ext} (K \times (a,b))  \right] =\mathbb{E}_{avoid}^{a,b, \vec{x}, \vec{y}, f, g} \bigl[ F(\tilde{\mathcal{Q}}) \bigr],
\end{equation}
where
$$\tilde{\mathcal{F}}_{ext} (K \times (a,b)) = \sigma \left \{ \mathcal{L}^\infty_i(s): (i,s) \in \tilde{D}_{K,a,b}^c \right\} \mbox{, with $\tilde{D}_{K,a,b}^c = ( \llbracket 1, k \rrbracket \times \mathbb{R}) \setminus K \times (a,b).$ } $$

Since $\Pi_{k} [\mathcal{L}^{\infty}]$ is non-intersecting almost surely and $k \geq k_2 + 1$ was arbitrary we conclude that $\mathcal{L}^{\infty}$ is almost surely non-intersecting. Let $\mathcal{A}$ denote the collection of sets $A$ of the form
$$A = \{ \mathcal{L}^\infty(i_r, x_r) \in B_r \mbox{ for $r = 1, \dots, p$ } \},$$
where $p \in \mathbb{N}$, $B_1, \dots, B_p \in \mathcal{B}(\mathbb{R})$ (the Borel $\sigma$-algebra on $\mathbb{R}$) and $(i_1, x_1), \dots, (i_p, x_p) \in D_{K,a,b}^c$. Since in (\ref{BGPTowerV2S3}) we have that $k \geq k_2 +1$ was arbitrary we conclude that for all $A \in \mathcal{A}$ we have 
$$\mathbb{E} \left[  F\left(\mathcal{L}^\infty|_{K \times [a,b]} \right)   \cdot {\bf 1 }_A \right] = \mathbb{E} \left[  \mathbb{E}_{avoid}^{a,b, \vec{x}, \vec{y}, f, g} \bigl[ F(\tilde{\mathcal{Q}}) \bigr] \cdot {\bf 1 }_A \right].$$
In view of the bounded convergence theorem, we see that the collection of sets $A$ that satisfies the last equation is a $\lambda$-system and as it contains the $\pi$-system $\mathcal{A}$ we conclude by the $\pi-\lambda$ theorem that it contains $\sigma(\mathcal{A})$, which is precisely $  \mathcal{F}_{ext} (K \times (a,b)) $. We may thus conclude (\ref{BGPTowerV2S2}) from the defining properties of conditional expectation and the fact that the right side of (\ref{BGPTowerV2S2}) is $\mathcal{F}_{ext} (K \times (a,b))$-measurable as follows from \cite[Lemma 3.4]{DimMat}. This suffices for the proof.

%
\subsection{Proof of Theorem \ref{ThmLG}}\label{Section3.2} We use the same notation and assumptions as in the statement of the theorem. We proceed to show that $\mathcal{L}^N$ satisfy the conditions of Theorem \ref{Thm1}, which if true would imply our desired statement. 

In view of Proposition \ref{PropLOG} we know that $\mathfrak{L}^N$ is a $\llbracket 1, N \rrbracket$-indexed discrete line ensemble on $\llbracket -N, N \rrbracket$, which satisfies the partial $(H,H^{RW})$-Gibbs property with $H,H^{RW}$ as in (\ref{S1Hamiltonian}). What remains is to show that Assumptions 1,2 and 3 from Section \ref{Section1.2.1} all hold.

Since $a_N = -N$, $b_N = N$, $\alpha = 2/3$ and $\psi(N) = (1/2) N^{1/3}$ we see that Assumption 1 is satisfied. The fact that $H^{RW}$ and $H$ satisfy the conditions in Definitions \ref{AssHR} and \ref{AssH}, respectively, is a straightforward verification and carried out in \cite[Section 6.3]{BCDB}. This verifies Assumption 3. In addition, in  \cite[Section 6.3]{BCDB} it was shown that if $\sigma_p$ is as in Definition \ref{AssHR} we have $\sigma_p^2 = \Psi'(g_{\theta}^{-1}(1)) = \Psi'(\theta/2)$, which agrees with the $\sigma_p$ in Section \ref{Section1.2.2}. What remains is to show that Assumption 2 is satisfied with $p = - h_{\theta}'(1)$, $\alpha = 2/3$ for some $\lambda > 0$ and function $\phi: (0,\infty) \rightarrow (0,\infty)$.\\

From \cite[Theorem 1.2]{BCDA} we have for each $y \in \mathbb{R}$ and $n \in \mathbb{Z}$ that
\begin{equation*}
\lim_{N \rightarrow \infty} \mathbb{P} \left( \frac{L_1^N(\lfloor nN^{2/3} \rfloor) - 2N h_{\theta}(1) + 2N h_{\theta}((2N + nN^{2/3})/(2N))}{(2N)^{1/3} \tsigma_\theta(1) } \leq y \right) = F_{GUE}(y),
\end{equation*}
where $F_{GUE}$ denotes the GUE Tracy-Widom distribution \cite{TWPaper} and 
\begin{equation*}
\tsigma_\theta(x) = \left[ \sum_{n = 0}^\infty \frac{x}{\big(n+g_\theta^{-1}(x)\big)^3} +  \sum_{n = 0}^\infty \frac{1}{\big(n+\theta - g_\theta^{-1}(x)\big)^3} \right]^{1/3}.
\end{equation*}
Taylor expanding $h_{\theta}$ we conclude that 
\begin{equation}\label{S3TWConv}
\lim_{N \rightarrow \infty} \mathbb{P} \left( \frac{L_1^N(\lfloor nN^{2/3} \rfloor) + h_{\theta}'(1) nN^{2/3} + (1/4) h_{\theta}''(1)  n^2 N^{1/3}}{(2N)^{1/3} \tsigma_\theta(1) } \leq y \right) = F_{GUE}(y).
\end{equation}
We now set $\lambda = (1/4) h_{\theta}''(1) $ and note that $\lambda >0 $ since $h_{\theta}'(x) = \Psi(g^{-1}_{\theta}(x))$, $\Psi'(y) > 0$ for all $y \in \mathbb{R}$ and $(d/dx)\left(g_{\theta}^{-1}(x) \right)> 0$ for all $x \in (0,\infty)$. In addition, if $\epsilon \in (0,\infty)$ we let $\phi(\epsilon)$ be such that 
\begin{equation}
 \mathbb{P} \left(|X| \geq \phi(\epsilon) \right) \leq \epsilon,
\end{equation}
where $X /(2^{1/3} d_{\theta}(1))$ has the GUE Tracy-Widom distribution. From (\ref{S3TWConv}) we have that
$$N^{-1/3} \left( L_1^N(\lfloor nN^{2/3}\rfloor ) - p nN^{2/3} + \lambda n^2 N^{1/3} \right)$$
converges weakly to $X$ as $N \rightarrow \infty$ for each $n \in \mathbb{Z}$. Since $X$ has a continuous cumulative distribution function (as $F_{GUE}$ is continuous) we conclude that for each $n \in \mathbb{N}$ we have
$$  \lim_{N \rightarrow \infty} \mathbb{P} \left( \left|N^{-1/3}(L_1^N(\lfloor n N^{2/3} \rfloor ) - p n N^{2/3} + \lambda n^2 N^{1/3}) \right| \geq \phi(\epsilon) \right) =  \mathbb{P} \left(|X| \geq \phi(\epsilon) \right) \leq \epsilon.$$
This shows that Assumption 2  is satisfied with the above choice of $\lambda$ and $\phi$, which concludes the proof of the theorem.

%
\subsection{Proof of Theorem \ref{PropTightGood}$(i)$ }\label{Section3.3} In this section we prove Theorem \ref{PropTightGood}$(i)$ and for the remainder we assume the same notation as in the statement of the theorem. In particular, we assume that 
\begin{equation}\label{GoodEq1}
\{ \mathfrak{L}^N = (L_1^N, \cdots, L_K^N)\}_{N =1}^\infty \mbox{ is an $(\alpha, p, \lambda)$-good sequence of discrete line ensembles,}
\end{equation}
defined on a probability space with measure $\mathbb{P}$. The general strategy behind the proof is inspired by \cite{CorHamK} and we begin by stating three key lemmas. The proofs of these lemmas are postponed until Section \ref{Section4}. All constants in the statements below implicitly depend on 
\begin{equation}\label{Depend}
\mbox{ $K, \alpha, p, \lambda, N_0, H, H^{RW}$ and the functions $\psi, \phi_1, \phi_2$ in Definition \ref{Def1},}
\end{equation}
which are fixed. We will not list this dependence explicitly.

\begin{lemma}\label{keyL1} Let $\mathbb{P}$ be the measure from the beginning of this section. For any $\epsilon > 0$, $r\geq 2$ and $k \in \llbracket 1, K \rrbracket$ there exist $\Rt_k = \Rt_k(r, \epsilon) > 0$ and $\Nt_k = \Nt_k(r, \epsilon) \in \mathbb{N}$ such that for $N \geq \Nt_k$ 
\begin{equation}\label{NoBigMax}
\mathbb{P} \left( \sup_{x \in [- rN^{\alpha}, r N^{\alpha}] }\left[L_k^N(x) - p x \right] \geq \Rt_k N^{\alpha/2} \right) < \epsilon.
\end{equation}
\end{lemma}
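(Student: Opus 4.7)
The argument proceeds by induction on $k \in \llbracket 1, K \rrbracket$, carried out in parallel with the companion Lemmas \ref{keyL2} and \ref{keyL3} that supply the matching lower-sup and separation estimates. For the base case $k=1$ -- which in spirit reproduces the strategy of \cite[Theorem 1.2]{BCDB} -- cover $[-rN^\alpha, rN^\alpha]$ by overlapping integer windows $I_n = \llbracket \lfloor n N^\alpha\rfloor, \lfloor (n+1)N^\alpha\rfloor\rrbracket$ with $|n| \leq r+1$ and use the tightness input \eqref{UnifTight} with a union bound to produce a good event $G_1$ of probability at least $1-\epsilon/2$ on which $L_1^N(\lfloor n N^\alpha\rfloor) - p n N^\alpha \leq -\lambda n^2 N^{\alpha/2} + C N^{\alpha/2}$ for all $|n| \leq r+1$, with $C = C(r,\epsilon)$. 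On $G_1$ apply the partial $(H,H^{RW})$-Gibbs property of Definition \ref{DefLGGP} to resample $L_1^N$ on each $I_n$ conditionally on its endpoints and on $L_2^N\vert_{I_n}$. The Boltzmann weight \eqref{WH} is bounded pointwise by $1$, so the resampled law has Radon--Nikodym derivative against the free $H^{RW}$ random walk bridge bounded by $Z^{-1}$, where $Z$ is the acceptance probability \eqref{AccProb}. Once $Z$ is lower-bounded on a further large-probability event (an estimate of the type packaged into Lemma \ref{LAccProb}), Proposition \ref{KMT} couples the free bridge to a Brownian bridge of diffusion $\sigma_p$ whose supremum over $I_n$ has Gaussian tails on scale $N^{\alpha/2}$; summing over the finitely many $n$ and choosing $\Rt_1$ large enough concludes \eqref{NoBigMax}.

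For the inductive step $k \to k+1$ with $k+1 \leq K-1$: combine the already established case of Lemma \ref{keyL1} at indices $\leq k$ with the separation estimates of Lemma \ref{keyL2} at indices $\leq k$ to control the endpoints $L_{k+1}^N(\lfloor n N^\alpha\rfloor)$ from above on a large-probability event. Resample $L_1^N, \dots, L_{k+1}^N$ jointly on $I_n$, which is permissible since $k+1 \leq K-1$; the resampled measure has the form $\mathbb{P}_{H,H^{RW}}^{1, k+1, a_n, b_n, \vec{x}, \vec{y}, \infty, L_{k+2}^N}$ and thus falls in the scope of Lemma \ref{MonCoup}. Absolute continuity relative to a vector of $k+1$ independent free $H^{RW}$ bridges, combined once more with Proposition \ref{KMT} and standard Brownian supremum bounds on each bridge, produces the desired estimate for $L_{k+1}^N$. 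The boundary case $k = K$ requires a dedicated argument because the partial Gibbs property does not permit resampling of $L_K^N$: here one exploits the Boltzmann penalty $H(L_K^N(m+1) - L_{K-1}^N(m))$ present in the resampling of $L_1^N, \dots, L_{K-1}^N$, which together with the inductively established upper bound on $L_{K-1}^N$ and a Markov-type estimate (an event on which $L_K^N$ is too high would force the acceptance probability to be sub-exponentially small, contradicting a uniform lower bound) forces $L_K^N$ to remain below $px + \Rt_K N^{\alpha/2}$ with high probability.

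The main obstacle is the lower bound on the acceptance probability $Z$ arising in each step of the induction. Because $H$ is increasing with $\lim_{x\to\infty} x^2 H(-x) = 0$ (Definition \ref{AssH}), the factor $\exp(-H(L_{i+1}(m+1) - L_i(m)))$ is close to $1$ exactly when consecutive curves are well separated in the correct order, and this separation is precisely the content of Lemmas \ref{keyL2} and \ref{keyL3}. Hence the three key lemmas cannot be proved in isolation: they must be advanced in parallel through a single joint induction, which is the source of the technical intricacy of Section \ref{Section4}. The concrete tools that convert the separation input into the desired supremum bound are the acceptance probability estimate Lemma \ref{LAccProb} of Section \ref{Section7.3} and the modulus of continuity bound Lemma \ref{MOCLemmaS4}, both ultimately leveraging the KMT coupling of Proposition \ref{KMT}.
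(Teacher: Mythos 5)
Your proposal has a genuine gap that runs through both the proposed base case and the inductive step. You propose to convert the resampled conditional law of $L_1^N$ (or $L_1^N,\dots,L_{k+1}^N$) on a window into free $H^{RW}$ random walk bridges by bounding the Radon--Nikodym derivative $W_H/Z$ from above by $Z^{-1}$, and then to invoke a lower bound on the acceptance probability $Z$ ``of the type packaged into Lemma~\ref{LAccProb}.'' But Lemma~\ref{LAccProb}, Lemma~\ref{LAccProbP}, and any lower bound of this kind require an \emph{upper bound on the bottom boundary curve} $g$ entering the resampling: for $k=1$ that curve is $L_2^N$, and in the inductive step $k\to k+1$ it is $L_{k+2}^N$. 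At the stage of the joint induction where Lemma~\ref{keyL1}[$k$] is being proved, no upper bound on $L_{k+1}^N$ (let alone $L_{k+2}^N$) is available -- in particular the base case $k=1$ must be carried out with no information whatsoever about $L_2^N$. An arbitrarily high $L_2^N$ inside the resampling window makes $Z$ arbitrarily small while leaving your good event $G_1$ (which only controls one-point marginals of $L_1^N$ at the window endpoints) essentially unchanged, so the uniform Radon--Nikodym bound you want does not follow. The same issue reappears in your treatment of the boundary case $k=K$, which invokes ``a uniform lower bound'' on the acceptance probability that is not available.

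The paper avoids the acceptance probability entirely by running one-sided stochastic monotonicity. For $k=1$ the proof decomposes the event $\mathsf{High}^N$ by the first site $q\in\llbracket -u_N,u_N\rrbracket$ at which $L_1^N(q)-pq\geq \Rt_1 N^{\alpha/2}$, and then conditions on the Gibbs external $\sigma$-algebra $\mathcal F_{ext}(\{1\}\times\llbracket q+1,t_N-1\rrbracket)$; the conditional law of $L_1^N$ on $\llbracket q,t_N\rrbracket$ is $\mathbb P_{H,H^{RW}}^{1,1,q,t_N,x,y,\infty,g}$ with $x\geq pq+\Rt_1N^{\alpha/2}$, $y\geq pt_N - M_1N^{\alpha/2}$ (on $\mathsf D_1^N$) and $g=L_2^N\llbracket q,t_N\rrbracket$. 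Lemma~\ref{MonCoup} then lets one \emph{simultaneously lower} the starting point to $pq+\Rt_1N^{\alpha/2}$, the ending point to $pt_N-M_1N^{\alpha/2}$, and the floor $g$ all the way down to $-\infty$, each of which stochastically lowers the resampled curve; the probability that $\tilde L_1(s_N)-ps_N$ falls back to $\leq M_1N^{\alpha/2}$ is therefore bounded above by the corresponding probability for a free $H^{RW}$ bridge with those fixed endpoints, and Lemma~\ref{LStayInBand} shows this is small. No acceptance probability appears anywhere in the base case. For the inductive step (\ref{Imp1}), the paper uses Lemma~\ref{LHighBottom} rather than any acceptance probability estimate: if $L_{m+2}^N$ (which plays the role of the uncontrolled floor $g$) were high at some point of the window, the Gibbs reweighing would with high probability push $L_{m+1}^N$ high as well, which Lemma~\ref{keyL1}[$m+1$] has already ruled out. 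This contrapositive argument is likewise one-sided in the floor curve, so again nothing needs to be known about $L_{m+2}^N$ beforehand. Your high-level observation that the three key lemmas are proved in a joint induction is correct, but the mechanism that makes the base case and the keyL1-inductive step run is monotone coupling combined with first-passage decomposition and Lemmas~\ref{LStayInBand}, \ref{LHighBottom}, not any lower bound on the normalizing constant.
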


\begin{lemma}\label{keyL2} Let $\mathbb{P}$ be the measure from the beginning of this section. For any $\epsilon > 0$, $r \geq 2$ and $k \in \llbracket 1, K -1 \rrbracket$ there exist $\Rb_k = \Rb_k(r, \epsilon) > 0$ and $\Nb_k = \Nb_k(r, \epsilon) \in \mathbb{N}$ such that for $N \geq \Nb_k$ 
\begin{equation}\label{NoLowMin}
\mathbb{P} \left( \inf_{x \in [- rN^{\alpha}, r N^{\alpha}] }\left[L_k^N(x) - p x \right] \leq  - \Rb_k N^{\alpha/2} \right) < \epsilon.
\end{equation}
\end{lemma}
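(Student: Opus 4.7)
The plan is to prove Lemma \ref{keyL2} by induction on $k \in \llbracket 1, K-1 \rrbracket$, freely using Lemma \ref{keyL1} (which controls maxima of $L_k^N$ for every $k \in \llbracket 1, K \rrbracket$) and the acceptance-probability estimate of Lemma \ref{keyL3}. The restriction $k \leq K-1$ is precisely what permits the partial $(H, H^{RW})$-Gibbs property of Definition \ref{DefLGGP} to resample $L_k^N$.

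For the base case $k=1$, the bound (\ref{UnifTight}) from the $(\alpha, p, \lambda)$-good hypothesis, combined with a union bound over integers $|n| \leq r+1$, yields with probability at least $1-\epsilon/2$ a simultaneous grid-point lower bound of the form $L_1^N(\lfloor nN^\alpha \rfloor) - p n N^\alpha \geq -C_{r,\epsilon} N^{\alpha/2}$. To extend this to a uniform bound on $[-rN^\alpha, rN^\alpha]$, I would condition on $L_1^N$ at the grid points and on the entirety of $L_2^N$, and apply the partial Gibbs property to resample $L_1^N$ on each sub-interval $[\lfloor nN^\alpha \rfloor, \lfloor (n+1)N^\alpha \rfloor]$. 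Since there is no upper interaction for the top curve (we use the convention $L_0 = \infty$), part II of Lemma \ref{MonCoup}, applied with the bottom boundary lowered from $L_2^N$ to $-\infty$, stochastically decreases the resampled curve to a free $H^{RW}$ bridge. Coupling this free bridge to a Brownian bridge of diffusion parameter $\sigma_p$ via Proposition \ref{KMT}, and invoking standard minimum estimates for Brownian bridges of length $N^\alpha$ with endpoints of order $N^{\alpha/2}$, completes the base case after a union bound over the $O(r)$ sub-intervals.

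For the inductive step $k \geq 2$, once a grid-point lower bound $L_k^N(\lfloor nN^\alpha \rfloor) \geq -C N^{\alpha/2}$ is available with high probability, the extension from grid points to $[-rN^\alpha, rN^\alpha]$ proceeds similarly, but now the conditional law of $L_k^N$ on a sub-interval is not a plain free bridge: even after lowering the bottom boundary from $L_{k+1}^N$ to $-\infty$ using Lemma \ref{MonCoup}, one is left with a free $H^{RW}$ bridge reweighted by $\prod_m e^{-H(L_k(m+1) - L_{k-1}(m))}$. The Radon--Nikodym derivative of this measure against the free bridge is bounded by $1/Z_{H, H^{RW}}$ as in (\ref{AccProb}); Lemma \ref{keyL3}, resting on the acceptance-probability estimate of Lemma \ref{LAccProb}, provides a uniform lower bound on $Z_{H, H^{RW}}$ with high probability on the event (supplied by the inductive hypothesis and Lemma \ref{keyL1}) that $L_{k-1}^N$ is trapped in a band of width $O(N^{\alpha/2})$. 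One then transfers the free-bridge minimum estimate (again via Proposition \ref{KMT}) to the true law of $L_k^N$ at the cost of a bounded multiplicative factor. The grid-point bounds themselves are obtained by an analogous resampling argument on sub-intervals containing the target grid point, centered at that point.

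The main obstacle is precisely this inductive-step derivation of grid-point lower bounds on $L_k^N$ for $k \geq 2$: no analog of (\ref{UnifTight}) is available at depth $k$, so all lower-bound information must be bootstrapped from that at depth $k-1$ through the Gibbs interaction. The acceptance-probability mechanism of Lemmas \ref{keyL3} and \ref{LAccProb} is what makes this transfer possible, and its interplay with Lemma \ref{keyL1} (maxima) and the inductive form of Lemma \ref{keyL2} (minima at shallower depths) is what accounts for the intricate interwoven structure of the induction, explaining why the three key lemmas are proved in tandem rather than sequentially.
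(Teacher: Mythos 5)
Your base case is sound in spirit (the paper conditions on the entire interval $\llbracket u_N^-, u_N^+\rrbracket$ at once using Lemma~\ref{LNoDip}, while you split into $O(r)$ sub-intervals of length $N^{\alpha}$, but that is only a cosmetic difference), and you correctly identify the interwoven inductive structure. However, your inductive step contains a genuine gap precisely at the point you flag as the \emph{main obstacle}: ``once a grid-point lower bound $L_k^N(\lfloor nN^\alpha\rfloor) \geq -CN^{\alpha/2}$ is available with high probability\dots''. For $k\geq 2$ there is no analog of (\ref{UnifTight}), and your suggestion to obtain that seed bound by ``an analogous resampling argument on sub-intervals\dots centered at that point'' is circular: any such resample needs lower-bound inputs at the endpoints of the sub-interval, which are themselves unknown grid-point values of $L_k^N$. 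Moreover your proposed engine for the transfer is wrong. The acceptance-probability estimate (Lemma~\ref{keyL3} / Lemma~\ref{LAccProb}) cannot rule out downward escape of $L_{k}^N$: lowering the bottom curve only relaxes the constraint seen by the curves above it (since $H(-\infty) = 0$), so $Z_{H,H^{RW}}$ remains bounded away from zero, and in fact increases, as $L_{k}^N\to -\infty$; no acceptance-probability bound can detect that a curve has fallen away.

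What actually closes the induction, and what you never invoke, is the global parabolic curvature $\lambda > 0$ from Assumption~2, which the paper converts into Lemma~\ref{LNoParDip}. The logic is: suppose $L_{m+1}^N$ stays below $-R^{\mathsf{dip}}N^{\alpha/2}$ throughout a side interval. Using Lemma~\ref{keyL3}[$m$] to order the endpoints and Lemma~\ref{keyL1}[$m+1$]/Lemma~\ref{keyL2}[$m$] to control their location, Lemma~\ref{LNoParDip} then shows that the curves $L_1^N,\dots,L_m^N$ above, being essentially unconstrained from below, cannot rise much above the chord connecting their endpoints. But the $-\lambda n^2$ parabola is strictly concave, so the $(\alpha,p,\lambda)$-good hypothesis forces $L_1^N$ at an interior grid point to sit strictly above the chord by an amount proportional to $\lambda \tau^2 N^{\alpha/2}$, producing a contradiction for suitably large~$\tau$. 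This contradiction is the \emph{sole} source of the seed lower bound at depth $k\geq 2$, from which Lemma~\ref{LNoDip} then spreads the lower bound across $[-rN^\alpha, rN^\alpha]$. The paper even remarks immediately after Theorem~\ref{Thm1} that it knows of no way to prove tightness of the full ensemble without the uniform parabolic-shape assumption --- a signal that any argument avoiding it, such as yours, is bound to be incomplete as stated.
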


\begin{lemma}\label{keyL3} Let $\mathbb{P}$ be the measure from the beginning of this section. For any $\epsilon > 0$, $r \geq 2$ and $k \in \llbracket 1, K -1 \rrbracket$ there exist $\delta^{\mathsf{acc}}_k = \delta^{\mathsf{acc}}_k(r, \epsilon) > 0$ and $\Na_k = \Na_k(r, \epsilon) \in \mathbb{N}$ such that for $N \geq \Na_k$ 
\begin{equation}\label{AccP}
\mathbb{P} \left(  Z_{H, H^{RW}}\left(s^-_N, s^+_N, \vec{x}, \vec{y}, \infty, L^N_{k+1}\llbracket s^-_N , s^+_N \rrbracket \right) \leq  \delta^{\mathsf{acc}}_k \right) < \epsilon,
\end{equation}
where $s^{\pm}_N = \lfloor \pm r N^{\alpha} \rfloor$, $\vec{x} = (L_1^N(s^-_N), \dots, L_{k}^N(s^-_N))$, $\vec{y} =  (L_1^N(s^+_N), \dots, L_{k}^N(s^+_N))$, $L_{k+1}^N\llbracket s^-_N, s^+_N \rrbracket$ is the restriction of $L_{k+1}^N$ to the set $\llbracket s^-_N, s^+_N \rrbracket$ and $Z_{H, H^{RW}}$ is as in Definition \ref{Pfree}. Furthermore, there exist $\delta^{\mathsf{sep}}_k = \delta^{\mathsf{sep}}_k(r, \epsilon) > 0$ and $\Ns_k = \Ns_k(r, \epsilon) \in \mathbb{N}$ such that for $N \geq \Ns_k$ 
\begin{equation}\label{Sep}
\mathbb{P} \left(  \cup_{\zeta \in \{\pm 1\}} \cup_{i = 1}^{k-1} \left\{ L_i^N(  s^{\zeta}_N) - L_{i+1}^N(s^{\zeta}_N)  \leq  \delta^{\mathsf{sep}}_k \right\}  \right) < \epsilon,
\end{equation}
\end{lemma}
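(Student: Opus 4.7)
The plan is to combine Lemmas \ref{keyL1} and \ref{keyL2} with the partial $(H, H^{RW})$--Gibbs property, the strong KMT coupling of Proposition \ref{KMT}, and an explicit test--event computation on the free bridge measure. First I would apply Lemmas \ref{keyL1} and \ref{keyL2} to the curves $L_1^N, \dots, L_k^N$, and Lemma \ref{keyL1} to $L_{k+1}^N$, to build a favorable event $F_N$ of probability at least $1 - \epsilon/4$ on which $|L_i^N(x) - p x| \leq M N^{\alpha/2}$ for $i \leq k$ and $L_{k+1}^N(x) \leq p x + M N^{\alpha/2}$, uniformly in $x \in [s^-_N, s^+_N]$, for some $M = M(r,\epsilon)$. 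On $F_N$, by the partial $(H,H^{RW})$-Gibbs property (Definition \ref{DefLGGP}), conditional on the exterior $\sigma$-algebra the law of $(L_1^N, \dots, L_k^N)$ on $\llbracket s^-_N, s^+_N \rrbracket$ equals $\mathbb{P}_{H,H^{RW}}^{1, k, s^-_N, s^+_N, \vec{x}, \vec{y}, \infty, g}$ with $g = L_{k+1}^N\llbracket s^-_N, s^+_N \rrbracket$ and boundary data in the favorable cube; so the acceptance probability $Z$ in \eqref{AccP} becomes a measurable functional of this conditional law.

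To prove \eqref{AccP} I would lower-bound $Z = \mathbb{E}_{H^{RW}}^{1,k,s^-_N,s^+_N,\vec{x},\vec{y}}[W_H]$ by restricting to a test event $A$ on the free bridges on which $W_H \geq e^{-C}$. I would define $A$ to require strong ordered separation $L_i(m) - L_{i+1}(m) \geq D N^{\alpha/2}$ for $i < k$ and $L_k(m) - g(m) \geq D N^{\alpha/2}$ on a central subinterval $[s^-_N + T_0, s^+_N - T_0]$, together with bounded first and last $T_0$ increments of each bridge. The decay assumption $x^2 H(-x) \to 0$ from Definition \ref{AssH} then makes each interior term $H(L_{i+1}(m+1) - L_i(m)) = o(N^{-\alpha})$, so the interior contribution to the exponent of $W_H$ is $o(1)$, while the $O(kT_0)$ endpoint contributions are $O(1)$. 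To lower-bound $\mathbb{P}^{\mathrm{bridge}}(A)$ I would apply Proposition \ref{KMT} to couple each of the $k$ independent $H^{RW}$-bridges to an independent Brownian bridge of diffusion $\sigma_p$; the separation event then reduces to an ordered--tube event for $k$ independent Brownian bridges of length $\Theta(N^{\alpha})$, which has a positive probability depending only on $r, \epsilon, M, D, T_0$.

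For the separation estimate \eqref{Sep}, I would invoke the Gibbs property on the slightly enlarged window $\llbracket s^-_N - 1, s^+_N + 1 \rrbracket$ so that each $L_i^N(s^\pm_N)$ now lies in the interior of the resampled region. By the Radon--Nikodym formula \eqref{RND}, the conditional joint density of $(L_i^N(s^\zeta_N))_{i=1}^{k}$ factorizes as a product of outer step densities $G(u_{i,\zeta} - L_i^N(s^\zeta_N \mp 1))$ times a Boltzmann factor $W_H \leq 1$ times a middle integral, all divided by the corresponding acceptance probability $Z'$. Since $G$ is bounded (Assumption 1 of Definition \ref{AssHR}) and $Z'$ is lower--bounded on a high--probability subset of $F_N$ by the already--established \eqref{AccP}, the marginal density of each gap $L_i^N(s^\zeta_N) - L_{i+1}^N(s^\zeta_N)$ is uniformly bounded by a constant $C = C(r,\epsilon)$ on this subset. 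Hence $\mathbb{P}\bigl(L_i^N(s^\zeta_N) - L_{i+1}^N(s^\zeta_N) \leq \delta^{\mathsf{sep}}_k\bigr) \leq C \delta^{\mathsf{sep}}_k + \epsilon/2$, and choosing $\delta^{\mathsf{sep}}_k$ small enough and taking a union bound over $i \in \llbracket 1, k-1 \rrbracket$ and $\zeta \in \{\pm 1\}$ finishes \eqref{Sep}.

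The hard part will be taming the endpoint Boltzmann terms in the argument for \eqref{AccP}. Lemmas \ref{keyL1}--\ref{keyL2} locate $(\vec{x}, \vec{y})$ only within a cube of side $O(N^{\alpha/2})$ without ordering the coordinates, so the boundary quantities $H(L_{i+1}(s^-_N + 1) - L_i(s^-_N))$ can be very large (exponentially so for $H(x) = e^x$) whenever consecutive endpoints are misordered or close. Designing $A$ so that the first and last few bridge increments force these boundary terms to remain bounded with uniformly positive probability, across all admissible $(\vec{x},\vec{y})$ in the favorable cube, is the technical crux; this is precisely the role of the configuration lemma (\ref{LAccProb}) from Section \ref{Section7.3} that the authors flagged in the outline, and it is where the grand monotone coupling of Lemma \ref{MonCoup} and the strong KMT coupling of Proposition \ref{KMT} must work in tandem to transfer the relevant estimates between interacting $H^{RW}$-bridges and independent Brownian bridges.
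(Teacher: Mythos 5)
The proposal has a structural gap in the treatment of \eqref{AccP}. You apply the Gibbs property on the small window $\llbracket s_N^-, s_N^+\rrbracket$ and then try to lower--bound the acceptance probability $Z$ \emph{deterministically} on the favorable event $F_N$, by building a test event $A$ on the free bridges for which $W_H \geq e^{-C}$ ``with uniformly positive probability across all admissible $(\vec x, \vec y)$ in the favorable cube.'' This cannot work. The favorable cube coming from Lemmas \ref{keyL1}--\ref{keyL2} controls only the \emph{magnitude} $|x_i - p s_N^-| \leq M N^{\alpha/2}$, not ordering; if, say, $x_{i+1} - x_i = c N^{\alpha/2}$ for some $c > 0$, then for the boundary term $H(\ell_{i+1}(s_N^-+1) - x_i)$ to be $O(1)$ the $(i{+}1)$-st bridge must drop by $\Theta(N^{\alpha/2})$ in the first step, an event whose free-bridge probability tends to $0$ as $N \to \infty$. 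Consequently $Z$ can be arbitrarily small on a positive-probability subset of $F_N$, and no deterministic lower bound exists. You correctly flag this as ``the technical crux'' and name Lemma \ref{LAccProb}, but your framing misreads what that lemma delivers: Lemma \ref{LAccProb} is a \emph{probabilistic} statement about the acceptance probability at an \emph{inner} window $\llbracket t_0, t_1\rrbracket$ under a line-ensemble law on a strictly \emph{larger} window $\llbracket T_0, T_1\rrbracket$ (with $\min(t_0-T_0, T_1-t_1, t_1-t_0) \geq t\,\Delta T$). The paper's Step~1 of the proof of \eqref{Imp3} accordingly applies the Gibbs property not on $\llbracket s_N^-, s_N^+\rrbracket$ but on $\llbracket t_N^-, t_N^+\rrbracket$ with $t_N^\pm = \lfloor \pm 2r_0 N^\alpha\rfloor$, $r_0 = \lceil r \rceil + 1$; under this conditioning the values $L_i^N(s_N^\pm)$ become random, and Lemma \ref{LAccProb} then shows that $Z_{H,H^{RW}}(s_N^-, s_N^+, \cdot, \cdot, \infty, g\llbracket s_N^-, s_N^+\rrbracket)$ is, \emph{with high probability} rather than pointwise, bounded away from $0$. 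This nested two-window structure is the central device that breaks the chicken-and-egg between good separation at $s_N^\pm$ and a good acceptance probability, and it is absent from your proposal.

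Your argument for \eqref{Sep} is genuinely different from the paper's. The paper uses the trichotomy of events $\mathsf{Sep}^N_{m+1}$, $\mathsf{Osc}^N_{m+1}$, $\mathsf{Ord}^N_{m+1}$ together with Lemma \ref{LNotClose} (free bridges are not close), Lemma \ref{LNoBigJump} (no $O(N^{\alpha/4})$ single-step jumps), and the observation that close-but-well-separated-and-no-jumps forces a large backward crossing and hence a tiny Boltzmann factor $\exp(-H(3N^{\alpha/4}))$. Your absolute-continuity/density-bound argument via the one-step-enlarged window is a plausible alternative but has two caveats you should make explicit: (i) you need \eqref{AccP} (or a variant) for the window $\llbracket s_N^--1, s_N^++1\rrbracket$, not $\llbracket s_N^-, s_N^+\rrbracket$, which is fine but must be stated; (ii) bounding the marginal density of the gap $L_i^N(s_N^\zeta) - L_{i+1}^N(s_N^\zeta)$ requires $\sup_u p^{\mathrm{free}}_i(u) = \sup_u \frac{G(u - x'_i)\,G_{T-1}^u(y'_i)}{G_T^{x'_i}(y'_i)} = O(1)$, which in turn needs a local-CLT-type lower bound on $G_T^{x'_i}(y'_i)$ for diffusively-scaled endpoints; the paper never proves such an estimate (it works entirely with sup-coupling bounds from Proposition \ref{KMT}), so this would be an additional ingredient you would have to supply.
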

\begin{remark} From Lemma \ref{ContinuousGibbsCond} we have that $Z_{H, H^{RW}}\left(a, b, \vec{x}, \vec{y}, \infty, g\right) $ is a bounded measurable function of $(\vec{x}, \vec{y}, g) \in Y(\llbracket 1,k \rrbracket) \times Y(\llbracket 1,k \rrbracket) \times Y^-(\llbracket a,b\rrbracket) $, which takes values in $[0,1]$. In particular, the event in (\ref{AccP}) is measurable and its probability is well-defined.
\end{remark}

We now turn to the proof of Theorem \ref{PropTightGood}$(i)$.

\begin{proof}[Proof of Theorem \ref{PropTightGood}$(i)$] For clarity we split the proof into three steps.\\

{\bf \raggedleft Step 1.} By Lemma \ref{2Tight} to establish that ${\mathbb{P}}_N$ is tight, it suffices to verify that for all $i \in \llbracket 1,K-1\rrbracket$
\begin{equation}\label{ThmCond1}
\lim_{a\to\infty} \limsup_{N\to\infty} {\mathbb{P}}(|{f}^N_i(0)|\geq a) = 0 
\end{equation}
and also for any $r, \epsilon, \eta > 0$ and $i \in \llbracket 1,K-1\rrbracket$
\begin{equation}\label{ThmCond2}
\lim_{\delta\to 0} \limsup_{N\to\infty} {\mathbb{P}}\left(\sup_{{x,y\in [-r,r], |x-y|\leq\delta}} |{f}^N_i(x) - {f}^N_i(y)| \geq \epsilon \right) \leq \eta.
\end{equation}
Equation (\ref{ThmCond1}) is an immediate consequence of Lemmas \ref{keyL1} and \ref{keyL2}, and so we focus on (\ref{ThmCond2}). In the sequel we fix $r, \epsilon, \eta > 0$ and $i \in \llbracket 1,K-1\rrbracket$.\\

Let $t_N^{\pm} = \lfloor \pm (r+1) N^{\alpha} \rfloor$. We claim we can find $ \delta > 0$ such that for all $N$ sufficiently large we have
\begin{equation}\label{PTG2.5}
\mathbb{P} \bigg(  \sup_{\substack{ x,y \in [ t^-_N, t_N^+] \\  |x - y| \leq   \delta (t_N^+ - t_N^-) }}  \big| L_i^N(x) - L_i^N(y) - p(x - y) \big| \geq \frac{\epsilon  \sigma_p (t_N^+ -t_N^-)^{1/2}}{2   (2r+2)^{1/2}}  \bigg) \leq \eta.
\end{equation}
As usual, we are treating $L_i^N$ as the continuous curve which linearly interpolates between its values on integers. We establish (\ref{PTG2.5}) in the steps below. Here we assume its validity and deduce (\ref{ThmCond2}).\\

Using that for all large enough $N$ we have $(t_N^+ - t_N^-) N^{-\alpha} \geq 1$ we get
\begin{equation*}
 \mathbb{P}\left(\sup_{{x,y\in [-r,r], |x-y|\leq\delta}} \hspace{-2mm} |{f}^N_i(x) - {f}^N_i(y)| \geq \epsilon \right) \leq \mathbb{P} \bigg(  \sup_{\substack{ x,y \in [ t_N^-, t_N^+]  \\  |x - y| \leq \delta(t_N^+ - t_N^-) }} \hspace{-3mm} \big| L_i^N(x) - L_i^N(y) - p(x - y) \big| \geq \sigma_p \epsilon N^{\alpha/2} \bigg).
\end{equation*}
Since $t^{\pm}_N = \lfloor \pm (r+1)N^{\alpha} \rfloor$ we see that $\frac{\sigma_p \epsilon (t_N^+ - t_N^-)^{1/2}}{2 (2r+2)^{1/2}} \sim (\sigma_p \epsilon /2) N^{\alpha/2}$ as $N$ becomes large and so we conclude that for all sufficiently large $N$ we have $\frac{\sigma_p \epsilon (t_N^+ -t_N^-)^{1/2}}{2 (2r+2)^{1/2}} < \sigma_p \epsilon N^{\alpha/2}$. The observations in this paragraph and (\ref{PTG2.5}) together imply (\ref{ThmCond2}). \\

{\raggedleft \bf Step 2.} In Step 1 we reduced the proof of the theorem to establishing (\ref{PTG2.5}). This step sets up notation, which we need in the this and the next step in order to prove (\ref{PTG2.5}).

From Lemmas \ref{NoBigMax} and \ref{NoLowMin} we can find $M_1 > 0$ sufficiently large so that for all large $N$ we have
$$\mathbb{P} (E_1) \geq 1 - \eta/4,\quad \mbox{where}\quad  E_1 = \bigg\{ \max_{\zeta \in \{ +, -\} , i \in \llbracket 1, K-1 \rrbracket } \big| L^N_i(t^{\zeta}_N) - pt^{\zeta}_N \big|  \leq M_1N^{\alpha/2} \bigg\}.$$
In addition, by Lemma \ref{keyL3} we can find $\delta_1 > 0$ such that for all large $N$ we have
$$\mathbb{P} (E_2) \geq 1 - \eta/4,\quad \mbox{where}\quad   E_2 = \Big\{ Z_{H, H^{RW}}\left(t^-_N, t^+_N, \vec{x}, \vec{y}, \infty, L_{K}\llbracket t^-_N , t^+_N \rrbracket \right) > \delta_1  \Big\},$$
where $\vec{x} = (L_1^N(t^-_N), \dots, L_{K}^N(t^-_N))$, $\vec{y} =  (L_1^N(t^+_N), \dots, L_{K}^N(t^+_N))$, and we recall that $L_{K}^N\llbracket t^-_N, t^+_N \rrbracket$ is the restriction of $L_{K}^N$ to the set $\llbracket t^-_N, t^+_N \rrbracket$.

For $\delta > 0$ and any continuous curve $\ell$ on $[t_N^-, t_N^+]$ we define
$$V(\delta, \ell) = \sup_{\substack{ x,y \in [t^-_N, t^+_N] \\  |x - y| \leq  \delta (t_N^+ -t_N^-)}}  \left| \ell(x) - \ell(y) - p(x - y) \right|.$$
We assert that we can find $\delta > 0$ such that for all large $N$ we have
\begin{equation}\label{PTG7.5}
\mathbb{P} \Big{(} V(\delta, L_i^N[t_N^-, t_N^+])\geq A \big\} \cap E_1\cap E_2 \Big{)} \leq \eta/2,\quad \mbox{where}\quad  A = \frac{\sigma_p \epsilon (t_N^+ -t_N^-)^{1/2}}{2 (2r+2)^{1/2}}.
\end{equation}
In the above $L_i^N[t_N^-, t_N^+]$ denotes the restriction of $L_1^N$ to the interval $[t_N^-, t_N^+].$

Let us assume the validity of (\ref{PTG7.5}) and deduce (\ref{PTG2.5}). We have
$$\mathbb{P}\Big{(} V(\delta, L_i^N[t_N^-, t_N^+] )\geq A \Big{)} \leq \mathbb{P} \Big{(}\big\{V(\delta, L_i^N[t_N^-, t_N^+] )\geq A \big\} \cap E_1\cap E_2\Big{)} + \eta/2 < \eta,$$
where we used that $\mathbb{P}( E^c_1 ) \leq \eta/4$ and $\mathbb{P}( E^c_2) \leq \eta/4$. Identifying  $\mathbb{P}\big(V(\delta, L_i^N[t_N^-, t_N^+])\geq A \big)$ with  the left-hand side of \eqref{PTG2.5} we see that the last inequality implies (\ref{PTG2.5}).\\

{\raggedleft \bf Step 3.} In this step we establish (\ref{PTG7.5}). Let us write $F_{\delta} = \{V(\delta, L_i^N[t_N^-, t_N^+] )\geq A \big\}$.
Using the $(H,H^{RW})$-Gibbs property (see (\ref{GibbsEq})) we know that
\begin{equation}\label{PTG10}
\begin{split}
&\mathbb{P} \Big{(}\big\{V(\delta, L_i^N[t_N^-, t_N^+] )\geq A \big\} \cap E_1\cap E_2\Big{)} = \mathbb{E} \left[ \mathbb{E} \left[ {\bf 1}_{F_\delta}  {\bf 1}_{E_1}  {\bf 1}_{E_2}  \big{\vert} \mathcal{F}_{ext} ( \{i\} \times \llbracket t_N^- \hspace{-1mm}+ 1, t_N^+ \hspace{-1mm} - 1 \rrbracket) \right] \right] = \\
&\mathbb{E} \left[  {\bf 1}_{E_1}  {\bf 1}_{E_2} \mathbb{E} \left[ {\bf 1}_{F_\delta}   \big{\vert} \mathcal{F}_{ext} ( \{i\} \times \llbracket t_N^- \hspace{-1mm}+ 1, t_N^+ \hspace{-1mm} - 1 \rrbracket) \right] \right] =\mathbb{E} \left[  {\bf 1}_{E_1} \cdot {\bf 1}_{E_2} \cdot \mathbb{E}_{H, H^{RW}}\left[ {\bf 1}\{ V(\delta, \ell_i) \geq A\} \right]  \right],
\end{split}
\end{equation}
where we have written $\mathbb{E}_{H, H^{RW}}$ to stand for $\mathbb{E}_{H, H^{RW}}^{1, K-1, t_N^-, t_N^+,\vec{x}, \vec{y}, \infty,  L_K^N \llbracket t_N^-, t_N^+ \rrbracket}$ to ease the notation, with $\vec{x}, \vec{y}$ as in Step 2. The $\llbracket 1, K- 1 \rrbracket$-indexed line ensemble on $\llbracket t_N^-, t_N^+ \rrbracket$ that has law $\mathbb{P}_{H, H^{RW}}$ is denoted by $(\ell_1, \dots, \ell_{K-1})$ in (\ref{PTG10}).

In addition, from (\ref{RND}) we have
\begin{equation}\label{PTG11}
\begin{split}
 \mathbb{E}_{H, H^{RW}}\left[ {\bf 1}\{ V(\delta, \ell_i) \geq A\} \right]  = \frac{\mathbb{E}_{H^{RW}}^{1, K-1, t_N^-, t_N^+,\vec{x}, \vec{y}} \left[ W_H \cdot {\bf 1}\{ V(\delta, \tilde{\ell}_i) \geq A\}  \right]}{Z_{H, H^{RW}}\left(t^-_N, t^+_N, \vec{x}, \vec{y}, \infty, L_{K}\llbracket t^-_N , t^+_N \rrbracket \right)},
\end{split}
\end{equation}
where we have written $W_H$ in place of $W_{H}^{1, K-1, t_N^- ,t_N^+,\infty, L_{K}\llbracket t^-_N , t^+_N \rrbracket} (\tilde{\ell}_{1}, \dots, \tilde{\ell}_{K-1})$ to ease the notation. The $\llbracket 1, K- 1 \rrbracket$-indexed line ensemble on $\llbracket t_N^-, t_N^+ \rrbracket$ that has law $\mathbb{P}_{H^{RW}}^{1, K-1, t_N^-, t_N^+,\vec{x}, \vec{y}} $ is denoted by $(\tilde{\ell}_1, \dots, \tilde{\ell}_{K-1})$ in (\ref{PTG11}).

We next use the fact that $W_H \in [0,1]$ and $Z_{H, H^{RW}}\left(t^-_N, t^+_N, \vec{x}, \vec{y}, \infty, L_{K}\llbracket t^-_N , t^+_N \rrbracket \right) > \delta_1$ on $E_2$ by definition to conclude that
\begin{equation}\label{PTG12}
\begin{split}
 {\bf 1}_{E_1} {\bf 1}_{E_2} \cdot  \mathbb{E}_{H, H^{RW}}\left[ W_H \cdot{\bf 1}\{ V(\delta, \ell_i) \geq A\} \right]  \leq  {\bf 1}_{E_1} {\bf 1}_{E_2} \cdot  \frac{\mathbb{P}_{H^{RW}}^{t_N^-, t_N^+,L_i^N(t_N^-), L_i^N(t_N^+)} \left( V(\delta, \ell) \geq A  \right)}{\delta_1},
\end{split}
\end{equation}
where we used that $\tilde{\ell}_i$ under $\mathbb{P}_{H^{RW}}^{1, K-1, t_N^-, t_N^+,\vec{x}, \vec{y}}$ has the same law as $\ell$ under $\mathbb{P}_{H^{RW}}^{t_N^-, t_N^+,L_i^N(t_N^-), L_i^N(t_N^+)}$ (recall that this notation was defined in Section \ref{Section2.4}).\\

We now observe that
\begin{equation}\label{PTG13}
\begin{split}
\mathbb{P}_{H^{RW}}^{t_N^-, t_N^+,L_i^N(t_N^-), L_i^N(t_N^+)} \left( V(\delta, \ell) \geq A  \right)= \mathbb{P}_{H^{RW}}^{0, t_N^+ - t_N^-,0, L_i^N(t_N^+) - L_i^N(t_N^-)} \left(w(f^{\ell},\delta ) \geq \frac{\sigma_p \epsilon}{2(2r+2)^{1/2}}  \right),
\end{split}
\end{equation}
where on the right side $\ell$ is $\mathbb{P}_{H^{RW}}^{0, t_N^+ - t_N^-,0, L_i^N(t_N^+) - L_i^N(t_N^-)} $-distributed and we used the notation $f^\ell$ from Lemma \ref{MOCLemmaS4}. In deriving the above equation we used the definition of $A$, as well as the fact that if two random curves $\ell$ and $\tilde{\ell}$ are distributed according to $\mathbb{P}_{H^{RW}}^{t_1, t_2,x,y}$ and $\mathbb{P}_{H^{RW}}^{0, t_2 - t_1,0,y - x}$ then they have the same distribution except for a re-indexing and a vertical shift by $x$ -- hence their modulus of continuity has the same distribution. Notice that on the event $E_1$ we have that
$$
| L_i^N(t_N^+) - L_i^N(t_N^-)  - p(t_N^+ - t_N^-)| \leq 2M_1 N^{\alpha/2} \leq 2M_1 (t_N^+- t_N^-)^{1/2}.
$$
The latter and Lemma \ref{MOCLemmaS4} (applied to $\eta = \delta_1 (\eta/2)$, $\epsilon =  \frac{\sigma_p \epsilon}{2(2r+2)^{1/2}}$, $M = 2M_1$, $p$ as in the statement of the theorem and $T = t_N^+ - t_N^-$) together imply that we can find $\delta > 0$ sufficiently small such that for all large enough $N$ we have
\begin{equation}\label{PTG14}
\begin{split}
{\bf 1}_{E_1} \cdot \mathbb{P}_{H^{RW}}^{0, t_N^+ - t_N^-,0, L_i^N(t_N^+) - L_i^N(t_N^-)} \left(w(f^{\ell},\delta ) \geq \frac{\sigma_p \epsilon}{2(2r+2)^{1/2}}  \right) \leq{\bf 1}_{E_1} \cdot \delta_1 \eta/2.
\end{split}
\end{equation}

Combining (\ref{PTG12}), (\ref{PTG13}), (\ref{PTG14}) we see that
\begin{equation*}
\begin{split}
 {\bf 1}_{E_1} \cdot {\bf 1}_{E_2} \cdot  \mathbb{E}_{H, H^{RW}}\left[ {\bf 1}\{ V(\delta, \ell_i) \geq A\} \right]  \leq  {\bf 1}_{E_1} \cdot {\bf 1}_{E_2} \cdot  \eta/2,
\end{split}
\end{equation*}
which together with (\ref{PTG10}) implies (\ref{PTG7.5}). This suffices for the proof.
\end{proof}

%
\subsection{Proof of Theorem \ref{PropTightGood}$(ii)$ }\label{Section3.4} In this section we prove Theorem \ref{PropTightGood}$(ii)$. We continue with the same notation as in Section \ref{Section3.3}. 

We begin by summarizing a bit of notation in the following definition.
\begin{definition}\label{scaledRW}
Let $N, k \in \mathbb{N}$, $p \in \mathbb{R}$, $\alpha > 0$, $a,b\in N^{-\alpha}\mathbb{Z}$ with $a<b$, $\vec{x}, \vec{y} \in \mathbb{R}^k$ and set $A = aN^{\alpha}, B = bN^{\alpha}$. Let $\mathfrak{X}^N = (X^N_1, \dots, X^N_k)$ be a $\llbracket 1, k \rrbracket$-indexed discrete line ensemble on $\llbracket A, B \rrbracket$  with law $\mathbb{P}^{1, k, A, B, \vec{x}, \vec{y}, f, g}_{H, H^{RW}}$ as in Definition \ref{Pfree}, where $f \in Y^+(\llbracket A, B \rrbracket)$ and $g \in Y^-(\llbracket A, B \rrbracket)$. Here we assume that $H^{RW}$ is as in Definition \ref{AssHR} and $H$ as in Definition \ref{AssH}.

Let $\mathcal{Y}^N = (\mathcal{Y}^N_1, \dots, \mathcal{Y}^N_k)$ be the $\llbracket 1, k\rrbracket$-indexed line ensemble on $[a,b]$, defined via
\begin{equation}\label{ContLE}
\mathcal{Y}^N_i(t) = N^{-\alpha/2} \sigma_p^{-1} \cdot \left( X^N_i(tN^{\alpha}) - p s N^{\alpha} \right), \quad t\in [a,b],
\end{equation}
where $\sigma_p$ is as in Definition \ref{AssHR}. We denote the law of $\mathcal{Y}^N$ by $\mathbb{P}^{\mathsf{scaled}}_N$ for brevity, and write $\mathbb{E}^{\mathsf{scaled}}_N$ for the expectation with respect to this measure.
\end{definition}

We next state a technical lemma, which will be required for the proof of Theorem \ref{PropTightGood}$(ii)$, and whose proof is postponed until Section \ref{Section7} -- see Lemma \ref{S7scaledavoidBB}. In plain words, the lemma states that if the boundary data of the measures $\mathbb{P}^{\mathsf{scaled}}_N$ from Definition \ref{scaledRW} converge as $N \rightarrow \infty$, then $\mathbb{P}^{\mathsf{scaled}}_N$ converge weakly to the law of avoiding Brownian bridges with the boundary limiting data, as in Definition \ref{DefAvoidingLaw}.
\begin{lemma}\label{scaledavoidBB}
Fix $k\in\mathbb{N}$, $p \in \mathbb{R}$, $\alpha > 0$, and $a,b\in\mathbb{R}$ with $a<b$. Let $f_{\infty}:[a-1,b+1]\to(-\infty,\infty]$, $g_{\infty}:[a-1,b+1]\to[-\infty,\infty)$ be continuous functions such that $f_{\infty}(t) > g_{\infty}(t)$ for all $t\in[a-1,b+1]$. Let $\vec{x},\vec{y}\in W_k^\circ$ be such that $f_{\infty}(a) > x_1$, $f_{\infty}(b) > y_1$, $g_{\infty}(a) < x_k$, and $g_{\infty}(b) < y_k$. Let $a_N = \lfloor aN^\alpha\rfloor N^{-\alpha}$ and $b_N = \lceil bN^\alpha\rceil N^{-\alpha}$, and let $f_N : [a-1,b+1]\to(-\infty,\infty]$ and $g_N : [a-1,b+1]\to[-\infty,\infty)$ be continuous functions such that $f_N\to f_{\infty}$ and $g_N\to g_{\infty}$ uniformly on $[a-1,b+1]$. If $f_{\infty} \equiv \infty$ the last statement means that $f_N \equiv \infty$ for all large enough $N$ and if $g_{\infty} \equiv - \infty$ the latter means that $g_N \equiv -\infty$ for all large enough $N$. 

Lastly, let $\vec{x}\,^N, \vec{y}\,^N \in \mathbb{R}^k$, write $\tilde{x}^N_i =  N^{-\alpha/2} \sigma_p^{-1} (x_i^N - pa_N N^{\alpha})$, $\tilde{y}^N_i = N^{-\alpha/2} \sigma_p^{-1} (y_i^N - pb_N N^{\alpha})$, and suppose that $\tilde{x}^N_i \to x_i$ and $\tilde{y}^N_i \to y_i$ as $N\to\infty$ for each $i\in\llbracket 1,k\rrbracket$. 

Let $\mathcal{Y}^N$ have laws $\mathbb{P}^{\mathsf{scaled}}_N$ as in Definition \ref{scaledRW} for $\vec{x} = \vec{x}^N, \vec{y} = \vec{y}^N$, $a = a_N$, $b = b_N$, $f,g$ given by
\begin{equation*}
\begin{split}
&N^{-\alpha/2} \sigma_p^{-1}( f(xN^{\alpha})  - p x N^{\alpha}) = f_N(x) \mbox{ and } N^{-\alpha/2} \sigma_p^{-1}( g(xN^{\alpha})  - p x N^{\alpha}) = g_N(x) \mbox{ for $x \in [a_N,  b_N]$}.
\end{split}
\end{equation*}
Let $\mathcal{Z}^N = \mathcal{Y}^N|_{\llbracket 1, k\rrbracket \times[a,b]}$, i.e. $\mathcal{Z}^N$ is a sequence of random variables on $C(\llbracket 1, k \rrbracket \times [a,b])$ obtained by projecting $\mathcal{Y}^N$ to $\llbracket 1, k\rrbracket \times[a,b]$, then the laws of $\mathcal{Z}^N$ converge weakly to $\mathbb{P}^{a,b,\vec{x},\vec{y},f_{\infty},g_{\infty}}_{avoid}$ as $N\to\infty$.
\end{lemma}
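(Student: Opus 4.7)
The plan is to write $\mathbb{P}^{\mathsf{scaled}}_N$ as a Radon--Nikodym reweighting of the free (unconstrained) random walk bridge ensemble via (\ref{RND}), analyse the free ensemble using Proposition \ref{KMT}, and show the reweighting factor concentrates on the indicator of the avoidance event. Let $\tilde{\mathfrak{X}}^N$ have law $\mathbb{P}_{H^{RW}}^{1,k,A_N,B_N,\vec{x}^{\,N},\vec{y}^{\,N}}$, where $A_N = a_N N^\alpha$ and $B_N = b_N N^\alpha$, and let $\tilde{\mathcal{Y}}^N$ be its scaled version as in Definition \ref{scaledRW}. Applying Proposition \ref{KMT} independently to each of the $k$ bridges (with a translation to accommodate non-zero starting points), one constructs a coupling under which $\tilde{\mathcal{Y}}^N$ converges uniformly on $[a,b]$ in probability to a $k$-tuple of independent Brownian bridges $\tilde{\mathcal{Y}}^{\infty} = (B_1,\dots,B_k)$ with $B_i(a) = x_i$, $B_i(b) = y_i$, and diffusion parameter $1$. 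The quantity $|z - pT|^2/T$ in (\ref{KMTeq}) remains bounded in $N$ thanks to the hypotheses $\tilde{x}^N_i \to x_i$ and $\tilde{y}^N_i \to y_i$, so $N^{-\alpha/2}\Delta(T,z) \to 0$ in probability and the scaled free ensemble converges uniformly.

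Next I turn to the weight $W_H$, whose argument of $H$ for $1 \leq i \leq k-1$ equals $p + \sigma_p N^{\alpha/2}\bigl(\tilde{\mathcal{Y}}^N_{i+1}((m+1)/N^\alpha) - \tilde{\mathcal{Y}}^N_i(m/N^\alpha)\bigr)$, with analogous expressions in terms of $f_N, g_N$ for $i = 0$ and $i = k$. Let $E = \{f_{\infty}(t) > B_1(t) > \cdots > B_k(t) > g_{\infty}(t) \text{ for all } t \in [a,b]\}$. On $E$, strict ordering plus compactness and uniform continuity of the limits force each argument to be $\leq -c \sigma_p N^{\alpha/2}$ for some random $c > 0$ and all large $N$; the hypothesis $\lim_{x\to\infty} x^2 H(-x) = 0$ then makes each summand of $\log W_H$ equal to $o(N^{-\alpha})$ uniformly in $i,m$, and since there are $O(kN^\alpha)$ summands the double sum tends to zero, yielding $W_H \to 1$. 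Conversely, on $\bar{E}^{\,c}$ strict ordering fails on an interval of positive length in the limit, so a positive fraction of the arguments remains $\geq c' > 0$ for some $c' > 0$, and $\log W_H \to -\infty$, giving $W_H \to 0$. The boundary $\partial E$ has probability zero under the law $\mathbb{P}^{\infty}_{\mathsf{free}}$ of the independent limiting Brownian bridges, as follows from the continuous distribution of the minima of Brownian bridges with positive endpoints, so $W_H \to \mathbf{1}_E$ almost surely on the coupling.

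Combining these ingredients via the Radon--Nikodym identity, for any bounded continuous $F$ on $C(\llbracket 1,k \rrbracket \times [a,b])$ one has
\begin{equation*}
\mathbb{E}^{\mathsf{scaled}}_N\bigl[F(\mathcal{Z}^N)\bigr] = \frac{\mathbb{E}^{\mathsf{free}}_N\bigl[W_H \cdot F(\tilde{\mathcal{Z}}^N)\bigr]}{\mathbb{E}^{\mathsf{free}}_N[W_H]},
\end{equation*}
where $\mathbb{E}^{\mathsf{free}}_N$ denotes expectation under the scaled free bridge measure. Since $W_H \in [0,1]$ and $F$ is bounded, bounded convergence on the coupling space shows the numerator and denominator tend to $\mathbb{E}^{\infty}_{\mathsf{free}}[\mathbf{1}_E F(\tilde{\mathcal{Y}}^{\infty})]$ and $\mathbb{P}^{\infty}_{\mathsf{free}}(E)$ respectively, and the ratio is exactly $\mathbb{E}_{avoid}^{a,b,\vec{x},\vec{y},f_{\infty},g_{\infty}}[F]$ by Definition \ref{DefAvoidingLaw}. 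Positivity of $\mathbb{P}^{\infty}_{\mathsf{free}}(E)$ follows from $\vec{x},\vec{y} \in \weyl_k$, $f_{\infty}(a) > x_1$, $g_{\infty}(a) < x_k$, and likewise at $b$. The main obstacle will be the rigorous on--$E$ decay of $\log W_H$ to zero, which requires the gap lower bound $c\sigma_p N^{\alpha/2}$ to hold at the scale of consecutive times $m/N^\alpha$ and $(m+1)/N^\alpha$: this is where uniform continuity of the limiting Brownian bridges (applied at the scale $N^{-\alpha}$) and the decay hypothesis on $H(-x)$ work together in a quantitative way, possibly with Lemma \ref{MOCLemmaS4} used to upgrade the control on moduli of continuity of $\tilde{\mathcal{Y}}^N$ itself.
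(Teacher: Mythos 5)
Your proposal is correct and follows the same overall strategy as the paper's proof: express $\mathbb{P}^{\mathsf{scaled}}_N$ as a Radon--Nikodym reweighting of the free bridge ensemble, use the KMT coupling of Proposition~\ref{KMT} (in the paper, after passing to a Skorokhod representation so that the scaled free bridges converge almost surely and uniformly on $[a,b]$ to independent Brownian bridges), show that $W_H \to \mathbf{1}_E$ almost surely using the two tail conditions on $H$ in Definition~\ref{AssH}, invoke the zero-probability of the "touching-without-crossing" event (in the paper this is Lemma~\ref{NoTouch}, quoted from \cite{CorHamA}), and conclude by bounded convergence together with positivity of $\mathbb{P}^{\infty}_{\mathsf{free}}(E)$ via Lemma~\ref{Spread}.

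The ``main obstacle'' you flag at the end is not actually an obstacle and requires no invocation of Lemma~\ref{MOCLemmaS4}. Once the scaled free bridges are coupled so that $\tilde{\mathcal Y}^N \to (B_1,\dots,B_k)$ almost surely in the uniform topology, the argument is pointwise on each fixed realisation $\omega \in E$: the limiting curves $B_i(\omega)$ are continuous (hence uniformly continuous) on $[a,b]$, and on $E$ there is an $\omega$-dependent gap $\delta(\omega)>0$ between consecutive curves. For $N$ large enough (depending on $\omega$), both the sup-distance $\sup|\tilde{\mathcal Y}^N_i - B_i|$ and the increments $|B_i(t)-B_i(s)|$ for $|t-s| \leq N^{-\alpha}$ drop below $\delta(\omega)/4$, so the arguments $L^N_{i+1}(m+1)-L^N_i(m)$ are all $\leq -\tfrac{1}{2}\delta(\omega)\sigma_p N^{\alpha/2}$ and the $x^2H(-x)\to 0$ hypothesis makes the $O(N^\alpha)$-term sum vanish; this pointwise bound combined with $W_H \in [0,1]$ is exactly what bounded convergence needs. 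Also, on the strict-crossing set a single index $(i,m)$ with argument $\geq c'\sigma_p N^{\alpha/2}$ already forces $W_H \to 0$ via $H(x)\to\infty$, so your ``positive fraction of the arguments'' condition is sufficient but stronger than necessary.
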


With the above result in place we are ready to give the proof of Theorem \ref{PropTightGood}$(ii)$.

\begin{proof}[Proof of Theorem \ref{PropTightGood}$(ii)$] As the proof we present is quite similar to the proof of \cite[Theorem 2.26$(ii)$]{DREU} we will omit some of the details. 

From our work in Section \ref{Section3.3} we know that the sequence of $\llbracket 1, K -1 \rrbracket$-indexed line ensembles $\mathcal{L}^N = ({f}^N_1,\dots,{f}^N_{K-1}) $ on $\mathbb{R}$ is tight. Since $\mathcal{L}^\infty$ is a weak subsequential limit of $\mathcal{L}^N$ by possibly passing to a subsequence we may assume that $\mathcal{L}^N \implies \mathcal{L}^\infty$. We will still call the subsequence $\mathcal{L}^N$. By the Skorokhod representation theorem \cite[Theorem 6.7]{Bill}, we can also assume that $\mathcal{L}^N$ and $\mathcal{L}^\infty$ are all defined on the same probability space with measure $\mathbb{P}$ and the convergence is $\mathbb{P}$-almost sure. Here we are implicitly using Lemma \ref{Polish}  from which we know that the random variables $\mathcal{L}^N$ and $\mathcal{L}^\infty$ take value in a Polish space so that the Skorokhod representation theorem is applicable. 

Fix a set $\Sigma_1 = \llbracket k_1,k_2\rrbracket \subseteq \llbracket 1, K-2\rrbracket$ and $a,b\in\mathbb{R}$ with $a<b$. We also fix a bounded Borel-measurable function $F:C(\Sigma_1 \times[a,b])\to\mathbb{R}$. To complete the proof we need to show that $\mathcal{L}^{\infty}$ is non-intersecing almost surely and also
\begin{equation}\label{BGPcondex}
\mathbb{E}[F(\mathcal{L}^\infty|_{\Sigma_1 \times[a,b]})\,|\,\mathcal{F}_{ext}(\Sigma_1 \times(a,b))] = \mathbb{E}^{a,b,\vec{x},\vec{y},f,g}_{avoid}[F(\mathcal{Q})] \hspace{3mm} \mbox{ $\mathbb{P}$-a.s., }
\end{equation}
where $\vec{x} = ({f}^\infty_{k_1}(a),\dots,{f}^\infty_{k_2}(a))$, $\vec{y} = ({f}^\infty_{k_1}(b),\dots,{f}^\infty_{k_2}(b))$, $f={f}^\infty_{k_1-1}$ (with ${f}^\infty_0 = +\infty$), $g={f}^\infty_{k_2+1}$, the $\sigma$-algebra $\mathcal{F}_{ext}(\Sigma_1 \times(a,b))$ is as in Definition \ref{DefPBGP}, and $\mathcal{Q}$ has law $\mathbb{P}^{a,b,\vec{x},\vec{y},f,g}_{avoid}$. We mention that by Lemma \ref{keyL3} we have $\mathbb{P}$-a.s. that $\mathcal{L}^\infty_i (x) > \mathcal{L}^\infty_{i+1} (x)$ for all $i \in \llbracket 1, K-2\rrbracket$ and $x \in \mathbb{R}$ so that the right side of (\ref{BGPcondex}) is well-defined. In addition, we mention that if (\ref{BGPcondex}) holds then by a simple monotone class argument we would have for any bounded Borel-measurable function $H: C(\llbracket 1, K -1 \rrbracket \times [a,b]) \rightarrow \mathbb{R}$ 
\begin{equation}\label{S4Fixed1}
\mathbb{E} \left[ H(\mathcal{L}^{\infty}|_{\llbracket 1, K- 1 \rrbracket \times [a,b]}) | \mathcal{F}_{ext}( \llbracket 1, K- 2 \rrbracket \times (a,b)) \right] =  \mathbb{E}^{a,b,\vec{x},\vec{y},f,g}_{avoid}[H(\mathcal{Q},g)] \hspace{3mm} \mbox{ $\mathbb{P}$-a.s., }
\end{equation}
where on the right side $(\mathcal{Q},g)$ is the line ensemble with $K-1$ curves, whose top $K-2$ curves agree with $\mathcal{Q}$ and the $(K-1)$-st one agrees with $g$. By letting $H(f_1, \dots, f_{K-1}) = {\bf 1} \{f_1(s) > \cdots > f_{K-1}(s) \mbox{ for all }s \in [a,b] \}$ in (\ref{S4Fixed1}) we would conclude that $\mathcal{L}^{\infty}$ is almost surely non-intersecting. Thus what remains is to prove (\ref{BGPcondex}). 
	
Fix $m\in\mathbb{N}$, $n_1,\dots,n_m\in\Sigma$, $t_1,\dots,t_m\in\mathbb{R}$, and $h_1,\dots,h_m : \mathbb{R}\to\mathbb{R}$ bounded continuous functions. Define $S = \{i\in\llbracket 1,m\rrbracket : n_i \in \Sigma_1, t_i \in [a,b]\}$. We will prove that
\begin{equation}\label{BBcondexsplit}
\mathbb{E}\left[\prod_{i=1}^m h_i({f}^\infty_{n_i}(t_i))\right] = \mathbb{E}\left[\prod_{s\notin S} h_s({f}^\infty_{n_s}(t_s))\cdot\mathbb{E}^{a,b,\vec{x},\vec{y},f,g}_{avoid}\left[\prod_{s\in S} h_s(Q_{n_s}(t_s))\right]\right],
\end{equation}
where $Q$ denotes a random variable with law $\mathbb{P}^{a,b,\vec{x},\vec{y},f,g}_{avoid}$. The fact that (\ref{BBcondexsplit}) implies (\ref{BGPcondex}) is a standard monotone class argument and so we omit it. The interested reader is referred to Step 2 in the proof of  \cite[Theorem 2.26$(ii)$]{DREU}, which can be repeated verbatim to show that (\ref{BBcondexsplit}) implies (\ref{BGPcondex}).\\

In the remainder we focus on proving (\ref{BBcondexsplit}). From the $\mathbb{P}$-a.s. convergence of $\mathcal{L}^N$ to $\mathcal{L}^{\infty}$ we have
\begin{equation}\label{BGPweak}
\mathbb{E}\left[\prod_{i=1}^m h_i({f}^\infty_{n_i}(t_i))\right] =  \lim_{N\to\infty}\mathbb{E}\left[\prod_{i=1}^m h_i(\mathcal{L}^N_{n_i}(t_i))\right].
\end{equation}
We define the sequences $a_N = \lfloor aN^\alpha\rfloor N^{-\alpha}$, $b_N = \lceil bN^\alpha\rceil N^{-\alpha}$, $\vec{x}^N = (L_{k_1}^N(a_N),\dots,L_{k_2}^N(a_N))$, $\vec{y}^N = (L_{k_1}^N(b_N),\dots,L_{k_2}^N(b_N))$, $f_N = {f}_{k_1-1}^N$ (where ${f}^N_0 = +\infty$), $g_N = {f}_{k_2+1}^N$. Since the line ensemble $(L_1^N,\dots,L_{K-1}^N)$ in the definition of $\mathcal{L}^N$ satisfies the $(H, H^{RW})$-Gibbs property of Definition \ref{DefLGGP}, we conclude that that the law of $\mathcal{L}^N|_{\Sigma_1 \times[a_N,b_N]}$, conditioned on the $\sigma$-algebra $ \mathcal{F} = \mathcal{F}_{ext} ( \Sigma_1  \times \llbracket \lfloor aN^\alpha\rfloor +1 ,\lceil bN^\alpha\rceil  -1 \rrbracket)$ as in (\ref{GibbsCond}) is (upto a reindexing of the curves) precisely $\mathbb{P}^{\mathsf{scaled}}_N$ as in Definition \ref{scaledRW} for $\vec{x} = \vec{x}^N, \vec{y} = \vec{y}^N$, $a = a_N$, $b = b_N$ and $f,g$ given by
\begin{equation*}
\begin{split}
&N^{-\alpha/2} \sigma_p^{-1}( f(xN^{\alpha})  - p x N^{\alpha}) = f_N(x) \mbox{ and } N^{-\alpha/2} \sigma_p^{-1}( g(xN^{\alpha})  - p x N^{\alpha}) = g_N(x) \mbox{ for $x \in [a_N,  b_N]$}.
\end{split}
\end{equation*}
Therefore, writing $Z^N$ for a random variable with this law, we have for large $N$
\begin{equation}\label{BBschur}
\begin{split}
&\mathbb{E}\left[\prod_{i=1}^m h_i(\mathcal{L}^N_{n_i}(t_i))\right]  = \mathbb{E}\left[ \prod_{s\notin S} h_s(\mathcal{L}^N_{n_s}(t_s)) \mathbb{E}\left[\prod_{s\in S}  h_s(\mathcal{L}^N_{n_s}(t_s))\Big{|} \mathcal{F} \right]  \right] =  \\
& \mathbb{E}\left[\prod_{s\notin S} h_s(\mathcal{L}^N_{n_s}(t_s))\cdot\mathbb{E}^{\mathsf{scaled}}_N \left[\prod_{s\in S} h_s(Z^N_{n_s-k_1+1}(t_s))\right]\right].
\end{split}
\end{equation}
We mention that in the deriving the first equality of (\ref{BBschur}) we used that $a_N \to a$, $b_N\to b$ so that $t_s < a_N$ or $t_s > b_N$ for all $s\notin S$ with $n_s \in \Sigma_1$, provided that $N$ is sufficiently large. We also used the tower property of conditional expectations.

From Lemma \ref{keyL3} we have $\mathbb{P}$-a.s. that ${f}^\infty_{k_1 -1}(a) > {f}^\infty_{k_1}(a) > \cdots > {f}^\infty_{k_2}(a) > {f}^\infty_{k_2+1}(a)$, ${f}^\infty_{k_1 -1}(b) > {f}^\infty_{k_1}(b) > \cdots > {f}^\infty_{k_2}(b) > {f}^\infty_{k_2+1}(b)$ and $f_{k_1-1}^{\infty}(x) >f_{k_2 + 1}^{\infty}(x)$ for all $x \in \mathbb{R}$.  In addition, we have by part (i) of Theorem \ref{PropTightGood} that $\mathbb{P}$-almost surely $f_N\to  {f}^\infty_{k_1 - 1}$ and $g_N\to  {f}^\infty_{k_2 + 1}$ uniformly on $[a-1,b+1]\supseteq [a_N,b_N]$, and $N^{-\alpha/2} \sigma_{p}^{-1} (x_i^N - pa_N N^{\alpha})\to {f}_i^{\infty}(a)  $, $N^{-\alpha/2} \sigma_{p}^{-1} (y_i^N - pb_N N^{\alpha})\to {f}_i^{\infty}(b) $ for $i\in\llbracket 1,K-1\rrbracket$. It follows from Lemma \ref{scaledavoidBB} that $\mathbb{P}$-almost surely
\begin{equation}\label{BGPNweak}
\lim_{N\to\infty}\mathbb{E}^{\mathsf{scaled}}_N \left[\prod_{s\in S} h_s(Z^N_{n_s-k_1+1}(t_s))\right] = \mathbb{E}^{a,b,\vec{x},\vec{y},f,g}_{avoid}\left[\prod_{s\in S} h_s(Q_{n_s}(t_s))\right].
\end{equation}
Lastly, the continuity of the $h_i$'s implies that
\begin{equation}\label{BGPuniform}
\lim_{N\to\infty}\prod_{s\notin S} h_s(\mathcal{L}_{n_s}^N(t_s)) = \prod_{s\notin S} h_s({f}^\infty_{n_s}(t_s)).
\end{equation}
Combining \eqref{BGPweak}, \eqref{BBschur}, \eqref{BGPNweak}, and \eqref{BGPuniform} with the bounded convergence theorem proves \eqref{BBcondexsplit}. This suffices for the proof.
	
\end{proof}

%
\section{Proof of three key lemmas}\label{Section4} In this section we give the proof of Lemmas \ref{keyL1}, \ref{keyL2} and \ref{keyL3}. We continue with the same notation as in Section \ref{Section3.3}. Observe that Lemma \ref{keyL1} consists of $K$ statements, one for each $k \in \llbracket 1 ,K \rrbracket$, while Lemmas \ref{keyL2} and \ref{keyL3} consist of $K-1$ statements, one for each $k \in \llbracket 1, K - 1\rrbracket$.

 For $m \in \llbracket 1, K \rrbracket$ we let Lemma \ref{keyL1}[$m$] denote the statement in Lemma \ref{keyL1} for $k \in \llbracket 1, m \rrbracket$. Analogously, for $m \in \llbracket 1 , K-1\rrbracket$ we let Lemma \ref{keyL2}[$m$] and Lemma \ref{keyL3}[$m$] denote the statement in Lemma \ref{keyL2} and Lemma \ref{keyL3}, respectively, for $k \in \llbracket 1, m \rrbracket$. We will establish the following three implications 
\begin{equation}\label{Imp1}
  \begin{aligned}
   &\mbox{Lemma \ref{keyL1}[$m +1$]} \\
    & \mbox{Lemma \ref{keyL2}[$m + 1$]} \\
  \end{aligned} 
\implies 
 \begin{aligned}
   &\mbox{Lemma \ref{keyL1}[$m +2$]} \\
  \end{aligned} \hspace{5mm} \mbox{ for $m \in \llbracket 0, K-2 \rrbracket$;}
\end{equation}
\begin{equation}\label{Imp2}
  \begin{aligned}
   \begin{aligned}
   &\mbox{Lemma \ref{keyL1}[$m +1$]}\\
    & \mbox{Lemma \ref{keyL2}[$m$]} \\
    &\mbox{Lemma \ref{keyL3}[$m$]}
  \end{aligned} 
  \end{aligned}
\implies 
 \begin{aligned}& \mbox{Lemma \ref{keyL2}[$m + 1$]} \\
  \end{aligned} \hspace{5mm}  \mbox{ for $m \in \llbracket 1, K-2 \rrbracket$;}
\end{equation}
\begin{equation}\label{Imp3}
  \begin{aligned}
   \begin{aligned}
   &\mbox{Lemma \ref{keyL1}[$m +2$]}\\
    & \mbox{Lemma \ref{keyL2}[$m + 1$]} \\
  \end{aligned} 
  \end{aligned}
\implies 
 \begin{aligned}
    &\mbox{Lemma \ref{keyL3}[$m + 1$]} 
  \end{aligned} \hspace{5mm} \mbox{ for $m \in \llbracket 0, K-2 \rrbracket$.}
\end{equation}
In addition, we will separately prove that 
\begin{equation}\label{Imp4}
\mbox{ Lemma \ref{keyL1}[$1$] and Lemma \ref{keyL2}[$1$] both hold}.
\end{equation}

Assuming that (\ref{Imp1}), (\ref{Imp2}), (\ref{Imp3}) and (\ref{Imp4}) all hold we would conclude the validity of Lemmas \ref{keyL1}, \ref{keyL2} and \ref{keyL3} as we explain here. For $m \in \llbracket 1, K-1 \rrbracket$ let $\mathsf{S}_m$ denote the statement
$$\mathsf{S}_m := ``\mbox{ Lemma \ref{keyL1}[$m+1$],  Lemma \ref{keyL2}[$m$] and Lemma \ref{keyL3}[$m$] all hold ''}.$$
In view of (\ref{Imp4}) we know that Lemma \ref{keyL1}[$1$] and Lemma \ref{keyL2}[$1$] both hold. This would imply from (\ref{Imp1}) that Lemma \ref{keyL1}[$2$] holds and then from (\ref{Imp3}) we have Lemma \ref{keyL3}[$1$] as well. This proves $\mathsf{S}_1$. Assuming the validity of $\mathsf{S}_m$  we establish $\mathsf{S}_{m+1}$ by showing that 
\begin{equation*}
\begin{aligned}
   &\mbox{Lemma \ref{keyL1}[$m +1$]} \\
    & \mbox{Lemma \ref{keyL2}[$m$]} \\
    &\mbox{Lemma \ref{keyL3}[$m$]}
  \end{aligned}\implies  \begin{aligned}
   &\mbox{Lemma \ref{keyL1}[$m +1$]} \\
    & \mbox{Lemma \ref{keyL2}[$m + 1$]} \\
    &\mbox{Lemma \ref{keyL3}[$m$]}
  \end{aligned}
\implies 
 \begin{aligned}
   &\mbox{Lemma \ref{keyL1}[$m +2$]} \\
    & \mbox{Lemma \ref{keyL2}[$m + 1$]} \\
    &\mbox{Lemma \ref{keyL3}[$m$]}
  \end{aligned}
\implies 
 \begin{aligned}
   &\mbox{Lemma \ref{keyL1}[$m +2$]}\\
    & \mbox{Lemma \ref{keyL2}[$m + 1$]} \\
    &\mbox{Lemma \ref{keyL3}[$m + 1$]}
  \end{aligned},
\end{equation*}
where the first implication used (\ref{Imp2}), the second (\ref{Imp3}) and the third (\ref{Imp4}). Since $\mathsf{S}_1$ is true we conclude by induction on $m$ that $\mathsf{S}_m$ all hold for $m \in \llbracket 1, K-1\rrbracket$, and $\mathsf{S}_{K-1}$ is precisely the statement that all three lemmas hold.

The above paragraph explains why it is enough to prove (\ref{Imp1}), (\ref{Imp2}), (\ref{Imp3}) and (\ref{Imp4}). We establish these four statement in the next four sections.

%

\subsection{Proof of (\ref{Imp4})}\label{Section4.2} In this section we present the proof of (\ref{Imp4}), for which we require the following two results, whose proofs are given in Section \ref{Section7} -- see Lemmas \ref{S7LStayInBand} and \ref{S7LNoDip}.

\begin{lemma}\label{LStayInBand} Let $\ell$ have distribution $\mathbb{P}_{H^{RW}}^{T_0,T_1, x,y}$(recall this was defined in Section \ref{Section2.4}) with $H^{RW}$ satisfying the assumptions in Definition \ref{AssHR}. For any $\epsilon \in (0,1)$, $p \in \mathbb{R}$ and $M > 0$ there exist $A = A(\epsilon,p,H^{RW})> 0$ and $W_1 = W_1(M, p, \epsilon,H^{RW}) \in \mathbb{N}$  such that the following holds. For $T_1 - T_0 \geq W_1$, $x, y \in \mathbb{R}$ with $|x - p T_0| \leq M (T_1-T_0)^{1/2}$ and $|y - p T_1| \leq M(T_1-T_0)^{1/2}$ we have
\begin{equation}\label{EStayInBand}
\mathbb{P}^{T_0,T_1,x,y}_{H^{RW}} \left( \sup_{ s \in [T_0, T_1] }  \left| \ell(s) - \frac{T_1 - s }{T_1 - T_0} \cdot x - \frac{s - T_0 }{T_1 - T_0} \cdot y  \right| \leq A (T_1 - T_0)^{1/2} \right) \geq 1 - \epsilon.
\end{equation}
\end{lemma}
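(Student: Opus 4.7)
\textbf{Plan of proof of Lemma \ref{LStayInBand}.} The plan is to reduce the lemma to the case of a bridge started at the origin and then invoke the KMT-type strong coupling of Proposition \ref{KMT} to compare the random walk bridge with a Brownian bridge, for which the analogous bound is a standard fact.

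First I would reduce to the case $T_0 = 0$, $x = 0$. If $\ell$ has law $\mathbb{P}^{T_0,T_1,x,y}_{H^{RW}}$, then $\tilde\ell(t) := \ell(t+T_0) - x$ has law $\mathbb{P}^{0,T,0,z}_{H^{RW}}$ with $T := T_1 - T_0$ and $z := y - x$; this follows directly from the explicit density formula \eqref{RWB}. The linear interpolant transforms accordingly, so that the supremum in \eqref{EStayInBand} equals $\sup_{t \in [0,T]}|\tilde\ell(t) - (t/T) z|$. Under the hypotheses on $x,y$ we also have $|z - pT| \leq |y - pT_1| + |x - pT_0| \leq 2M T^{1/2}$, so it suffices to prove the estimate in the reduced setting (after replacing $M$ by $2M$).

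Next I would invoke Proposition \ref{KMT} to couple $\tilde\ell$ with a Brownian bridge $B^\sigma$ of diffusion parameter $\sigma = \sigma_p$ on the same probability space in such a way that the quantity
\begin{equation*}
\Delta(T,z) = \sup_{0 \leq t \leq T} \bigl| \sqrt{T}\, B^\sigma_{t/T} + (t/T)\, z - \tilde\ell(t) \bigr|
\end{equation*}
satisfies $\mathbb{E}[e^{a\Delta(T,z)}] \leq C e^{\alpha \log T}\, e^{|z - pT|^2/T} \leq C e^{\alpha \log T}\, e^{4M^2}$. The triangle inequality then gives
\begin{equation*}
\sup_{t \in [0,T]} \bigl| \tilde\ell(t) - (t/T)\, z \bigr| \leq \sqrt{T}\, \sup_{u \in [0,1]} |B^\sigma_u| + \Delta(T,z),
\end{equation*}
so it is enough to control the two summands with probability $1 - \epsilon/2$ each.

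For the first summand, the distribution of $\sup_{u \in [0,1]}|B^\sigma_u|$ does not depend on $T$, so one can pick $A_0 = A_0(\epsilon, p, H^{RW}) > 0$ large enough that $\mathbb{P}(\sup_u |B^\sigma_u| \geq A_0) < \epsilon/2$; this contributes $A_0 T^{1/2}$ to the allowed deviation. For the second summand, Markov's inequality and the exponential moment bound give
\begin{equation*}
\mathbb{P}\bigl(\Delta(T,z) \geq A_0 T^{1/2}\bigr) \leq C \exp\bigl(\alpha \log T + 4M^2 - a A_0 T^{1/2}\bigr),
\end{equation*}
and since $aA_0 T^{1/2}$ eventually dominates $\alpha \log T$, this is less than $\epsilon/2$ once $T \geq W_1$ for a suitable $W_1 = W_1(M, p, \epsilon, H^{RW})$. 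Taking $A = 2A_0$ and combining the two estimates yields \eqref{EStayInBand}. The argument is essentially routine once Proposition \ref{KMT} is in hand; no genuine obstacle is anticipated beyond bookkeeping the reduction to $(T_0, x) = (0, 0)$ and the elementary Markov-type tail bound.
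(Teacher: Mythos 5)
Your plan is correct and follows the paper's own proof essentially step for step: shift to the case $(T_0,x)=(0,0)$, invoke Proposition \ref{KMT} to couple with a Brownian bridge, bound $\sup|B^\sigma|$ by a tail estimate (the paper cites Lemma \ref{BBmax}) and $\Delta(T,z)$ by the exponential-moment/Chebyshev bound, then combine via the triangle inequality. The only cosmetic difference is in how the threshold $A\,(T_1-T_0)^{1/2}$ is split: the paper uses $T^{1/2}$ as the cutoff for $\Delta(T,z)$ and $(A-1)$ for the Brownian bridge, while you use $A_0 T^{1/2}$ for each; both work and yield $A$ independent of $M$.
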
 
\begin{remark}
Lemma \ref{LStayInBand} states that a random walk bridge between well-behaved endpoints $(T_0,x)$ and $(T_1,y)$ is unlikely to deviate much from the straight segment connecting these two points.
\end{remark}

\begin{lemma}\label{LNoDip} Fix $k \in \mathbb{N}$ and let $\mathfrak{L} = (L_1, \dots, L_k)$ have law $\mathbb{P}_{H,H^{RW}}^{1, k, T_0 ,T_1, \vec{x}, \vec{y},\infty,g}$ as in Definition \ref{Pfree}, where we assume that $H^{RW}$ is as in Definition \ref{AssHR} and $H$ is as in Definition \ref{AssH}. Let $ p\in\mathbb{R}$, $r, \epsilon \in (0,1)$ and $M^{\mathsf{side}} > 0$ be given. Then we can find constants $W_2 = W_2(k,p,r,\epsilon, M^{\mathsf{side}}, H, H^{RW}) \in \mathbb{N}$ and $M^{\mathsf{dip}} =  M^{\mathsf{dip}}(k,p,r,\epsilon, M^{\mathsf{side}}, H, H^{RW})> 0$ so that the following holds. 

 For any $T_0, T_1 \in \mathbb{Z}$ with $ T_1 - T_0 \geq W_2$, $r (T_1 - T_0) \leq R \leq  r^{-1} (T_1 - T_0)$, $\vec{x}, \vec{y} \in \mathbb{R}^k$  that satisfy
$$ \left| x_i - pT_0 \right| \leq M^{\mathsf{side}} R^{1/2}, \left| y_i - pT_1 \right| \leq M^{\mathsf{side}} R^{1/2} \mbox{ for $i \in \llbracket 1, k \rrbracket$, }$$ 
and $g \in Y^{-}(\llbracket T_0, T_1 \rrbracket)$ we have that 
\begin{equation}\label{ENoDip}
\mathbb{P}_{H,H^{RW}}^{1, k, T_0 ,T_1, \vec{x}, \vec{y},\infty,g} \left( L_k(x) - p x  \geq - M^{\mathsf{dip}} R^{1/2}  \mbox{ for all $x \in [T_0, T_1]$}\right) \geq 1 - \epsilon.
\end{equation}
\end{lemma}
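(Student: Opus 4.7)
The plan is to first use the monotone coupling of Lemma \ref{MonCoup}: raising any of the boundary data $\vec{x}$, $\vec{y}$, or $g$ raises the ensemble pointwise, so the worst-case measure for a lower bound on $L_k$ is the one with $x_i = p T_0 - M^{\mathsf{side}} R^{1/2}$ and $y_i = p T_1 - M^{\mathsf{side}} R^{1/2}$ for every $i \in \llbracket 1, k\rrbracket$ and $g \equiv -\infty$. Denote this measure by $\mathbb{Q}$; it suffices to prove (\ref{ENoDip}) with $\mathbb{Q}$ in place of $\mathbb{P}_{H,H^{RW}}^{1,k,T_0,T_1,\vec{x},\vec{y},\infty,g}$. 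I then write $\mathbb{Q}$ as a reweighting of the free product bridge measure $\mathbb{P}_{free} = \mathbb{P}_{H^{RW}}^{1,k,T_0,T_1,\vec{x}^*,\vec{y}^*}$ by $W/Z$, where $W = \exp\bigl(-\sum_{i=1}^{k-1} \sum_{m=T_0}^{T_1-1} H(L_{i+1}(m+1) - L_i(m))\bigr) \in (0,1]$ and $Z = \mathbb{E}_{free}[W]$. Bounding $W \leq 1$ in the numerator gives
\begin{equation*}
\mathbb{Q}\Bigl(\inf_{x\in[T_0,T_1]} (L_k(x)-px) < -M^{\mathsf{dip}} R^{1/2}\Bigr) \leq \frac{\mathbb{P}_{free}\bigl(\inf_x (L_k(x)-px) < -M^{\mathsf{dip}} R^{1/2}\bigr)}{Z}.
\end{equation*}
Lemma \ref{LStayInBand} applied to the single free bridge $L_k$ makes the numerator arbitrarily small by taking $M^{\mathsf{dip}}$ large; the key remaining task is to produce a uniform lower bound $Z \geq c_0 > 0$ depending only on $k, r, M^{\mathsf{side}}, H, H^{RW}$.

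For $Z$ I would engineer a ``separation event'' $E$ under $\mathbb{P}_{free}$ on which $W$ is bounded below. Fix $\tau = \tau(k, H, H^{RW}) \in (0, r/2)$ and let $\varphi:[T_0,T_1]\to[0,R^{1/2}]$ be the trapezoidal profile that rises linearly from $0$ to $R^{1/2}$ on $[T_0, T_0+\tau R]$, remains at $R^{1/2}$ on $[T_0+\tau R, T_1-\tau R]$, and decreases symmetrically on $[T_1 - \tau R, T_1]$. Let $E$ be the event that $L_i(t) - L_{i+1}(t) \geq c_1 \varphi(t)$ for every $i \in \llbracket 1, k-1\rrbracket$ and $t \in [T_0, T_1]$, intersected with the event $\{|L_i(m+1) - L_i(m) - p| \leq \log R \text{ for all } i, m\}$, which has overwhelming probability under $\mathbb{P}_{free}$. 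On $E$ the argument of each $H$ in $W$ is at most $-c_1 \varphi(m) + \log R$; on the plateau this is $\leq -c_1 R^{1/2}/2$ for $R$ large, so the assumption $\lim_{x\to\infty} x^2 H(-x) = 0$ makes each plateau term $o(R^{-1})$ and the plateau sum $o(1)$. A careful split of each boundary layer into a sub-layer of length $O((\log R)^2)$ (on which $H$ is estimated using only its continuity on a bounded region) and the complementary part (on which the same tail assumption gives a summable bound) shows that the boundary contribution to the double sum is also $O(1)$. Consequently $W \geq e^{-C}$ on $E$ for some constant $C = C(k, r, H, H^{RW})$.

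Finally, to show $\mathbb{P}_{free}(E) \geq c_0' > 0$ uniformly in the lemma's parameters, I couple each free bridge to an independent Brownian bridge of diffusion $\sigma_p$ via Proposition \ref{KMT}; the sup-norm coupling error is $O(\log R)$ with overwhelming probability and is negligible against the $R^{1/2}$ separations demanded by $E$. For $k$ i.i.d. Brownian bridges with common endpoints, the profile event has positive probability uniformly in $T_1 - T_0 \in [rR, r^{-1}R]$: one conditions on the Gaussian values at the two reference times $T_0 + \tau R$ and $T_1 - \tau R$, which with positive probability are ordered and separated on scale $R^{1/2}$, and then applies standard Brownian bridge estimates on each of the three sub-intervals. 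Combining $Z \geq c_0' e^{-C}$ with the earlier inequality completes the proof upon taking $M^{\mathsf{dip}}$ large. The \emph{main obstacle} is the design of $\varphi$ in the preceding step: the boundary contribution to $\sum H$ and the probability of $E$ must be balanced simultaneously, which is made possible precisely by the tail hypothesis $\lim_{x\to\infty} x^2 H(-x) = 0$ on $H$.
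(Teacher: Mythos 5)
Your backbone matches the paper's: use the monotone coupling of Lemma~\ref{MonCoup} to replace $(\vec{x},\vec{y},g)$ by a dominated triple with $g\equiv-\infty$, write the resulting measure as a Radon--Nikodym reweighting $W/Z$ of a free product bridge measure, bound $W\leq 1$ in the numerator and invoke Lemma~\ref{LStayInBand} for a single free bridge, and then lower bound the normalizing constant $Z$. The point where you diverge from the paper is the choice of dominated boundary data, and it is there that your argument breaks. You set all $x_i$ (and $y_i$) equal, so all $k$ free bridges share the same endpoints at $T_0$ and $T_1$. To lower bound $Z$ you therefore need a separation event $E$ whose profile $\varphi$ vanishes at the endpoints, and this vanishing is fatal. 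On the boundary layer $m\in\llbracket T_0,\,T_0+\tau R\rrbracket$ your profile gives $\varphi(m)=R^{1/2}(m-T_0)/(\tau R)$, so $L_{i+1}(m+1)-L_i(m)\leq p+\log R-c_1(m-T_0)/(\tau R^{1/2})$: for all $m$ with $m-T_0\lesssim R^{1/2}$ the right side is $O(1)$, and there are at least $\sim R^{1/2}$ such indices. On each of those indices $H$ of the argument is at least $H(-C)>0$ for some fixed $C$ (recall $H$ is increasing, continuous, and $H\equiv 0$ only at $-\infty$; e.g. $H(x)=e^x$ gives $H(-C)=e^{-C}>0$). Hence $\sum_m H\gtrsim R^{1/2} H(-C)\to\infty$ and $W\to 0$ on $E$. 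Your proposed split into a sub-layer of length $O((\log R)^2)$ does not rescue this: on that sub-layer the argument can still reach $\sim\log R$, so $H$ of it blows up, and on the complement the argument is merely $O(1)$ negative over $\Theta(R^{1/2})$ or more indices, which again gives an unbounded sum.

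The paper's proof avoids this precisely by choosing a \emph{spread-out} dominated vector $x_i'=pT_0-z_i$ (and similarly for $\vec{y}'$), with $z_{i+1}-z_i\sim\sqrt{T}$, rather than equal entries. On the corresponding stay-in-band event the $k$ free bridges are separated by at least $B\sqrt{T}$ \emph{uniformly over all of $\llbracket T_0,T_1\rrbracket$}, including the endpoints, so
\begin{equation*}
W \;\geq\; \exp\!\bigl(-(k-1)\,T\,H(-B\sqrt{T})\bigr)\;\geq\;1-\delta
\end{equation*}
by choosing $B$ large via the tail hypothesis $\lim_{x\to\infty}x^2H(-x)=0$, which holds \emph{uniformly in $T$}. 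This is the step your construction cannot replicate once the endpoints are forced to coincide, since no jump-increment density with finite exponential moments can produce $\sqrt{T}$ separation in $O(1)$ steps with uniformly positive probability. Replacing your reduction with the paper's spread-out one (still licensed by Lemma~\ref{MonCoup}, since any pointwise lower vector suffices) repairs the argument.
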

\begin{remark}
Lemma \ref{LNoDip} states that the highest-indexed curve of a discrete line ensemble with law $\mathbb{P}_{H,H^{RW}}^{1, k, T_0 ,T_1, \vec{x}, \vec{y},\infty,g}$, whose entrance and exit data $\vec{x}, \vec{y}$ are well-behaved is unlikely to dip too low.
\end{remark}

\begin{proof}[Proof of \eqref{Imp4}, Lemma \ref{keyL1}] We continue with the same notation as in the statement of the lemma and Section \ref{Section3.3}. All constants in the proof depend on the quantities in (\ref{Depend}) as well as $r \geq 2$ and $\epsilon > 0$ as in the statement of the lemma, which are fixed. For clarity we split the proof into two steps.\\

{\bf \raggedleft Step 1.} Let $r_0 = \lceil r \rceil +1$ and put $u_N = \lfloor r_0 N^{\alpha} \rfloor$, $s_N = \lfloor 2r_0 N^{\alpha} \rfloor$, $t_N = \lfloor 4r_0 N^{\alpha} \rfloor$. Since $\mathfrak{L}^N$ is an $(\alpha,p,\lambda)$-good sequence we can find $M_1 > 0$ and $N_1 \geq N_0$ such that for $N \geq N_1$ we have 
\begin{equation}\label{S4S1E1}
\begin{split}
&\mathbb{P}(\mathsf{D}_0^N) < \epsilon/4 \mbox{, }1 - \mathbb{P}(\mathsf{D}_1^N) < \epsilon/4, \mbox{ and $T_N \geq t_N$ where } \\
&\mathsf{D}_0^N= \{ L^N_1(s_N )- p s_N  > M_1 N^{ \alpha/2}\} \mbox{ and }\mathsf{D}_1^N= \{|L^N_1(t_N )-pt_N |\leq M_1  N^{ \alpha/2}\}.
\end{split}
\end{equation}
Let $A(\epsilon/2, p, H^{RW})$ be as in Lemma \ref{LStayInBand}, and let $\Rt_1$, $N_2 \geq N_1$ be sufficiently large so that for $N \geq N_2$, and all $q \in \llbracket - u_N, u_N \rrbracket$ we have 
\begin{equation}\label{S4S1RLarge}
\frac{T_1 - s_N }{T_1 - T_0} \cdot \Rt_1 N^{\alpha/2}  -  \frac{s_N - T_0 }{T_1 - T_0} \cdot M_1 N^{\alpha/2} > A (T_1 - T_0)^{1/2} + M_1 N^{\alpha/2},
\end{equation}
where $T_1 = t_N$ and $T_0 = q$. Observe that such a choice is possible since for large $N$, depending on $r$ and $\alpha$, we have $rN^{\alpha} \leq t_N - s_N = T_1 - s_N \leq  T_1 - T_0 \leq u_N + t_N \leq 5 r_0N^{\alpha}$. This specifies $\Rt_1$. 

From Lemma \ref{LStayInBand} applied to $\epsilon = \epsilon/2$, $p, H^{RW}$ as in the present lemma and $M = \max (M_1, \Rt_1)$ we can find $N_3 \geq N_2$ such that for $N \geq N_3$ we have $t_N - u_N \geq W_1$ as in that lemma. We set $\Nt_1 = N_3$ and this specifies our choice of $\Nt_1$. We proceed to prove Lemma \ref{keyL1}[$1$] with this choice of $\Rt_1$ and $\Nt_1$.\\

Define the event
\begin{align*}
\mathsf{High}^{N}\coloneqq \left\{\max_{x\in \llbracket - u_N, u_N \rrbracket }\left[ L^N_1(x)-p  x \right]\geq  \Rt_1 N^{\alpha/2} \right\}.
\end{align*} 
We claim that for $N \geq \Nt_1$ we have
\begin{equation}\label{S4S1E2}
\mathbb{P}\left( \mathsf{High}^{N} \cap \mathsf{D}_1^N \cap (\mathsf{D}_0^N)^c \right) \leq \epsilon/2.
\end{equation}
We prove (\ref{S4S1E2}) in the second step. Here we assume its validity and prove Lemma \ref{keyL1}[$1$].\\

Using that $u_N \geq rN^{\alpha}$ and that $L^N_1$ is a linear interpolation of its values on $\mathbb{Z}$ we get for $N \geq \Nt_1$
\begin{equation*}
\begin{split}
&\mathbb{P} \left( \sup_{x \in [- rN^{\alpha}, r N^{\alpha}] }\left[L_1^N(x) - p x \right] \geq \Rt_1 N^{\alpha/2} \right) \leq \mathbb{P} \left( \sup_{x \in [-u_N, u_N] }\left[L_1^N(x) - p x \right] \geq \Rt_1 N^{\alpha/2} \right) = \\
&\mathbb{P}\left( \mathsf{High}^{N} \right) \leq \mathbb{P}\left( \mathsf{High}^{N} \cap \mathsf{D}_1^N \cap (\mathsf{D}_0^N)^c \right) + \mathbb{P} \left( (\mathsf{D}_1^N)^c  \right) + \mathbb{P} \left( \mathsf{D}_0^N  \right) < \epsilon,
\end{split}
\end{equation*}
where in the last inequality we used (\ref{S4S1E1}) and (\ref{S4S1E2}). The last equation gives Lemma \ref{keyL1}[$1$].\\

{\bf \raggedleft Step 2.} Define for $q \in \llbracket -u_N, u_N \rrbracket$ the events 
\begin{align*}
\mathsf{High}^{N}_q\coloneqq \left\{  L^N_1(q)-  p q  \geq  R^{\textsf{top}}_1N^{\alpha/2} \right\}\cap \left\{\max_{x\in \llbracket -u_N , q-1 \rrbracket }\left[ L^N_1(x)-p  x \right]<  R^{\textsf{top}}_1 N^{\alpha/2} \right\},
\end{align*}
and note that $\mathsf{High}^{N} = \sqcup_{q \in  \llbracket -u_N, u_N \rrbracket} \mathsf{High}^{N}_q$. We will prove that for $N \geq \Nt_1$
\begin{equation}\label{S4S1E3}
\mathbb{P}\left( \mathsf{High}^{N}_q \cap \mathsf{D}_1^N \cap (\mathsf{D}_0^N)^c \right) \leq (\epsilon/2) \cdot \mathbb{P}\left( \mathsf{High}^{N}_q \cap \mathsf{D}_1^N \right),
\end{equation}
which if true would imply (\ref{S4S1E2}) upon summation over $q \in \llbracket -u_N, u_N \rrbracket$.

We observe from the $(H,H^{RW})$-Gibbs property that if $N \geq \Nt_1$ (and hence $T_N \geq t_N$ from (\ref{S4S1E1}))
\begin{equation}\label{S4S1E4}
\begin{split}
&\mathbb{P}\left( \mathsf{High}^{N}_q \cap \mathsf{D}_1^N \cap (\mathsf{D}_0^N)^c \right) = \mathbb{E}\left[ \mathbb{E} \left[ {\bf 1}_{\mathsf{High}^{N}_q}  {\bf 1}_{\mathsf{D}^N_1}  {\bf 1}_{(\mathsf{D}^N_0)^c} \Big{|} \mathcal{F}_ q\right] \right] = \mathbb{E}\left[{\bf 1}_{\mathsf{High}^{N}_q} {\bf 1}_{\mathsf{D}^N_1}  \mathbb{E} \left[  {\bf 1}_{(\mathsf{D}^N_0)^c} \Big{|} \mathcal{F}_ q\right] \right]  \\
& = \mathbb{E}\left[{\bf 1}_{\mathsf{High}^{N}_q} \cdot {\bf 1}_{\mathsf{D}^N_1} \cdot \mathbb{P}_{H,H^{RW}}^{1,1,q, t_N,x,y,\infty,g}(\tilde{L}_1(s_N) - p s_N \leq M_1 N^{ \alpha/2} )\right],
\end{split}
\end{equation}
where $\mathcal{F}_q =\mathcal{F}_{ext}(\{1\}\times \llbracket q+1, t_N - 1 \rrbracket) $ as in (\ref{GibbsCond}), $x=L^N_1(q)$, $y=L^N_1(t_N)$ and $g = L^N_2\llbracket q, t_N \rrbracket$. We mention that in the first equality we used the tower property for conditional expectations and in the second that $\mathsf{High}^{N}_q, \mathsf{D}^N_1 \in \mathcal{F}_q$. Also $\tilde{L}_1$ is distributed according to $ \mathbb{P}_{H,H^{RW}}^{1,1,q, t_N,x,y,\infty,g}$. 

From Lemma \ref{MonCoup} we have $\mathbb{P}$-almost surely for $N \geq \Nt_1$
\begin{equation}\label{S4S1E5}
\begin{split}
&{\bf 1}_{\mathsf{High}^{N}_q} \cdot {\bf 1}_{\mathsf{D}^N_1}  \cdot \mathbb{P}_{H,H^{RW}}^{1,1,q, t_N,x,y,\infty,g}(\tilde{L}_1(s_N) - p s_N \leq M_1 N^{ \alpha/2} ) \leq \\
& {\bf 1}_{\mathsf{High}^{N}_q} \cdot {\bf 1}_{\mathsf{D}^N_1}  \cdot \mathbb{P}_{H^{RW}}^{q, t_N,pq + \Rt_1 N^{\alpha/2} , p t_N - M_1 N^{\alpha/2}}(\ell(s_N) - p s_N \leq M_1 N^{ \alpha/2} ) ,
\end{split}
\end{equation}
where on the right side $\ell$ is distributed according to $\mathbb{P}_{H^{RW}}^{q, t_N,x,y}$. We mention that in deriving (\ref{S4S1E5}) we used that $\mathbb{P}_{H,H^{RW}}^{1,1,q, t_N,x,y,\infty,-\infty}$ is the same as $\mathbb{P}_{H^{RW}}^{q, t_N,x,y}$. We also used that on $\mathsf{High}^{N}_q \cap \mathsf{D}^N_1$ we have $x \geq pq + \Rt_1 N^{\alpha/2}$ and $y \geq p t_N - M_1 N^{\alpha/2}$.

Finally, from Lemma \ref{LStayInBand}, applied to the same constants as in Step 1, we have for $N \geq \Nt_1$ that $t_N - q \geq t_N - u_N \geq W_1$ and so 
\begin{equation}\label{S4S1E6}
\begin{split}
&\mathbb{P}_{H^{RW}}^{q, t_N, x', y'}(\ell(s_N) - p s_N \leq M_1 N^{ \alpha/2} ) \leq \\
&\mathbb{P}_{H^{RW}}^{q, t_N, x', y'} \left(\ell(s_N) - \frac{T_1 - s_N }{T_1 - T_0} \cdot x' - \frac{s_N - T_0 }{T_1 - T_0} \cdot y'  < -A(T_1-T_0)^{1/2} \right) \leq \epsilon/2.
\end{split}
\end{equation}
where $x' = pq + \Rt_1 N^{\alpha/2}$ and $y' = p t_N - M_1 N^{\alpha/2}$. We mention that in deriving the last inequality we used the definition of $\Rt_1 $ from (\ref{S4S1RLarge}).

Combining (\ref{S4S1E4}), (\ref{S4S1E5}) and (\ref{S4S1E6}) we obtain (\ref{S4S1E3}). This suffices for the proof.
\end{proof}

\begin{proof}[Proof of \eqref{Imp4}, Lemma \ref{keyL2}] We continue with the same notation as in the statement of the lemma and Section \ref{Section3.3}. All constants in the proof depend on the quantities in (\ref{Depend}) as well as $r \geq 2$ and $\epsilon > 0$ as in the statement of the lemma, which are fixed. 

Let $r_0 = \lceil r \rceil +1$ and put $u^{\pm}_N = \lfloor \pm r_0 N^{\alpha} \rfloor$. Since $\mathfrak{L}^N$ is an $(\alpha,p,\lambda)$-good sequence we can find $M_1 > 0$ and $N_1 \geq N_0$ such that for $N \geq N_1$ we have $T_N \geq | u_N^{\pm}|$ and
\begin{equation}\label{S4S1F1}
\begin{split}
&1 - \mathbb{P}(\mathsf{A}^N_{\pm}) < \epsilon/4 \mbox{ where } \mathsf{A}^N_{\pm} =  \{|L^N_1(u^{\pm}_N )-p u^{\pm}_N  |\leq M_1 N^{ \alpha/2}\}.
\end{split}
\end{equation}
Let $W_2, M^{\mathsf{dip}}$ be as in Lemma \ref{LNoDip} for $k = 1$, $p, H, H^{RW}$ as in the present lemma, $\epsilon = \epsilon/2$, $r = (2r_0 + 1)^{-1}$ and $M^{\mathsf{side}} = M_1$. We then fix $\Rb_1 > M^{\mathsf{dip}}$, which specifies our choice of $\Rb_1$. 

Let $\Nb_1 \geq N_1$ be sufficiently large so that for $N \geq \Nb_1 $
\begin{equation}\label{S4S1F2}
\begin{split}
u^+_N \geq rN^{\alpha}, \hspace{2mm} u_N^- \leq -rN^{\alpha}, \hspace{2mm} T_N \geq |u_N^{\pm}| \mbox{ and } (2r_0 + 1) N^{\alpha} \geq u^+_N - u_N^- \geq W_2.
\end{split}
\end{equation}
We proceed to prove  Lemma \ref{keyL2}[$1$] with the above choice of $\Rb_1$ and $\Nb_1$. \\

Define the event
\begin{align*}
\mathsf{Low}^{N}\coloneqq \left\{\inf_{x\in [ u_N^-, u_N^+ ]}\left[ L^N_1(x)-p  x \right]\leq - \Rb_1 N^{\alpha/2} \right\}.
\end{align*} 
Since $u^+_N \geq rN^{\alpha}$ and $u_N^- \leq -rN^{\alpha}$ we have for $N \geq \Nb_1$ that
\begin{equation}\label{S4S1F3}
\mathbb{P} \left( \inf_{x \in [- rN^{\alpha}, r N^{\alpha}] }\left[L_1^N(x) - p x \right] \leq  - \Rb_1 N^{\alpha/2} \right)  \leq \mathbb{P}\left( \mathsf{Low}^{N} \right).
\end{equation}
From the $(H,H^{RW})$-Gibbs property of $\mathfrak{L}^N$ we have for $N \geq \Nb_1$ (note $T_N \geq |u^{\pm}_N|$ from (\ref{S4S1F2}))
\begin{equation}\label{S4S1F4}
\begin{split}
& \mathbb{P}\left( \mathsf{Low}^{N} \cap \mathsf{A}^N_{+} \cap \mathsf{A}^N_{-}  \right) =  \mathbb{E}\left[ \mathbb{E} \left[ {\bf 1}_{\mathsf{Low}^{N}} \cdot  {\bf 1}_{\mathsf{A}^N_{+} } \cdot  {\bf 1}_{ \mathsf{A}^N_{-} } \Big{|} \mathcal{F} \right] \right] =  \mathbb{E}\left[ {\bf 1}_{\mathsf{A}^N_{+} } \cdot  {\bf 1}_{ \mathsf{A}^N_{-} } \cdot \mathbb{E} \left[  {\bf 1}_{\mathsf{Low}^{N}}   \Big{|} \mathcal{F}\right] \right]   \\
& = \mathbb{E}\left[{\bf 1}_{\mathsf{A}^N_{+} }  \cdot {\bf 1}_{ \mathsf{A}^N_{-} }  \cdot \mathbb{P}_{H,H^{RW}}^{1,1,u^{-}_N, u^+_N,x,y,\infty,g}\left(\inf_{x\in [ u_N^-, u_N^+ ]}\left[ \tilde{L}_1(x)-p  x \right]\leq - R^{\mathsf{bot}}_1 N^{\alpha/2}\right)\right],
\end{split}
\end{equation}
where $\mathcal{F} =\mathcal{F}_{ext}(\{1\}\times \llbracket u^-_N+1, u^+_N - 1 \rrbracket) $ as in (\ref{GibbsCond}), $x=L^N_1(u^-_N)$, $y=L^N_1(u^+_N)$ and $g = L^N_2\llbracket u_N^-, u_N^+ \rrbracket$. We mention that in the first equality we used the tower property for conditional expectations and in the second that $\mathsf{A}^N_{+}, \mathsf{A}^N_{-}  \in \mathcal{F}$. Also $\tilde{L}_1$ is distributed according to $\mathbb{P}_{H,H^{RW}}^{1,1,u^{-}_N, u^+_N,x,y,\infty,g}$. 

From Lemma \ref{LNoDip} with the choice of parameters as in the beginning of the proof, $T_0 = u_N^-$, $T_1 = u_N^+$ and $R = N^{\alpha}$ we have for $N \geq \Nb_1$ (note $u_N^+ - u_N^- \geq W_2$ from (\ref{S4S1F2})) that $\mathbb{P}$-almost surely 
\begin{equation}\label{S4S1F5}
\begin{split}
{\bf 1}_{\mathsf{A}^N_{+} } \cdot  {\bf 1}_{ \mathsf{A}^N_{-} }  \cdot \mathbb{P}_{H,H^{RW}}^{1,1,u^{-}_N, u^+_N,x,y,\infty,g}\left(\inf_{x\in [ u_N^-, u_N^+ ]}\left[ \tilde{L}_1(x)-p  x \right]\leq - R^{\mathsf{bot}}_1 N^{\alpha/2}\right) \leq  \epsilon/2.
\end{split}
\end{equation}
 We mention that in deriving the last equation we used that on $\mathsf{A}^N_{+} \cap \mathsf{A}^N_{-} $ we have $|x- p u_N^-|\leq M^{\mathsf{side}} R^{1/2}$ and $|y- p u_N^+|\leq M^{\mathsf{side}} R^{1/2}$, and also that $R^{\mathsf{bot}}_1 > M^{\mathsf{dip}}$ by definition.

Combining (\ref{S4S1F1}), (\ref{S4S1F3}), (\ref{S4S1F4}) and (\ref{S4S1F5}) we get for $N \geq \Nb_1$
\begin{equation*}
\begin{split}
\mathbb{P} \left( \inf_{x \in [- rN^{\alpha}, r N^{\alpha}] }\left[L_k^N(x) - p x \right] \leq  - \Rb_k N^{\alpha/2}\hspace{-0.5mm} \right)\hspace{-0.5mm}  \leq \hspace{-0.5mm} \mathbb{P}\left( \mathsf{Low}^{N} \cap \mathsf{A}^N_{+} \cap \mathsf{A}^N_{-} \right) + \mathbb{P}\left((\mathsf{A}^N_{+})^c \right) + \mathbb{P}\left((\mathsf{A}^N_{+})^c \right)  < \epsilon.
\end{split}
\end{equation*}
The last equation gives Lemma \ref{keyL2}[$1$].
\end{proof}

%
\subsection{Proof of (\ref{Imp1})}\label{Section4.2} In this section we present the proof of (\ref{Imp1}), for which we require the following result, whose proof is postponed until Section \ref{Section7} -- see Lemma \ref{S7LHighBottom}.

\begin{lemma}\label{LHighBottom} Fix $k \in \mathbb{N}$ and let $\mathfrak{L} = (L_1, \dots, L_k)$ have law $\mathbb{P}_{H,H^{RW}}^{1, k, T_0 ,T_1, \vec{x}, \vec{y},\infty,g}$ as in Definition \ref{Pfree}, where we assume that $H$ is as in Definition \ref{AssH}, while $H^{RW}$ is as in Definition \ref{AssHR}. Let $ p \in \mathbb{R}$, $r,\epsilon \in (0,1)$, $M^{\mathsf{bot}}, M^{\mathsf{side}} > 0$ and $t \in (0,1/3)$ be given. Then we can find a constant $W_3 = W_3(k,p,r,M^{\mathsf{side}}, M^{\mathsf{bot}},t, \epsilon, H, H^{RW}) \in \mathbb{N}$ so that the following holds. 

For any $T_0, T_1 \in \mathbb{Z}$ with $  T_1 - T_0 \geq W_3$ and $t_0, t_1 \in \llbracket T_0, T_1 \rrbracket$ with $\min(t_0 - T_0,T_1 - t_1, t_1 - t_0)  \geq t (T_1 - T_0)$, $r (T_1 - T_0) \leq R \leq  r^{-1} (T_1 - T_0)$, $\vec{x}, \vec{y} \in \mathbb{R}^k, g \in Y^{-}(\llbracket T_0, T_1 \rrbracket)$  that satisfy
$$ \left| x_i - pT_0 \right| \leq M^{\mathsf{side}} R^{1/2}, \left| y_i - pT_1 \right| \leq M^{\mathsf{side}} R^{1/2} \mbox{ for $i \in \llbracket 1, k \rrbracket$, and }$$ 
$$g(j) - pj \geq M^{\mathsf{bot}} R^{1/2} \mbox{ for some $j \in \llbracket t_0, t_1 - 1 \rrbracket$,}$$
we have that 
\begin{equation}\label{EHighBottom}
\mathbb{P}_{H,H^{RW}}^{1, k, T_0 ,T_1, \vec{x}, \vec{y},\infty,g} \left(L_k(x) - p x  \geq M^{\mathsf{bot}} R^{1/2} - R^{1/4} \mbox{ for some $x \in [t_0, t_1]$}\right) \geq 1 - \epsilon.
\end{equation}
\end{lemma}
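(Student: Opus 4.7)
The plan is to reduce to a single-peak bottom boundary via the monotone coupling of Lemma \ref{MonCoup}, and then execute a partition-function / Radon--Nikodym argument that exploits $\lim_{x\to\infty} H(x) = \infty$ to show that configurations keeping $L_k$ too low near $j$ contribute vanishing weight.

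First I would define $g' \in Y^-(\llbracket T_0, T_1 \rrbracket)$ by $g'(j) = g(j)$ and $g'(m) = -\infty$ for $m \neq j$. Since $g' \leq g$ pointwise, Lemma \ref{MonCoup}(II) produces a coupling in which the bottom curves satisfy $L_k' \leq L_k$ almost surely, so it suffices to prove \eqref{ENoDip} with $g$ replaced by $g'$. With this replacement the only surviving bottom-boundary term in the Boltzmann weight \eqref{WH} is $\exp(-H(g'(j) - L_k(j-1)))$, since $H(-\infty - u) = 0$ annihilates every other contribution.

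Next, let $\mathbb{P}^0 := \mathbb{P}_{H,H^{RW}}^{1,k,T_0,T_1,\vec{x},\vec{y},\infty,-\infty}$ denote the ``no-bottom'' ensemble. The target law $\mathbb{P}_{H,H^{RW}}^{1,k,T_0,T_1,\vec{x},\vec{y},\infty,g'}$ is the reweighting of $\mathbb{P}^0$ by $\exp(-H(g'(j) - L_k(j-1)))/Z'$. Let $A$ denote the complement of the event in \eqref{ENoDip}; a modulus-of-continuity reduction handles the corner case $j=t_0$, and I will assume below that $j-1\in[t_0,t_1]$, so on $A$ one has $L_k(j-1) - p(j-1) < M^{\mathsf{bot}} R^{1/2} - R^{1/4}$ and hence $g'(j) - L_k(j-1) > R^{1/4}/2$ for $R$ sufficiently large. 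By monotonicity of $H$ the Boltzmann factor is then at most $e^{-H(R^{1/4}/2)}$ on $A$, while on $B := \{L_k(j-1) \geq g'(j)\}$ the argument of $H$ is nonpositive, so $Z' \geq e^{-H(0)} \mathbb{P}^0(B)$. Combining,
\begin{equation*}
\mathbb{P}_{H,H^{RW}}^{1,k,T_0,T_1,\vec{x},\vec{y},\infty,g'}(A) \;\leq\; \frac{e^{-H(R^{1/4}/2) + H(0)}}{\mathbb{P}^0(B)},
\end{equation*}
whose numerator tends to zero as $R\to\infty$ because $\lim_{x\to\infty} H(x) = \infty$.

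The step I expect to be the main obstacle is the $R$-uniform positive lower bound on $\mathbb{P}^0(B)$. For $k=1$ this is immediate: $\mathbb{P}^0$ is the law of a single $H^{RW}$ random walk bridge, and Proposition \ref{KMT} reduces $\mathbb{P}^0(B)$ to a Brownian-bridge tail estimate at an interior time (using $\min(j-T_0,T_1-j)\geq tr R$), bounded below by a constant depending only on $p,r,t,M^{\mathsf{side}},M^{\mathsf{bot}},H^{RW}$. For $k\geq 2$ the curve $L_k$ sits at the bottom of a multi-curve ensemble with only curve--curve interactions under $\mathbb{P}^0$. Here the plan is to first apply Lemma \ref{MonCoup}(II) again to replace $(\vec{x},\vec{y})$ by their pointwise-minimum allowed values in each coordinate, stochastically dominating $L_k$ from below by the analogue in the resulting ``symmetric'' ensemble; and then to compare that ensemble with $k$ independent Brownian bridges by applying Proposition \ref{KMT} curve by curve, controlling the curve--curve Boltzmann weight via the decay hypothesis $\lim_{x\to\infty} x^2 H(-x) = 0$ so that the reweighting of the free-bridge measure preserves a positive probability for the event $B$. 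This multi-curve comparison is in the same spirit as the acceptance-probability arguments of Section \ref{Section7.3}, and it supplies the final $R$-uniform lower bound, from which the lemma follows by choosing $W_3$ so large that the exponential numerator dominates.
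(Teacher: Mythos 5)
Your overall strategy -- replace $g$ by a one-site bottom curve $g'$ via Lemma \ref{MonCoup}, then run a Radon--Nikodym argument where the single bottom factor $e^{-H(g'(j)-L_k(j-1))}$ is forced small on the ``stay low'' event $A$ while the normalizing constant $Z'$ is bounded below -- is the same high-level plan as the paper's. But there is a genuine gap in the way you set up $g'$.

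You take $g'(j) := g(j)$, and then your lower bound on the denominator reads $Z' \geq e^{-H(0)}\,\mathbb{P}^0(B)$ with $B = \{L_k(j-1) \geq g'(j)\} = \{L_k(j-1) \geq g(j)\}$. The hypotheses only give a \emph{lower} bound $g(j) - pj \geq M^{\mathsf{bot}} R^{1/2}$; they impose no \emph{upper} bound on $g(j)$. If, say, $g(j) = pj + M^{\mathsf{bot}} R^{1/2} + R^{100}$, then $B$ forces $L_k(j-1)$ up by $R^{100}$, and $\mathbb{P}^0(B)$ decays far faster in $R$ than the numerator $e^{-H(R^{1/4}/2)}$. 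So the final ratio does not go to zero, and the claimed $R$-uniform lower bound on $\mathbb{P}^0(B)$ is simply false. What the paper does, and what makes the argument go through, is to set $g'(j_0) := pj_0 + M^{\mathsf{bot}} R^{1/2}$ -- the \emph{smallest} value compatible with the hypothesis -- rather than $g(j_0)$. This is still $\leq g(j_0)$, so Lemma \ref{MonCoup} applies, the numerator bound is unaffected (a larger $g'(j_0)$ would only help there), and now $\mathbb{P}^0(L_k(j-1) \geq g'(j_0))$ depends on $R$ only through the $O(R^{1/2})$ quantity $M^{\mathsf{bot}}R^{1/2}$, for which a uniform lower bound \emph{is} available.

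There is a second, more implementation-level issue in your sketch of the lower bound on $Z'$ / $\mathbb{P}^0(B)$. You propose lowering $\vec{x},\vec{y}$ to their ``pointwise-minimum allowed values in each coordinate'', which makes all $k$ starting and ending heights coincide. With collapsed boundary data the curves are not separated, the curve--curve Boltzmann factor $\exp(-\sum_{i<k}\sum_m H(L_{i+1}(m+1)-L_i(m)))$ has $\Theta(kT)$ terms each of size $\gtrsim H(0) > 0$, and so it can be $\exp(-\Theta(T))$ with probability bounded away from zero under the free-bridge measure -- the decay hypothesis $x^2 H(-x)\to 0$ cannot rescue this because the argument of $H$ is not going to $-\infty$. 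The paper instead lowers $\vec{x},\vec{y}$ to \emph{staggered} values $z_i$ whose consecutive gaps are $\gtrsim (B+|p|)\sqrt{T}$, and then builds (via a hat-function path and Lemma \ref{Spread}) an event of positive probability on which all consecutive curve gaps exceed $B\sqrt{T}$, so that $kT\,H(-B\sqrt{T}) \lesssim kB^{-2}\cdot\sup_{y\geq B}y^2H(-y)$ can be made uniformly small. That staggering is essential; pointwise-minimum (identical) data forecloses it.

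If you correct the definition of $g'$ as above and replace the symmetric boundary data by a staggered choice, your decomposition (reweighting $\mathbb{P}^0$ rather than the free bridge measure, as the paper does) is a perfectly reasonable alternative route, and the remaining work would reduce to a one-point lower-tail bound for $L_k(j-1)$ under $\mathbb{P}^0$, in the spirit of the acceptance-probability estimates you cite from Section \ref{Section7.3}.
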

\begin{remark} Lemma \ref{LHighBottom} states that the highest-indexed curve of a discrete line ensemble with law $\mathbb{P}_{H,H^{RW}}^{1, k, T_0 ,T_1, \vec{x}, \vec{y},\infty,g}$, whose entrance and exit data $\vec{x}, \vec{y}$ are well-behaved is unlikely to stay low if the bottom bounding curve $g$ is high at least at one point.
\end{remark}

\begin{proof}[Proof of \eqref{Imp1}] We continue with the same notation as in the statement of the lemma and Section \ref{Section3.3}. All constants in the proof depend on the quantities in (\ref{Depend}) as well as $r \geq 2$ and $\epsilon > 0$ as in the statement of the lemma, which are fixed.

 Let $r_0 = \lceil r \rceil +1$ and put $u^{\pm}_N = \lfloor \pm r_0 N^{\alpha} \rfloor$ and $t^{\pm}_N = \lfloor \pm 4r_0 N^{\alpha} \rfloor$. In addition, suppose that $N_1 \geq N_0$ is sufficiently large so that for $N \geq N_1$.
\begin{equation}\label{S4S2E1}
\begin{split}
&u_N^+ -1 \geq rN^{\alpha}, \hspace{2mm} u_N^- \leq - rN^{\alpha}, \hspace{2mm} \min(t^+_N - u^+_N, u_N^{+} - u_N^-, u_N^- - t_N^-) \geq (t_N^+ - t_N^-)/10 \mbox{, } \\
& \left| t^{\pm}_N \right| \leq 5r_0 N^{\alpha}, \mbox{ and } T_N \geq \left| t^{\pm}_N \right|.
\end{split}
\end{equation}

Let $R_1 > 0$ and $N_2 \in \mathbb{N}$, $N_2 \geq N_1$ be sufficiently large so that 
\begin{equation}\label{S4S2E2}
\begin{split}
&R_1 \geq \max \left( \Rt_{k}(5r_0, \epsilon/(4m+4)),  \Rb_{k}(5r_0, \epsilon/(4m+4)) \right) \mbox{ and } \\
& N_2 \geq \max \left( \Nt_{k}(5r_0, \epsilon/(4m+4)),  \Nb_{k}(5r_0, \epsilon/(4m+4))  \right) \mbox{ for $k \in \llbracket 1, m+1 \rrbracket$,}
\end{split}
\end{equation}
where $\Rt_{k}, \Nt_{k}$ are as in Lemma \ref{keyL1}[$m+1$] and $\Rb_{k}, \Nb_{k}$ are as in Lemma \ref{keyL2}[$m+1$], which we have assumed to hold as part of our input in  \eqref{Imp1}.

Let $W_3$ be as in Lemma \ref{LHighBottom} for $k = m+1$, $p, H, H^{RW}$ as in the statement of the present lemma, $\epsilon = \epsilon/4$, $t = 1/10$, $r = 1/(10r_0)$, $M^{\mathsf{side}} = R_1$ and $M^{\mathsf{bot}} = R_1 + 1$. We suppose that $\Nt_{m+2} $ is large enough so that $\Nt_{m+2} \geq N_2$ and $u_N^+ - u_N^- \geq W_3$ for $N \geq\Nt_{m+2}$. We also set $ \Rt_{m+2} = R_1 + 1$ and proceed to prove that for $N \geq \Nt_{m+2}$ we have
\begin{equation}\label{S4S2NoBigMax}
\mathbb{P} \left( \sup_{x \in [- rN^{\alpha}, r N^{\alpha}] }\left[L_{m+2}^N(x) - p x \right] \geq \Rt_{m+2} N^{\alpha/2} \right) < \epsilon.
\end{equation}
Our assumption that Lemma \ref{keyL1}[$m+1$] holds and (\ref{S4S2NoBigMax}) together give Lemma \ref{keyL1}[$m+2$] as desired.\\

In the remainder we prove (\ref{S4S2NoBigMax}). Define the events
\begin{equation*}
\begin{split}
&\mathsf{High}_{m+2}^{N}\coloneqq \left\{\max_{x\in \llbracket  u_N^-, u_N^+ - 1 \rrbracket}\left[ L^N_{m+2}(x)-p x \right]\geq  R^{\textsf{top}}_{m+2} N^{\alpha/2} \right\}, \\
&\mathsf{High}^{N}_{m+1}\coloneqq \left\{\max_{x\in \llbracket  u_N^-, u_N^+  \rrbracket}\left[ L^N_{m+1}(x)-p x \right]\geq  R_1N^{\alpha/2} \right\}, \mbox{ and } \\
&\mathsf{Side}^N \coloneqq\left\{ \left|L^N_{j}(t^{\pm}_N )- p t^{\pm}_N \right|\leq R_1 N^{\alpha/2} \mbox{ for all $j \in \llbracket 1, m +1 \rrbracket$} \right\}.
\end{split}
\end{equation*}
In view of (\ref{S4S2E2}) and a union bound we have for $N \geq  \Nt_{m+2}$
\begin{equation}\label{S4S2E3}
\begin{split}
1 - \mathbb{P} \left(\mathsf{Side}^N  \right) < \epsilon/2 \mbox{ and } \mathbb{P} \left(\mathsf{High}^{N}_{m+1}  \right) < \epsilon/4. 
\end{split}
\end{equation}
In deriving (\ref{S4S2E3}) we used that $ \left| u^{\pm}_N \right| \leq \left| t^{\pm}_N \right| \leq 5r_0 N^{\alpha}$ as assured by (\ref{S4S2E1}).

From the $(H,H^{RW})$-Gibbs property of $\mathfrak{L}^N$ we have for $N \geq \Nt_{m+2}$ (note $T_N \geq \left| t^{\pm}_N \right|$ from (\ref{S4S2E1}))
\begin{equation}\label{S4S2E4}
\begin{split}
& \mathbb{P}\left( \mathsf{High}_{m+2}^{N} \cap  \mathsf{Side}^N  \cap  (\mathsf{High}_{m+1}^{N})^c \right) =  \mathbb{E}\left[ \mathbb{E} \left[ {\bf 1}_{\mathsf{High}^{N}_{m+2}} \cdot  {\bf 1}_{\mathsf{Side}^N } \cdot  {\bf 1}_{(\mathsf{High}^{N}_{m+1})^c}  \Big{|} \mathcal{F} \right] \right] = \\
&  \mathbb{E}\left[  {\bf 1}_{\mathsf{High}^{N}_{m+2}} \cdot  {\bf 1}_{\mathsf{Side}^N }  \cdot \mathbb{E} \left[ {\bf 1}_{(\mathsf{High}^{N}_{m+1})^c}  \Big{|} \mathcal{F}\right] \right] =  \\
& \mathbb{E}\left[ {\bf 1}_{\mathsf{High}^{N}_{m+2}} \cdot  {\bf 1}_{\mathsf{Side}^N }     \cdot \mathbb{P}_{H,H^{RW}}^{1,m+1,t^{-}_N, t^+_N,\vec{x},\vec{y},\infty,g}\left(\max_{x\in \llbracket  t_N^-, t_N^+  \rrbracket}\left[ \tilde{L}_{m+1}(x)-p x \right]<  R_1N^{\alpha/2}  \right)\right],
\end{split}
\end{equation}
where $\mathcal{F} =\mathcal{F}_{ext}(\llbracket 1, m + 1 \rrbracket \times \llbracket t^-_N+1, t^+_N - 1 \rrbracket) $ as in (\ref{GibbsCond}), $\vec{x}= (L^N_1(t^-_N), \dots,L^N_{m+1}(t^-_N)) $, $\vec{y}= (L^N_1(t^+_N), \dots,L^N_{m+1}(t^+_N)) $ and $g = L^N_{m+2}\llbracket t_N^-, t_N^+ \rrbracket$. We mention that in the first equality we used the tower property for conditional expectations and in the second that $\mathsf{High}^{N}_{m+2}, \mathsf{Side}^N \in \mathcal{F}$. Also $(\tilde{L}_1, \dots,\tilde{L}_{m+1}) $ is distributed according to $\mathbb{P}_{H,H^{RW}}^{1,m+1,t^{-}_N, t^+_N,\vec{x},\vec{y},\infty,g}$.

From Lemma \ref{LHighBottom} with the same choice of parameters as in the beginning of the proof, $R = N^{\alpha}$, $T_1 = t_N^+$, $T_0 = t_N^-$, $t_0 = u_N^-$, $t_1 = u_N^+$ we have $\mathbb{P}$-almost surely for $N \geq \Nt_{m+2}$ 
\begin{equation}\label{S4S2E5}
\begin{split}
&{\bf 1}_{\mathsf{High}^{N}_{m+2}} \cdot  {\bf 1}_{\mathsf{Side}^N }   \cdot \mathbb{P}_{H,H^{RW}}^{1,m+1,t^{-}_N, t^+_N,\vec{x},\vec{y},\infty,g}\left(\max_{x\in \llbracket  t_N^-, t_N^+  \rrbracket}\left[ \tilde{L}_{m+1}(x)-p x \right]<  R_1N^{\alpha/2}  \right) = \\
&{\bf 1}_{\mathsf{High}^{N}_{m+2}} \cdot  {\bf 1}_{\mathsf{Side}^N }   \cdot \mathbb{P}_{H,H^{RW}}^{1,m+1,t^{-}_N, t^+_N,\vec{x},\vec{y},\infty,g}\left(\sup_{x\in [  t_N^-, t_N^+  ]}\left[ \tilde{L}_{m+1}(x)-p x \right]<  R_1N^{\alpha/2}  \right) \leq \epsilon/4,
\end{split}
\end{equation}
where in the first equality we used that $\tilde{L}_{m+1}$ linearly interpolates its values at the integers. We mention that in deriving the last inequality we used that $M^{\mathsf{bot}} R^{1/2} - R^{1/4} = (R_1 + 1)N^{\alpha/2} - N^{\alpha/4} \geq R_1N^{\alpha/2}$. The fact that $T_0, T_1, t_0, t_1$ satisfy the conditions of Lemma \ref{LHighBottom} follows from the inequalities in (\ref{S4S2E1}). We also mention that we used that on $\mathsf{Side}^N$ the random vectors $\vec{x}, \vec{y}$ as in (\ref{S4S2E5}) satisfy the conditions of Lemma \ref{LHighBottom} and on $\mathsf{High}^{N}_{m+2}$ we have that $g(j) - pj \geq M^{\mathsf{bot}} R^{1/2}$ for some $j \in \llbracket  u_N^-, u_N^+ - 1 \rrbracket$. 

Combining (\ref{S4S2E4}) and (\ref{S4S2E5}) we conclude that for $N \geq \Nt_{m+2}$ 
\begin{equation}\label{S4S2E6}
\begin{split}
& \mathbb{P}\left( \mathsf{High}_{m+2}^{N} \cap  \mathsf{Side}^N  \cap  (\mathsf{High}_{m+1}^{N})^c \right) \leq \epsilon/4.
\end{split}
\end{equation}
Finally, using that $u_N^+ -1 \geq rN^{\alpha}$ and $ u_N^- \leq - rN^{\alpha}$ for $N \geq \Nt_{m+2}$, see (\ref{S4S2E1}), the fact that $L^N_{m+2}$ is a linear interpolation of its values on the integers and equations (\ref{S4S2E3}) and (\ref{S4S2E6}) we conclude that 
 \begin{equation*}
\begin{split}
&\mathbb{P} \left( \sup_{x \in [- rN^{\alpha}, r N^{\alpha}] }\left[L_{m+2}^N(x) - p x \right] \geq \Rt_{m+2} N^{\alpha/2} \right) \leq\mathbb{P} \left( \mathsf{High}_{m+2}^{N} \right) \leq \\
& \mathbb{P}\left( \mathsf{High}_{m+2}^{N} \cap  \mathsf{Side}^N  \cap  (\mathsf{High}_{m+1}^{N})^c \right)  + \mathbb{P}\left( ( \mathsf{Side}^N)^c \right)  + \mathbb{P}\left(\mathsf{High}_{m+1}^{N} \right) < \epsilon.
\end{split}
\end{equation*}
The last expression gives (\ref{S4S2NoBigMax}).
\end{proof}

%
\subsection{Proof of (\ref{Imp2})}\label{Section4.3} In this section we present the proof of (\ref{Imp2}), for which we require the following result, whose proof is postponed until Section \ref{Section7} -- see Lemma \ref{S7LNoParDip}.

\begin{lemma}\label{LNoParDip} Fix $k \in \mathbb{N}$ and let $\mathfrak{L} = (L_1, \dots, L_k)$ have law $\mathbb{P}_{H,H^{RW}}^{1, k, T_0 ,T_1, \vec{x}, \vec{y},\infty,g}$ as in Definition \ref{Pfree}, where we assume that $H$ is as in Definition \ref{AssH}, while $H^{RW}$ is as in Definition \ref{AssHR}. Let $ p \in \mathbb{R}$, $\lambda> 0$, $L \geq 1$, $ \epsilon \in (0,1)$, $M^{\mathsf{top}}  > 0 $ be given. Then we can find constants $A = A(k,p, \epsilon, H^{RW}, H) > 0$, $W_4 = W_4(k,p, \lambda, M^{\mathsf{top}}, \epsilon, H, H^{RW},L) \in \mathbb{N}$ and $M^{\mathsf{base, top}} = M^{\mathsf{base, top}} (k,p, \lambda, M^{\mathsf{top}}, \epsilon, H, H^{RW},L) > 0$ such that the following holds. 

If $T \geq W_4$, $T_0, T_1 \in \mathbb{Z}$ with $\max(|T_0|, |T_1|) \leq T \cdot L$, $\Delta T = T_1 - T_0 \geq T$, $\vec{x}, \vec{y} \in \mathbb{R}^k$ and $g \in Y^-(\llbracket T_0, T_1 \rrbracket)$ satisfy 
$$  x_i - pT_0 + \lambda T^{1/2} (T_0/T)^2 \leq M^{\mathsf{top}} T^{1/2}, y_i - pT_1 + \lambda T^{1/2}  (T_1/T)^2   \leq M^{\mathsf{top}} T^{1/2} \mbox{ for $i \in \llbracket 1, k \rrbracket$, and }$$ 
$$g(j) - pj \leq - M^{\mathsf{base, top}} T^{1/2} \mbox{ for all $j \in \llbracket T_0, T_1 - 1 \rrbracket$,}$$
then we have that 
\begin{equation}\label{ENoParDip}
\begin{split}
1-\epsilon \leq \mathbb{P}_{H,H^{RW}}^{1, k, T_0 ,T_1, \vec{x}, \vec{y},\infty,g} &\Bigg{(} L_1(x) - p x +  \frac{x- T_0}{T_1 - T_0}\cdot \lambda T^{1/2} (T_1/T)^2 +  \\
&   \frac{T_1 - x}{T_1 - T_0} \cdot \lambda T^{1/2} (T_0/T)^2 \leq (A +M^{\mathsf{top}}) \Delta T^{1/2}  \mbox{ for all $x \in [T_0, T_1]$}\Bigg{)}.
\end{split}
\end{equation}
\end{lemma}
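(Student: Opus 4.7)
The plan is to reduce the problem to a single free random walk bridge via a monotone coupling and a Radon--Nikodym bound, then appeal to Lemma \ref{LStayInBand}. Fix $M^{\mathsf{sep}} > 0$ (to be chosen large at the end) and define
$$
\vec{x}^{\,\mathsf{up}}_i = pT_0 - \lambda T^{1/2}(T_0/T)^2 + M^{\mathsf{top}} T^{1/2} + (k-i+1) M^{\mathsf{sep}} \Delta T^{1/2},
$$
and analogously $\vec{y}^{\,\mathsf{up}}_i$ with $T_1$ replacing $T_0$. Since $M^{\mathsf{sep}}\geq 0$, the hypothesis forces $x_i \leq \vec{x}^{\,\mathsf{up}}_i$ and $y_i \leq \vec{y}^{\,\mathsf{up}}_i$. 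By part II of Lemma \ref{MonCoup} (noting that $H,H^{RW}$ are convex and $H$ increasing by Definitions \ref{AssHR}--\ref{AssH}) we can couple $\mathfrak{L}$ with $\mathfrak{L}^{\mathsf{up}}$ of law $\mathbb{P}_{H,H^{RW}}^{1,k,T_0,T_1,\vec{x}^{\,\mathsf{up}},\vec{y}^{\,\mathsf{up}},\infty,g}$ so that $L_i \leq L_i^{\mathsf{up}}$ almost surely. It thus suffices to prove the conclusion of \eqref{ENoParDip} for $L_1^{\mathsf{up}}$.

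Next I apply the Radon--Nikodym identity \eqref{RND}, together with $W_H \in (0,1]$, to obtain
$$
\mathbb{P}_{H,H^{RW}}^{1,k,T_0,T_1,\vec{x}^{\,\mathsf{up}},\vec{y}^{\,\mathsf{up}},\infty,g}(\mathsf{Bad}) \leq \frac{\mathbb{P}_{H^{RW}}^{1,k,T_0,T_1,\vec{x}^{\,\mathsf{up}},\vec{y}^{\,\mathsf{up}}}(\mathsf{Bad})}{Z_{H,H^{RW}}^{1,k,T_0,T_1,\vec{x}^{\,\mathsf{up}},\vec{y}^{\,\mathsf{up}},\infty,g}},
$$
where $\mathsf{Bad}$ is the complement of the event in \eqref{ENoParDip}. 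Under the free bridge measure on the right, $L_1^{\mathsf{up}},\dots,L_k^{\mathsf{up}}$ are independent $H^{RW}$ bridges whose linear interpolations are separated by exactly $M^{\mathsf{sep}} \Delta T^{1/2}$.

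The key step is the lower bound on the acceptance probability $Z := Z_{H,H^{RW}}^{1,k,T_0,T_1,\vec{x}^{\,\mathsf{up}},\vec{y}^{\,\mathsf{up}},\infty,g}$. Apply Lemma \ref{LStayInBand} to each free bridge $L_i^{\mathsf{up}}$ (their endpoints are at distance $O((M^{\mathsf{top}}+kM^{\mathsf{sep}}+\lambda L^2)\Delta T^{1/2})$ from $pT_0,pT_1$) and take a union bound to define the event
$$
E_{\mathsf{good}} = \Big\{ \max_{1\leq i\leq k}\sup_{s\in[T_0,T_1]} \big| L_i^{\mathsf{up}}(s) - \mathrm{linear}_i(s) \big| \leq A_0 \Delta T^{1/2} \Big\},
$$
with $A_0$ fixed so that the free-bridge probability of $E_{\mathsf{good}}$ exceeds $1-\epsilon/8$. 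Choosing $M^{\mathsf{sep}} > 2A_0 + 1$, on $E_{\mathsf{good}}$ consecutive curves are separated by at least $(M^{\mathsf{sep}} - 2A_0)\Delta T^{1/2} - O(1)$, while the hypothesis on $g$ plus $L_k^{\mathsf{up}} \geq \mathrm{linear}_k - A_0 \Delta T^{1/2}$ forces $L_k^{\mathsf{up}}(m) - g(m+1) \gtrsim M^{\mathsf{base,top}} T^{1/2}$. Using the decay $\lim_{x\to\infty} x^2 H(-x) = 0$ from Definition \ref{AssH}, for any $\delta > 0$ we can choose $M^{\mathsf{sep}}$ and $M^{\mathsf{base,top}}$ large enough (depending on $k, p, \lambda, L, \epsilon, H, H^{RW}$) so that on $E_{\mathsf{good}}$, summing over the $O(\Delta T)$ pairs,
$$
\sum_{i=0}^{k}\sum_{m=T_0}^{T_1-1} H\big(L_{i+1}^{\mathsf{up}}(m+1) - L_i^{\mathsf{up}}(m)\big) \leq \delta,
$$
with $L_0^{\mathsf{up}}=\infty$ and $L_{k+1}^{\mathsf{up}}=g$. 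Hence $W_H \geq e^{-\delta} \geq 1/2$ on $E_{\mathsf{good}}$, so $Z \geq (1/2)(1-\epsilon/8) \geq 1/4$.

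Finally, under the free measure the bad event depends only on $L_1^{\mathsf{up}}$, a single bridge from $(T_0,\vec{x}^{\,\mathsf{up}}_1)$ to $(T_1,\vec{y}^{\,\mathsf{up}}_1)$. Rewriting the quadratic expression in \eqref{ENoParDip} as deviation from the linear interpolation and using $T^{1/2}\leq \Delta T^{1/2}$ converts the bad event into $\sup_{s\in[T_0,T_1]}|L_1^{\mathsf{up}}(s)-\mathrm{linear}_1(s)| > (A - kM^{\mathsf{sep}})\Delta T^{1/2}$. Choosing $A := kM^{\mathsf{sep}} + A_{\star}$ with $A_{\star}=A_{\star}(\epsilon/16,p,H^{RW})$ from a second application of Lemma \ref{LStayInBand} bounds this free probability by $\epsilon/16$. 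Dividing by $Z\geq 1/4$ and combining with the monotone coupling completes the proof.

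The main technical obstacle is orchestrating the constants in the correct order: one first fixes $A_0$ via the good-event union bound, then $M^{\mathsf{sep}}$ to guarantee curve separation, and only then $M^{\mathsf{base,top}}$ so that the bottom interaction decays---the dependency must be compatible with the parameter schedule $A = A(k,p,\epsilon,H^{RW},H)$ and $M^{\mathsf{base,top}} = M^{\mathsf{base,top}}(k,p,\lambda,M^{\mathsf{top}},\epsilon,H,H^{RW},L)$ stated in the lemma. In particular, $A$ must be independent of $M^{\mathsf{top}}$ and $\lambda$; this is achieved because in the final step the deviation threshold $(A - kM^{\mathsf{sep}})\Delta T^{1/2}$ is already free of $M^{\mathsf{top}}$, while the enlarged endpoint magnitude $M$ in Lemma \ref{LStayInBand} only affects the \emph{threshold} $W_1$, not the deviation constant.
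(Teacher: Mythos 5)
Your proposal is correct and follows essentially the same strategy as the paper's proof: shift the boundary data up to $\vec{x}^{\mathsf{up}},\vec{y}^{\mathsf{up}}$ via the monotone coupling of Lemma \ref{MonCoup}, convert the question to a free-bridge one via the Radon--Nikodym identity \eqref{RND}, lower-bound the acceptance probability by restricting to a good event on which the free bridges stay near their linear interpolants (Lemma \ref{LStayInBand}) and are therefore well-separated from each other and from $g$, and finally upper-bound the free probability of the bad event for the single bridge $L_1^{\mathsf{up}}$ by a second application of Lemma \ref{LStayInBand}. One small imprecision: the claim that choosing $M^{\mathsf{sep}}$ and $M^{\mathsf{base,top}}$ large alone makes $\sum_{i,m} H \leq \delta$ is not correct on its own, since the sum has $(k+1)\Delta T$ terms each of size $H(-c\Delta T^{1/2})$; you also need $W_4$ to be taken large enough (equivalently you may instead pre-choose the separation constant $B$ so that $H(-B\sqrt S)\leq S^{-1}B^{-2}$ for all $S$, as the paper does, which gives a bound uniform in $\Delta T$), but since $W_4$ is allowed to depend on every parameter this is only a matter of phrasing, and your stated constant schedule for $A$ and $M^{\mathsf{base,top}}$ is consistent with the lemma.
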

\begin{remark} Lemma \ref{LNoParDip} states that the lowest-indexed curve of a discrete line ensemble with law $\mathbb{P}_{H,H^{RW}}^{1, k, T_0 ,T_1, \vec{x}, \vec{y},\infty,g}$, whose entrance and exit data $\vec{x}, \vec{y}$ are not higher than a certain inverted parabola at times $T_0$ and $T_1$, is unlikely to rise higher than the straight segment connecting the points of this parabola at times $T_0$ and $T_1$. This is true provided that the bottom bounding curve $g$ is sufficiently low on the interval $[T_0,T_1]$.
\end{remark}

\begin{proof}[Proof of \eqref{Imp2}] We continue with the same notation as in the statement of the lemma and Section \ref{Section3.3}. All constants in the proof depend on the quantities in (\ref{Depend}) as well as $r \geq 2$ and $\epsilon > 0$ as in the statement of the lemma, which are fixed. For clarity we split the proof into five steps.\\

{\bf \raggedleft Step 1.} In this step we fix some notation and state a relevant claim that we will prove and use.

 Let $r_0 = \lceil r \rceil +1$ and put $u^{\pm}_N = \lfloor \pm r_0 N^{\alpha} \rfloor$ and $t^{\pm}_N = \lfloor \pm 4r_0 N^{\alpha} \rfloor$. For $\tau \in \mathbb{N}$ we let $v_N^{\pm,\tau} = \lfloor \pm \tau N^{\alpha} \rfloor$. We claim that we can find $\tau \geq 4r_0 + 2$, $N_1 \in \mathbb{N}$ and $R^{\mathsf{dip}} > 0$, such that $N_1 \geq N_0$ and $T_N \geq |v_N^{\pm,\tau}|$ for $N \geq N_1$, and for $N \geq N_1$ we have 
\begin{equation}\label{S4S3E1}
\begin{split}
&\mathbb{P} \left( L_{m+1}^N(x) - p x \leq -R^{\mathsf{dip}} N^{\alpha/2} \mbox{ for all } x \in \llbracket v_N^{-,\tau}, t_N^- \rrbracket \right) < \epsilon/8, \\
&\mathbb{P} \left( L_{m+1}^N(x) - p x \leq -R^{\mathsf{dip}} N^{\alpha/2} \mbox{ for all } x \in \llbracket t_N^+, v_N^{+,\tau} \rrbracket \right)  < \epsilon/8.
\end{split}
\end{equation}
We will prove (\ref{S4S3E1}) in Step 4 below. In this and the next two steps we assume its validity and conclude the proof of Lemma \ref{keyL2}[$m+1$].\\

We fix $\tau, N_1, R^{\mathsf{dip}}$ as in the previous paragraph. Suppose that $N_2 \geq N_1$ is sufficiently large so that for $N \geq N_2$.
\begin{equation}\label{S4S3E2}
\begin{split}
&u_N^+ \geq rN^{\alpha}, \hspace{2mm} u_N^- \leq - rN^{\alpha}, \hspace{2mm} \min(v^{+,\tau}_N - u^+_N, u_N^{+} - u_N^-, u_N^- - v^{-,\tau}_N) \geq ( v^{+,\tau}_N -  v^{-,\tau}_N)/\tau \mbox{, } \\
&\mbox{ and } \left| v^{\pm,\tau}_N \right| \leq 2 \tau N^{\alpha} .
\end{split}
\end{equation}

Let $R_1 \geq R^{\mathsf{dip}}$ and $N_3 \in \mathbb{N}$, $N_3 \geq N_2$ be sufficiently large so that 
\begin{equation}\label{S4S3E3}
\begin{split}
&R_1 \geq \max_{k \in \llbracket 1, m+1 \rrbracket}  \Rt_{k}(2\tau, \epsilon/(4m+4)), \hspace{2mm} R_1 \geq \max_{k \in \llbracket 1, m\rrbracket}   \Rb_{k}(2\tau, \epsilon/(4m+4)), \mbox{ and }  \\
&N_3 \geq \max_{k \in \llbracket 1, m+1 \rrbracket}  \Nt_{k}(2\tau, \epsilon/(4m+4)), \hspace{2mm} N_3 \geq \max_{k \in \llbracket 1, m\rrbracket}   \Nb_{k}(2\tau, \epsilon/(4m+4)),
\end{split}
\end{equation}
where $\Rt_{k}, \Nt_{k}$ are as in Lemma \ref{keyL1}[$m+1$] and $\Rb_{k}, \Nb_{k}$ are as in Lemma \ref{keyL2}[$m$], which we have assumed to hold as part of our input in  \eqref{Imp2}. 

Finally, we let $N_4 \geq N_3$ be sufficiently large so that $v_N^{+,\tau} - v_N^{-,\tau} \geq W_2$ as in Lemma \ref{LNoDip} with $k = m+1$, $p, H, H^{RW}$ as in the statement of this lemma, $r = (2\tau)^{-1}$, $\epsilon = \epsilon/4$ and $M^{\mathsf{side}} = R_1$. We set $\Nb_{m+1} = N_4$ and fix $\Rb_{m+1} \geq R_1$ large enough so that $\Rb_{m+1} \geq M^{\mathsf{dip}}$, where $ M^{\mathsf{dip}}$ is as in Lemma \ref{LNoDip}  with the choice of parameters we just listed. This specifies our choice of $\Nb_{m+1}$ and $\Rb_{m+1}$ and we prove below that for $N \geq \Nb_{m+1}$ we have 
\begin{equation}\label{S4S3E4}
\begin{split}
\mathbb{P} \left( \mathsf{Low}^N_{m+1} \right) < \epsilon , \mbox{ where }\mathsf{Low}^N_{m+1} =  \left\{ \min_{x \in \llbracket u_N^-, u_N^+ \rrbracket }\left[L_{m+1}^N(x) - p x \right] \leq  - \Rb_{m+1} N^{\alpha/2} \right\}.
\end{split}
\end{equation}
Observe that since $u_N^+ \geq rN^{\alpha}, \hspace{2mm} u_N^- \leq - rN^{\alpha}$ for $N \geq \Nb_{m+1}$, see (\ref{S4S3E2}), and $L_{m+1}^N$ is equal to the linear interpolation of its values on the integers we have that (\ref{S4S3E4}) implies for $N \geq \Nb_{m+1}$ 
$$\mathbb{P} \left( \inf_{x \in [- rN^{\alpha}, r N^{\alpha}] }\left[L_{m+1}^N(x) - p x \right] \leq  - \Rb_{m+1} N^{\alpha/2} \right) \leq \mathbb{P} \left( \mathsf{Low}^N_{m+1} \right)  < \epsilon.$$
Our assumption that Lemma \ref{keyL2}[$m$] holds and the last equation together give Lemma \ref{keyL2}[$m+1$].\\

{\bf \raggedleft Step 2.} In this step we prove (\ref{S4S3E4}). Define the events 
\begin{equation*}
\begin{split}
&\mathsf{High}_{m+1}^{+,N}\coloneqq \left\{\max_{x\in \llbracket  t_N^+, v_N^{+,\tau}  \rrbracket}\left[ L^N_{m+1}(x)-p x \right]\geq  - R_1 N^{\alpha/2} \right\},\\
&\mathsf{High}_{m+1}^{-,N}\coloneqq \left\{\max_{x\in \llbracket  v_N^{-,\tau}, t_N^-  \rrbracket}\left[ L^N_{m+1}(x)-p x \right]\geq  - R_1 N^{\alpha/2} \right\} \mbox{ and }\\
&\mathsf{Side}^N \coloneqq \left\{ \left|L^N_{j}(x )- p x \right|\leq R_1 N^{\alpha/2} \mbox{ for all $j \in \llbracket 1, m \rrbracket$ and $x \in \llbracket v_N^{-,\tau}, v_N^{+,\tau} \rrbracket $ }  \right\}.
\end{split}
\end{equation*}

In view of (\ref{S4S3E3}) and a union bound as well as (\ref{S4S3E1}) we have for $N \geq \Nb_{m+1}$
\begin{equation}\label{S4S3E5}
\begin{split}
\mathbb{P} \left( \mathsf{High}_{m+1}^{+,N} \right) > 1 - \epsilon/8, \hspace{2mm} \mathbb{P} \left( \mathsf{High}_{m+1}^{-,N} \right) > 1 - \epsilon/8, \hspace{2mm} \mathbb{P} \left( \mathsf{Side}^N \right) > 1 - \epsilon/2.
\end{split}
\end{equation}
Here we also used that $\left| v^{\pm,\tau}_N \right| \leq 2 \tau N^{\alpha}$ as follows from (\ref{S4S3E2}).

We also observe that 
\begin{equation*}
\begin{split}
&\mathsf{High}_{m+1}^{+,N} = \sqcup_{q^+ \in  \llbracket  t_N^+, v_N^{+,\tau}  \rrbracket} \mathsf{High}_{m+1}^{+,N}(q^+) \mbox{ and }\mathsf{High}_{m+1}^{-,N} = \sqcup_{q^- \in  \llbracket v_N^{-,\tau}, t_N^-  \rrbracket} \mathsf{High}_{m+1}^{-,N}(q^-) , \mbox{ where }\\
&\mathsf{High}_{m+1}^{+,N}(q^+)  =  \left\{ L^N_{m+1}(q^+)-p q^+ \geq  - R_1 N^{\alpha/2}  \right\} \cap \left\{ \max_{ x \in \llbracket q^+ + 1, v_N^{+,\tau} \rrbracket} \left[ L^N_{m+1}(x)-p x \right]<  - R_1 N^{\alpha/2} \right\}, \\
&\mathsf{High}_{m+1}^{-,N}(q^-)  =  \left\{\left[ L^N_{m+1}(q^-)-p q^- \right]\geq  - R_1 N^{\alpha/2} \right\} \cap \left\{ \max_{ x \in \llbracket v_N^{-,\tau}, q^- - 1\rrbracket }\left[ L^N_{m+1}(x)-p x \right]<  - R_1 N^{\alpha/2} \right\}.
\end{split}
\end{equation*}
Finally, we define for $q^+ \in   \llbracket  t_N^+, v_N^{+,\tau}  \rrbracket$ and $q^- \in \llbracket v_N^{-,\tau}, t_N^- \rrbracket$ the event 
$$\mathsf{Side}^N(q^-, q^+) = \left\{ \left|L^N_{j}(x )- p x \right|\leq R_1 N^{\alpha/2} \mbox{ for all $j \in \llbracket 1, m \rrbracket$ and $x \in \{q^-, q^+ \} $ }  \right\}.$$

We claim that for each $N \geq  \Nb_{m+1}$, $q^+ \in \llbracket  t_N^+, v_N^{+,\tau}  \rrbracket$ and $q^- \in \llbracket v_N^{-,\tau}, t_N^-  \rrbracket$  we have 
\begin{equation}\label{S4S3E6}
\begin{split}
&\mathbb{P} \left( \mathsf{High}_{m+1}^{+,N}(q^+) \cap \mathsf{High}_{m+1}^{-,N}(q^-)  \cap \mathsf{Side}^N(q^-, q^+) \cap \mathsf{Low}_{m+1}^N  \right) \leq  \\
&(\epsilon/4) \cdot \mathbb{P} \left( \mathsf{High}_{m+1}^{+,N}(q^+) \cap \mathsf{High}_{m+1}^{-,N}(q^-)    \right).
\end{split}
\end{equation}
We will prove (\ref{S4S3E6}) in Step 3. Here we assume its validity and conclude the proof of (\ref{S4S3E4}).\\

Since $\mathsf{Side}^N \subset \mathsf{Side}^N(q^-, q^+) $ for each $q^+ \in   \llbracket  t_N^+, v_N^{+,\tau}  \rrbracket$ and $q^- \in \llbracket v_N^{-,\tau}, t_N^-  \rrbracket$ we have from summing (\ref{S4S3E6}) over $q^+ \in   \llbracket  t_N^+, v_N^{+,\tau}  \rrbracket$ and $q^- \in \llbracket v_N^{-,\tau}, t_N^-  \rrbracket$ that 
\begin{equation*}
\begin{split}
&\mathbb{P} \left( \mathsf{High}_{m+1}^{+,N} \cap \mathsf{High}_{m+1}^{-,N} \cap \mathsf{Side}^N\cap \mathsf{Low}_{m+1}^N  \right) \leq (\epsilon/4) \cdot \mathbb{P} \left( \mathsf{High}_{m+1}^{+,N} \cap \mathsf{High}_{m+1}^{-,N}    \right) \leq \epsilon/4.
\end{split}
\end{equation*}
Combining the last equation with (\ref{S4S3E5}) gives 
\begin{equation*}
\begin{split}
\mathbb{P} \left( \mathsf{Low}_{m+1}^N  \right) \leq \hspace{2mm} & \mathbb{P} \left( \mathsf{High}_{m+1}^{+,N} \cap \mathsf{High}_{m+1}^{-,N} \cap \mathsf{Side}^N\cap \mathsf{Low}_{m+1}^N  \right) + \\
&\mathbb{P} \left( (\mathsf{High}_{m+1}^{+,N})^c \right) +  \mathbb{P} \left( (\mathsf{High}_{m+1}^{-,N})^c \right)  + \mathbb{P} \left(( \mathsf{Side}^N)^c \right) < \epsilon,
\end{split}
\end{equation*}
which proves (\ref{S4S3E4}).\\

{\bf \raggedleft Step 3.} In this step we prove (\ref{S4S3E6}). From the $(H,H^{RW})$-Gibbs property of $\mathfrak{L}^N$ we have for $N \geq \Nb_{m+1}$ (note $T_N \geq | v^{\pm,\tau}_N |$ from (\ref{S4S3E2}))
\begin{equation}\label{S4S3E7}
\begin{split}
& \mathbb{P}\left( \mathsf{High}_{m+1}^{+,N}(q^+) \cap \mathsf{High}_{m+1}^{-,N}(q^-)  \cap \mathsf{Side}^N(q^-, q^+) \cap \mathsf{Low}_{m+1}^N \right) = \\
&  \mathbb{E}\left[ \mathbb{E} \left[ {\bf 1}_{ \mathsf{High}_{m+1}^{+,N}(q^+)} \cdot  {\bf 1}_{ \mathsf{High}_{m+1}^{-,N}(q^-)}  \cdot   {\bf 1}_{\mathsf{Side}^N(q^-, q^+)}  \cdot  {\bf 1}_{ \mathsf{Low}_{m+1}^N }  \Big{|} \mathcal{F} \right] \right] = \\
&  \mathbb{E}\left[ {\bf 1}_{ \mathsf{High}_{m+1}^{+,N}(q^+)} \cdot  {\bf 1}_{ \mathsf{High}_{m+1}^{-,N}(q^-)}  \cdot   {\bf 1}_{\mathsf{Side}^N(q^-, q^+)}  \cdot \mathbb{E} \left[    {\bf 1}_{ \mathsf{Low}_{m+1}^N }  \Big{|} \mathcal{F} \right] \right] = \\
& \mathbb{E}\Bigg{[}{\bf 1}_{ \mathsf{High}_{m+1}^{+,N}(q^+)} \cdot  {\bf 1}_{ \mathsf{High}_{m+1}^{-,N}(q^-)}  \cdot   {\bf 1}_{\mathsf{Side}^N(q^-, q^+)}    \times \\
& \mathbb{P}_{H,H^{RW}}^{1,m+1,q^{-}, q^+,\vec{x},\vec{y},\infty,g}\left( \min_{x \in \llbracket u_N^-, u_N^+ \rrbracket }\left[\tilde{L}_{m+1}^N(x) - p x \right] \leq  - \Rb_{m+1} N^{\alpha/2}\right)\Bigg{]},
\end{split}
\end{equation}
where $\mathcal{F} =\mathcal{F}_{ext}(\llbracket 1, m + 1 \rrbracket \times \llbracket q^- +1, q^+ - 1 \rrbracket) $ as in (\ref{GibbsCond}), $\vec{x}= (L^N_1(q^-), \dots,L^N_{m+1}(q^-)) $, $\vec{y}= (L^N_1(q^+), \dots,L^N_{m+1}(q^+)) $ and $g = L^N_{m+2}\llbracket q^-, q^+ \rrbracket$. We mention that in the first equality we used the tower property for conditional expectations and in the second that $ \mathsf{High}_{m+1}^{+,N}(q^+)$, $\mathsf{High}_{m+1}^{-,N}(q^-)$, $\mathsf{Side}^N(q^-, q^+) \in \mathcal{F}$. Also $(\tilde{L}_1, \dots,\tilde{L}_{m+1}) $ is distributed according to $\mathbb{P}_{H,H^{RW}}^{1,m+1,q^{-}, q^+,\vec{x},\vec{y},\infty,g}$. 

Setting $\vec{x}'$, $\vec{y}'$ to be the same as $\vec{x}, \vec{y}$ except that $x'_{m+1} = pq^- -R_1 N^{\alpha/2}$ and $y'_{m+1} = pq^+ -R_1 N^{\alpha/2}$ we see from  Lemma \ref{MonCoup} that $\mathbb{P}$-almost surely
\begin{equation}\label{S4S3E8}
\begin{split}
&{\bf 1}_{ \mathsf{High}_{m+1}^{+,N}(q^+)}   {\bf 1}_{ \mathsf{High}_{m+1}^{-,N}(q^-)}  \mathbb{P}_{H,H^{RW}}^{1,m+1,q^{-}, q^+,\vec{x},\vec{y},\infty,g}\left( \min_{x \in \llbracket u_N^-, u_N^+ \rrbracket }\left[\tilde{L}_{m+1}^N(x) - p x \right] \leq  - \Rb_{m+1} N^{\alpha/2}\right) \leq \\
& {\bf 1}_{ \mathsf{High}_{m+1}^{+,N}(q^+)}   {\bf 1}_{ \mathsf{High}_{m+1}^{-,N}(q^-)}  \mathbb{P}_{H,H^{RW}}^{1,m+1,q^{-}, q^+,\vec{x}',\vec{y}',\infty,g}\left( \min_{x \in \llbracket u_N^-, u_N^+ \rrbracket }\left[\tilde{L}_{m+1}^N(x) - p x \right] \leq  - \Rb_{m+1} N^{\alpha/2}\right).
\end{split}
\end{equation}
From Lemma \ref{LNoDip} with the same choice of parameters as in Step 1, $R = N^{\alpha}$, $T_0 = q^-$, $T_1 = q^+$ we have $\mathbb{P}$-almost surely for $N \geq \Nb_{m+1}$  that 
\begin{equation}\label{S4S3E9}
\begin{split}
&   {\bf 1}_{\mathsf{Side}^N(q^-, q^+)} \cdot \mathbb{P}_{H,H^{RW}}^{1,m+1,q^{-}, q^+,\vec{x}',\vec{y}',\infty,g}\left( \min_{x \in \llbracket u_N^-, u_N^+ \rrbracket }\left[\tilde{L}_{m+1}^N(x) - p x \right] \leq  - \Rb_{m+1} N^{\alpha/2}\right) \leq \\
& {\bf 1}_{\mathsf{Side}^N(q^-, q^+)} \cdot (\epsilon/4).
\end{split}
\end{equation}
We mention that in deriving the last inequality we used that $\Rb_{m+1} \geq M^{\mathsf{dip}}$ as in Step 1. The fact that $R, T_0, T_1$ satisfy the conditions of Lemma \ref{LNoDip} follows from the inequalities in (\ref{S4S3E2}) and the fact that we took $r = (2\tau)^{-1}$ in Lemma \ref{LNoDip}. We also mention that we used that on $\mathsf{Side}^N(q^-, q^+)$ the random vectors $\vec{x}', \vec{y}'$ as in (\ref{S4S3E9}) satisfy the conditions of Lemma \ref{LNoDip} with $M^{\mathsf{side}} = R_1$.

Equations (\ref{S4S3E7}), (\ref{S4S3E8}) and (\ref{S4S3E9}) give (\ref{S4S3E6}). \\

{\bf \raggedleft Step 4.} In the next two steps we prove (\ref{S4S3E1}). In this step we specify the choice of $\tau, N_1$ and $R^{\mathsf{dip}}$. 

Let $A(m, p, \epsilon/64, H^{RW}, H)$ be as in Lemma \ref{LNoParDip} for $m, p ,\epsilon, H^{RW}$ as in the present lemma. In addition, let $S_1 > 0$ be sufficiently large so that 
\begin{equation}\label{S4S3E10}
S_1 \geq \phi_2(\epsilon/128) + 1,
\end{equation}
where $\phi_2$ is as in Definition \ref{Def1} of an $(\alpha, p, \lambda)$-good sequence. We then let $\tau \in \mathbb{N}$ and $V_1 \in \mathbb{N}$, be sufficiently large so that $\tau \geq 4r_0 + 2$ and for $N \geq V_1$
\begin{equation}\label{S4S3E11}
\begin{split}
&-  \lambda  \cdot \left(\frac{\lfloor \tau_{+} N^{\alpha} \rfloor}{N^{\alpha}}\right)^2 +  \frac{\lfloor \tau_{+} N^{\alpha} \rfloor - T^+_0}{T^+_1 - T^+_0}\cdot \lambda \cdot  \left(\frac{T^+_1}{N^{\alpha}} \right)^2    +  \frac{T^+_1 - \lfloor \tau_{+} N^{\alpha}\rfloor}{T^+_1 - T^+_0} \cdot \lambda \cdot  \left(\frac{T^+_0}{N^{\alpha}} \right)^2  > \\
& (A +2S_1)\left(\frac{T_1^+ - T_0^+}{N^{\alpha}} \right)^{1/2}   , \\
&-  \lambda  \cdot \left(\frac{\lfloor \tau_{-} N^{\alpha} \rfloor}{N^{\alpha}}\right)^2 +  \frac{\lfloor \tau_{-} N^{\alpha} \rfloor - T^-_0}{T^-_1 - T^-_0}\cdot \lambda \cdot  \left(\frac{T^-_1}{N^{\alpha}} \right)^2    +  \frac{T^-_1 - \lfloor \tau_{-} N^{\alpha}\rfloor}{T^-_1 - T^-_0} \cdot \lambda \cdot  \left(\frac{T^-_0}{N^{\alpha}} \right)^2  > \\
& (A +2S_1)\left(\frac{T_1^- - T_0^-}{N^{\alpha}} \right)^{1/2}   , \\
\end{split}
\end{equation}
where $T_0^+ =  t_N^+$, $T_1^+ = \lfloor \tau N^{\alpha} \rfloor$, $T_0^- = \lfloor - \tau N^{\alpha} \rfloor$, $T_1^- = t_N^-$, $\tau_+ = \lfloor N^{-\alpha} (T_0^+ + T_1^+)/2 \rfloor$,  and $\tau_- = \lfloor N^{-\alpha} (T_0^- + T_1^-)/2 \rfloor$. Note that such a choice of $\tau$ and $V_1$ is possible since the left sides in the two inequalities in (\ref{S4S3E11}) for all large $N$ are at least $(\lambda/8)\tau^2$, while the right sides are $O(\sqrt{\tau})$ for all $N$. This specifies our choice of $\tau$ and we set, as in Step 1, $v_N^{\pm,\tau} = \lfloor \pm \tau N^{\alpha} \rfloor$.

We next let $V_2 \in \mathbb{N}$ be sufficiently large so that $V_2 \geq V_1$ and for $N \geq V_2$ we have
\begin{equation}\label{S4S3E12}
\begin{split}
v_N^{+,\tau} - t_N^+ \geq N^{\alpha}, \hspace{2mm} t_N^- - v_N^{-,\tau}  \geq N^{\alpha}, \hspace{2mm} | v_N^{\pm, \tau} | \leq 2\tau N^{\alpha}, \hspace{2mm} T_N \geq  | v_N^{\pm, \tau} | \mbox{ and }\lfloor \tau_{\pm} N^{\alpha} \rfloor \in [T_0^{\pm}, T_1^{\pm}].
\end{split}
\end{equation}
Let $N_m^{\mathsf{sep}}(\tau, \epsilon/128)$, $N_m^{\mathsf{sep}}(4r_0, \epsilon/128)$ be as in Lemma \ref{keyL3}[$m$], which we have assumed to hold as part of our input in  \eqref{Imp2}. Finally, let $W_4$ and $M^{\mathsf{base}, \mathsf{top}}$ be as in Lemma \ref{LNoParDip} for $k = m$, $p, \lambda, H, H^{RW}$ as in the present lemma, $L = 2 \tau$, $\epsilon = \epsilon/128$ and $M^{\mathsf{top}} = S_1$. We let $V_3$ be sufficiently large so that $V_3 \geq V_2$, $V_3 \geq \max(N_m^{\mathsf{sep}}(\tau, \epsilon/128), N_m^{\mathsf{sep}}(4r_0, \epsilon/128))$ and for $N \geq V_3$ we have $N^{\alpha} \geq W_4$. 

Finally, from Definition \ref{Def1} of an $(\alpha, p, \lambda)$-good sequence, and (\ref{S4S3E10}), we have that there exists $V_4 \geq V_3$ such that for $N \geq V_4$ we have 
\begin{equation}\label{S4S3E13}
\begin{split} 
&\mathbb{P}  \left( \left| N^{-\alpha/2} \left( L_1^N(\lfloor \tau_{\pm} N^{\alpha} \rfloor ) - p \lfloor \tau_{\pm} N^{\alpha} \rfloor \right) + \lambda (N^{-\alpha}\lfloor \tau_{\pm} N^{\alpha} \rfloor)^2 \right| \geq  S_1 \right) < \epsilon/64,\\
&\mathbb{P}  \left( \left| N^{-\alpha/2} \left( L_1^N( v_N^{\pm, \tau} ) - p   v_N^{\pm, \tau}  \right) + \lambda (N^{-\alpha}v_N^{\pm, \tau})^2 \right|  \geq S_1  \right) < \epsilon/64,\\
&\mathbb{P}  \left( \left| N^{-\alpha/2} \left( L_1^N( t_N^{\pm} ) - p  t_N^{\pm}  \right) + \lambda (N^{-\alpha}t_N^{\pm})^2 \right|  \geq S_1  \right) < \epsilon/64.
\end{split}
\end{equation}
We now let $N_1 = V_4$ and set $R^{\mathsf{dip}} = M^{\mathsf{base}, \mathsf{top}}$ as in Lemma \ref{LNoParDip} with the choice of parameters we listed in the previous paragraph. This concludes our specification of $\tau, N_1$ and $R^{\mathsf{dip}} $.\\

{\bf \raggedleft Step 5.} In this final step we prove (\ref{S4S3E1}). As the proofs are quite similar we only establish the second line in (\ref{S4S3E1}).

Since $N_1 \geq N_m^{\mathsf{sep}}(\tau, \epsilon/128)$ we know that for $N \geq N_1$
\begin{equation*}
\mathbb{P}\left( \cup_{i = 1}^{m-1} \left\{ L_i^N(  v^{+,\tau}_N) - L_{i+1}^N(v^{+,\tau}_N)  \leq  \delta^{\mathsf{sep}}_m \right\} \right) < \epsilon/128. 
\end{equation*}
Combining the latter with the second line of (\ref{S4S3E13}) we conclude that for $N \geq N_1$
\begin{equation}\label{S4S3E14}
\begin{split}
&\mathbb{P}\left(\mathsf{Side}^N_+ \right) > 1- 3\epsilon/128, \mbox{ where } \mathsf{Side}^N_+ = \\
& \{ S_1 N^{\alpha/2} - N^{\alpha/2} \lambda(N^{-\alpha}v_N^{+,\tau})^2 \geq L_1^N(v^{+,\tau}_N) - pv^{+,\tau}_N \geq \cdots \geq L_{m}^N(v^{+,\tau}_N) - pv^{+,\tau}_N  \}. 
\end{split}
\end{equation}
Analogous arguments show that for $N \geq N_1$
\begin{equation}\label{S4S3E15}
\begin{split}
&\mathbb{P}\left(\mathsf{Side}^N_- \right) > 1- 3\epsilon/128, \mbox{ where } \mathsf{Side}^N_- = \\
& \{ S_1 N^{\alpha/2} - N^{\alpha/2} \lambda(N^{-\alpha}t_N^{+})^2 \geq L_1^N(t^{+}_N) - pt^{+}_N \geq \cdots \geq L_{m}^N(t^{+}_N) - pt^{+}_N \}. 
\end{split}
\end{equation}
We also define the events
$$\mathsf{Dip}^N_{m+1} = \{ L_{m+1}^N(x) - p x \leq -R^{\mathsf{dip}} N^{\alpha/2} \mbox{ for all } x \in \llbracket t_N^+, v_N^{+,\tau} \rrbracket \},$$
and 
$$\mathsf{LB}^N_{m} = \{ N^{-\alpha/2} \left(L_1^N(\lfloor \tau_{+} N^{\alpha} \rfloor ) - p \lfloor \tau_{+} N^{\alpha} \rfloor \right) + \lambda (N^{-\alpha}\lfloor \tau_{+} N^{\alpha} \rfloor)^2 \geq - S_1 \},$$
and note that from (\ref{S4S3E13}) we have for $N \geq N_1$
\begin{equation}\label{S4S3E16}
\begin{split}
\mathbb{P} \left( \mathsf{LB}^N_{m}\right) > 1 - \epsilon/64.
\end{split}
\end{equation}

From the $(H,H^{RW})$-Gibbs property of $\mathfrak{L}^N$ we have for $N \geq N_1$ (note $T_N \geq | v^{+, \tau}_N |$ from (\ref{S4S3E12}))
\begin{equation}\label{S4S3E17}
\begin{split}
&  \mathbb{P}\left( \mathsf{Dip}^N_{m+1} \cap \mathsf{Side}^N_-  \cap \mathsf{Side}^N_+  \cap \mathsf{LB}^N_{m}   \right)=  \mathbb{E}\left[ \mathbb{E} \left[ {\bf 1 }_{\mathsf{Dip}^N_{m+1}} \cdot {\bf 1 }_{\mathsf{Side}^N_- } \cdot {\bf 1 }_{ \mathsf{Side}^N_+} \cdot   {\bf 1 }_{\mathsf{LB}^N_{m}} \Big{|} \mathcal{F} \right] \right] = \\
&  \mathbb{E}\left[ {\bf 1 }_{\mathsf{Dip}^N_{m+1}} \cdot {\bf 1 }_{\mathsf{Side}^N_- } \cdot {\bf 1 }_{ \mathsf{Side}^N_+} \cdot  \mathbb{E} \left[   {\bf 1 }_{\mathsf{LB}^N_{m}}   \Big{|} \mathcal{F} \right] \right] =  \mathbb{E}\Bigg{[}{\bf 1 }_{\mathsf{Dip}^N_{m+1}} \cdot {\bf 1 }_{\mathsf{Side}^N_- } \cdot {\bf 1 }_{ \mathsf{Side}^N_+}    \times \\
& \mathbb{P}_{H,H^{RW}}^{1,m,t_N^{+}, v_N^{\tau, +},\vec{x},\vec{y},\infty,g}\left(N^{-\alpha/2} \left(\tilde{L}_1(\lfloor \tau_{+} N^{\alpha} \rfloor ) - p \lfloor \tau_{+} N^{\alpha} \rfloor \right) + \lambda (N^{-\alpha}\lfloor \tau_{+} N^{\alpha} \rfloor)^2 \geq - S_1 \right)\Bigg{]},
\end{split}
\end{equation}
where $\mathcal{F} =\mathcal{F}_{ext}(\llbracket 1, m  \rrbracket \times \llbracket t_N^++1,v_N^{\tau,+} - 1 \rrbracket) $ as in (\ref{GibbsCond}), also $\vec{x}= (L^N_1(t_N^+), \dots,L^N_{m}(t_N^+)) $, $\vec{y}= (L^N_1(v_N^{\tau,+}), \dots,L^N_{m}(v_N^{\tau,+})) $ and $g = L^N_{m+1}\llbracket t_N^+, v_N^{\tau, +} \rrbracket$. We mention that in the first equality we used the tower property for conditional expectations and in the second that $ \mathsf{Dip}^N_{m+1}$, $\mathsf{Side}^N_-$, $\mathsf{Side}^N_+ \in \mathcal{F}$. Also $(\tilde{L}_1, \dots,\tilde{L}_{m}) $ is distributed according to $\mathbb{P}_{H,H^{RW}}^{1,m,t_N^{+}, v_N^{\tau, +},\vec{x},\vec{y},\infty,g}$.

From Lemma \ref{LNoParDip} with the same choice of parameters as in Step 4, $T = N^{\alpha}$, $T_1 = v_N^{\tau,+}$, $T_0 = t_N^{+}$ we have $\mathbb{P}$-almost surely for $N \geq N_1$  that 
\begin{equation}\label{S4S3E18}
\begin{split}
& {\bf 1 }_{\mathsf{Dip}^N_{m+1}} \cdot {\bf 1 }_{\mathsf{Side}^N_- } \cdot {\bf 1 }_{ \mathsf{Side}^N_+}    \cdot  \mathbb{P}_{H,H^{RW}}^{1,m,t_N^{+}, v_N^{\tau, +},\vec{x},\vec{y},\infty,g}\Big(N^{-\alpha/2} \left(\tilde{L}_1(\lfloor \tau_{+} N^{\alpha} \rfloor ) - p \lfloor \tau_{+} N^{\alpha} \rfloor \right) + \\
& \lambda (N^{-\alpha}\lfloor \tau_{+} N^{\alpha} \rfloor)^2 \geq -S_1  \Big) \leq  {\bf 1 }_{\mathsf{Dip}^N_{m+1}} \cdot {\bf 1 }_{\mathsf{Side}^N_- } \cdot {\bf 1 }_{ \mathsf{Side}^N_+}  \cdot (\epsilon/64).
\end{split}
\end{equation}
We mention that in deriving the last inequality we used that $x = \lfloor \tau_{+} N^{\alpha} \rfloor \in [T_0, T_1]$, see (\ref{S4S3E12}), and the first inequality in (\ref{S4S3E11}). The fact that $T, T_0, T_1$ satisfy the conditions of Lemma \ref{LNoParDip} follows from the inequalities in (\ref{S4S3E12}). We also mention that we used that on $\mathsf{Side}_-^N \cap \mathsf{Side}_+^N$ the random vectors $\vec{x}, \vec{y}$ as in (\ref{S4S3E18}) satisfy the conditions of Lemma \ref{LNoParDip} with $M^{\mathsf{top}} = S_1$ as in Step 4, while on $\mathsf{Dip}^N_{m+1}$ the vector $g$ satisfies the conditions of Lemma \ref{LNoParDip} with $M^{\mathsf{base}, \mathsf{top}} = R^{\mathsf{dip}}$ as in Step 4.

Combining (\ref{S4S3E17}) and (\ref{S4S3E18}) we conclude that 
$$\mathbb{P}\left( \mathsf{Dip}^N_{m+1} \cap \mathsf{Side}^N_-  \cap \mathsf{Side}^N_+  \cap \mathsf{LB}^N_{m}   \right) \leq (\epsilon/64) \cdot \mathbb{P}\left( \mathsf{Dip}^N_{m+1} \cap \mathsf{Side}^N_-  \cap \mathsf{Side}^N_+   \right) \leq \epsilon/64.$$
The last inequality together with (\ref{S4S3E14}), (\ref{S4S3E15}) and (\ref{S4S3E16}) together give
\begin{equation*}
\begin{split}
&   \mathbb{P} \left( \mathsf{Dip}^N_{m+1} \right) \leq \mathbb{P}\left( \mathsf{Dip}^N_{m+1} \cap \mathsf{Side}^N_-  \cap \mathsf{Side}^N_+  \cap \mathsf{LB}^N_{m}   \right) + \mathbb{P}\left( (\mathsf{Side}^N_-)^c \right) + \\
& \mathbb{P}\left( (\mathsf{Side}^N_+)^c \right)  + \mathbb{P}\left( (\mathsf{LB}^N_{m})^c \right) \leq \epsilon/64 + 3\epsilon/128 + 3\epsilon/128 + \epsilon/64 = 5\epsilon/64 < \epsilon/8.
\end{split}
\end{equation*}
The last equation gives the second line in (\ref{S4S3E1}).
\end{proof}

%
\subsection{Proof of (\ref{Imp3})}\label{Section4.4} In this section we present the proof of (\ref{Imp3}), for which we require the following three results, whose proofs are postponed until Section \ref{Section7} -- see Lemmas \ref{S7LNoBigJump}, \ref{S7LNotClose} and \ref{S7LAccProb}.

\begin{lemma}\label{LNoBigJump} Let $\ell$ have distribution $\mathbb{P}^{T_0,T_1,x,y}_{H^{RW}}$ with $H^{RW}$ as in Definition \ref{AssHR}.  Fix $p \in \mathbb{R}$, $r, \epsilon \in (0,1)$, $M > 0$.  Then we can find $W_5 = W_5(p,r, \epsilon, M, H^{RW}) \in \mathbb{N}$ such that the following holds.
If $T_0, T_1 \in \mathbb{Z}$ are such that $T_1 - T_0 \geq  W_5$, $r (T_1 - T_0) \leq R \leq r^{-1}(T_1 - T_0) $ and $x,y \in \mathbb{R}$ satisfy
$$ \left|x  - pT_0\right|\leq M R^{1/2} \mbox{ and } \left|y  - pT_1 \right| \leq M R^{1/2} $$
then we have 
\begin{equation}\label{ENoBigJump}
\mathbb{P}^{T_0,T_1,x,y}_{H^{RW}} \left( \sup_{ s \in [T_0, T_1-1] }  \left| \ell(s) - \ell(s+1) \right| \leq R^{1/4} \right) \geq 1 - \epsilon.
\end{equation}
\end{lemma}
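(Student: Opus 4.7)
The plan is to apply the KMT-type strong coupling of Proposition \ref{KMT} to reduce bounding the jumps of $\ell$ to bounding the jumps of a Brownian bridge plus the coupling error. Set $T = T_1 - T_0$ and $z = y - x$; the recentered curve $\tilde{\ell}(t) = \ell(T_0 + t) - x$ has law $\mathbb{P}^{0,T,0,z}_{H^{RW}}$ and the same jump distribution as $\ell$. The hypotheses give $|z - pT| \leq |x - pT_0| + |y - pT_1| \leq 2MR^{1/2}$, and together with $rR \leq T$ also $|z - pT|^2/T \leq 4M^2/r$. Proposition \ref{KMT} then provides a probability space on which $\tilde{\ell}$ is defined together with a Brownian bridge $B^\sigma$ of diffusion parameter $\sigma = \sigma_p$ such that
\begin{equation*}
\Delta \coloneqq \sup_{0 \leq t \leq T}\bigl|\sqrt{T}\, B^\sigma_{t/T} + (t/T) z - \tilde{\ell}(t)\bigr|
\end{equation*}
satisfies $\mathbb{E}[e^{a\Delta}] \leq C_1 T^\alpha$, with $C_1 = Ce^{4M^2/r}$ and constants $a, \alpha, C > 0$ depending only on $p$ and $H^{RW}$.

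From the telescoping decomposition
\begin{equation*}
\tilde{\ell}(j+1) - \tilde{\ell}(j) = E_{j+1} - E_j + \sqrt{T}\bigl(B^\sigma_{(j+1)/T} - B^\sigma_{j/T}\bigr) + \tfrac{z}{T},
\end{equation*}
with $E_j = \tilde{\ell}(j) - \sqrt{T}B^\sigma_{j/T} - (j/T)z$ satisfying $|E_j| \leq \Delta$, we deduce
\begin{equation*}
\sup_{j \in \llbracket 0, T-1\rrbracket}\bigl|\tilde{\ell}(j+1) - \tilde{\ell}(j)\bigr| \leq 2\Delta + \sqrt{T}\sup_j\bigl|B^\sigma_{(j+1)/T} - B^\sigma_{j/T}\bigr| + \tfrac{|z|}{T}.
\end{equation*}
It thus suffices to show each of the three summands is bounded by $R^{1/4}/3$ with probability at least $1 - \epsilon/3$, once $T_1 - T_0 \geq W_5$ for a threshold $W_5 = W_5(p,r,\epsilon,M,H^{RW})$ chosen large enough.

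For the first summand, Markov's inequality and the KMT moment bound give
\begin{equation*}
\mathbb{P}\bigl(2\Delta \geq R^{1/4}/3\bigr) \leq C_1 T^\alpha e^{-aR^{1/4}/6} \leq C_1 (r^{-1}R)^\alpha e^{-aR^{1/4}/6},
\end{equation*}
which vanishes as $R \to \infty$. For the second summand, the representation $B^\sigma_t = \sigma(W_t - tW_1)$ with $W$ a standard Brownian motion shows that each increment $\sqrt{T}(B^\sigma_{(j+1)/T} - B^\sigma_{j/T})$ is centered Gaussian with variance at most $\sigma^2$, so a union bound over the at most $T \leq r^{-1}R$ values of $j$ combined with the standard Gaussian tail estimate yields
\begin{equation*}
\mathbb{P}\Bigl(\sqrt{T}\sup_j\bigl|B^\sigma_{(j+1)/T} - B^\sigma_{j/T}\bigr| \geq R^{1/4}/3\Bigr) \leq 2 r^{-1} R \exp\bigl(-R^{1/2}/(18\sigma^2)\bigr),
\end{equation*}
again vanishing as $R \to \infty$. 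For the third summand, $|z|/T \leq 2MR^{1/2}/T + |p| \leq 2M/(rR^{1/2}) + |p|$ via $T \geq rR$, which is smaller than $R^{1/4}/3$ for all $R$ sufficiently large. Choosing $W_5$ large enough to drive each of the three estimates below $\epsilon/3$ completes the proof. The only subtle step is ensuring the endpoint-dependent prefactor $e^{|z - pT|^2/T}$ in the KMT coupling is uniformly controlled; this is precisely where the comparability assumption $rT \leq R$ between $R$ and the bridge length, combined with $|z - pT| \leq 2MR^{1/2}$, is needed. Everything else reduces to Gaussian tail estimates and union bounds.
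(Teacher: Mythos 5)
Your proof is correct and follows the same strategy as the paper: reduce by shift invariance to the canonical bridge, invoke the KMT coupling of Proposition~\ref{KMT}, split the unit-step increment into a coupling-error term, a Brownian-bridge increment, and a drift term, and control these by the subexponential moment bound, Gaussian tails with a union bound over the $O(T)$ integer times, and a deterministic estimate, respectively. The only difference is organizational: the paper takes a per-time union bound up front with threshold $\epsilon/T$, while you take the supremum first and union-bound only the Brownian part (using that $\Delta$ already controls all times simultaneously); both give the same conclusion since the tails decay superpolynomially in $T$.
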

\begin{remark}
Lemma \ref{LNoBigJump} states that a random walk bridge between well-behaved endpoints $(T_0,x)$ and $(T_1,y)$ is unlikely to make large jumps in a single unit of time.
\end{remark}

\begin{lemma}\label{LNotClose}Fix $k \in \mathbb{N}$, $k \geq 2$ and let $\mathfrak{L} = (L_1, \dots, L_k)$ have law $\mathbb{P}_{H^{RW}}^{1, k, T_0 ,T_1, \vec{x}, \vec{y}}$ as in Definition \ref{Pfree}, where $H^{RW}$ is as in Definition \ref{AssHR}. Fix $p \in \mathbb{R}$, $r \in (0,1)$, $t \in (0,1/3)$, $M, \epsilon > 0$. Then we can find $W_6 = W_6(k,p,r,t,M,\epsilon, H^{RW}) \in \mathbb{N}$ and $\delta = \delta(k, p, r, t,  \epsilon, H^{RW})> 0$, such that the following holds. If $T_0, T_1 \in \mathbb{Z}$ are such that $T_1 - T_0 \geq  W_6$, $r (T_1 - T_0)\leq R \leq r^{-1} (T_1 - T_0)$, $t_0 \in \llbracket T_0, T_1\rrbracket $ is such that $\min( T_1 - t_0, t_0 -T_0 )\geq t (T_1 - T_0)$ and $\vec{x}, \vec{y} \in \mathbb{R}^k$ satisfy
$$ \left|x_i  - pT_0\right|\leq M R^{1/2} \mbox{ and } \left|y_i  - pT_1 \right| \leq M R^{1/2} \mbox{ for $i = 1, \dots, k$}$$
then we have 
\begin{equation}\label{NEotClose}
\mathbb{P}_{H^{RW}}^{1, k, T_0 ,T_1, \vec{x}, \vec{y}} \left( \min_{1 \leq i < j \leq k} \left|L_i(t_0) - L_j(t_0) \right| \leq \delta R^{1/2}  \right) \leq \epsilon.
\end{equation}
\end{lemma}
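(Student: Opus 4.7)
The plan is to use the strong KMT-type coupling of Proposition \ref{KMT} to reduce to a direct Gaussian computation. Throughout, set $T = T_1 - T_0$, $z_i = y_i - x_i$, $s = t_0 - T_0$ and $u = s/T$, and note the hypotheses give $u \in [t, 1-t] \subset [t, 2/3]$, $|z_i - pT| \leq 2MR^{1/2} \leq 2Mr^{-1/2}T^{1/2}$, and $R \geq rT$.

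First I would construct, on a product probability space, independent couplings of each $L_i$ to an independent Brownian bridge $B_i^{\sigma}$ of diffusion parameter $\sigma_p$. The process $t \mapsto L_i(T_0 + t) - x_i$ has law $\mathbb{P}^{0,T,0,z_i}_{H^{RW}}$, so applying Proposition \ref{KMT} on $k$ independent probability spaces and taking the product yields such a coupling with control on
\[
\Delta_i := \sup_{0 \leq t \leq T}\bigl|\sqrt{T}\, B_i^{\sigma}(t/T) + (t/T) z_i - (L_i(T_0 + t) - x_i)\bigr|
\]
via $\mathbb{E}[e^{a\Delta_i}] \leq C e^{\alpha \log T} e^{|z_i-pT|^2/T}$. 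Using the uniform bound $|z_i - pT|^2/T \leq 4M^2 r^{-1}$, Markov's inequality, and a union bound, for any fixed $\eta > 0$ the event $E := \{\Delta_i \leq \eta T^{1/2} \text{ for all } i\}$ satisfies $\mathbb{P}(E^c) \leq k C' T^{\alpha} e^{-a\eta T^{1/2}}$, which is below $\epsilon/2$ once $T \geq W_6$ for $W_6$ large enough.

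On $E$, for each pair $i \neq j$ one has
\[
L_i(t_0) - L_j(t_0) = \sqrt{T}\,Y_{ij} + \mu_{ij} + \Xi_{ij},
\]
where $Y_{ij} := B_i^{\sigma}(u) - B_j^{\sigma}(u)$, $\mu_{ij} := (1-u)(x_i - x_j) + u(y_i - y_j)$ is the linear interpolation of the endpoints, and $|\Xi_{ij}| \leq 2\eta T^{1/2}$. The variable $Y_{ij}$ is centered Gaussian with variance $2\sigma_p^2 u(1-u) \geq (4/3)\sigma_p^2 t$, so $\sqrt{T}\,Y_{ij}$ has density bounded by $C_t := (8\pi \sigma_p^2 t /3)^{-1/2}$ uniformly in $T$ and the endpoint data. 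Hence the event $\{|L_i(t_0) - L_j(t_0)| \leq \delta R^{1/2}\}$ forces $\sqrt{T}\,Y_{ij}$ to lie in an interval of length at most $(2\delta r^{-1/2} + 4\eta)T^{1/2}$, and so
\[
\mathbb{P}\bigl(\{|L_i(t_0) - L_j(t_0)| \leq \delta R^{1/2}\} \cap E\bigr) \leq C_t (2\delta r^{-1/2} + 4\eta).
\]
Choosing first $\eta$ and then $\delta$ small enough to make this bound at most $\epsilon/(2\binom{k}{2})$, then enlarging $W_6$ so that $\mathbb{P}(E^c) \leq \epsilon/2$, a union bound over the $\binom{k}{2}$ pairs yields \eqref{NEotClose}.

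The main obstacle is largely bookkeeping: the constants $\eta, \delta, W_6$ must be chosen in the correct order, and one must verify that the KMT coupling error (which scales like $\log T$ after applying Markov's inequality) is negligible compared to the $T^{1/2}$ Gaussian spread on which gaps are measured. The underlying probabilistic content, that two independent Gaussians with variance bounded away from zero rarely land close to one another, is elementary. The independence of the bridges under $\mathbb{P}_{H^{RW}}^{1,k,T_0,T_1,\vec{x},\vec{y}}$ is what makes the strategy work cleanly, and it is what distinguishes this lemma from its interacting counterparts later in the paper.
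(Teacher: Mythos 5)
Your proposal is correct and essentially the same as the paper's proof: both use the KMT-type coupling of Proposition \ref{KMT} to reduce the claim to an anti-concentration bound for the difference of two independent Gaussian random variables with variance bounded away from zero, followed by a union bound over the $\binom{k}{2}$ pairs. The only cosmetic difference is that the paper applies the coupling pair by pair (taking a $2$-fold product of the Proposition \ref{KMT} space for each pair $(i,j)$), whereas you build a single $k$-fold product space up front; these are equivalent. One small wording slip: you write that $\sqrt{T}\,Y_{ij}$ has density bounded by $C_t$ uniformly in $T$ — it is $Y_{ij}$ (not $\sqrt{T}\,Y_{ij}$) whose density is uniformly bounded, and the cancellation of the $\sqrt{T}$ factors between the interval length and the density of $\sqrt{T}\,Y_{ij}$ is what produces your $T$-free bound $C_t(2\delta r^{-1/2}+4\eta)$. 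The final inequality and the ordering of the choices of $\eta$, then $\delta$, then $W_6$ are correct.
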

\begin{remark}
Lemma \ref{LNotClose} states that $k$ independent random walk bridges between well-behaved endpoints $(T_0,x_i)$ and $(T_1,y_i)$ are unlikely to be close to each other at some intermediate time $t_0$.
\end{remark}

\begin{lemma}\label{LAccProb}Fix $k \in \mathbb{N}$ and let $\mathfrak{L} = (L_1, \dots, L_k)$ have law $\mathbb{P}_{H,H^{RW}}^{1, k, T_0 ,T_1, \vec{x}, \vec{y},\infty,g}$ as in Definition \ref{Pfree}, where we assume that $H$ is as in Definition \ref{AssH}, while $H^{RW}$ satisfies the assumptions in Definition \ref{AssHR}. Let $M^{\mathsf{side}, \mathsf{top}},M^{\mathsf{side}, \mathsf{bot}}, M^{\mathsf{base}, \mathsf{top}}, p , t \in \mathbb{R}$ be given such that $M^{\mathsf{side}, \mathsf{top}} \geq M^{\mathsf{side}, \mathsf{bot}}$, $t \in (0, 1/3)$. Then we can find constants $W_7  \in \mathbb{N}$, $\delta > 0$, all depending on $ M^{\mathsf{side}, \mathsf{top}},M^{\mathsf{side}, \mathsf{bot}}, M^{\mathsf{base}, \mathsf{top}}, p , t , k, H$ and $H^{RW}$ so that the following holds.

Set $\Delta T = T_1 - T_0$ and suppose that $\vec{x}, \vec{y} \in \mathbb{R}^k$, $g \in Y^-( \llbracket T_0, T_1 \rrbracket)$ satisfy
\begin{itemize}
\item $ pT_0 +  M^{\mathsf{side},\mathsf{bot}} \Delta T^{1/2} \leq x_i \leq pT_0 + M^{\mathsf{side},\mathsf{top}} \Delta T^{1/2}$ for $i = 1, \dots, k$;
\item $ p T_1 + M^{\mathsf{side},\mathsf{bot}} \Delta T^{1/2}  \leq y_i \leq p T_1 +  M^{\mathsf{side},\mathsf{top}} \Delta T^{1/2}$ for $i = 1, \dots, k$;
\item  $g(j) \leq j p + M^{\mathsf{base},\mathsf{top}} \Delta T^{1/2}$ for $j \in \llbracket T_0, T_1\rrbracket$.
\end{itemize}
For any $\epsilon > 0$, $T_1, T_0 \in \mathbb{Z}$ with $\Delta T \geq W_7$, and $t_1, t_0 \in \llbracket T_0, T_1 \rrbracket$ with $\min(t_0 - T_0 , T_1 - t_1 , t_1 - t_0 ) \geq t \Delta T$
\begin{equation}\label{EAccProb}
\mathbb{P}_{H,H^{RW}}^{1, k, T_0 ,T_1, \vec{x}, \vec{y},\infty,g} \left(  Z_{H, H^{RW}}\left(t_0, t_1, \mathfrak{L}(t_0), \mathfrak{L}(t_1), \infty, g \llbracket t_0, t_1 \rrbracket  \right) \leq \delta \epsilon \right) \leq \epsilon,
\end{equation}
where $Z_{ H,H^{RW}}$ is the acceptance probability of Definition \ref{Pfree}, $g \llbracket t_0, t_1 \rrbracket $ is restriction of $g$ to $\llbracket t_0, t_1 \rrbracket$ and $\mathfrak{L}(a)  = (L_1(a), \dots, L_k(a))$ is the value of $\mathfrak{L}$ with law $\mathbb{P}_{H,H^{RW}}^{1, k, T_0 ,T_1, \vec{x}, \vec{y},\infty,g}$ at $a$.
\end{lemma}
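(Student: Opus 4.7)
We abbreviate $\mathcal{Z} := Z_{H, H^{RW}}(t_0, t_1, \mathfrak{L}(t_0), \mathfrak{L}(t_1), \infty, g \llbracket t_0, t_1 \rrbracket)$ and $\Delta T := T_1 - T_0$. The plan is to construct, for each $\epsilon \in (0, 1]$, a ``good event'' $E_\epsilon$ with $\mathbb{P}(E_\epsilon^c) \leq \epsilon$ on which $\mathcal{Z} \geq \delta \epsilon$ for a constant $\delta$ depending only on the fixed parameters in the lemma. Specifically, I would let $E_\epsilon$ be the event that for $\alpha \in \{0,1\}$ and $i \in \llbracket 1, k \rrbracket$ one has three controls: (i) boundedness $|L_i(t_\alpha) - p t_\alpha| \leq M_\epsilon \Delta T^{1/2}$; (ii) height margin $L_k(t_\alpha) - p t_\alpha \geq (M^{\mathsf{base,top}} + \gamma) \Delta T^{1/2}$; and (iii) separation $L_i(t_\alpha) - L_{i+1}(t_\alpha) \geq \delta'(\epsilon) \Delta T^{1/2}$, for suitable $M_\epsilon, \gamma > 0$ and $\delta'(\epsilon) = c'\epsilon$.

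To establish $\mathbb{P}(E_\epsilon^c) \leq \epsilon$, I would argue as follows. For (i), Lemma \ref{MonCoup} together with $W_H \in (0, 1]$ stochastically dominates each $L_i$ above by a free $H^{RW}$ random walk bridge between $(T_0, x_i)$ and $(T_1, y_i)$, and a symmetric argument bounds below by another free bridge ensemble; Lemma \ref{LStayInBand} then supplies the $\Delta T^{1/2}$-band bound with $M_\epsilon = O(\log(1/\epsilon)^{1/2})$. For (ii), Lemma \ref{LNoDip} applied with $R = \Delta T$ gives the lower bound on $L_k$, while the hypothesis $g(j) \leq j p + M^{\mathsf{base,top}} \Delta T^{1/2}$ produces the margin. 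For (iii), I would monotonically couple $\mathfrak{L}$ above and below by free-bridge ensembles with extremal boundary data and invoke Lemma \ref{LNotClose}: the probability that two free bridges come within $\delta \Delta T^{1/2}$ at an interior point scales linearly in $\delta$ (as follows from Proposition \ref{KMT} and the Brownian bridge small-ball computation), which forces the calibration $\delta'(\epsilon) = c'\epsilon$ for a small enough constant $c' > 0$.

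For the lower bound on $\mathcal{Z}$ on $E_\epsilon$, given boundary data $(\vec{u}, \vec{v}) = (\mathfrak{L}(t_0), \mathfrak{L}(t_1))$ satisfying (i)--(iii), I would apply Proposition \ref{KMT} to couple each of the $k$ free $H^{RW}$ random walk bridges appearing in the definition of $\mathcal{Z}$ with a Brownian bridge $B_i^{\sigma_p}$. Let $\Omega_0$ be the event that every Brownian bridge stays within sup-norm distance $(\delta'(\epsilon)/4)\Delta T^{1/2}$ of its linear interpolant; standard Brownian small-ball estimates yield $\mathbb{P}(\Omega_0) \geq c_1 \delta'(\epsilon)$. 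On $\Omega_0$, condition (iii) forces consecutive bridges to be pointwise separated by at least $(\delta'(\epsilon)/2)\Delta T^{1/2}$, and condition (ii) forces the bottom bridge to stay above $g$ by at least $(\gamma/2)\Delta T^{1/2}$. Therefore every argument of $H$ in the Boltzmann weight $W_H$ is at most $-\min(\delta'(\epsilon)/2, \gamma/2)\Delta T^{1/2}$, so by the decay hypothesis $\lim_{x \to \infty} x^2 H(-x) = 0$ each summand in $\sum H(\cdot)$ is $o(\Delta T^{-1})$, the total sum is $o(1)$, and $W_H \geq 1/2$ on $\Omega_0$ provided $\Delta T$ exceeds a large threshold $W_7$. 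After absorbing the KMT error via Gaussian tail arguments, this gives $\mathcal{Z} \geq (c_1/2)\delta'(\epsilon) = \delta \epsilon$ on $E_\epsilon$, and hence $\mathbb{P}(\mathcal{Z} \leq \delta \epsilon) \leq \mathbb{P}(E_\epsilon^c) \leq \epsilon$.

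The principal obstacle will be the separation bound (iii). Because the hypothesis places no ordering condition on $\vec{x}, \vec{y}$, the interacting ensemble $\mathfrak{L}$ could a priori be tightly clustered at $t_0, t_1$; to obtain a uniformly linear-in-$\epsilon$ separation requires a careful bootstrap combining monotone coupling of $\mathfrak{L}$ with free ensembles, Lemma \ref{LNotClose}, and Lemma \ref{LNoBigJump} to rule out close encounters. Matching this rate in Step 2 via an $\Omega_0$ of Brownian-bridge probability $\gtrsim \epsilon$ is what yields the sharp scaling $\mathcal{Z} \gtrsim \epsilon$ required by the lemma.
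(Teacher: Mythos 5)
Your proposed direct argument has several genuine gaps, and the paper's own proof avoids all of them by a fundamentally different route — a change-of-measure trick — that you have not anticipated.

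The most damaging problem is the small-ball estimate in your Step~2. You claim that the event $\Omega_0$ (all $k$ Brownian bridges staying within sup-norm distance $(\delta'(\epsilon)/4)\Delta T^{1/2}$ of their linear interpolants) has probability $\mathbb{P}(\Omega_0)\geq c_1\delta'(\epsilon)$. In fact the probability that a Brownian bridge of diffusion $\sigma_p$ on a unit-length interval stays in a band of half-width $\eta$ is of order $\exp(-c/\eta^2)$, not linear in $\eta$. With $\delta'(\epsilon)=c'\epsilon$ this forces $\mathcal{Z}\gtrsim \exp(-C/\epsilon^2)$, which is incomparable to the required $\delta\epsilon$. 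Even correcting the estimate, the acceptance probability for $k$ free bridges with boundary gaps of order $\epsilon\Delta T^{1/2}$ scales roughly as $\epsilon^{k(k-1)}$ (Vandermonde), not as $\epsilon$, so the calibration $\mathcal{Z}\gtrsim \epsilon$ does not follow even if every other step were fixed.

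The separation step also does not work as described. Monotone coupling above and below by free ensembles gives you $\mathfrak{F}^-_i \leq L_i\leq \mathfrak{F}^+_i$, hence $L_i - L_{i+1}\geq \mathfrak{F}^-_i - \mathfrak{F}^+_{i+1}$, which is useless (the right-hand side can be arbitrarily negative). There is no way to lower-bound the \emph{gap between consecutive interacting curves} by sandwiching them between free ensembles; Lemma~\ref{LNotClose} only controls gaps between \emph{independent} bridges and does not transfer via the coupling.

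The paper sidesteps all of this with a Radon--Nikodym trick. It introduces an auxiliary measure $\mathbb{P}_{\tilde{\mathfrak{L}}}$ (with interaction Hamiltonian switched on only in $\llbracket T_0, t_0\rrbracket\cup\llbracket t_1, T_1-1\rrbracket$), and observes that the RN derivative of the law of $\mathfrak{L}$ restricted to the outer region with respect to the law of $\tilde{\mathfrak{L}}$ restricted to the same region is exactly $(Z')^{-1}\cdot Z_{H,H^{RW}}(t_0,t_1,\cdot)$, where $Z'=\mathbb{E}_{\tilde{\mathfrak{L}}}[Z_{H,H^{RW}}]$. Consequently
$\mathbb{P}(Z_{H,H^{RW}}\leq\delta\epsilon)\leq (\delta\epsilon)/Z'$, and it suffices to show $Z'\geq\delta$ for a \emph{constant} $\delta$ independent of $\epsilon$. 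This is far weaker than what you attempt to prove: it only needs a constant-probability event on which the acceptance is bounded below by a constant (Lemmas~\ref{LSep} and~\ref{LAccProbP}), not a $1-\epsilon$-probability event on which the acceptance is $\gtrsim\epsilon$. Your intuition that the separation step is the crux is correct, but the resolution is not a finer estimate along your lines; it is to avoid the need for an $\epsilon$-calibrated separation altogether.
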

\begin{remark}
Lemma \ref{LAccProb} states that a discrete line ensemble $\mathfrak{L}$ with law $\mathbb{P}_{H,H^{RW}}^{1, k, T_0 ,T_1, \vec{x}, \vec{y},\infty,g}$, whose entrance and exit data $\vec{x}, \vec{y}$ are well-behaved and the bottom bounding curve $g$ is not too high is likely to arrange itself in a configuration that  produces a bounded away from zero acceptance probability in a fraction smaller window $\llbracket t_0, t_1 \rrbracket$ that is well within $\llbracket T_0, T_1 \rrbracket$, where $\mathfrak{L}$ is defined.
\end{remark}

\begin{proof}[Proof of \eqref{Imp3}] We continue with the same notation as in the statement of the lemma and Section \ref{Section3.3}. All constants in the proof depend on the quantities in (\ref{Depend}) as well as $r \geq 2$ and $\epsilon > 0$ as in the statement of the lemma, which are fixed. For clarity we split the proof into four steps.\\

{\bf \raggedleft Step 1.} In this step we prove \eqref{AccP}. Let $r_0 = \lceil r \rceil +1$, $t^{\pm}_N = \lfloor \pm 2r_0 N^{\alpha} \rfloor$ and recall $s^{\pm}_N = \lfloor \pm rN^{\alpha} \rfloor$. Let $R_1 >0$ and $N_1 \in \mathbb{N}, N_1 \geq N_0$ be sufficiently large so that for $N \geq N_1$ we have 
\begin{equation}\label{S4S4E1}
\begin{split}
4r_0 N^{\alpha} \geq |t^{\pm}_N| \mbox{ and } T_N \geq |t^{\pm}_N|, \hspace{2mm} \min(t_N^+ - s_N^+,  s_N^+ - s_N^-, s_N^- - t_N^-) \geq (2r_0)^{-1}(t^{+}_N - t_N^-),
\end{split}
\end{equation}
and also 
\begin{equation}\label{S4S4E2}
\begin{split}
&R_1 \geq \max_{k \in \llbracket 1, m+2 \rrbracket}  \Rt_{k}(4r_0, \epsilon/(4m+8)), \hspace{2mm} R_1 \geq \max_{k \in \llbracket 1, m + 1\rrbracket}   \Rb_{k}(4r_0, \epsilon/(4m+8)), \mbox{ and }  \\
&N_1 \geq \max_{k \in  \llbracket 1, m+2 \rrbracket}  \Nt_{k}(4r_0, \epsilon/(4m+8)), \hspace{2mm} N_1 \geq \max_{k \in \llbracket 1, m + 1\rrbracket}   \Nb_{k}(4r_0, \epsilon/(4m+8)),
\end{split}
\end{equation}
where $\Rt_{k}, \Nt_{k}$ are as in Lemma \ref{keyL1}[$m+2$] and $\Rb_{k}, \Nb_{k}$ are as in Lemma \ref{keyL2}[$m+1$], which we have assumed to hold as part of our input in  \eqref{Imp3}.

Let $W_7, \delta$ be as in Lemma \ref{LAccProb} applied to $k = m+1$, $p, H, H^{RW}$ as in the present lemma, $t = (2r_0)^{-1}$, and $- M^{\mathsf{side}, \mathsf{bot}}  = M^{\mathsf{side}, \mathsf{top}} = M^{\mathsf{base}, \mathsf{top}} = R_1$. We finally let $\Na_{m+1} = \Na_{m+1}(r,\epsilon) \in \mathbb{N}$ be sufficiently large so that $\Na_{m+1} \geq N_1$ and for $N \geq \Na_{m+1}$ we have $t_N^+ - t_N^- \geq W_7$, and also set $\delta^{\mathsf{acc}}_{m+1}(r, \epsilon) = \delta/2$. This fixes $\Na_{m+1}, \delta^{\mathsf{acc}}_{m+1}$, and in the remainder of this step we prove \eqref{AccP}.\\

Define the event
\begin{equation}\label{S4S4E3}
\begin{split}
\mathsf{Bdd}^N_{m+1}=  &\left\{  \max_{ j \in \llbracket 1, m +1 \rrbracket} \big| L^N_j(t_N^{\pm})-pt_N^{\pm} \big|\leq R_1 N^{\alpha/2} \right\} \cap \\
& \left\{ \sup_{x\in [-4r_0 N^{\alpha}, 4r_0 N^{\alpha}] } [ L^N_{m+2}(x)-px ] \leq R_1N^{\alpha/2} \right\},
\end{split}
\end{equation}
and observe that from (\ref{S4S4E2}) and a union bound we have for $N \geq N_1$ (and thus $N \geq \Na_{m+1}$)
\begin{equation}\label{S4S4E4}
\begin{split}
1 - \mathbb{P} \left( \mathsf{Bdd}^N_{m+1} \right) < \epsilon/2.
\end{split}
\end{equation}
We also define the event  
\begin{equation}\label{S4S4E5}
\begin{split}
\mathsf{Z}^N_{m+1}= \{  Z_{H, H^{RW}}\left(s^-_N, s^+_N, \vec{x}, \vec{y}, \infty, L^N_{m+2}\llbracket s^-_N , s^+_N \rrbracket \right) \leq  \delta \epsilon/2 \},
\end{split}
\end{equation}
where $\vec{x}= (L^N_1(s_N^-), \dots,L^N_{m+1}(s_N^-)) $, $\vec{y}= (L^N_1(s_N^+), \dots,L^N_{m+1}(s_N^+)  ) $.

From the $(H,H^{RW})$-Gibbs property of $\mathfrak{L}^N$ we have for $N \geq \Na_{m+1}$ (note $T_N \geq \left| t^{\pm}_N \right|$ from (\ref{S4S4E1}))
\begin{equation}\label{S4S4E6}
\begin{split}
& \mathbb{P}\left( \mathsf{Bdd}^N_{m+1} \cap \mathsf{Z}^N_{m+1} \right) =  \mathbb{E}\left[ \mathbb{E} \left[ {\bf 1}_{ \mathsf{Bdd}^N_{m+1} } \cdot  {\bf 1}_{ \mathsf{Z}^N_{m+1}}  \Big{|} \mathcal{F} \right] \right] =   \mathbb{E}\left[{\bf 1}_{ \mathsf{Bdd}^N_{m+1} } \cdot  \mathbb{E} \left[   {\bf 1}_{ \mathsf{Z}^N_{m+1} } \Big{|} \mathcal{F} \right] \right]  = \\
&\mathbb{E}\left[{\bf 1}_{ \mathsf{Bdd}^N_{m+1} } \cdot  \mathbb{P}_{H,H^{RW}}^{1,m+1,t_N^-, t_N^+,\vec{x}',\vec{y}',\infty,g'}\left(  Z_{H, H^{RW}}\left(s^-_N, s^+_N, \vec{x}, \vec{y}, \infty, L^N_{m+2} \llbracket s^-_N , s^+_N \rrbracket \right) \leq  \delta \epsilon/2\right)\right],
\end{split}
\end{equation}
where $\mathcal{F} =\mathcal{F}_{ext}(\llbracket 1, m + 1 \rrbracket \times \llbracket t_N^-+1, t_N^+ - 1 \rrbracket) $ as in (\ref{GibbsCond}), $\vec{x}'= (L^N_1(t_N^-), \dots,L^N_{m+1}(t_N^-)) $, $\vec{y}'= (L^N_1(t_N^+), \dots,L^N_{m+1}(t_N^+)) $ and $g' = L^N_{m+2}\llbracket t_N^-, t_N^+ \rrbracket$. We mention that in the first equality we used the tower property for conditional expectations and in the second that $ \mathsf{Bdd}^N_{m+1}  \in \mathcal{F}$. Also $(\tilde{L}_1, \dots,\tilde{L}_{m+1}) $ is distributed according to $\mathbb{P}_{H,H^{RW}}^{1,m+1,t_N^-, t_N^+,\vec{x}',\vec{y}',\infty,g'}$ and we have also set $\vec{x}= (\tilde{L}_1(s_N^-), \dots, \tilde{L}_{m+1}(s_N^-)) $, $\vec{y}= (\tilde{L}_1(s_N^+), \dots, \tilde{L}_{m+1}(s_N^+)  ).$

Since $N \geq   \Na_{m+1}$ we have from Lemma \ref{LAccProb} (with the same parameters as in the beginning of the proof), $t_0 = s_N^-$, $t_1 = s_N^+$, $T_0 = t_N^-$, $T_1 = t_N^+$ that $\mathbb{P}$-almost surely
\begin{equation}\label{S4S4E7}
\begin{split}
{\bf 1}_{ \mathsf{Bdd}^N_{m+1} } \cdot  \mathbb{P}_{H,H^{RW}}^{1,m+1,t_N^-, t_N^+,\vec{x}',\vec{y}',\infty,g'}\left(  Z_{H, H^{RW}}\left(s^-_N, s^+_N, \vec{x}, \vec{y}, \infty, L^N_{m+2}\llbracket s^-_N , s^+_N \rrbracket \right) \leq  \delta \epsilon/2\right) \leq \epsilon/2,
\end{split}
\end{equation}
where we used that $\vec{x}', \vec{y}', g'$ satisfy the conditions of Lemma \ref{LAccProb} almost surely on $\mathsf{Bdd}^N_{m+1}$ from (\ref{S4S4E3}) and $t_0, t_1, T_0, T_1$ satisfy the conditions of Lemma \ref{LAccProb} from (\ref{S4S4E1}).

Combining (\ref{S4S4E4}), (\ref{S4S4E6}) and (\ref{S4S4E7}) we get 
\begin{equation*}
\begin{split}
\mathbb{P} \left( \mathsf{Z}^N_{m+1}\right) \leq \mathbb{P} \left( \mathsf{Z}^N_{m+1} \cap  \mathsf{Bdd}^N_{m+1} \right) + \mathbb{P} \left( ( \mathsf{Bdd}^N_{m+1})^c \right)  < \epsilon/2 + \epsilon/2 = \epsilon.
\end{split}
\end{equation*}
The last inequality implies \eqref{AccP}.\\

{\bf \raggedleft Step 2.} In the next three steps we establish \eqref{Sep}. The goal of this step is to specify our choice of $\delta^{\mathsf{sep}}_{m+1}$ and $\Ns_{m+1}$, and set up some useful notation.

Let $t_N^{\pm}$, $N_1, R_1, \mathsf{Bdd}^N_{m+1}$ be as in Step 1. Suppose further that $\Na_{m+1}(2r_0,\epsilon/16)$, $\delta^{\mathsf{acc}}_{m+1}(2r_0, \epsilon/16)$ are as in Step 1, and observe that from Step 1, we have for $N \geq \Na_{m+1}(2r_0,\epsilon/16)$ 
\begin{equation}\label{S4S4E8}
\begin{split}
&\mathbb{P} \left( \tilde{\mathsf{Z}}^N_{m+1} \right) < \epsilon/16, \mbox{ where } \\
&\tilde{\mathsf{Z}}^N_{m+1} = \{  Z_{H, H^{RW}}\left(t^-_N, t^+_N, \vec{x}, \vec{y}, \infty, L^N_{m+2}\llbracket t^-_N , t^+_N \rrbracket \right) \leq  \delta^{\mathsf{acc}}_{m+1}(2r_0, \epsilon/16)\},
\end{split}
\end{equation}
and $\vec{x}= (L^N_1(t_N^-), \dots,L^N_{m+1}(t_N^-)) $, $\vec{y}= (L^N_1(t_N^+), \dots,L^N_{m+1}(t_N^+)  ) $. 

Let $W_6, \delta$ be as in Lemma \ref{LNotClose} for $k = m+1$, $p, H^{RW}$ as in the present lemma, $t = (2r_0)^{-1}$, $r = (8r_0)^{-1}$, $M = R_1$ and $\epsilon = \delta^{\mathsf{acc}}_{m+1}(2r_0, \epsilon/16) \cdot (\epsilon/16)$. With this notation we set $\delta^{\mathsf{sep}}_{m+1} = \delta^{\mathsf{sep}}_{m+1}(r, \epsilon) = \delta/4$. Let $N_4 \in \mathbb{N}$ be sufficiently large so that $N_4 \geq N_1$, $N_4 \geq \Na_{m+1}(2r_0,\epsilon/16)$, and for $N \geq N_4$ 
\begin{equation}\label{S4S4E9}
\begin{split}
& r(t_N^+ - t_N^-) \leq N^{\alpha} \leq r^{-1}(t_N^+ - t_N^-), \hspace{2mm} \min( t_N^+ - s_N^{\pm}, s_N^{\pm} - t_N^-) \geq t (t_N^+ - t_N^-), \\
&  N^{-\alpha/4} < \delta^{\mathsf{sep}}_{m+1}, \mbox{ and }\exp\left( -H(3 N^{\alpha/4} ))\right) < \delta^{\mathsf{acc}}_{m+1}(2r_0, \epsilon/16) (\epsilon/16).
\end{split}
\end{equation}
We mention that the last inequality in (\ref{S4S4E9}) is achievable since by Definition \ref{AssH}  $\lim_{x \rightarrow \infty} H(x) = \infty$.

Finally, we pick $\Ns_{m+1} \in \mathbb{N}$ sufficiently large so that $\Ns_{m+1}  \geq N_4$ and for $N \geq \Ns_{m+1} $ we have $t_N^+ - t_N^- \geq W_5$, where $W_5$ is as in Lemma \ref{LNoBigJump} for $p, H^{RW}$ as in the present lemma, $r = (8r_0)^{-1}$, $\epsilon = (m+1)^{-1} \delta^{\mathsf{acc}}_{m+1}(2r_0, \epsilon/16) \cdot (\epsilon/16)$ and $M = R_1$. 

The above paragraph specifies our choice of $\delta^{\mathsf{sep}}_{m+1} $ and $\Ns_{m+1}$ and in the remainder of this step we fix some useful notation.

Define the events 
\begin{equation}\label{S4S4E10}
\begin{split}
&\mathsf{Sep}^{N}_{m+1} =\cap_{\zeta \in \{\pm 1\}} \left\{  \min_{1 \leq i < j \leq m+1} \left|L^N_i(s_N^{\zeta}) - L^N_j(s_N^{\zeta}) \right| > 4\delta^{\mathsf{sep}}_{m+1} N^{\alpha/2} \right\}, \\
&\mathsf{Osc}^{N}_{m+1} = \left\{  \max_{i \in \llbracket 1, m+ 1\rrbracket}  \max_{ x \in \llbracket t_N^-, t_N^+ -1 \rrbracket}  \left|L^N_i(x)  -L^N_i(x + 1) \right| \leq N^{\alpha/4} \right\}, \\
&\mathsf{Ord}^{N}_{m+1} =  \cup_{\zeta \in \{\pm 1\}} \cup_{i = 1}^{m} \left\{ L_i^N(  s^{\zeta}_N) - L_{i+1}^N(s^{\zeta}_N)  \leq  \delta^{\mathsf{sep}}_{m+1} N^{\alpha/2} \right\}.
\end{split}
\end{equation}

{\bf \raggedleft Step 3.} We claim that for $N \geq \Ns_{m+1}$ we have
\begin{equation}\label{S4S4E11}
\begin{split}
&\mathbb{P} \left( (\mathsf{Sep}^{N}_{m+1})^c \cap \mathsf{Bdd}^N_{m+1} \cap (\tilde{\mathsf{Z}}^N_{m+1})^c \right) \leq \epsilon/8,
\end{split}
\end{equation}
\begin{equation}\label{S4S4E12}
\begin{split}
&\mathbb{P} \left( (\mathsf{Osc}^{N}_{m+1})^c \cap \mathsf{Bdd}^N_{m+1} \cap  (\tilde{\mathsf{Z}}^N_{m+1})^c \right) \leq \epsilon/16,
\end{split}
\end{equation}
\begin{equation}\label{S4S4E13}
\begin{split}
&\mathbb{P} \left(\mathsf{Ord}^{N}_{m+1} \cap \mathsf{Osc}^{N}_{m+1} \cap \mathsf{Sep}^{N}_{m+1}  \cap  (\tilde{\mathsf{Z}}^N_{m+1})^c   \right) \leq \epsilon/16.
\end{split}
\end{equation}
We prove (\ref{S4S4E11}), (\ref{S4S4E12}) and (\ref{S4S4E13}) in the next step. Here we assume their validity and conclude the proof of (\ref{Sep}).

Observe that for $N \geq \Ns_{m+1}$ we have
\begin{equation*}
\begin{split}
&\mathbb{P} \left( \mathsf{Ord}^{N}_{m+1}  \right) \leq \mathbb{P} \left( (\mathsf{Bdd}^N_{m+1})^c \right) + \mathbb{P} \left( \tilde{\mathsf{Z}}^N_{m+1}\right) +  \mathbb{P} \left( \mathsf{Bdd}^N_{m+1} \cap  (\tilde{\mathsf{Z}}^N_{m+1})^c \cap \mathsf{Ord}^{N}_{m+1} \right) < \\
&9\epsilon/16 + \mathbb{P} \left( (\mathsf{Sep}^{N}_{m+1})^c \cap \mathsf{Bdd}^N_{m+1} \cap (\tilde{\mathsf{Z}}^N_{m+1})^c \right)  +  \mathbb{P} \left( (\mathsf{Osc}^{N}_{m+1})^c \cap \mathsf{Bdd}^N_{m+1} \cap  (\tilde{\mathsf{Z}}^N_{m+1})^c\right) + \\
& \mathbb{P} \left(\mathsf{Ord}^{N}_{m+1} \cap \mathsf{Osc}^{N}_{m+1} \cap \mathsf{Sep}^{N}_{m+1}  \cap  (\tilde{\mathsf{Z}}^N_{m+1})^c   \right) \leq 9 \epsilon/16 + 2 \epsilon/16 + \epsilon/16 + \epsilon/16 < \epsilon,
\end{split}
\end{equation*}
where in going from the first to the second line we used (\ref{S4S4E4}) and (\ref{S4S4E8}), and in the next to last inequality we used (\ref{S4S4E11}), (\ref{S4S4E12}) and (\ref{S4S4E13}). The last equation implies (\ref{Sep}).\\

{\bf \raggedleft Step 4.} In this final step we prove (\ref{S4S4E11}), (\ref{S4S4E12}) and (\ref{S4S4E13}).

 We set $\mathcal{F} =\mathcal{F}_{ext}(\llbracket 1, m + 1 \rrbracket \times \llbracket t_N^-+1, t_N^+ - 1 \rrbracket) $ as in (\ref{GibbsCond}), $\vec{x}'= (L^N_1(t_N^-), \dots,L^N_{m+1}(t_N^-)) $, $\vec{y}'= (L^N_1(t_N^+), \dots,L^N_{m+1}(t_N^+)) $ and $g' = L^N_{m+2}\llbracket t_N^-, t_N^+ \rrbracket$. We also write $\mathbb{P}_{H^{RW}}$ in place of $\mathbb{P}_{H^{RW}}^{1, m+1, t_N^- , t_N^+, \vec{x}', \vec{y}'}$, $\mathbb{E}_{H^{RW}}$ in place of $\mathbb{E}_{H^{RW}}^{1, m+1, t_N^- , t_N^+, \vec{x}', \vec{y}'}$ and $W_H(\tilde{\mathfrak{L}})$ in place of $W_{H}^{1, m+1, t_N^- ,t_N^+,\infty, g'} (\tilde{L}_{1}, \dots, \tilde{L}_{m+1})$ with $\tilde{\mathfrak{L}} = (\tilde{L}_1, \dots, \tilde{L}_{m+1})$ having distribution $\mathbb{P}_{H^{RW}}$.

From the $(H,H^{RW})$-Gibbs property of $\mathfrak{L}^N$ we have for $N \geq \Ns_{m+1}$ (note $T_N \geq \left| t^{\pm}_N \right|$ from (\ref{S4S4E1}))
\begin{equation}\label{S4S4E14}
\begin{split}
& \mathbb{P}\left(  \mathsf{Bdd}^N_{m+1} \cap (\tilde{\mathsf{Z}}^N_{m+1})^c  \cap  (\mathsf{Sep}^{N}_{m+1})^c \right) =  \mathbb{E}\left[ \mathbb{E} \left[ {\bf 1}_{ \mathsf{Bdd}^N_{m+1} } \cdot  {\bf 1}_{(\tilde{\mathsf{Z}}^N_{m+1})^c} \cdot  {\bf 1}_{ (\mathsf{Sep}^{N}_{m+1})^c }  \Big{|} \mathcal{F} \right] \right] = \\
&\mathbb{E}\left[ {\bf 1}_{ \mathsf{Bdd}^N_{m+1} } \cdot  {\bf 1}_{ (\tilde{\mathsf{Z}}^N_{m+1})^c} \cdot  \mathbb{E} \left[  {\bf 1}_{ (\mathsf{Sep}^{N}_{m+1})^c }  \Big{|} \mathcal{F} \right] \right]   = \mathbb{E}\Bigg{[} {\bf 1}_{ \mathsf{Bdd}^N_{m+1} } \cdot  {\bf 1}_{ (\tilde{\mathsf{Z}}^N_{m+1})^c} \cdot  \\
&\frac{\mathbb{E}_{H^{RW}} \left[W_H(\tilde{\mathfrak{L}}) \cdot {\bf 1}  \left\{  \min_{\zeta \in \{+, -\},1 \leq i < j \leq m+1} \left|\tilde{L}_i(s_N^{\zeta}) - \tilde{L}_j(s_N^{\zeta}) \right| \leq 4\delta^{\mathsf{sep}}_{m+1} N^{\alpha} \right\} \right] }{Z_{H, H^{RW}}\left(t^-_N, t^+_N, \vec{x}, \vec{y}, \infty, L^N_{m+2}\llbracket t^-_N , t^+_N \rrbracket \right)  } \Bigg{]} \leq \\
&  \mathbb{E}\left[ {\bf 1}_{ \mathsf{Bdd}^N_{m+1} } \cdot  {\bf 1}_{(\tilde{\mathsf{Z}}^N_{m+1})^c} \cdot  \frac{2 \cdot \delta^{\mathsf{acc}}_{m+1}(2r_0, \epsilon/16) \cdot (\epsilon/16)}{ \delta^{\mathsf{acc}}_{m+1}(2r_0, \epsilon/16) } \right] \leq \epsilon/8.
\end{split}
\end{equation}
We mention that in the first equality we used the tower property for conditional expectations and in the second that $ \mathsf{Bdd}^N_{m+1}, (\tilde{\mathsf{Z}}^N_{m+1})^c  \in \mathcal{F}$. The application of the $(H,H^{RW})$-Gibbs property was used in going from the second to the third line as well as (\ref{RND}). In going from the third to the fourth line we used that on $(\tilde{\mathsf{Z}}^N_{m+1})^c$ we have $Z_{H, H^{RW}}\left(t^-_N, t^+_N, \vec{x}, \vec{y}, \infty, L^N_{m+2}\llbracket t^-_N , t^+_N \rrbracket \right)   > \delta^{\mathsf{acc}}_{m+1}(2r_0, \epsilon/16)$ -- this lower bounds the denominator. We also used that $W_H(\tilde{\mathfrak{L}}) \in [0,1]$ and Lemma \ref{LNotClose} to upper bound the numerator, where Lemma \ref{LNotClose} is applied with the choice of parameters as in Step 2 twice --  once for $t_0 = s_N^-$ and once for $t_0 = s_N^+$, with $T_0 = t_N^-$, $T_1 = t_N^+$ and $R = N^{\alpha}$. We mention that the conditions of Lemma \ref{LNotClose} are satisfied $\mathbb{P}$-almost surely for $N \geq \Ns_{m+1}$ on $\mathsf{Bdd}^N_{m+1}$ in view of (\ref{S4S4E3}) and our choice of $\Ns_{m+1}$. Equation (\ref{S4S4E14}) implies (\ref{S4S4E11}).

By an analogous argument we have
\begin{equation}\label{S4S4E15}
\begin{split}
& \mathbb{P}\left(  \mathsf{Bdd}^N_{m+1} \cap (\tilde{\mathsf{Z}}^N_{m+1})^c  \cap  (\mathsf{Osc}^{N}_{m+1})^c \right) =  \mathbb{E}\left[ \mathbb{E} \left[ {\bf 1}_{ \mathsf{Bdd}^N_{m+1} } \cdot  {\bf 1}_{(\tilde{\mathsf{Z}}^N_{m+1})^c} \cdot  {\bf 1}_{ (\mathsf{Osc}^{N}_{m+1})^c }  \Big{|} \mathcal{F} \right] \right] = \\
&\mathbb{E}\left[ {\bf 1}_{ \mathsf{Bdd}^N_{m+1} } \cdot  {\bf 1}_{ (\tilde{\mathsf{Z}}^N_{m+1})^c} \cdot  \mathbb{E} \left[  {\bf 1}_{ (\mathsf{Osc}^{N}_{m+1})^c }  \Big{|} \mathcal{F} \right] \right]   =  \mathbb{E}\Bigg{[} {\bf 1}_{ \mathsf{Bdd}^N_{m+1} } \cdot  {\bf 1}_{ (\tilde{\mathsf{Z}}^N_{m+1})^c} \cdot \\
& \frac{\mathbb{E}_{H^{RW}} \left[W_H(\tilde{\mathfrak{L}}) \cdot {\bf 1} \left\{  \max_{i \in \llbracket 1, m+ 1\rrbracket}  \max_{ x \in \llbracket t_N^-, t_N^+ -1 \rrbracket}  \left|\tilde{L}_i(x)  - \tilde{L}_i(x + 1) \right| > N^{\alpha/4} \right\} \right] }{Z_{H, H^{RW}}\left(t^-_N, t^+_N, \vec{x}, \vec{y}, \infty, L^N_{m+2}\llbracket t^-_N , t^+_N \rrbracket \right)  } \Bigg{]} \leq \\
& \mathbb{E}\left[ {\bf 1}_{ \mathsf{Bdd}^N_{m+1} } \cdot  {\bf 1}_{(\tilde{\mathsf{Z}}^N_{m+1})^c} \cdot  \frac{(m+1) \cdot  (m+1)^{-1} \delta^{\mathsf{acc}}_{m+1}(2r_0, \epsilon/16) \cdot (\epsilon/16)}{ \delta^{\mathsf{acc}}_{m+1}(2r_0, \epsilon/16) } \right] \leq \epsilon/16.
\end{split}
\end{equation}
The above inequalities are derived in the same way as in the previous paragraph. The only exception is that in going from the third to the fourth line we applied Lemma \ref{LNoBigJump} instead of Lemma \ref{LNotClose}. We mention that we applied  Lemma \ref{LNoBigJump} $m+1$ times (once for each $\tilde{L}_i$) with the choice of parameters as in Step 2, $T_0 = t_N^-$, $T_1 = t_N^+$ and $R = N^{\alpha}$. The conditions of Lemma \ref{LNoBigJump} are satisfied $\mathbb{P}$-almost surely for $N \geq \Ns_{m+1}$ on $\mathsf{Bdd}^N_{m+1}$ in view of (\ref{S4S4E3}) and our choice of $\Ns_{m+1}$. This proves (\ref{S4S4E12}).\\

We finally establish (\ref{S4S4E13}). Let us denote by $\mathsf{Sep}_{m+1}$, $\mathsf{Osc}_{m+1}$ and $\mathsf{Ord}_{m+1}$ the events in (\ref{S4S4E10}) with $\mathfrak{L}^N = (L_1^N, \dots, L_{m+1}^N)$ replaced with $\tilde{\mathfrak{L}} = (\tilde{L}_1, \dots, \tilde{L}_{m+1})$, and suppose that $\tilde{\mathfrak{L}} \in Y(\llbracket 1, m+1 \rrbracket \times \llbracket t_N^-, t_N^+\rrbracket)$ satisfy the conditions in $\mathsf{Sep}_{m+1} \cap \mathsf{Osc}_{m+1} \cap \mathsf{Ord}_{m+1}$. We then have
$$ \tilde{L}_i(  s^{\zeta}_N) - \tilde{L}_{i+1}(s^{\zeta}_N)  \leq  \delta^{\mathsf{sep}}_{m+1}N^{\alpha/2} \mbox{ and } | \tilde{L}_i(  s^{\zeta}_N) - \tilde{L}_{i+1}(s^{\zeta}_N) | > 4\delta^{\mathsf{sep}}_{m+1}N^{\alpha/2}$$
for some $\zeta \in \{- , +\}$ and $i \in \llbracket 1, m \rrbracket$. We fix such a choice of $i$ and $\zeta$.

If $\zeta = +$ let $u^{\zeta}_N = s^{+}_N - 1$ and $v^{\zeta}_N = s^{+}_N$, while if $\zeta = -$ let $u^{\zeta}_N = s^{-}_N $ and $v^{\zeta}_N = s^{-}_N + 1$. Note that since $\tilde{\mathfrak{L}}$ satisfies the conditions in $\mathsf{Osc}_{m+1}$ we have
$$|\tilde{L}_i(  u^{\zeta}_N)  - \tilde{L}_i(  s^{\zeta}_N)| \leq N^{\alpha/4} \mbox{ and }|\tilde{L}_{i+1}(  v^{\zeta}_N)  - \tilde{L}_{i+1}(  s^{\zeta}_N)| \leq N^{\alpha/4}.$$
The above two sets of inequalities imply that
$$ \tilde{L}_i(  u^{\zeta}_N) - \tilde{L}_{i+1}(v^{\zeta}_N)   < -4\delta^{\mathsf{sep}}_{m+1} N^{\alpha/2} + N^{\alpha/2} \leq-3 N^{\alpha/4},$$
where in the last inequality we used the third inequality in (\ref{S4S4E9}). Consequently, we have 
\begin{equation}\label{S4S4E16}
\begin{split}
&{\bf 1}_{\mathsf{Sep}_{m+1}} \cdot {\bf 1}_{\mathsf{Osc}_{m+1}} \cdot {\bf 1}_{\mathsf{Ord}_{m+1}} \cdot W_H(\tilde{\mathfrak{L}}) \leq \\
& \exp\left( -H \left( \tilde{L}_{i+1}(v^{\zeta}_N) -  \tilde{L}_i(  u^{\zeta}_N) \right)\right) \leq \exp\left( -H(3 N^{\alpha/4} ))\right) < \delta^{\mathsf{acc}}_{m+1}(2r_0, \epsilon/16) \cdot (\epsilon/16).
\end{split}
\end{equation}
We mention that in the first inequality we used the definition of $W_H$ from (\ref{WH}) and the fact that $H(x) \in [0, \infty]$, in the second inequality we used that $H$ is increasing and in the last inequality we used the last inequality in (\ref{S4S4E9}).

Arguing as in the derivation of (\ref{S4S4E14}) and (\ref{S4S4E15}) we get
\begin{equation*}
\begin{split}
& \mathbb{P}\left(   (\tilde{\mathsf{Z}}^N_{m+1})^c  \cap \mathsf{Ord}^{N}_{m+1} \cap \mathsf{Osc}^{N}_{m+1} \cap \mathsf{Sep}^{N}_{m+1} \right) =  \mathbb{E}\left[ \mathbb{E} \left[ {\bf 1}_{(\tilde{\mathsf{Z}}^N_{m+1})^c} \cdot  {\bf 1}_{\mathsf{Ord}^{N}_{m+1} }  \cdot  {\bf 1}_{\mathsf{Osc}^{N}_{m+1}  }  \cdot  {\bf 1}_{\mathsf{Sep}^{N}_{m+1} }  \Big{|} \mathcal{F} \right] \right] = \\
&\mathbb{E}\left[ {\bf 1}_{(\tilde{\mathsf{Z}}^N_{m+1})^c} \cdot  \mathbb{E} \left[   {\bf 1}_{\mathsf{Ord}^{N}_{m+1} }  \cdot  {\bf 1}_{\mathsf{Osc}^{N}_{m+1}  }  \cdot  {\bf 1}_{\mathsf{Sep}^{N}_{m+1} }   \Big{|} \mathcal{F} \right] \right]   =   \\
&\mathbb{E}\left[ {\bf 1}_{(\tilde{\mathsf{Z}}^N_{m+1})^c} \cdot \frac{\mathbb{E}_{H^{RW}} \left[W_H(\tilde{\mathfrak{L}}) \cdot {\bf 1}_{\mathsf{Sep}_{m+1}} \cdot {\bf 1}_{\mathsf{Osc}_{m+1}} \cdot {\bf 1}_{\mathsf{Ord}_{m+1}} \right] }{Z_{H, H^{RW}}\left(t^-_N, t^+_N, \vec{x}, \vec{y}, \infty, L^N_{m+2}\llbracket t^-_N , t^+_N \rrbracket \right)  } \right] \leq \\
& \mathbb{E}\left[  {\bf 1}_{(\tilde{\mathsf{Z}}^N_{m+1})^c} \cdot  \frac{\delta^{\mathsf{acc}}_{m+1}(2r_0, \epsilon/16) (\epsilon/16)}{ \delta^{\mathsf{acc}}_{m+1}(2r_0, \epsilon/16) } \right] \leq \epsilon/16,
\end{split}
\end{equation*}
where in going from the third to the fourth line we used (\ref{S4S4E16}). The last inequality proves (\ref{S4S4E13}), which completes the proof of the lemma.
\end{proof}

%
\section{Continuous grand monotone coupling lemma}\label{Section6}
The goal of this section is to prove our continuous grand monotone coupling lemma, Lemma \ref{MonCoup}. We continue with the same notation as in Section \ref{Section2}.

%
\subsection{Preliminary results}\label{Section6.1}

We summarize some useful notation in the following definition.
\begin{definition}\label{FProj}Let $n,k \in \mathbb{N}$ and suppose that $H^{RW}$ and $\vec{H} = (H_1, \dots, H_{n})$ are as in Definition \ref{Pfree}. Let $\vec{x}, \vec{y} \in \mathbb{R}^{k}$, $\vec{z} \in Y^-(\llbracket 0, n+1\rrbracket)$, and fix subsets $A, B \subset \llbracket 1, k \rrbracket \times \llbracket 1, n \rrbracket$ and a point $P \in  \llbracket 1, k \rrbracket \times \llbracket 1, n \rrbracket$ such that $A \cap B = \emptyset$ and $A \cup B \cup \{ P \} = \llbracket 1, k \rrbracket \times \llbracket 1, n \rrbracket$. Finally, we fix $ x\in \mathbb{R}$ and $\vec{v} \in Y(A)$ (recall that $Y(A)$ was defined in Definition \ref{YVec}). With the above data we define the real-valued function $h_{k,n}^{\vec{x}, \vec{y}, \vec{z}}(x; \vec{v}; A, B, P)$ via 
\begin{equation}\label{S6DefProj}
h_{k,n}^{\vec{x}, \vec{y}, \vec{z}}(x; \vec{v}; A, B, P) = \int_{Y(B)} \prod_{i = 1}^{k}\prod_{j = 1}^{n+1} \g(u_{i,j} - u_{i, j -1})   \prod_{i = 1}^{k}\prod_{j = 1}^{n} e^{-H_j(u_{i+1, j+1} - u_{i,j})} \prod_{(i,j) \in B} du_{i,j},
\end{equation}
where $u_{k+1,i} = \vec{z}(i)$ for $i \in \llbracket 1, n+1 \rrbracket$, $u_{i,0}= x_i$ and $u_{i,n+1} = y_i$ for $i \in \llbracket 1, k \rrbracket$, $u_{i,j} = \vec{v}(i,j)$ for $(i,j) \in A$, and $u_{p_1, p_2} = x$ for $(p_1, p_2) = P$. When $B = \emptyset$ the function $h_{k,n}^{\vec{x}, \vec{y}, \vec{z}}(x; \vec{v}; A, B, P) $ stands for the integrand in (\ref{S6DefProj}) and there is no integral.

If $(a,b), (x,y) \in \mathbb{Z}^2$ we write $(a,b) <_{\mathbb{Z}^2} (x,y)$ if $a < x$ or $a = x$ and $b <y $ and note that $<_{\mathbb{Z}^2}$ defines a complete order on $\mathbb{Z}^2$. For any $P = (p_1, p_2) \in \llbracket 1, k \rrbracket \times \llbracket 1, n \rrbracket$ we let $A_P = \{ (x,y) \in \llbracket 1, k \rrbracket \times \llbracket 1, n \rrbracket: (p_1, p_2) <_{\mathbb{Z}^2} (x,y) \}$ and $B_P = \{ (x,y) \in \llbracket 1, k \rrbracket \times \llbracket 1, n \rrbracket: (x,y) <_{\mathbb{Z}^2} (p_1, p_2)\}$.
\end{definition}
\begin{remark}\label{ProjRem} In words, the function $h_{k,n}^{\vec{x}, \vec{y}, \vec{z}}(x; \vec{v}; A, B, P)$ in (\ref{S6DefProj}) for $P = (p_1, p_2)$ is the density of $L_{p_1}(p_2)$, where $\mathfrak{L} = (L_{1}, \dots, L_k)$ is a $\llbracket 1, k \rrbracket$-indexed line ensemble on $\llbracket 0, n + 1 \rrbracket$, whose law is $ \mathbb{P}_{\vec{H},H^{RW}}^{1, k, 0 , n +1 , \vec{x}, \vec{y},\infty, \vec{z}}$, conditioned on $\{ L_i(j): (i,j) \in A\}$. 
\end{remark}

\begin{lemma}\label{MCKF} Assume the same notation as in Definition \ref{FProj} and suppose that $H^{RW}, H_1, \dots, H_{n}$ are convex, and that $H_1, \dots, H_{n}$ are increasing. Suppose that $\vec{x}, \vec{y}, \vec{x}', \vec{y}' \in \mathbb{R}^k$ with $x_i \leq x_i'$ and $y_i \leq y_i'$ for $i =1, \dots, k$. In addition, suppose that $\vec{z}, \vec{z}' \in Y^-(\llbracket 0, n+1\rrbracket)$ with $\vec{z}(i) \leq \vec{z}'(i)$ for $i  \in \llbracket 0, n+1 \rrbracket$. Finally, suppose $P  = (p_1, p_2) \in \llbracket 1, k \rrbracket \times \llbracket 1, n \rrbracket$, $\vec{v}, \vec{v}' \in Y(A_P)$, and $s,s' \in \mathbb{R}$ with $\vec{v}(a) \leq \vec{v}'(a)$ for $a \in A_P$ and $s \leq s'$. Then we have
\begin{equation}\label{conv1}
\frac{\int_{-\infty}^s h_{k,n}^{\vec{x}, \vec{y}, \vec{z}}(w; \vec{v}; A_P, B_P, P) dw }{\int_{-\infty}^s h_{k,n}^{\vec{x}', \vec{y}', \vec{z}'}(w; \vec{v}'; A_P, B_P, P)   dw} \geq\frac{\int_{-\infty}^{s'} h_{k,n}^{\vec{x}, \vec{y}, \vec{z}}(w; \vec{v}; A_P, B_P, P) dw }{\int_{-\infty}^{s'} h_{k,n}^{\vec{x}', \vec{y}', \vec{z}'}(w; \vec{v}'; A_P, B_P, P)   dw}.
\end{equation}
In particular, we have
\begin{equation}\label{conv2}
\frac{\int_{-\infty}^s h_{k,n}^{\vec{x}, \vec{y}, \vec{z}}(w; \vec{v}; A_P, B_P, P) dw }{\int_{-\infty}^{\infty} h_{k,n}^{\vec{x}, \vec{y}, \vec{z}}(w; \vec{v}; A_P, B_P, P) dw } \geq\frac{\int_{-\infty}^s h_{k,n}^{\vec{x}', \vec{y}', \vec{z}'}(w; \vec{v}'; A_P, B_P, P)   dw}{\int_{-\infty}^{\infty} h_{k,n}^{\vec{x}', \vec{y}', \vec{z}'}(w; \vec{v}'; A_P, B_P, P)   dw}.
\end{equation}
\end{lemma}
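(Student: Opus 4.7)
The strategy is to reduce both inequalities to a single monotone likelihood ratio (MLR) statement and then verify that statement via a preservation-of-log-supermodularity argument. Setting $h(w):=h_{k,n}^{\vec{x},\vec{y},\vec{z}}(w;\vec{v};A_P,B_P,P)$ and $h'(w):=h_{k,n}^{\vec{x}',\vec{y}',\vec{z}'}(w;\vec{v}';A_P,B_P,P)$, one checks via the identity $\int_{-\infty}^{s'}h = \int_{-\infty}^{s}h + \int_{s}^{s'}h$ (and likewise for $h'$) that \eqref{conv1} is equivalent to
\[
\int_{-\infty}^{s}\!\!\int_{s}^{s'}\!\!\bigl[h(w)h'(w') - h(w')h'(w)\bigr]\,dw'\,dw \;\geq\; 0,
\]
which in turn reduces to the pointwise inequality $h(w)h'(w')\geq h(w')h'(w)$ whenever $w\leq w'$; equivalently, the likelihood ratio $h'/h$ is non-decreasing in $w$. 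Inequality \eqref{conv2} is then the limit of \eqref{conv1} as $s'\to\infty$ (after dividing both sides by the total integrals), so the task collapses to establishing this single MLR property.

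The key input is multivariate total positivity of order $2$ (MTP2). Viewed as a function of all scalar arguments $(w,\vec{x},\vec{y},\vec{z},\vec{v},\{u_{i,j}:(i,j)\in B_P\})$, the integrand of \eqref{S6DefProj} is log-supermodular: its logarithm is a sum of pairwise-coupled terms of the form $-H^{RW}(u_{i,j}-u_{i,j-1})$ and $-H_j(u_{i+1,j+1}-u_{i,j})$, each of which has non-negative mixed second partial derivative in its two active arguments (equal to $(H^{RW})''$ and $H_j''$ respectively, both $\geq 0$ by convexity), and all other mixed partials vanish. By the classical Karlin--Rinott preservation theorem for MTP2 under marginalization, integrating out $u_{i,j}$ for $(i,j)\in B_P$ produces a function that is again MTP2, and that function is exactly $h_{k,n}^{\vec{x},\vec{y},\vec{z}}(w;\vec{v};A_P,B_P,P)$. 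Applying the MTP2 inequality to the pair consisting of $w$ and an interpolation parameter along the straight segment from $(\vec{x},\vec{y},\vec{z},\vec{v})$ to $(\vec{x}',\vec{y}',\vec{z}',\vec{v}')$ yields the desired MLR property, which closes the argument.

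The main obstacles are the standard technical issues attendant to MTP2 on non-compact domains with partial infinities. Integrability is easy: $\g$ is a bounded probability density and each factor $e^{-H_j(\cdot)}$ lies in $(0,1]$, so the integrand defining $h$ is dominated by a product of translates of $\g$ and is integrable on $Y(B_P)$. The possibility $\vec{z}(j)=-\infty$ is dealt with by monotone approximation, using $H_j(-\infty)=0$, so that both sides of \eqref{conv1} are continuous limits of their finite-valued analogues. Worth noting is that the monotonicity hypothesis on the $H_j$ is not used in this lemma: only convexity enters the cross-partial computation. The increasing hypothesis will enter at later stages in the proof of Lemma \ref{MonCoup}, where the pointwise MLR output of Lemma \ref{MCKF} is combined with explicit quantile couplings to produce the grand coupling $\Phi^{k,T}$.
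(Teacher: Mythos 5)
Your proof is correct, and it establishes the lemma by a route that is genuinely different from the paper's. Both arguments reduce \eqref{conv1} and \eqref{conv2} to the monotone likelihood ratio property $h(w)h'(w') \geq h(w')h'(w)$ for $w\leq w'$ (the paper does this in its Step~1 by differentiating a log-ratio; your telescoping reduction is equivalent). The difference is in how the MLR is established: the paper proves it \emph{ab initio} by a fairly involved double induction on $k$ and $p_2$, at each step splitting a double integral over $\{x<y\}$ and $\{y<x\}$ and invoking an explicit FKG-type rearrangement $(AB-CD)(XY-ZW)\leq 0 \Rightarrow ABXY+CDZW \leq CDXY+ABZW$; you instead recognize the unintegrated kernel in \eqref{S6DefProj} as log-supermodular (MTP$_2$) in the joint variables and invoke closure of MTP$_2$ under marginalization (Karlin--Rinott / Preston / Ahlswede--Daykin), which yields the MLR of $h_{k,n}^{\vec{x},\vec{y},\vec{z}}$ directly after integrating out the $B_P$ coordinates. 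Your route is shorter and more conceptual, trading the paper's self-contained induction for an appeal to a standard but nontrivial preservation theorem.

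Two small points. First, you phrase log-supermodularity in terms of mixed second partial derivatives of $H^{RW}$ and $H_j$, but these are only assumed continuous and convex, not $C^2$; the same conclusion follows directly from the increasing-differences characterization $F(t+\Delta_1)-F(t+\Delta_1-\Delta_2)\geq F(t)-F(t-\Delta_2)$ for $\Delta_1,\Delta_2\geq 0$, so this is easily repaired. Second, your observation that the increasing hypothesis on $H_j$ is unnecessary in this lemma is valid precisely because you route through finite-valued approximations of $\vec z$ and pass to the limit (dominated convergence, with the product of $\g$'s as dominating function, handles the $-\infty$ coordinates since $H_j(-\infty)=0$). This is worth emphasizing as a bona fide difference: the paper \emph{does} use monotonicity of $H_j$ in deriving \eqref{S6HConv} for $b,d\in[-\infty,\infty)$, because the inequality $H_j(d-y)\leq H_j(d-x)$ with $b=-\infty$ and $x\leq y$ is exactly monotonicity; your limiting argument circumvents this. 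Finally, the ``interpolation parameter'' phrasing is slightly off — no interpolation is needed. The MLR follows from one application of MTP$_2$ to the pair of points $(w,\xi')$ and $(w',\xi)$ with $w\leq w'$, $\xi\leq\xi'$, whose join and meet are $(w',\xi')$ and $(w,\xi)$.
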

\begin{proof}
Observe first that all the integrals are well-defined by our assumptions on $\vec{H},H^{RW}$. Also it is clear that we can obtain (\ref{conv2}) from (\ref{conv1}) upon taking the limit $s' \rightarrow \infty$. We thus focus on establishing (\ref{conv1}). For clarity we split the proof into three steps.\\

{\bf \raggedleft Step 1.} We begin by summarizing some notation and results that will be used throughout the proof. Note that if $F$ is convex, $x \leq y$ and $\Delta \geq 0$ then
\begin{equation}\label{convexFun}
F(x + \Delta) - F(x) \leq F(y+\Delta) -F(y),
\end{equation}
which can be deduced from \cite[Exercise 4.23]{Rudin}. In particular, using that $H^{RW}$ is convex we note that for $x,y,b,d \in \mathbb{R}$, $x \leq y$ and $b \leq d$ we have 
\begin{equation}\label{S6GConv}
\begin{split}
&\g(d - y) \g(b-x) \geq \g(d-x) \g(b-y),
\end{split}
\end{equation}
and using the convexity of $H_j$ and the fact that it is increasing we see for $j \in \llbracket 1, n\rrbracket$, $x, y \in \mathbb{R}$, $b,d \in [-\infty, \infty)$ with $x \leq y$ and $b \leq d$
\begin{equation}\label{S6HConv}
\begin{split}
&e^{-H_j(d - y)} e^{-H_j(b-x)} \geq e^{-H_j(d-x)} e^{-H_j(b-y)}.
\end{split}
\end{equation}

We define the function 
\begin{equation}\label{DefHHat}
\begin{split}
\hat{h}_{k,n}^{\vec{x}, \vec{y}, \vec{z}}(x; \vec{v}; P) = \int_{Y(B_P )} &\prod_{i = 1}^{p_1-1}\prod_{j = 1}^{n+1} \g(u_{i,j} - u_{i, j -1})   \prod_{i = 1}^{p_1 - 1}\prod_{j = 1}^{n} e^{-H_j(u_{i+1, j+1} - u_{i,j})} \times \\
&\prod_{j = 1}^{p_2} \g(u_{p_1,j} - u_{p_1, j -1})   e^{-H_j(u_{p_1+1, j+1} - u_{p_1,j})} \prod_{(i,j) \in B_P} du_{i,j},
\end{split}
\end{equation}
where the $u$ variables are as in Definition \ref{FProj} for the sets $A_P, B_P$. If $B_P= \emptyset$ the function $\hat{h}_{k,n}^{\vec{x}, \vec{y}, \vec{z}}(w; \vec{v}; P) $ denotes the integrand itself and there is no integral. We claim that for any $s,w \in \mathbb{R}$ with $w \leq s$ we have
\begin{equation}\label{IndIneq}
\begin{split}
\hat{h}_{k,n}^{\vec{x}, \vec{y}, \vec{z}}(s; \vec{v}; P) \hat{h}_{k,n}^{\vec{x}', \vec{y}', \vec{z}'}(w; \vec{v}'; P) \leq \hat{h}_{k,n}^{\vec{x}', \vec{y}', \vec{z}'}(s; \vec{v}'; P) \hat{h}_{k,n}^{\vec{x}, \vec{y}, \vec{z}}(w; \vec{v}; P) .
\end{split}
\end{equation}
We will prove (\ref{IndIneq}) in the steps below. Here we assume its validity and conclude the proof of (\ref{conv1}).\\

Define the function $R(s; \vec{x},\vec{y}, \vec{z}; \vec{v}; P) := \log \big( \int_{-\infty}^s h_{k,n}^{\vec{x}, \vec{y}, \vec{z}}(w; \vec{v}; A_P, B_P, P) dw  \big)$ and notice that for fixed $\vec{x},\vec{y}, \vec{z}, \vec{v},P$ the latter function is differentiable in $s$ and its derivative is given by
$$\frac{d R(s; \vec{x},\vec{y}, \vec{z}; \vec{v};P) }{ds} = \frac{ h_{k,n}^{\vec{x}, \vec{y}, \vec{z}}(s; \vec{v}; A_P, B_P, P) }{\int_{-\infty}^s h_{k,n}^{\vec{x}, \vec{y}, \vec{z}}(w; \vec{v}; A_P, B_P, P) dw}.$$
Upon taking logarithms on both sides of (\ref{conv1}) we see that our goal is to show that for $s' \geq s$
\begin{equation*}
\begin{split}
&R(s; \vec{x},\vec{y}, \vec{z}; \vec{v};P) - R(s; \vec{x}',\vec{y}', \vec{z}'; \vec{v}';P) \geq  R(s'; \vec{x},\vec{y}, \vec{z}; \vec{v}; P) - R(s'; \vec{x}',\vec{y}', \vec{z}'; \vec{v}'; P) ,
\end{split}
\end{equation*}
and so it suffices to show $\partial_s\big[R(s; \vec{x},\vec{y}, \vec{z}; \vec{v}; P) - R(s; \vec{x}',\vec{y}', \vec{z}'; \vec{v}'; P) \big] \leq 0$ or equivalently
$$ \frac{ h_{k,n}^{\vec{x}, \vec{y}, \vec{z}}(s; \vec{v}; A_P, B_P, P) }{\int_{-\infty}^s h_{k,n}^{\vec{x}, \vec{y}, \vec{z}}(w; \vec{v}; A_P, B_P, P) dw} \leq \frac{ h_{k,n}^{\vec{x}', \vec{y}', \vec{z}'}(s; \vec{v}'; A_P, B_P, P) }{\int_{-\infty}^s h_{k,n}^{\vec{x}', \vec{y}', \vec{z}'}(w; \vec{v}'; A_P, B_P, P) dw}$$
for all $s \in \mathbb{R}$. Cross-multiplying the above we see it suffices to show that for $w \leq s$ 
\begin{equation*}
\begin{split}
h_{k,n}^{\vec{x}, \vec{y}, \vec{z}}(s; \vec{v}; A_P, B_P, P) h_{k,n}^{\vec{x}', \vec{y}', \vec{z}'}(w; \vec{v}'; A_P, B_P, P) \leq h_{k,n}^{\vec{x}', \vec{y}', \vec{z}'}(s; \vec{v}'; A_P, B_P, P) h_{k,n}^{\vec{x}, \vec{y}, \vec{z}}(w; \vec{v}; A_P, B_P, P) .
\end{split}
\end{equation*}
We now observe from (\ref{S6DefProj}) and (\ref{DefHHat}) that 
$$h_{k,n}^{\vec{x}, \vec{y}, \vec{z}}(s; \vec{v}; A_P, B_P, P) = \hat{h}_{k,n}^{\vec{x}, \vec{y}, \vec{z}}(s; \vec{v}; P) G(u_{p_1, p_2 + 1} - s)\cdot C(\vec{x}, \vec{y}, \vec{z}, \vec{v}),$$
where $C(\vec{x}, \vec{y}, \vec{z}, \vec{v}) > 0$ is a positive constant that depends on $\vec{x}, \vec{y}, \vec{z}, \vec{v}$, $u_{p_1, p_2 + 1} = \vec{v}(p_1, p_2+1)$ if $p_2 + 1 \leq n$ and $u_{p_1, p_2 + 1} = y_{p_1}$ if $p_2 = n$. Consequently, the last inequality above is equivalent to 
\begin{equation}\label{S6R1}
\begin{split}
&\hat{h}_{k,n}^{\vec{x}, \vec{y}, \vec{z}}(s; \vec{v};  P) \hat{h}_{k,n}^{\vec{x}', \vec{y}', \vec{z}'}(w; \vec{v}'; P) G(u_{p_1, p_2 + 1} - s) G(u'_{p_1, p_2 + 1} - w) \leq \\
& \hat{h}_{k,n}^{\vec{x}', \vec{y}', \vec{z}'}(s; \vec{v}';  P) \hat{h}_{k,n}^{\vec{x}, \vec{y}, \vec{z}}(w; \vec{v}; P)G(u'_{p_1, p_2 + 1} - s) G(u_{p_1, p_2 + 1} - w),
\end{split}
\end{equation}
where $u'_{p_1, p_2 + 1} = \vec{v}'(p_1, p_2+1)$ if $p_2 + 1 \leq n$ and $u'_{p_1, p_2 + 1} = y'_{p_1}$ if $p_2 = n$.

Since $s \geq w$ and $u'_{p_1, p_2 + 1}  \geq u_{p_1, p_2 + 1} $ by assumption we see that (\ref{S6R1}) follows from (\ref{IndIneq}) and (\ref{S6GConv}) applied to $d = u'_{p_1, p_2 + 1} $, $y = s$, $b = u_{p_1, p_2 + 1}$ and $x = w$. This concludes the proof of the lemma, modulo (\ref{IndIneq}), which we establish in the next steps.\\

{\bf \raggedleft Step 2.} In the next two steps we prove (\ref{IndIneq}) by induction on $k$. In this step we prove the base case $k = 1$ and mention that our proof in this case is an adaptation of the one in \cite[Lemma 2.13]{BCDB}. \\

Since $k = 1$ we know that $p_1 = 1$ and $p_2 \in \llbracket 1, n \rrbracket$. We proceed to prove (\ref{IndIneq}) by induction on $p_2$. For the base case $p_2 = 1$ we have that (\ref{IndIneq}) is equivalent to
\begin{equation*}
 \g(s-x_1) e^{-H_1(\vec{z}(2) - s)} \g(w -x_1') e^{-H_1(\vec{z}'(2) - w)} \leq  \g(w-x_1) e^{-H_1(\vec{z}(2) - w)} \g(s -x_1') e^{-H_1(\vec{z}'(2) - s)},
\end{equation*}
which follows from (\ref{S6GConv}) applied to $d = s$, $y = x_1'$, $b = w$ and $x = x_1$, and (\ref{S6HConv}) applied to $j = 1$, $d = \vec{z}'(2)$, $y = s$, $b = \vec{z}(2)$ and $x = w$.

Suppose that we have proved (\ref{IndIneq}) for $p_2 = m$ and we wish to show it for $p_2 = m+1 \leq n$. Using (\ref{DefHHat}) we can rewrite (\ref{IndIneq}) for $p_2 = m+1$ as
\begin{equation}\label{IndIneqBC0}
\begin{split}
e^{-H_{m+1}(\vec{z}(m+2) - s )} e^{-H_{m+1}(\vec{z}'(m+2) - w)} &\int_{ \mathbb{R}}   \hat{h}_{k,n}^{\vec{x}, \vec{y}, \vec{z}}(x; \vec{v}_Q; Q) \g(s - x) dx  \times \\
&  \int_{ \mathbb{R}}   \hat{h}_{k,n}^{\vec{x}', \vec{y}', \vec{z}'}(y; \vec{v}'_Q; Q) \g(w - y)  dy\leq \\
e^{-H_{m+1}(\vec{z}'(m+2) - s)}e^{-H_{m+1}(\vec{z}(m+2) - w )} &\int_{ \mathbb{R}} \hat{h}_{k,n}^{\vec{x}', \vec{y}', \vec{z}'} (x; \vec{v}'_Q; Q) \g(s - x)  dx \times \\
&  \int_{ \mathbb{R}} \hat{h}_{k,n}^{\vec{x}, \vec{y}, \vec{z}}(y; \vec{v}_Q; Q) \g(w - y)   dy,
\end{split}
\end{equation}
where $Q = (1,m)$ and $\vec{v}_Q, \vec{v}'_Q \in Y(A_Q) = Y(A_P \cup \{P\})$ are such that $\vec{v}_Q (a) = \vec{v}(a)$, $\vec{v}'_Q(a) =\vec{v}'(a)$ for $a \in A_P$ and $\vec{v}_Q (P) = \vec{v}'_Q (P)  = 0$ (note that when $k = 1 = p_1$ the function $\hat{h}_{k,n}^{\vec{x}, \vec{y}, \vec{z}}(x; \vec{v}_Q; Q)$ does not depend on $\vec{v}_Q (P)$ and so the value we assign to $\vec{v}_Q (P)$ and $\vec{v}'_Q (P)$ is immaterial). Using (\ref{S6HConv}) with $j = m+1$, $d = \vec{z}'(m+2)$, $y = s$, $b = \vec{z}(m+2)$, $x = w$ we see that to prove (\ref{IndIneqBC0}) it suffices to show that for $s \geq w$ we have
\begin{equation}\label{IndIneqBC1}
\begin{split}
&\iint_{ \mathbb{R}^2} \hat{h}_{k,n}^{\vec{x}, \vec{y}, \vec{z}}(x; \vec{v}_Q; Q) \g(s - x)   \hat{h}_{k,n}^{\vec{x}', \vec{y}', \vec{z}'}(y; \vec{v}'_Q; Q) \g(w - y) dx dy \leq \\
&\iint_{ \mathbb{R}^2} \hat{h}_{k,n}^{\vec{x}', \vec{y}', \vec{z}'}(x; \vec{v}'_Q; Q) \g(s - x)  \hat{h}_{k,n}^{\vec{x}, \vec{y}, \vec{z}}(y; \vec{v}_Q; Q) \g(w - y) dx dy.
\end{split}
\end{equation}

Splitting the integrals in (\ref{IndIneqBC1}) over $\{x< y \}$ and $\{y <x \}$ and swapping the $x,y$ labels in the region $\{y <x \}$ we see that (\ref{IndIneqBC1}) is equivalent to
\begin{equation*}
\begin{split}
\iint_{ x < y} & \hat{h}_{k,n}^{\vec{x}, \vec{y}, \vec{z}}(x; \vec{v}_Q; Q) \g(s - x)   \hat{h}_{k,n}^{\vec{x}', \vec{y}', \vec{z}'}(y; \vec{v}'_Q; Q) \g(w - y) +  \\
& \hat{h}_{k,n}^{\vec{x}, \vec{y}, \vec{z}}(y; \vec{v}_Q; Q) \g(s - y)   \hat{h}_{k,n}^{\vec{x}', \vec{y}', \vec{z}'}(x; \vec{v}'_Q; Q) \g(w - x)  dx dy \leq \\
\iint_{ x < y} & \hat{h}_{k,n}^{\vec{x}', \vec{y}', \vec{z}'}(x; \vec{v}'_Q; Q) \g(s - x)  \hat{h}_{k,n}^{\vec{x}, \vec{y}, \vec{z}}(y; \vec{v}_Q; Q) \g(w - y) + \\
&\hat{h}_{k,n}^{\vec{x}', \vec{y}', \vec{z}'}(y; \vec{v}'_Q; Q) \g(s - y)  \hat{h}_{k,n}^{\vec{x}, \vec{y}, \vec{z}}(x; \vec{v}_Q; Q) \g(w - x) dx dy.
\end{split}
\end{equation*}
The latter inequality would follow if we can show that the first integrand is pointwise dominated by the second integrand, which is equivalent to proving that if $x < y$ we have
\begin{equation*}
\begin{split}
& A B XY + CD ZW \leq CD XY + ABZW, \mbox{ or equivalently } (AB - CD) (XY - ZW) \leq 0, \mbox{ where }\\
&A = \hat{h}_{k,n}^{\vec{x}, \vec{y}, \vec{z}}(x; \vec{v}_Q; Q) ,  B=   \hat{h}_{k,n}^{\vec{x}', \vec{y}', \vec{z}'}(y; \vec{v}'_Q; Q),  X = \g(s - x)  , Y =  \g(w - y) ,  \\
& C = \hat{h}_{k,n}^{\vec{x}, \vec{y}, \vec{z}}(y; \vec{v}_Q; Q) , D=   \hat{h}_{k,n}^{\vec{x}', \vec{y}', \vec{z}'}(x; \vec{v}'_Q; Q) , Z =   \g(s - y) , W =  \g(w - x) .
\end{split}
\end{equation*}
We observe that 
\begin{equation*}
XY - ZW \leq 0 \iff  \g(s - x)  \g(w - y)   \leq \g(s - y)   \g(w - x) , 
\end{equation*}
and the latter holds by (\ref{S6HConv}) with $d = s$, $b = w$ and $x,y$ as above (here we recall that $s \geq w$, $y > x$). In addition, by the induction hypothesis, (\ref{IndIneq}) applied to $k = 1$ and $Q$, we have
$$AB = \hat{h}_{k,n}^{\vec{x}, \vec{y}, \vec{z}}(x; \vec{v}_Q; Q) \hat{h}_{k,n}^{\vec{x}', \vec{y}', \vec{z}'}(y; \vec{v}'_Q; Q) \geq \hat{h}_{k,n}^{\vec{x}, \vec{y}, \vec{z}}(y; \vec{v}_Q; Q) \hat{h}_{k,n}^{\vec{x}', \vec{y}', \vec{z}'}(x; \vec{v}'_Q; Q) = CD,$$
and so $AB - CD \geq 0$. The last two inequalities give $(AB - CD) (XY - ZW) \leq 0$ and we conclude that $A B XY + CD ZW \leq CD XY + ABZW$. This proves (\ref{IndIneqBC1}), and so (\ref{IndIneq}) holds for $k = 1$ and $p_2 = m+1$. This concludes the induction step and we conclude (\ref{IndIneq}) for $k =1 $ and all $p_2 \in \llbracket 1, n \rrbracket$.\\

{\bf \raggedleft Step 3.} In this step we assume that (\ref{IndIneq}) holds for $k = r$ and proceed to prove it for $k = r +1 \geq 2$. \\

Suppose first that $p_1 \leq r$. Then from (\ref{DefHHat}) we have that 
$$\hat{h}_{r + 1,n}^{\vec{x}, \vec{y}, \vec{z}}(x; \vec{v}; P) = \hat{h}_{r,n}^{\vec{x}_r, \vec{y}_r, \vec{z}_r}(x; \vec{v}_r; P), \mbox{ and } \hat{h}_{r + 1,n}^{\vec{x}', \vec{y}', \vec{z}'}(x; \vec{v}'; P) = \hat{h}_{r,n}^{\vec{x}'_r, \vec{y}'_r, \vec{z}'_r}(x; \vec{v}'_r; P),$$
where 
\begin{equation}\label{ChangeCoord}
\begin{split}
&\vec{x}_r = (x_1, \dots, x_r), \hspace{2mm} \vec{y}_r = (y_1, \dots, y_r), \hspace{2mm} \vec{x}'_r = (x_1', \dots, x_r'), \hspace{2mm} \vec{y}'_r = (y_1', \dots, y_r') \\
&\vec{v}_r(a) = \vec{v}(a), \hspace{2mm} \vec{v}_r'(a) = \vec{v}'(a) \mbox{ for } a \in A_P \cap \llbracket 1, r  \rrbracket \times \llbracket 1, n \rrbracket, \vec{z}_r(0) = x_{r+1},\vec{z}'_r(0) = x'_{r+1}, \\
&\vec{z}_r(i) = \vec{v}(r+1, i), \hspace{2mm} \vec{z}'_r(i) =\vec{v}'(r+1, i) \mbox{ for $i \in \llbracket 1, n \rrbracket$},  \vec{z}_r(n+1) = y_{r+1}, \vec{z}'_r(n+1) = y'_{r+1} .
\end{split}
\end{equation}
In particular, we see that (\ref{IndIneq}) for $k = r+1$ and $p_1 \leq r$ is equivalent to
$$\hat{h}_{r,n}^{\vec{x}_r, \vec{y}_r, \vec{z}_r}(s; \vec{v}_r; P)  \hat{h}_{r,n}^{\vec{x}'_r, \vec{y}'_r, \vec{z}'_r}(w; \vec{v}'_r; P) \leq   \hat{h}_{r,n}^{\vec{x}'_r, \vec{y}'_r, \vec{z}'_r}(s; \vec{v}'_r; P) \hat{h}_{r,n}^{\vec{x}_r, \vec{y}_r, \vec{z}_r}(w; \vec{v}_r; P),$$
which holds by the induction hypothesis (\ref{IndIneq}) for $k = r$.  This proves (\ref{IndIneq}) when $k = r+1$ and $p_1 \leq k$ and in the sequel we focus on the remaining case $p_1 = r+1$.\\

To conclude the proof we need to show (\ref{IndIneq}) for $k = r+1 = p_1$ and $p_2 \in \llbracket 1, n \rrbracket$. As in Step 2 we proceed by induction on $p_2$. For the base case $p_2 = 1$ we use (\ref{DefHHat}) to rewrite (\ref{IndIneq}) as 
\begin{equation*}
\begin{split}
&G(s - x_{r+1})e^{-H_1(\vec{z}(2) - s )} G(w - x'_{r+1})e^{-H_1(\vec{z}'(2) - w )}  \times \\
& \int_{ \mathbb{R}} \hat{h}_{r,n}^{\vec{x}_r, \vec{y}_r, \vec{z}_r}(x;  \vec{v}_r ; Q) \g(y_r - x) dx   \int_{ \mathbb{R}} \hat{h}_{r,n}^{\vec{x}'_r, \vec{y}'_r, \vec{z}'_r}(y; \vec{v}_r' ; Q) \g(y_r' - y)  dy \leq \\
&G(w - x_{r+1})e^{-H_1(\vec{z}(2) - w )} G(s - x'_{r+1})e^{-H_1(\vec{z}'(2) - s )}  \times \\
& \int_{ \mathbb{R}} \hat{h}_{r,n}^{\vec{x}'_r, \vec{y}'_r, \vec{z}'_r}(x; \vec{v}_r' ; Q) \g(y_r' - x)  dx \int_{ \mathbb{R}} \hat{h}_{r,n}^{\vec{x}_r, \vec{y}_r, \vec{z}_r}(y;  \vec{v}_r ; Q) \g(y_r - y) dy ,
\end{split}
\end{equation*}
where $Q = (r,n)$ and $\vec{x}_r, \vec{y}_r, \vec{x}_r', \vec{y}_r', \vec{z}_r,\vec{z}_r', \vec{v}_r, \vec{v}'_r$ are as in (\ref{ChangeCoord}) with the exception of $\vec{z}_r(1), \vec{z}'_r(1)$, which we both set to $0$. We remark that $\vec{v}_r, \vec{v}_r' \in Y(\emptyset)$ and in deriving the last inequality we used that $ \hat{h}_{r,n}^{\vec{x}_r, \vec{y}_r, \vec{z}_r}(x;  \vec{v}_r ; Q)$ and $\hat{h}_{r,n}^{\vec{x}'_r, \vec{y}'_r, \vec{z}'_r}(x; \vec{v}_r' ; Q)$ are independent of $\vec{z}_r(1)$ and $\vec{z}'_r(1)$, respectively. Cancelling the two integrals on both sides we see that it suffices to show that for $s \geq w$ we have
\begin{equation*}
\begin{split}
&G(s - x_{r+1})e^{-H_1(\vec{z}(2) - s )} G(w - x'_{r+1})e^{-H_1(\vec{z}'(2) - w )}  \leq \\
&G(w - x_{r+1})e^{-H_1(\vec{z}(2) - w )} G(s - x'_{r+1}) e^{-H_1(\vec{z}'(2) - s )},
\end{split}
\end{equation*}
which follows from (\ref{S6GConv}) applied to $d = s$, $y = x_{r+1}'$, $b = w$ and $x = x_{r+1}$, and (\ref{S6HConv}) applied to $j = 1$, $d = \vec{z}'(2)$, $y = s$, $b = \vec{z}(2)$ and $x = w$. This proves (\ref{IndIneq}) for $k = r+1 = p_1$ and $p_2 = 1$.

Suppose that we have proved (\ref{IndIneq}) for $k = r+1 = p_1$, $p_2 = m$ and we wish to show it for $p_2 = m+1 \leq n$. Using (\ref{DefHHat}) we can rewrite (\ref{IndIneq}) for $k = r+1 = p_1$, $p_2 = m + 1$ as
\begin{equation}\label{IndIneqIH1}
\begin{split}
 e^{-H_{m+1}(\vec{z}(m+2) - s )} e^{-H_{m+1}(\vec{z}'(m+2) - w)} &\int_{ \mathbb{R}}   \hat{h}_{k,n}^{\vec{x}, \vec{y}, \vec{z}}(x; \vec{v}_Q; Q) \g(s - x) dx \times \\
&  \int_{ \mathbb{R}}  \hat{h}_{k,n}^{\vec{x}', \vec{y}', \vec{z}'}(y; \vec{v}'_Q; Q) \g(w - y)  dy \leq \\
e^{-H_{m+1}(\vec{z}'(m+2) - s)}e^{-H_{m+1}(\vec{z}(m+2) - w )}  & \int_{ \mathbb{R}}  \hat{h}_{k,n}^{\vec{x}', \vec{y}', \vec{z}'}(x; \vec{v}'_Q; Q) \g(s - x)  dx \times \\
& \int_{ \mathbb{R}}  \hat{h}_{k,n}^{\vec{x}, \vec{y}, \vec{z}}(y; \vec{v}_Q; Q) \g(w - y)  dy,
\end{split}
\end{equation}
where $Q = (r+1,m)$ and $\vec{v}_Q, \vec{v}'_Q \in Y(A_Q) = Y(A_P \cup \{P\})$ are such that $\vec{v}_Q (a) = \vec{v}(a)$, $\vec{v}'_Q(a) =\vec{v}'(a)$ for $a \in A_P$ and $\vec{v}_Q (P) = \vec{v}'_Q (P)  = 0$ (note that when $k = r+1 = p_1$ the function $\hat{h}_{k,n}^{\vec{x}, \vec{y}, \vec{z}}(x; \vec{v}_Q; Q)$ does not depend on $\vec{v}_Q (P)$ and so the value we assign to $\vec{v}_Q (P)$ and $\vec{v}'_Q (P)$ is immaterial). At this time we can repeat the proof of (\ref{IndIneqBC0}) from Step 2 verbatim to establish (\ref{IndIneqIH1}). This concludes the proof of (\ref{IndIneq}) for $k = r+1 = p_1$ and $p_2 = m + 1$, which shows by the induction on $m$ that (\ref{IndIneq}) holds for all $P \in \llbracket 1, k \rrbracket \times \llbracket 1, n\rrbracket$ if $k = r + 1$. This completes the induction step for $k$ and so we conclude (\ref{IndIneq}) for all $k, n \in \mathbb{N}$ and $P \in \llbracket 1, k \rrbracket \times \llbracket 1, n\rrbracket$. This concludes the proof of the lemma. 
\end{proof}

%
\subsection{Proof of Lemma  \ref{MonCoup}}\label{Section6.2}
For clarity we split the proof into five steps. In the first step we explain our construction of the probability space $( \Omega^{k,T}, \mathcal{F}^{k,T},\mathbb{P}^{k,T})$ and the random vectors $\ell^{k,T,\vec{x},\vec{y}, \vec{z}} \in Y( \llbracket 1, k \rrbracket \times \llbracket 0, T-1 \rrbracket)$ for all $\vec{x},\vec{y} \in \mathbb{R}^k$ and $\vec{z} \in Y^-(\llbracket 0, T-1 \rrbracket)$ on this space. In the second step we make two claims about the function $\Phi^{k,T}$ in the statement of the lemma and assuming the validity of these claims prove the parts I and II of the lemma. The two claims are proved in Steps 3 and 4, and in Step 5 we conclude the proof of part III of the lemma.\\

{\bf \raggedleft Step 1.} In this step we explain how to construct the probability space $( \Omega^{k,T}, \mathcal{F}^{k,T},\mathbb{P}^{k,T})$ and the random vectors $\ell^{k,T,\vec{x},\vec{y}, \vec{z}} \in Y( \llbracket 1, k \rrbracket \times \llbracket 0, T-1 \rrbracket)$. 

If $T = 2$ we take $\Omega^{k,T}$ to be a set with one point $\omega_0$, $\mathcal{F}^{k,T}$ to be the discrete $\sigma$-algebra and $\mathbb{P}^{k,T}$ to be the unit mass at $\omega_0$. The random vectors $\ell^{k,T,\vec{x},\vec{y}, \vec{z}}$ are then defined by $\ell^{k,T,\vec{x},\vec{y}, \vec{z}}(i, \cdot ) = (x_i,y_i)$ for $i \in \llbracket 1, k \rrbracket$ and clearly satisfy the conditions of the lemma. 

If $T \geq 3$ we let $( \Omega^{k,T}, \mathcal{F}^{k,T},\mathbb{P}^{k,T}) = \big((0,1)^{k(T-2)}, \mathcal{B}((0,1)^{k(T-2)}),\lambda \big),$ where $\mathcal{B}((0,1)^{k(T-2)})$ is the Borel $\sigma$-algebra on $(0,1)^{k(T-2)}$ and $\lambda$ is the Lebesgue measure on this set. This specifies the choice of $( \Omega^{k,T}, \mathcal{F}^{k,T},\mathbb{P}^{k,T})$. We next define the vector $\ell^{k,T,\vec{x},\vec{y}, \vec{z}}$. 

We first set $\ell^{k,T,\vec{x},\vec{y}, \vec{z}}(i, 0) = x_i$ and $\ell^{k,T,\vec{x},\vec{y}, \vec{z}}(i, T-1) = y_i$ for $i \in \llbracket 1, k \rrbracket$. We next let $P_m$ be an enumeration of the elements in $\llbracket 1, k \rrbracket \times \llbracket 1, T-2 \rrbracket$ so that $P_1 <_{\mathbb{Z}^2}  \cdots <_{\mathbb{Z}^2} P_{k(T-2) - 1}<_{\mathbb{Z}^2}  P_{k(T-2)}$, where we recall that $<_{\mathbb{Z}^2}$ was defined in Definition \ref{FProj}. We also define for $\vec{v} \in Y(A_{P_m})$ and $s \in \mathbb{R}$ the function
\begin{equation}\label{CDFRec}
F_{k,T-2}^{\vec{x}, \vec{y},\vec{z}} (s; \vec{v}; m): = \frac{\int_{-\infty}^s h_{k,T-2}^{\vec{x}, \vec{y}, \vec{z}}(w; \vec{v}; A_{P_m}, B_{P_m}, P_m) dw }{\int_{-\infty}^{\infty} h_{k,T-2}^{\vec{x}, \vec{y}, \vec{z}}(w; \vec{v}; A_{P_m}, B_{P_m}, P_m) dw },
\end{equation}
where $h_{k,T-2}^{\vec{x}, \vec{y}, \vec{z}}(w; \vec{v}; A_P, B_P, P) $, $A_{P}$ and $B_{P}$ are as in Definition \ref{FProj}. Notice that by assumption we know that $F_{k,T-2}^{\vec{x}, \vec{y},\vec{z}}(\cdot ; \vec{v}; m)$ is strictly increasing and defines a bijection between $\mathbb{R}$ and $(0,1)$. 

We denote the points $\omega \in (0,1)^{k(T-2)}$ by $\omega = (u_1, \dots, u_{k(T-2)})$ and specify $\ell^{k,T,\vec{x},\vec{y}, \vec{z}}(P_n)(\omega)$ recursively on $m$, with 
\begin{equation}\label{S6DefLRec}
\begin{split}
\ell^{k,T,\vec{x},\vec{y}, \vec{z}}(P_{m}) := [F_{k,T-2}^{\vec{x}, \vec{y},\vec{z}} (\cdot; \vec{v}_{m}; m )]^{-1}(u_{m}) \mbox{ for } m = k(T-2), k(T-2) - 1, \dots, 1,
\end{split}
\end{equation}
where $\vec{v}_{k(T-2)} \in Y(\emptyset)$ and if $\ell^{k,T,\vec{x},\vec{y}, \vec{z}}(P_{n})$ has been specified for $n = m +1$ we have $\vec{v}_m \in Y(A_{P_m})$ are defined via $\vec{v}_m(a) = \ell^{k,T,\vec{x},\vec{y}, \vec{z}}(a) $ for $a \in A_{P_m}$. This concludes the construction of $\ell^{k,T,\vec{x},\vec{y}, \vec{z}}$.\\

{\bf \raggedleft Step 2.} In this step we show that the construction of Step 1 satisfies parts I and II of the lemma. From our construction in Step 1 it is clear that $( \Omega^{k,T}, \mathcal{F}^{k,T},\mathbb{P}^{k,T})$ satisfies the conditions in part III of the lemma. We make the following two claims about $\Phi^{k,T}$ from part III. We claim that
\begin{equation}\label{S2Phi1}
\begin{split}
&\mbox{ the function $\Phi^{k,T}$ is a bijection between $\mathbb{R}^k \times \Omega^{k,T} \times \mathbb{R}^k \times  Y^-(\llbracket 0, T-1 \rrbracket) $ and }\\
& \mbox{ $ Y(\llbracket 1, k \rrbracket \times \llbracket 0, T-1 \rrbracket) \times   Y^-(\llbracket 0, T-1 \rrbracket)$ }
\end{split}
\end{equation}
and if $[\Phi^{k,T}]^{-1}$ denotes its inverse function then for any sequence $(\vec{w}^n, \vec{z}^n)$ converging to $(\vec{w}^\infty, \vec{z}^\infty)$ in $ Y(\llbracket 1, k \rrbracket \times \llbracket 0, T-1 \rrbracket) \times  Y^-(\llbracket 0, T-1 \rrbracket)$ we have that 
\begin{equation}\label{S2Phi2}
\lim_{n \rightarrow \infty} [\Phi^{k,T}]^{-1} (\vec{w}^n, \vec{z}^n) = [\Phi^{k,T}]^{-1}(\vec{w}^\infty, \vec{z}^\infty).
\end{equation}
The claims in (\ref{S2Phi1}) and (\ref{S2Phi2}) will be proved in Steps 3 and 4 below. Here we assume their validity and proceed to prove parts I and II of the lemma. \\

Notice that if we fix $\vec{z} \in  Y^-(\llbracket 0, T-1 \rrbracket)$ then by (\ref{S2Phi1}) and (\ref{S2Phi2}) we know that the function $[\Phi^{k,T}]^{-1} (\vec{w}, \vec{z})$ defines a continuous bijection between $Y(\llbracket 1, k \rrbracket \times \llbracket 0, T-1 \rrbracket)$ and $\mathbb{R}^k \times (0,1)^{k(T-2)} \times \mathbb{R}^k$ as a function of $\vec{w}$. By the Invariance of domain theorem \cite[Theorem 36.5]{Munk2} we see that $\Phi^{k,T}$ is also continuous and hence by restriction for fixed $\vec{x},\vec{y} \in \mathbb{R}^k$ we have that $\ell^{k,T,\vec{x},\vec{y}, \vec{z}}$ is a continuous function of $\omega$. In particular, the vector-valued functions $\ell^{k,T,\vec{x},\vec{y}, \vec{z}}$ in Step 1 are random vectors.

We next check that $\ell^{k,T,\vec{x},\vec{y}, \vec{z}}(\omega)$ has the law $\mathbb{P}_{\vec{H}, H^{RW}}^{ 1,k, 0, T - 1,\vec{x}, \vec{y}, \infty, \vec{z}}$ as in Definition \ref{Pfree}. We proceed to show that $(\ell^{k,T,\vec{x},\vec{y}, \vec{z}}(P_{k(T-2)}), \dots, \ell^{k,T,\vec{x},\vec{y}, \vec{z}}(P_{m}))$ has the same joint law as $(\mathfrak{L}(P_{k(T-2)}), \dots, \mathfrak{L}(P_{m}))$ for each $m = k(T-2), k(T-2) - 1, \dots, 1$, where $\mathfrak{L} = (L_{1}, \dots, L_k)$ is a $\llbracket 1, k \rrbracket$-indexed line ensemble on $\llbracket 0, T-1 \rrbracket$, whose law is $ \mathbb{P}_{\vec{H},H^{RW}}^{1, k, 0 , T-1 , \vec{x}, \vec{y},\infty, \vec{z}}$. If true then the equality of laws at $m = 1$ would imply the desired statement.

Observe that under $\mathbb{P}^{k,T}$ we have that $u_1, \dots, u_{k(T-2)}$ in Step 1 are i.i.d. uniform random variables on $(0,1)$. In addition, as explained in Remark \ref{ProjRem}, we have that $F_{k,T-2}^{\vec{x}, \vec{y},\vec{z}} (s; \vec{v}; m)$ from (\ref{CDFRec}) is nothing but the cumulative distribution function of $\mathfrak{L}(P_m)$, conditioned on $\{ \mathfrak{L}(P): P \in A_{P_m}\}$. The latter two observations and (\ref{S6DefLRec}) imply that $\ell^{k,T,\vec{x},\vec{y}, \vec{z}}(P_{k(T-2)})$ has the same law as $\mathfrak{L}(P_{k(T-2)})$. In addition, if $(\ell^{k,T,\vec{x},\vec{y}, \vec{z}}(P_{k(T-2)}), \dots, \ell^{k,T,\vec{x},\vec{y}, \vec{z}}(P_{m+1}))$ has the same joint law as $(\mathfrak{L}(P_{k(T-2)}), \dots, \mathfrak{L}(P_{m+1}))$ then (\ref{S6DefLRec}) implies that $(\ell^{k,T,\vec{x},\vec{y}, \vec{z}}(P_{k(T-2)}), \dots, \ell^{k,T,\vec{x},\vec{y}, \vec{z}}(P_{m}))$ has the same joint law as $(\mathfrak{L}(P_{k(T-2)}), \dots, \mathfrak{L}(P_{m}))$. This proves that $\ell^{k,T,\vec{x},\vec{y}, \vec{z}}(\omega)$ has the law $\mathbb{P}_{\vec{H}, H^{RW}}^{ 1,k, 0, T - 1,\vec{x}, \vec{y}, \infty, \vec{z}}$ and hence part I of the lemma holds.\\

In the remainder of this step we prove part II of the lemma, and assume that $\vec{x}, \vec{y}, \vec{z}, \vec{x}', \vec{y}', \vec{z}'$ are as in part II. From our construction we immediately get
$$\ell^{k,T,\vec{x},\vec{y}, \vec{z}}(i, 0) = x_i \leq x_i' = \ell^{k,T,\vec{x}',\vec{y}', \vec{z}'}(i, 0) \mbox{ and } \ell^{k,T,\vec{x},\vec{y}, \vec{z}}(i, T-1) = y_i \leq y_i' = \ell^{k,T,\vec{x}',\vec{y}', \vec{z}'}(i, T-1),$$ 
and so it suffices to show that 
\begin{equation}\label{P2LI}
\ell^{k,T,\vec{x},\vec{y}, \vec{z}}(P_m) \leq  \ell^{k,T,\vec{x}',\vec{y}', \vec{z}'}(P_m) \mbox{ for $m = k (T-2), \dots, 1$}.
\end{equation}
It follows from Lemma \ref{MCKF} (here we use that $H^{RW}, H_j$ are convex and $H_j$ are increasing for $j = 0, \dots, T-2$) that 
\begin{equation}\label{S6FIneq}
F_{k,T-2}^{\vec{x}, \vec{y},\vec{z}} (s; \vec{v}; m) \geq F_{k,T-2}^{\vec{x}', \vec{y}',\vec{z}'} (s; \vec{v}'; m),
\end{equation}
provided that $\vec{v}, \vec{v}' \in Y(A_{P_m})$ are such that $\vec{v}(a) \leq \vec{v}'(a)$ for $a \in A_{P_m}$. 

If $m =  k (T-2)$ we have from (\ref{S6DefLRec}) and (\ref{S6FIneq}), applied to $\vec{v}_m = \vec{v}'_m \in Y(\emptyset)$, that 
\begin{equation}\label{PointIneqS6}
\ell^{k,T,\vec{x},\vec{y}, \vec{z}}(P_m) = [F_{k,T-2}^{\vec{x}, \vec{y},\vec{z}} (\cdot ; \vec{v}_m; m)]^{-1}(u_m) \leq[F_{k,T-2}^{\vec{x}', \vec{y}',\vec{z}'} (\cdot ; \vec{v}'_m; m)]^{-1}(u_m) = \ell^{k,T,\vec{x}',\vec{y}', \vec{z}'}(P_m) , 
\end{equation}
which proves (\ref{P2LI}) when $m = k(T-2)$. Assuming we have established (\ref{P2LI}) for $m = k(T-2), k(T-2) - 1, \dots, n + 1 \geq 2$ we see from (\ref{S6DefLRec}) and (\ref{S6FIneq}), applied to $\vec{v}_n = \vec{v}'_n \in Y(A_{P_n})$, such that $\vec{v}_n(a) = \ell^{k,T,\vec{x},\vec{y}, \vec{z}}(a) \leq  \ell^{k,T,\vec{x}',\vec{y}', \vec{z}'}(a)  = \vec{v}_n'(a)$ for $a \in A_{P_n}$ that (\ref{PointIneqS6}) holds for $m = n$, which gives (\ref{P2LI}) for $m = n$. We conclude that (\ref{P2LI}) holds for all $m = k(T-2), k(T-2) - 1, \dots, 1$, which proves part II of the lemma.\\

{\bf \raggedleft Step 3.} In this step we prove (\ref{S2Phi1}). We define the function $\Psi^{k,T}:  Y(\llbracket 1, k \rrbracket \times \llbracket 0, T-1 \rrbracket) \times   Y^-(\llbracket 0, T-1 \rrbracket) \rightarrow \mathbb{R}^k \times (0,1)^{k(T-2)} \times \mathbb{R}^k \times  Y^-(\llbracket 0, T-1 \rrbracket)$ with $(0,1)^0$ denoting the set containing the single point $\omega_0$ as follows.

If $T = 2$ then the map is given by 
$$\Psi^{k,T}(\vec{w}, \vec{z}) = (\vec{x}, \omega_0, \vec{y}, \vec{z}) \mbox{ where } x_i = w(i,0) \mbox{ and }y_i = w(i,1) \mbox{ for $i \in \llbracket 1, k \rrbracket$}.$$
If $T \geq 3$ then the map is given by
\begin{equation}\label{InvFun}
\begin{split}
\Psi^{k,T}(\vec{w}, \vec{z}) = (\vec{x}, \vec{u}, \vec{y}, \vec{z}) \mbox{ where } &x_i = w(i,0) \mbox{ and }y_i = w(i,T-1) \mbox{ for $i \in \llbracket 1, k \rrbracket$ and }\\
&u_m =  F_{k,T-2}^{\vec{x}, \vec{y},\vec{z}} (\vec{w}(P_m); \vec{w}_{m}; m ),
\end{split}
\end{equation}
where $P_m$ and  $F_{k,T-2}^{\vec{x}, \vec{y},\vec{z}} (\cdot ; \vec{w}_{m}; m )$ are as in Step 1, and $\vec{w}_m  \in Y(A_{P_m})$ is such that $\vec{w}_m(a) = \vec{w}(a)$ for $a \in A_{P_m}$. We claim that for all $T \geq 2$, the function $\Psi^{k,T}$ is a left and right inverse to the function $\Phi^{k,T}$. If true, the latter will clearly imply (\ref{S2Phi1}).

 When $T = 2$ the latter is trivial, since the maps are basically the identity map, except that $\Psi^{k,2}$ inserts the coordinate $\omega_0$ between $\vec{w}(\cdot, 0)$ and $\vec{w}(\cdot, 1)$ in the vector $(\vec{w}, \vec{z})$, while $\Phi^{k,2}$ removes it. In the remainder we suppose that $T \geq 3$ and prove
\begin{equation}\label{ASDF1}
\begin{split}
&(\vec{\rho}, \vec{\zeta}) =  (\vec{w}, \vec{z}) \mbox{ and }  (\vec{\alpha}, \vec{\eta}, \vec{\beta}, \vec{\sigma}) = ( \vec{x}, \vec{u},  \vec{y}, \vec{z}), \mbox{ where } \\
&(\vec{\rho}, \vec{\zeta}) :=\Phi^{k,T} \left( \Psi^{k,T}(\vec{w}, \vec{z}) \right) \mbox{ and } (\vec{\alpha}, \vec{\eta}, \vec{\beta}, \vec{\sigma}) := \Psi^{k,T} \left( \Phi^{k,T}( \vec{x}, \vec{u}, \vec{y}, \vec{z})\right)
\end{split}
\end{equation}
for all $\vec{w} \in  Y(\llbracket 1, k \rrbracket \times \llbracket 0, T-1 \rrbracket) $, $\vec{z} \in   Y^-(\llbracket 0, T-1 \rrbracket)$, $\vec{x}, \vec{y} \in \mathbb{R}^{k}$ and $\vec{u} \in (0,1)^{k(T-2)}$. 

From the definition of $\Phi^{k,T}$ (see part III and Step 1) and $\Psi^{k,T}$ in (\ref{InvFun}) we see that $\vec{\zeta} = \vec{z}$, $\vec{\sigma} = \vec{z}$, $\vec{\alpha} = \vec{x}$, $\vec{\beta} = \vec{y}$ and $\vec{\rho}(i,j) = \vec{w}(i,j)$ for $j \in \{0, T-1\}$ and $i \in \llbracket 1, k \rrbracket$. We are thus left with proving 
\begin{equation}\label{ASDF2}
\begin{split}
\vec{\rho}(P_m) = \vec{w}(P_m) \mbox{ and } \eta_m = u_m \mbox{ for } m = k(T-2), \dots, 1. 
\end{split}
\end{equation}

From the definition of $\Phi^{k,T}$ and $\Psi^{k,T}$ we have for each $m = k(T-2), k(T-2) - 1, \dots, 1$ that
\begin{equation}\label{ASDF3}
\begin{split}
&\vec{\rho}(P_m) = [F_{k,T-2}^{\vec{x}, \vec{y},\vec{z}} (\cdot; \vec{\ell}_{m}; m )]^{-1}(F_{k,T-2}^{\vec{x}, \vec{y},\vec{z}} (\vec{w}(P_m); \vec{w}_{m}; m)),  \mbox{ and }\\
&\eta_m = F_{k,T-2}^{\vec{x}, \vec{y},\vec{z}} ([F_{k,T-2}^{\vec{x}, \vec{y},\vec{z}} (\cdot ; \vec{v}_{m}; m )]^{-1}(u_m) ; \vec{v}_{m}; m ) ,
\end{split}
\end{equation}
where $\vec{w}_m  \in Y(A_{P_m})$ is such that $\vec{w}_m(a) = \vec{w}(a)$ for $a \in A_{P_m}$ and $\vec{\ell}_m \in Y(A_{P_m})$ is such that $\vec{\ell}_m(a) = \ell^{k,T,\vec{x}, \vec{y}, \vec{z}}(a)$ for $\ell^{k,T,\vec{x}, \vec{y}, \vec{z}}$ as in Step 1 and the vector $ \vec{u}  \in (0,1)^{k(T-2)}$ as in (\ref{InvFun}). In addition, $\vec{v}_{m} \in Y(A_{P_m})$ is such that $\vec{v}_m(a) = \ell^{k,T,\vec{x}, \vec{y}, \vec{z}}(a)$ for $\ell^{k,T,\vec{x}, \vec{y}, \vec{z}}$ as in Step 1 and the vector $ \vec{u}  \in (0,1)^{k(T-2)}$ as in the second equality in (\ref{ASDF1}). The second equality in (\ref{ASDF3}) implies the second equality in (\ref{ASDF2}), and to conclude that the first equality in (\ref{ASDF2}) also holds it suffices to show that for $m = k(T-2), \dots, 1$
\begin{equation}\label{ASDF4}
\begin{split}
\vec{\ell}_{m} = \vec{w}_{m} .
\end{split}
\end{equation}

When $m = k(T-2)$ we have $\vec{\ell}_{m} = \vec{w}_{m}  \in Y(\emptyset)$ and (\ref{ASDF4}) holds. Assuming that (\ref{ASDF4}) holds for $m = k(T-2), \dots, n+1 \geq 2$ we have 
$$\vec{\ell}_n(a) = \vec{\ell}_{n+1}(a) = \vec{w}_{n+1}(a) = \vec{w}_n(a) \mbox{ for $a \in A_{P_{n+1}}$},$$
and so to prove (\ref{ASDF4}) for $m = n$ it suffices to show that $\vec{\ell}_{n}(P_{n+1}) = \vec{w}_n(P_{n+1}).$ Utilizing the definition of $\vec{\ell}_n$ we have that 
$$\vec{\ell}_n(P_{n+1}) = [F_{k,T-2}^{\vec{x}, \vec{y},\vec{z}} (\cdot; \vec{\ell}_{n+1}; n )]^{-1}(F_{k,T-2}^{\vec{x}, \vec{y},\vec{z}} (\vec{w}(P_{n+1}); \vec{w}_{n+1}; n+1 )) =\vec{w}(P_{n+1}) =  \vec{w}_n(P_{n+1}),  $$ 
where in the second equality we used that $\vec{w}_{n+1} =\vec{\ell}_{n+1}$ by the assumption that (\ref{ASDF4}) holds for $m = n +1$. This proves (\ref{ASDF4}) for $m = n$ and so we conclude (\ref{ASDF4}) for $m = k(T-2), \dots, 1$. This concludes the proof of (\ref{S2Phi1}).\\

{\bf \raggedleft Step 4.} In this step we prove (\ref{S2Phi2}). In view of our work in Step 3 we know that $[\Phi^{k,T}]^{-1}$ is nothing but the function $\Psi^{k,T}$ we constructed in that step. For $n \in \mathbb{N} \cup \{\infty\}$ we set $(\vec{x}^n, \vec{u}^n, \vec{y}^n, \vec{z}^n) : = \Psi^{k,T}(\vec{w}^n, \vec{z}^n)$ and then (\ref{S2Phi2}) becomes equivalent to
\begin{equation}\label{S2Phi3}
\lim_{n \rightarrow \infty} (\vec{x}^n, \vec{u}^n, \vec{y}^n, \vec{z}^n) =(\vec{x}^\infty, \vec{u}^\infty, \vec{y}^\infty, \vec{z}^\infty) \mbox{ in } \mathbb{R}^k \times (0,1)^{k(T-2)} \times \mathbb{R}^k \times  Y^-(\llbracket 0, T-1 \rrbracket),
\end{equation}
provided that $(\vec{w}^n, \vec{z}^n) \rightarrow (\vec{w}^\infty, \vec{z}^\infty)$ in $ Y(\llbracket 1, k \rrbracket \times \llbracket 0, T-1 \rrbracket) \times   Y^-(\llbracket 0, T-1 \rrbracket)$.

From the definition of $\Psi^{k,T}$ in (\ref{InvFun}) we immediately get $\lim_{n \rightarrow \infty} (\vec{x}^n, \vec{y}^n, \vec{z}^n) = (\vec{x}^\infty, \vec{y}^\infty, \vec{z}^\infty)$ in $\mathbb{R}^k  \times \mathbb{R}^k \times Y^-(\llbracket 0, T-1 \rrbracket)$ and so to prove (\ref{S2Phi3}) it suffices to show that for $m = k(T-2), k(T-2) - 1, \dots, 1$ we have $\lim_{n \rightarrow \infty} u^n_m = u^{\infty}_m$, or equivalently,
\begin{equation}\label{S2Phi3.5}
\lim_{n \rightarrow \infty} F_{k,T-2}^{\vec{x}^n, \vec{y}^n,\vec{z}^n} (\vec{w}^n(P_m); \vec{w}^n_{m}; m ) = F_{k,T-2}^{\vec{x}^\infty, \vec{y}^\infty,\vec{z}^\infty} (\vec{w}^\infty(P_m); \vec{w}^\infty_{m}; m ).
\end{equation}
From the definition of $F_{k,T-2}^{\vec{x}, \vec{y},\vec{z}} (s; \vec{v}; m )$ in (\ref{CDFRec}) we see that the last convergence statement is equivalent to proving for $m = k(T-2), k(T-2) - 1, \dots, 1$ that
\begin{equation}\label{S2Phi4}
\lim_{n \rightarrow \infty}  \frac{\int_{-\infty}^{s^n_m} h_{k,T-2}^{\vec{x}^n, \vec{y}^n, \vec{z}^n}(w; \vec{w}^n_{m}; A_{P_m}, B_{P_m}, P_m) dw }{\int_{-\infty}^{\infty} h_{k,T-2}^{\vec{x}^n, \vec{y}^n, \vec{z}^n}(w; \vec{w}^n_{m}; A_{P_m}, B_{P_m}, P_m) dw } = \frac{\int_{-\infty}^{s^{\infty}_m} h_{k,T-2}^{\vec{x}^\infty, \vec{y}^\infty, \vec{z}^\infty}(w; \vec{w}^\infty_{m}; A_{P_m}, B_{P_m}, P_m) dw }{\int_{-\infty}^{\infty} h_{k,T-2}^{\vec{x}^\infty, \vec{y}^\infty, \vec{z}^\infty}(w; \vec{w}^\infty_{m}; A_{P_m}, B_{P_m}, P_m) dw },
\end{equation}
where we have written $s_m^n = \vec{w}^n(P_m)$ for $n \in \mathbb{N}\cup \{\infty \}$. We show that the numerators and denominators on the left side of (\ref{S2Phi4}) converge to the numerator and the denominator on the right side, respectively. As the proofs are very similar, we only prove this statement for the numerators.\\

From the definition of $h_{k,n}^{\vec{x}, \vec{y}, \vec{z}}(x; \vec{v}; A, B, P)$ in (\ref{S6DefProj}) we see that the convergence of the numerators in (\ref{S2Phi4}) is equivalent to
\begin{equation*}
\begin{split}
&\lim_{n \rightarrow \infty}  \int_{-\infty}^{s_m^n}  \int_{Y(B_{P_m})} \prod_{i = 1}^{k}\prod_{j = 1}^{T-1} \g(u^n_{i,j} - u^n_{i, j -1})   \prod_{i = 1}^{k}\prod_{j = 1}^{T-2} e^{-H_j(u^n_{i+1, j+1} - u^n_{i,j})} \prod_{(i,j) \in B_{P_m}} du^n_{i,j} dw = \\
&  \int_{-\infty}^{s_m^\infty}  \int_{Y(B_{P_m})} \prod_{i = 1}^{k}\prod_{j = 1}^{T-1} \g(u^\infty_{i,j} - u^\infty_{i, j -1})   \prod_{i = 1}^{k}\prod_{j = 1}^{T-2} e^{-H_j(u^\infty_{i+1, j+1} - u^\infty_{i,j})} \prod_{(i,j) \in B_{P_m}} du^\infty_{i,j} dw,
\end{split}
\end{equation*}
where $u^n_{k+1,i} = \vec{z}^n(i)$ for $i \in \llbracket 0, T-1 \rrbracket$, $u^n_{i,0}= x^n_i$ and $u^n_{i,T-1} = y^n_i$ for $i \in \llbracket 1, k \rrbracket$, $u^n_{i,j} = \vec{w}^n_m(i,j)$ for $(i,j) \in A_{P_m}$, and $u^n_{p_1, p_2} = w$ for $(p_1, p_2) = P_m$. Applying the change of variables $y^n_{i,j} = u_{i,j}^n + s_m^n$ and $v = w +s_m^n$  above we see that it suffices to prove that 
\begin{equation}\label{S6QWE1}
\begin{split}
&\lim_{n \rightarrow \infty}  \int_{-\infty}^{0}  \int_{Y(B_{P_m})} \prod_{i = 1}^{k}\prod_{j = 1}^{T-1} \g(y^n_{i,j} - y^n_{i, j -1})   \prod_{i = 1}^{k}\prod_{j = 1}^{T-2} e^{-H_j(y^n_{i+1, j+1} - y^n_{i,j})} \prod_{(i,j) \in B_{P_m}} dy^n_{i,j} dv = \\
&  \int_{-\infty}^{0}  \int_{Y(B_{P_m})} \prod_{i = 1}^{k}\prod_{j = 1}^{T-1} \g(y^\infty_{i,j} - y^\infty_{i, j -1})   \prod_{i = 1}^{k}\prod_{j = 1}^{T-2} e^{-H_j(y^\infty_{i+1, j+1} - y^\infty_{i,j})} \prod_{(i,j) \in B_{P_m}} dy^\infty_{i,j} dv,
\end{split}
\end{equation}
where $y^n_{k+1,i} = \vec{z}^n(i) + s_{m}^n$ for $i \in \llbracket 0, T-1 \rrbracket$, $y^n_{i,0}= x^n_i + s_{m}^n$ and $y^n_{i,T-1} = y^n_i + s_{m}^n$ for $i \in \llbracket 1, k \rrbracket$ and $n \in \mathbb{N} \cup \{\infty\}$, $y^n_{i,j} = \vec{w}^n_m(i,j) + s_m^n$ for $(i,j) \in A_{P_m}$, and $y^n_{p_1, p_2} = v$ for $(p_1, p_2) = P_m$.  

Notice that by the continuity of $\g$ and $H_j$, for $j \in \llbracket 0, T-2 \rrbracket$, we know that the integrands on the top line of (\ref{S6QWE1}) converge pointwise to the integrand on the bottom. The fact that the integrals also converge then follows from the Generalized dominated convergence theorem (see \cite[Theorem 4.17]{Royden}) with dominating functions
$$f_n (\vec{y}, v ) =  \prod_{i = 1}^{k}\prod_{j = 1}^{T-1} \g(y_{i,j} - y_{i, j -1}), \mbox{ for } (\vec{y},v) \in Y(B_{P_m}) \times \mathbb{R}, $$
where $y_{i,0}= x^n_i + s_{m}^n$ and $y_{i,T-1} = y^n_i + s_{m}^n$ for $i \in \llbracket 1, k \rrbracket$,  $y_{i,j} = \vec{w}^n_m(i,j) + s_m^n$ for $(i,j) \in A_{P_m}$, and $y_{p_1, p_2} = v$ for $(p_1, p_2) = P_m$.  

Let us elaborate on the last argument. Since $H_j \geq 0$ for $j \in \llbracket 0, T-2 \rrbracket$ by assumption, we know that $f_n$ dominate the integrands on the top line of (\ref{S6QWE1}). Furthermore, by the continuity of $\g$ we conclude that $f_n$ converge pointwise to $f_\infty$, which has the same form as $f_n$ except that $y_{i,0}= x^\infty_i + s_{m}^\infty$ and $y_{i,T-1} = y^\infty_i + s_{m}^\infty$ for $i \in \llbracket 1, k \rrbracket$,  $y_{i,j} = \vec{w}^\infty_m(i,j) + s_m^\infty$ for $(i,j) \in A_{P_m}$. To conclude the application of the Generalized dominated convergence theorem we need to show
\begin{equation}\label{S6QWE2}
\begin{split}
\lim_{n \rightarrow \infty} &  \int_{-\infty}^{0} \int_{Y(B_{P_m})} \prod_{i = 1}^{k}\prod_{j = 1}^{T-1} \g(y^n_{i,j} - y^n_{i, j -1})    \prod_{(i,j) \in B_{P_m}} dy^n_{i,j} dv = \\
&   \int_{-\infty}^{0}  \int_{Y(B_{P_m})} \prod_{i = 1}^{k}\prod_{j = 1}^{T-1} \g(y^\infty_{i,j} - y^\infty_{i, j -1})    \prod_{(i,j) \in B_{P_m}} dy^\infty_{i,j} dv.
\end{split}
\end{equation}
Splitting the integral in (\ref{S6QWE2}) over $(i,j)$ such that $i \leq p_1 - 1$ and $i = p_1$ we see that (\ref{S6QWE2}) is equivalent to 
\begin{equation}\label{S6QWE3}
\begin{split}
\lim_{n \rightarrow \infty} & \prod_{i = 1}^{p_1- 1} \left[ \int_{\mathbb{R}^{T-2}} G(u_1 - x^n_i - s_m^n) \cdot \prod_{j = 2}^{T-2} G(u_j - u_{j-1}) \cdot G(y_i^n + s_m^n  - u_{T-2} ) \prod_{j = 1}^{T-2} du_j \right] \times \\
& \int_{\mathbb{R}^{p_2}} G(u_1 - x^n_{p_1} - s_m^n) \cdot \prod_{j = 2}^{p_2} G(u_j - u_{j-1}) \cdot G(t_m^n + s_m^n  - u_{p_2} ) {\bf 1 }\{ u_{p_2} \leq 0\} \prod_{j = 1}^{p_2} d u_j \times \\
& \prod_{(i,j) \in A_{P_m}} G(y_{i,j+1}^n  -y_{i,j}^n ) = \prod_{(i,j) \in A_{P_m}} G(y_{i,j+1}^\infty  -y_{i,j}^\infty ) \times \\
&   \prod_{i = 1}^{p_1- 1} \left[ \int_{\mathbb{R}^{T-2}} G(u_1 - x^\infty_i - s_m^n) \cdot \prod_{j = 2}^{T-2} G(u_j - u_{j-1}) \cdot G(y_i^\infty + s_m^\infty  - u_{T-2} ) \prod_{j = 1}^{T-2} du_j \right] \times \\
& \int_{\mathbb{R}^{p_2}} G(u_1 - x^\infty_{p_1} - s_m^\infty) \cdot \prod_{j = 2}^{p_2} G(u_j - u_{j-1}) \cdot G(t_m^\infty + s_m^\infty  - u_{p_2} ) {\bf 1 }\{ u_{p_2} \leq 0\} \prod_{j = 1}^{p_2} d u_j,
\end{split}
\end{equation}
where $t_m^n = y_{p_1}^n$ if $p_2 = T-2$ and $t_m^n = \vec{w}^n_m(p_1, p_2 + 1)$ if $p_2 \leq T-3$ for $n \in \mathbb{N} \cup \{ \infty\}$. We are left with establishing (\ref{S6QWE3}).\\

 By the continuity of $G$ and the fact that $\lim_{n \rightarrow \infty} y_{i,j}^n = y_{i,j}^\infty$ and $\lim_{n \rightarrow \infty} y_{i,j + 1}^n = y_{i,j + 1}^\infty$ for $(i,j) \in A_{P_m}$ we conclude that 
\begin{equation}\label{S6QWE4}
\lim_{n \rightarrow \infty} \prod_{(i,j) \in A_{P_m}} G(y_{i,j+1}^n  -y_{i,j}^n ) = \prod_{(i,j) \in A_{P_m}} G(y_{i,j+1}^\infty  -y_{i,j}^\infty ) .
\end{equation}
In addition, by performing a change of variables $\tilde{u}_1 = u_{1} - x_i^n - s_m^n$ and $\tilde{u}_j = u_j - u_{j-1}$ for $j \in \llbracket 2, T - 2 \rrbracket$ we see that for any $i \in \llbracket 1, p_1 - 1\rrbracket$ we have
\begin{equation}\label{S6QWE5}
\begin{split}
\lim_{n \rightarrow \infty} & \int_{\mathbb{R}^{T-2}} G(u_1 - x^n_i - s_m^n) \cdot \prod_{j = 2}^{T-2} G(u_j - u_{j-1}) \cdot G(y_i^n + s_m^n  - u_{T-2} ) \prod_{j = 1}^{T-2} du_j = \\
\lim_{n \rightarrow \infty} & \int_{\mathbb{R}^{T-2}}  \prod_{j = 1}^{T-2} G(\tilde{u}_j) \cdot G\left(y_i^n - x_i^n  - \sum_{j =1}^{T-2} \tilde{u}_j \right) \prod_{j = 1}^{T-2} d\tilde{u}_j = \\
&   \int_{\mathbb{R}^{T-2}}  \prod_{j = 1}^{T-2} G(\tilde{u}_j) \cdot G\left(y_i^\infty - x_i^\infty  - \sum_{j =1}^{T-2} \tilde{u}_j \right) \prod_{j = 1}^{T-2} d\tilde{u}_j = \\
& \int_{\mathbb{R}^{T-2}} G(u_1 - x^\infty_i - s_m^\infty) \cdot \prod_{j = 2}^{T-2} G(u_j - u_{j-1}) \cdot G(y_i^\infty + s_m^\infty  - u_{T-2} ) \prod_{j = 1}^{T-2} du_j,
\end{split}
\end{equation}
where the middle equality follows from the dominated convergence theorem (see \cite[Theorem 4.16]{Royden}) with dominating function $\|\g \|_\infty \cdot \prod_{i = 1}^{T-2}\g(\tilde{u}_i) $ (the pointwise convergence of the integrand follows from the continuity of $G$ and the fact that $\lim_{n \rightarrow \infty} x_i^n = x_i^{\infty}$ and $\lim_{n \rightarrow \infty} y_i^n = y_i^{\infty}$ by assumption). Finally, by performing a change of variables $\tilde{u}_1 = u_{1} - x_{p_1}^n - s_m^n$ and $\tilde{u}_j = u_j - u_{j-1}$ for $j \in \llbracket 2, p_2 \rrbracket$ we see that 
\begin{equation}\label{S6QWE6}
\begin{split}
\lim_{n \rightarrow \infty} & \int_{\mathbb{R}^{p_2}} G(u_1 - x^n_{p_1}- s_m^n) \cdot \prod_{j = 2}^{p_2} G(u_j - u_{j-1}) \cdot G(t_m^n + s_m^n  - u_{p_2} ){\bf 1 }\{ u_{p_2} \leq 0\} \prod_{j = 1}^{p_2} du_j = \\
\lim_{n \rightarrow \infty} & \int_{\mathbb{R}^{p_2}}  \prod_{j = 1}^{T-2} G(\tilde{u}_j) \cdot G\left(t_m^n - x_{p_1}^n  - \sum_{j =1}^{p_2} \tilde{u}_j \right){\bf 1 }\left\{ \sum_{i = 1}^{p_2} \tilde{u}_{i} + x_{p_1}^n + s_m^n \leq 0\right\} \prod_{j = 1}^{p_2} d\tilde{u}_j = \\
&   \int_{\mathbb{R}^{p_2}}  \prod_{j = 1}^{p_2} G(\tilde{u}_j) \cdot G\left(t_m^{\infty} - x_{p_1}^\infty  - \sum_{j =1}^{p_2} \tilde{u}_j \right) {\bf 1}\left\{ \sum_{i = 1}^{p_2} \tilde{u}_{i} + x_{p_1}^\infty + s_m^\infty \leq 0 \right\}  \prod_{j = 1}^{p_2} d\tilde{u}_j = \\
& \int_{\mathbb{R}^{p_2}} G(u_1 - x^\infty_{p_1} - s_m^\infty) \cdot \prod_{j = 2}^{p_2} G(u_j - u_{j-1}) \cdot G(y_i^\infty + s_m^\infty  - u_{T-2} ) {\bf 1 }\{ u_{p_2} \leq 0\}\prod_{j = 1}^{p_2} du_j,
\end{split}
\end{equation}
where the middle equality follows from the dominated convergence theorem (see \cite[Theorem 4.16]{Royden}) with dominating function $\|\g \|_\infty \cdot \prod_{i = 1}^{p_2}\g(\tilde{u}_i) $ (the pointwise convergence of the integrand follows from the continuity of $G$ and the fact that $\lim_{n \rightarrow \infty} x_{p_1}^n = x_{p_1}^{\infty}$ and $\lim_{n \rightarrow \infty} t_m^n = t_m^{\infty}$ by assumption). Combining (\ref{S6QWE4}), (\ref{S6QWE5}) and (\ref{S6QWE6}) we conclude (\ref{S6QWE3}) which completes our work in this step.\\

{\bf \raggedleft Step 5.} In this step we prove part III of the lemma. We already observed in Step 2 that $( \Omega^{k,T}, \mathcal{F}^{k,T},\mathbb{P}^{k,T})$ satisfies the conditions in part III. Furthermore, we showed in Step 3 that $\Phi^{k,T}$ is a bijection and in Step 4 that its inverse $\Psi^{k,T}$ is continuous. Thus we only need to prove that $\Phi^{k,T}$ is itself continuous. 

When $T = 2$ the continuity of $\Phi^{k,T}$ is immediate from its definition and so we assume that $T \geq 3$. Let $(\vec{x}^n, \vec{u}^n, \vec{y}^n, \vec{z}^n) \in \mathbb{R}^k \times (0,1)^{k(T-2)} \times \mathbb{R}^k \times  Y^-(\llbracket 0, T-1 \rrbracket)$ for $n \in \mathbb{N} \cup \{ \infty \}$ be such that 
$$\lim_{n \rightarrow \infty} (\vec{x}^n, \vec{u}^n, \vec{y}^n, \vec{z}^n)  = (\vec{x}^\infty, \vec{u}^\infty, \vec{y}^\infty, \vec{z}^\infty) ,$$
and put $(\vec{w}^n, \vec{z}^n) = \Phi^{k,T}(\vec{x}^n, \vec{u}^n, \vec{y}^n, \vec{z}^n) .$ We wish to show that 
\begin{equation}\label{S6Step5E1}
\lim_{n \rightarrow \infty} (\vec{w}^n, \vec{z}^n) = (\vec{w}^\infty, \vec{z}^\infty) .
\end{equation}
By the definition of $\Phi^{k,T}$ we know that 
$$\lim_{n \rightarrow \infty} \vec{z}^n = \vec{z}^\infty, \lim_{n \rightarrow \infty} \vec{w}^n(i, 0) = \lim_{n \rightarrow \infty} x_i^n = x_i^{\infty} = \vec{w}^\infty(i, 0) \mbox{ and }$$
$$ \lim_{n \rightarrow \infty} \vec{w}^n(i, T-1) = \lim_{n \rightarrow \infty} y_i^n = y_i^{\infty} = \vec{w}^\infty(i,T-1) \mbox{ for $i \in \llbracket 1, k \rrbracket$}.$$
Consequently, it suffices to show for $m = k(T-2), \dots, 1$ and $P_m$ as in Step 1 that 
\begin{equation}\label{S6Step5E2}
\lim_{n \rightarrow \infty} \vec{w}^n(P_m) =\vec{w}^\infty(P_m) .
\end{equation}

We claim that if $m \in \llbracket 1, k(T-2) \rrbracket$ and $\vec{v}^n_m \in Y(A_{P_m})$ for $n \in \mathbb{N} \cup \{ \infty \}$ are such that $\lim_{n \rightarrow \infty} \vec{v}^n_m = \vec{v}^{\infty}_m$ then
\begin{equation}\label{S6Step5E3}
\lim_{n \rightarrow \infty} [F_{k,T-2}^{\vec{x}^n, \vec{y}^n,\vec{z}^n} (\cdot; \vec{v}^n_{m}; m )]^{-1}(u^n_{m}) = [F_{k,T-2}^{\vec{x}^\infty, \vec{y}^\infty,\vec{z}^\infty} (\cdot; \vec{v}^\infty_{m}; m )]^{-1}(u^\infty_{m}).
\end{equation}
We will prove (\ref{S6Step5E3}) below. Here we assume its validity and conclude the proof of (\ref{S6Step5E2}).\\

We proceed to recursively prove (\ref{S6Step5E2}) for $m = k(T-2), \dots, 1$. From the definition of $\Phi^{k,T}$ in Step 1 we have that (\ref{S6Step5E2}) is equivalent to
\begin{equation}\label{S6Step5E4}
\lim_{n \rightarrow \infty} [F_{k,T-2}^{\vec{x}^n, \vec{y}^n,\vec{z}^n} (\cdot; \vec{v}^n_{m}; m )]^{-1}(u^n_{m}) =[F_{k,T-2}^{\vec{x}^\infty, \vec{y}^\infty,\vec{z}^\infty} (\cdot; \vec{v}^\infty_{m}; m )]^{-1}(u^\infty_{m}),
\end{equation}
$\vec{v}^n_c \in Y(A_{P_c})$ are such that $\vec{v}^n_c(a) = \vec{w}^n(a)$ for $a \in Y(A_{P_c})$ and $n \in \mathbb{N} \cup \{\infty\}$. When $m = k(T-2)$ we have that $\vec{v}^n_m \in Y(\emptyset)$ and so (\ref{S6Step5E4}) follows from (\ref{S6Step5E3}). This proves (\ref{S6Step5E2}) for $m = k(T-2)$. Assuming that we have proved (\ref{S6Step5E2}) for $m = k(T-2), \dots, c+1 \geq 2$ we have that $\lim_{n \rightarrow \infty} \vec{v}^n_c = \vec{v}^\infty_c$ and so (\ref{S6Step5E4}) with $m = c$ follows from (\ref{S6Step5E3}). This proves  (\ref{S6Step5E2}) for $m = k(T-2), \dots, c$ and so we conclude (\ref{S6Step5E2}) for all $m \in \llbracket 1, k(T-2) \rrbracket$.\\

To conclude the proof of part III we are left with establishing (\ref{S6Step5E3}) and we fix $m \in \llbracket 1, k(T-2) \rrbracket$ in the sequel. Setting $A_n = [F_{k,T-2}^{\vec{x}^n, \vec{y}^n,\vec{z}^n} (\cdot; \vec{v}^n_{m}; m )]^{-1}(u^n_{m})$ for $n \in \mathbb{N} \cup \{\infty\}$ we see that to prove (\ref{S6Step5E3}) it suffices to show that $\{ A_n\}_{n = 1}^\infty$ is bounded and all its subsequential limits are equal to $A_{\infty}$. 

Suppose first that $A_{n_r}$ converges to $\infty$ along some subsequence $n_r$. By the monotonicity of the function $F_{k,T-2}^{\vec{x}^n, \vec{y}^n,\vec{z}^n}$ we know for any $a \in \mathbb{R}$ 
\begin{equation*}
\begin{split}
&u_{m}^\infty = \lim_{r \rightarrow \infty} u_m^{n_r} = \lim_{r \rightarrow \infty} F_{k,T-2}^{\vec{x}^{n_r}, \vec{y}^{n_r},\vec{z}^{n_r}} (A_{n_r}; \vec{v}^{n_r}_{m}; m ) \geq \\ &\limsup_{r \rightarrow \infty} F_{k,T-2}^{\vec{x}^{n_r}, \vec{y}^{n_r},\vec{z}^{n_r}} (A_{n_r}; \vec{v}^{n_r}_{m}; m )= F_{k,T-2}^{\vec{x}^{\infty}, \vec{y}^{\infty},\vec{z}^{\infty}} (a; \vec{v}^{\infty}_{m}; m ),
\end{split}
\end{equation*}
where in the last equality we used (\ref{S2Phi3.5}). Letting $a \rightarrow \infty$ above we see $u_{m}^\infty \geq 1$, which is a contradiction as $u_{m}^\infty  \in (0,1)$.

Analogously, if $A_{n_r}$ converges to $-\infty$ along some subsequence $n_r$ then we have for any $a \in \mathbb{R}$
\begin{equation*}
\begin{split}
&u_{m}^\infty = \lim_{r \rightarrow \infty} u_{m}^{n_r} =\lim_{r \rightarrow \infty}  F_{k,T-2}^{\vec{x}^{n_r}, \vec{y}^{n_r},\vec{z}^{n_r}} (A_{n_r}; \vec{v}^{n_r}_{m}; m )\leq \\
& \liminf_{ n \rightarrow \infty} F_{k,T-2}^{\vec{x}^{n_r}, \vec{y}^{n_r},\vec{z}^{n_r}} (a; \vec{v}^{n_r}_{m}; m )= F_{k,T-2}^{\vec{x}^{\infty}, \vec{y}^{\infty},\vec{z}^{\infty}} (a; \vec{v}^{\infty}_{m}; m ),
\end{split}
\end{equation*}
where in the last equality we used (\ref{S2Phi3.5}). Letting $a \rightarrow -\infty$ above we see $u_{m}^\infty \leq 0 $, which is a contradiction as $u_{m}^\infty  \in (0,1)$. 

Finally, suppose that $A_{n_r}$ converges to $B_\infty$ along some subsequence $n_r$. Then
$$u^\infty_{m} = \lim_{r \rightarrow \infty} u^{n_r}_{m} =\lim_{r \rightarrow \infty}  F_{k,T-2}^{\vec{x}^{n_r}, \vec{y}^{n_r},\vec{z}^{n_r}} (A_{n_r}; \vec{v}^{n_r}_{m}; m ) = F_{k,T-2}^{\vec{x}^{\infty}, \vec{y}^{\infty},\vec{z}^{\infty}} (B_{\infty}; \vec{v}^{\infty}_{m}; m ),$$ 
where in the last equality we used (\ref{S2Phi3.5}). Applying $[F_{k,T-2}^{\vec{x}^\infty, \vec{y}^\infty,\vec{z}^\infty} (\cdot; \vec{v}^\infty_{m}; m )]^{-1}$ to both sides we see that $A_\infty = B_\infty$ as desired. The last three paragraphs show that $\{A_n\}_{n = 1}^\infty$ converges to $A_\infty$, concluding the proof of (\ref{S6Step5E3}) and hence part III of the lemma.

%
\section{Estimates on $(H,H^{RW})$-Gibbsian line ensembles}\label{Section7}  In this section we present the proofs of various statements throughout the paper. The main tools of our analysis will be the strong coupling of $H^{RW}$ random walk bridges and Brownian bridges, afforded by Proposition \ref{KMT}, the monotone coupling lemma, Lemma \ref{MonCoup}, and the partial $(H,H^{RW})$-Gibbs property, which is enjoyed by the measures $\mathbb{P}_{H,H^{RW}}^{1, k, T_0 ,T_1, \vec{x}, \vec{y},\infty,g}$ in view of Lemma \ref{HHRWSG}. We continue with the same notation as in Section \ref{Section2} and Definition \ref{AssH} and mention that essentially no other parts of the paper will be used in this section.

%
\subsection{Proof of Lemmas \ref{LStayInBand}, \ref{LNoBigJump} and \ref{LNotClose}}\label{Section7.1} Recall from Definition \ref{Pfree} that $\mathbb{P}_{H^{RW}}^{1,k, T_0,T_1, \vec{x},\vec{y}}$ is the law of $k$ independent $H^{RW}$ random walk bridge between the points $(T_0, x_i)$ and $(T_1,y_i)$ for $i \in \llbracket 1, k \rrbracket$, see (\ref{RWB}). We mention that these measures are a special case of $\mathbb{P}_{H,H^{RW}}^{1, k, T_0 ,T_1, \vec{x}, \vec{y},\infty,g}$ when $H \equiv 0$, and thus also satisfy the $(H,H^{RW})$-Gibbs property and Lemma \ref{MonCoup}. 

The goal of this section is to prove Lemmas \ref{LStayInBand}, \ref{LNoBigJump} and \ref{LNotClose} from Section \ref{Section4}, which are recalled here as Lemmas \ref{S7LStayInBand}, \ref{S7LNoBigJump} and \ref{S7LNotClose}, respectively, for the reader's convenience. 

In many of the proofs below we will require the following statement. In the notation of Proposition \ref{KMT} we have for any $A > 0$
\begin{equation}\label{S7Chebyshev}
\mathbb{P}\left(  \Delta(T,z) \geq A \right)\leq C  e^{-aA} e^{ \alpha (\log T)}e^{|z- p T|^2/T}, 
\end{equation}
where we recall that $\Delta(T,z)=  \sup_{0 \leq t \leq T} \big| \sqrt{T} B^\sigma_{t/T} + \frac{t}{T}z - \ell^{(T,z)}(t) \big|$. Equation (\ref{S7Chebyshev}) is an immediate consequence of (\ref{KMTeq}) and Chebyshev's inequality.

We also require the following simple result, which is an immediate consequence of \cite[Proposition 12.3.3]{Dudley}, see also \cite[Lemma 3.6]{DREU}.
\begin{lemma}\label{BBmax}
Let $B^\sigma$ be a Brownian bridge with diffusion parameter $\sigma > 0$ as in Section \ref{Section2.4}. Then for any $C,T> 0$ we have
\begin{equation}\label{sepBd}
\mathbb{P}\left(\max_{s\in[0,T]} \big|B^\sigma_{s/T}\big| \geq C\right) \leq 2\exp\left( - \frac{2C^2}{\sigma^2}\right).
\end{equation}
\end{lemma}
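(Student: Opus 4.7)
The plan is to reduce \eqref{sepBd} to a classical reflection-principle identity for the maximum of a standard Brownian bridge. The first step is the change of variables $u = s/T$, which turns $\max_{s \in [0,T]} |B^\sigma_{s/T}|$ into $\max_{u \in [0,1]} |B^\sigma_u|$; this eliminates $T$ from the problem and explains why the right-hand side of \eqref{sepBd} is independent of $T$. Writing $B^\sigma_u = \sigma \tilde{B}(u)$ for a standard Brownian bridge $\tilde{B}$ (as in Section \ref{Section2.1}) then rescales the threshold by $\sigma$ and reduces the problem to showing
\[
\mathbb{P}\left(\max_{u \in [0,1]} |\tilde{B}(u)| \geq C/\sigma\right) \leq 2 e^{-2C^2/\sigma^2}.
\]

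The second step is to invoke the classical one-sided identity
\[
\mathbb{P}\left(\max_{u \in [0,1]} \tilde{B}(u) \geq c\right) = e^{-2c^2}, \qquad c > 0,
\]
which is the content of \cite[Proposition 12.3.3]{Dudley}. Combining this with the distributional symmetry $-\tilde{B} \stackrel{d}{=} \tilde{B}$ and a union bound,
\[
\mathbb{P}\left(\max_{u \in [0,1]} |\tilde{B}(u)| \geq c\right) \leq \mathbb{P}\left(\max_{u \in [0,1]} \tilde{B}(u) \geq c\right) + \mathbb{P}\left(\max_{u \in [0,1]} (-\tilde{B}(u)) \geq c\right) \leq 2 e^{-2c^2},
\]
and taking $c = C/\sigma$ yields the claimed inequality.

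There is no substantive obstacle in this argument. The only point worth noting is the time-change in the first step, which clarifies why no factor of $T$ appears on the right-hand side of \eqref{sepBd}: the supremum is effectively taken over the unit interval on which $B^\sigma$ is defined, and the diffusion parameter $\sigma$ enters the final bound only through the rescaling of the threshold $C$.
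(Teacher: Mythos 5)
Your proof is correct and is precisely the elaboration of what the paper leaves implicit: the paper states the lemma "is an immediate consequence of \cite[Proposition 12.3.3]{Dudley}" without writing out the details, and your argument—the trivial time-change $u = s/T$, rescaling by $\sigma$, the one-sided reflection identity $\mathbb{P}(\max_{[0,1]}\tilde B \geq c) = e^{-2c^2}$, and a union bound via the symmetry $-\tilde B \stackrel{d}{=} \tilde B$—is exactly the intended deduction. No gap, same route.
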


We now proceed with the proof of Lemma \ref{LStayInBand}.
\begin{lemma}\label{S7LStayInBand}[Lemma \ref{LStayInBand}] Let $\ell$ have distribution $\mathbb{P}_{H^{RW}}^{T_0,T_1, x,y}$(recall this was defined in Section \ref{Section2.4}) with $H^{RW}$ satisfying the assumptions in Definition \ref{AssHR}. For any $\epsilon \in (0,1)$, $p \in \mathbb{R}$ and $M > 0$ there exist $A = A(\epsilon,p,H^{RW})> 0$ and $W_1 = W_1(M, p, \epsilon,H^{RW}) \in \mathbb{N}$  such that the following holds. For $T_1 - T_0 \geq W_1$, $x, y \in \mathbb{R}$ with $|x - p T_0| \leq M (T_1-T_0)^{1/2}$ and $|y - p T_1| \leq M(T_1-T_0)^{1/2}$ we have
\begin{equation}\label{S7EStayInBand}
\mathbb{P}^{T_0,T_1,x,y}_{H^{RW}} \left( \sup_{ s \in [T_0, T_1] }  \left| \ell(s) - \frac{T_1 - s }{T_1 - T_0} \cdot x - \frac{s - T_0 }{T_1 - T_0} \cdot y  \right| \leq A (T_1 - T_0)^{1/2} \right) \geq 1 - \epsilon.
\end{equation}
\end{lemma}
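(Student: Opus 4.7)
The natural approach is to reduce to a Brownian-bridge estimate via the strong coupling in Proposition \ref{KMT}. First I would use the translation invariance of the random walk bridge measure: set $T = T_1 - T_0$ and $z = y - x$, and observe that if $\ell$ has law $\mathbb{P}_{H^{RW}}^{T_0,T_1,x,y}$, then $\tilde\ell(t) := \ell(t+T_0) - x$ has law $\mathbb{P}_{H^{RW}}^{0,T,0,z}$. A direct algebraic manipulation shows that the event in \eqref{S7EStayInBand} translates into
\[
\sup_{t \in [0,T]} \Bigl|\tilde\ell(t) - \tfrac{t}{T} z\Bigr| \leq A\,T^{1/2},
\]
and the hypothesis on $x, y$ becomes $|z - pT| \leq 2M T^{1/2}$.

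Next, using Proposition \ref{KMT} with parameter $p$ and the Hamiltonian $H^{RW}$, I would realize $\tilde\ell$ on a common probability space together with a Brownian bridge $B^\sigma$ of diffusion parameter $\sigma = \sigma_p$, so that
\[
\Bigl|\tilde\ell(t) - \tfrac{t}{T} z\Bigr| \leq \Delta(T,z) + \sqrt{T}\,\max_{u\in[0,1]} |B^\sigma_u|,
\qquad t \in [0,T],
\]
where $\Delta(T,z)$ is as in \eqref{KMTeq}. Thus it suffices to control each of the two terms on the right by $AT^{1/2}/2$ with high probability.

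For the Brownian-bridge term, Lemma \ref{BBmax} gives
\[
\mathbb{P}\Bigl(\max_{u\in[0,1]} |B^\sigma_u| \geq A/2\Bigr) \leq 2\exp\bigl(-A^2/(2\sigma_p^2)\bigr),
\]
and I can fix $A = A(\epsilon,p,H^{RW}) > 0$ large enough that this is at most $\epsilon/2$; crucially, this choice depends only on $\epsilon, p, H^{RW}$ (through $\sigma_p$) and not on $M$. For the coupling error, the Markov-type estimate \eqref{S7Chebyshev} together with $|z - pT|^2/T \leq 4M^2$ yields
\[
\mathbb{P}\bigl(\Delta(T,z) \geq A T^{1/2}/2\bigr) \leq C\, e^{-aAT^{1/2}/2}\, e^{\alpha \log T}\, e^{4M^2},
\]
where $C, a, \alpha$ depend only on $p$ and $H^{RW}$. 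Since the exponential in $-T^{1/2}$ dominates the $e^{\alpha \log T}$ factor, this quantity tends to $0$ as $T \to \infty$, so I can choose $W_1 = W_1(M,p,\epsilon,H^{RW}) \in \mathbb{N}$ such that the bound is at most $\epsilon/2$ whenever $T \geq W_1$. A union bound over the two events then gives \eqref{S7EStayInBand}.

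The only subtlety worth flagging is making sure the constant $A$ is independent of $M$; this is handled above by letting $M$ enter only through the threshold $W_1$ via the $e^{4M^2}$ factor in the KMT tail, while the Brownian-bridge estimate alone fixes $A$. The rest of the argument is a standard coupling-plus-union-bound computation.
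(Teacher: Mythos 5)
Your argument is correct and follows essentially the same route as the paper: reduce by shift-invariance to a bridge from $(0,0)$ to $(T,z)$ with $|z-pT|\leq 2M\sqrt{T}$, invoke Proposition \ref{KMT} to couple with a Brownian bridge, bound the Brownian-bridge supremum via Lemma \ref{BBmax} (which fixes $A$ independently of $M$), and absorb the coupling error and the $e^{4M^2}$ factor into the choice of $W_1$. The only cosmetic difference is the splitting of the threshold: you use $AT^{1/2}/2 + AT^{1/2}/2$, while the paper uses $T^{1/2} + (A-1)T^{1/2}$; both are fine.
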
 
\begin{proof} Let $A = A(\epsilon,p,H^{RW}) \geq 2$ be sufficiently large so that 
\begin{equation}\label{S7S1L1E1}
2\exp\left( - \frac{2(A-1)^2}{\sigma_p^2}\right) < \epsilon/2,
\end{equation}
where $\sigma_p$ is as in Definition \ref{AssHR}. This specifies our choice of $A$.

From Proposition \ref{KMT} we know that we can find constants $0 < C, a, \alpha < \infty$ (depending on $p$ and $H^{RW}$) and a probability space with measure $\mathbb{P}$ on which are defined a Brownian bridge $B^{\sigma_p}$ with diffusion parameter $\sigma_p$ and a family of random curves $\ell^{(T,z)}$ on $[0, T]$, which is parametrized by $z \in \mathbb{R}$ such that $\ell^{(T,z)}$  has law $\mathbb{P}^{0,T,0,z}_{H^{RW}}$ and also (\ref{S7Chebyshev}) holds. Let $W_1 = W_1(M, p, \epsilon,H^{RW}) \in \mathbb{N}$ be sufficiently large so that for $T \geq W_1$ we have
\begin{equation}\label{S7S1L1E2}
C  e^{-a \sqrt{T} } e^{ \alpha (\log T)}e^{4M^2} < \epsilon/2.
\end{equation}
This specifies our choice of $W_1 $. We now proceed to prove (\ref{S7EStayInBand}).\\

Observe that by the triangle inequality we have for any $z\in \mathbb{R}$ and $T \in \mathbb{N}$
\begin{equation*}
\mathbb{P}\left( \sup_{ s \in [0, T] }  \left| \ell^{(T,z)}(s)  - \frac{s}{T} \cdot z  \right| > A T^{1/2} \right) \leq \mathbb{P}\left( \Delta(T,z) >  T^{1/2} \right) +  \mathbb{P}\left( \big|B^\sigma_{s/T}\big|  > A - 1\right).
\end{equation*}
Setting $z = y - x$ (note that $|z - pT| \leq 2M \sqrt{T}$) and $T \geq W_1$ we conclude that 
\begin{equation}\label{S7S1L1E3}
\begin{split}
&\mathbb{P}\left( \sup_{ s \in [0, T] }  \left| \ell^{(T,y-x)}(s)  - \frac{s}{T} \cdot (y-x)  \right| > A T^{1/2} \right)\leq \\
& C  e^{-a \sqrt{T} } e^{ \alpha (\log T)}e^{4M^2} +  2\exp\left( - \frac{2(A-1)^2}{\sigma_p^2}\right) < \epsilon.
\end{split}
\end{equation}
In first inequality we used (\ref{S7Chebyshev}) and (\ref{sepBd}), while in the second we used (\ref{S7S1L1E1}) and (\ref{S7S1L1E2}). 

Equation (\ref{S7S1L1E3}) is equivalent to (\ref{S7EStayInBand}) once we set $T = T_1 - T_0$, since under $\mathbb{P}$ the curve $\ell^{(T,y-x)}$ has distribution $\mathbb{P}^{0,T,0,y - x}_{H^{RW}}$, which is the same as the law of $\ell$ in (\ref{S7EStayInBand}) under a horizontal shift by $T_0$ and a vertical one by $x$.
\end{proof}


The following lemma can be found in \cite[Section 4.2]{MZ}. 
\begin{lemma}\label{LemmaI1}
We denote by $\Phi(x)$ and $\phi(x)$ the cumulative distribution function and density of a standard normal random variable. There is a constant $c_0 > 1$ such that for all $x \geq 0$ we have
\begin{equation}\label{LI2}
 \frac{1}{c_0(1+x)} \leq \frac{1 - \Phi(x)}{\phi(x)} \leq \frac{c_0}{1 +x},
\end{equation}
\end{lemma}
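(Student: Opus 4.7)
My plan is to reduce to the classical Mills ratio bounds and then handle the endpoint $x=0$ (where both sides of \eqref{LI2} are positive and finite) by a compactness argument.

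The first step is to derive the two-sided Mills ratio estimate
\begin{equation*}
\frac{x}{x^2+1} \leq \frac{1-\Phi(x)}{\phi(x)} \leq \frac{1}{x}, \qquad x > 0,
\end{equation*}
using the identity $\phi'(t) = -t\phi(t)$. Writing $1-\Phi(x) = \int_x^\infty t^{-1}(-\phi'(t))\, dt$ and integrating by parts gives $1-\Phi(x) = \phi(x)/x - \int_x^\infty t^{-2}\phi(t)\, dt$, and since the remainder integral is positive this yields the upper bound. For the lower bound I would consider $h(x) := (x^2+1)(1-\Phi(x)) - x\phi(x)$; a direct calculation using $\phi'(t) = -t\phi(t)$ shows $h'(x) = 2x(1-\Phi(x)) - 2\phi(x) \leq 0$ by the upper bound just established, while $h(\infty)=0$, so $h \geq 0$ on $[0,\infty)$.

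The second step is to transfer these bounds, valid on $(0,\infty)$, into the form $1/(c_0(1+x)) \leq M(x) \leq c_0/(1+x)$ claimed in the lemma, where $M(x) := (1-\Phi(x))/\phi(x)$. For $x \geq 1$ one has $1/x \leq 2/(1+x)$ and $x/(x^2+1) \geq 1/(2x) \geq 1/(2(1+x))$, so the constant $c_0 = 2$ suffices on $[1,\infty)$. For $x \in [0,1]$ the function $M$ is continuous and strictly positive (since $1-\Phi(x) > 0$ and $\phi(x) > 0$), hence there exist constants $0 < c_2 \leq C_1 < \infty$ with $c_2 \leq M(x) \leq C_1$ on $[0,1]$. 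Using that $1/(1+x) \in [1/2, 1]$ on this interval gives $M(x) \leq C_1 \leq 2C_1/(1+x)$ and $M(x) \geq c_2 \geq c_2/(1+x) \cdot 1 \geq 1/(c_0(1+x))$ as soon as $c_0 \geq 1/c_2$.

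Taking $c_0 = \max(2,\, 2C_1,\, 1/c_2)$ (and, if necessary, enlarging so that $c_0 > 1$) then yields the two-sided bound uniformly over $x \geq 0$. There is no real obstacle here: the only mild subtlety is that the classical Mills ratio bounds degenerate at $x=0$ (the lower bound $x/(x^2+1)$ vanishes and the upper bound $1/x$ blows up), which is precisely why the factor $1+x$ rather than $x$ appears in \eqref{LI2}, and why a separate compactness argument on $[0,1]$ is needed to patch the two regimes together.
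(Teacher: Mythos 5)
Your argument is correct. Note that the paper does not actually prove Lemma~\ref{LemmaI1}: it is stated with a citation to \cite[Section 4.2]{MZ} and no proof is given there. Your derivation is the standard self-contained one: the Mills-ratio bounds $x/(x^2+1)\le (1-\Phi(x))/\phi(x)\le 1/x$ on $(0,\infty)$ via integration by parts (using $\phi'=-t\phi$) for the upper bound and the monotonicity of $h(x)=(x^2+1)(1-\Phi(x))-x\phi(x)$ (with $h'\le 0$ following from the upper bound and $h(\infty)=0$) for the lower bound; then for $x\ge 1$ these translate into the $(1+x)^{-1}$ form with $c_0=2$, while on the compact interval $[0,1]$ the ratio $M$ is continuous and strictly positive, so a finite $c_0$ covers that regime as well. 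The computations all check out, including $h'(x)=2x(1-\Phi(x))-2\phi(x)$ and the endpoint behavior. The only thing you might add for completeness is an explicit check that $h$ extends continuously at $\infty$, i.e.\ $h(x)\to 0$ as $x\to\infty$, which follows from the already-established upper bound $1-\Phi(x)\le\phi(x)/x$ and the exponential decay of $\phi$; you mention $h(\infty)=0$ but a one-line justification would make the argument airtight.
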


We now proceed with the proof of Lemma \ref{LNoBigJump}.
\begin{lemma}\label{S7LNoBigJump}[Lemma \ref{LNoBigJump}] Let $\ell$ have distribution $\mathbb{P}^{T_0,T_1,x,y}_{H^{RW}}$ with $H^{RW}$ as in Definition \ref{AssHR}.  Fix $p \in \mathbb{R}$, $r, \epsilon \in (0,1)$, $M > 0$.  Then we can find $W_5 = W_5(p,r, \epsilon, M, H^{RW}) \in \mathbb{N}$ such that the following holds.
If $T_0, T_1 \in \mathbb{Z}$ are such that $T_1 - T_0 \geq  W_5$, $r (T_1 - T_0) \leq R \leq r^{-1}(T_1 - T_0) $ and $x,y \in \mathbb{R}$ satisfy
$$ \left|x  - pT_0\right|\leq M R^{1/2} \mbox{ and } \left|y  - pT_1 \right| \leq M R^{1/2} $$
then we have 
\begin{equation}\label{S7ENoBigJump}
\mathbb{P}^{T_0,T_1,x,y}_{H^{RW}} \left( \sup_{ s \in [T_0, T_1-1] }  \left| \ell(s) - \ell(s+1) \right| \leq R^{1/4} \right) \geq 1 - \epsilon.
\end{equation}
\end{lemma}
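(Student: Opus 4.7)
The plan is to apply the KMT strong coupling of Proposition \ref{KMT} to compare $\ell$ with a Brownian bridge, for which single-step increments are easy to control.

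First, by the translation invariance of $H^{RW}$ random walk bridges, $\ell(\cdot + T_0) - x$ has the law $\mathbb{P}_{H^{RW}}^{0,T,0,z}$ where $T = T_1 - T_0$ and $z = y - x$, and the jump size is unchanged by this translation. The hypotheses imply $|z - pT| \leq 2MR^{1/2} \leq 2Mr^{-1/2} T^{1/2}$. Invoking Proposition \ref{KMT}, I would place on a common probability space a coupling of a bridge $\ell^{(T,z)}$ with law $\mathbb{P}^{0,T,0,z}_{H^{RW}}$ and a Brownian bridge $B^{\sigma_p}$, with $\Delta(T,z) = \sup_{t\in[0,T]}|\sqrt{T}B^{\sigma_p}_{t/T} + (t/T)z - \ell^{(T,z)}(t)|$ obeying $\mathbb{E}[e^{a\Delta(T,z)}] \leq C e^{\alpha \log T} e^{4M^2/r}$.

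Next, for each integer $s \in \llbracket 0, T - 1 \rrbracket$, the triangle inequality applied at the endpoints $s$ and $s+1$ yields
\begin{equation*}
|\ell^{(T,z)}(s+1) - \ell^{(T,z)}(s)| \leq \frac{|z|}{T} + \sqrt{T}\bigl|B^{\sigma_p}_{(s+1)/T} - B^{\sigma_p}_{s/T}\bigr| + 2\Delta(T,z).
\end{equation*}
The deterministic first term is at most $|p| + 2M r^{-1/2}T^{-1/2}$, which is below $R^{1/4}/3$ for all $T$ exceeding a threshold $W_5^{(1)}$ depending only on $p,r,M$, since $R^{1/4} \geq r^{1/4}T^{1/4}$. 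For the coupling error, Chebyshev on the exponential moment gives
\begin{equation*}
\mathbb{P}\bigl(2\Delta(T,z) \geq R^{1/4}/3\bigr) \leq C e^{\alpha \log T} e^{4M^2/r} e^{-a R^{1/4}/6},
\end{equation*}
which drops below $\epsilon/2$ once $T \geq W_5^{(2)}$, because $R^{1/4} \geq r^{1/4}T^{1/4}$ dominates any power of $\log T$.

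The main remaining task, and the only one that is not a direct invocation, is to bound the maximum Brownian bridge increment. I would use the representation $B^{\sigma_p}_t = \sigma_p(W_t - tW_1)$ for a standard Brownian motion $W$, so that
\begin{equation*}
\sqrt{T}\bigl(B^{\sigma_p}_{(s+1)/T} - B^{\sigma_p}_{s/T}\bigr) = \sigma_p \xi_s - \sigma_p W_1/\sqrt{T}, \qquad \xi_s := \sqrt{T}(W_{(s+1)/T} - W_{s/T}),
\end{equation*}
where $\xi_0,\ldots,\xi_{T-1}$ are standard normal (though not independent, which is immaterial for a union bound). A union bound with the Gaussian tail estimate gives $\mathbb{P}(\max_s |\sigma_p \xi_s| > \sigma_p\sqrt{4\log T}) \leq 2/T$, and $|\sigma_p W_1|/\sqrt{T} \leq 1$ holds with arbitrarily high probability for $T$ large. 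Hence the maximum is below $R^{1/4}/3$ with probability $\geq 1 - \epsilon/2$ for $T \geq W_5^{(3)}$, since $\sqrt{\log T}$ is beaten comfortably by $R^{1/4} \geq r^{1/4}T^{1/4}$. Combining the three estimates via a union bound and choosing $W_5 = \max(W_5^{(1)}, W_5^{(2)}, W_5^{(3)})$ yields \eqref{S7ENoBigJump}. The primary subtlety is just ensuring the polynomial threshold $R^{1/4}$ dominates the logarithmic scale of Brownian bridge increments and the logarithmic-in-$T$ prefactor in the KMT tail bound, which is comfortable for $T$ large.
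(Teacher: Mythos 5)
Your proof is correct and follows essentially the same route as the paper: translation to $\mathbb{P}^{0,T,0,z}_{H^{RW}}$, the KMT coupling of Proposition \ref{KMT}, a triangle-inequality split into coupling error, Brownian bridge increment, and the deterministic drift term, and a union bound over $s$. The only cosmetic difference is how the Brownian bridge increment is handled: the paper directly observes that $B^{\sigma_p}_{s/T}-B^{\sigma_p}_{(s+1)/T}$ is $N(0,\sigma_p^2(1-1/T))$ and applies a Gaussian tail estimate per $s$, while you pass to the representation $B^{\sigma_p}_t=\sigma_p(W_t-tW_1)$ and bound $\max_s|\xi_s|$ by $\sqrt{4\log T}$ via a union bound over standard normals; both give the same logarithmic control, which is dominated by $R^{1/4}\gtrsim T^{1/4}$. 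You also streamline the coupling estimate by applying Chebyshev to $\Delta(T,z)$ once rather than per $s$, which is correct since $\Delta$ is already a sup over $t\in[0,T]$. One minor slip: the $\xi_s=\sqrt{T}(W_{(s+1)/T}-W_{s/T})$ are in fact i.i.d.\ standard normals (increments of Brownian motion over disjoint intervals), not merely ``not independent,'' but you note this is immaterial for the union bound, so the argument stands.
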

\begin{proof} Let $\sigma_p$ be as in Definition \ref{AssHR}. Let $N_1 = N_1(p,r, \epsilon, M, H^{RW})  \in \mathbb{N}$ be sufficiently large so that for $T \geq N_1$ and $R \geq r T$ we have
\begin{equation}\label{S7S1L2E1}
T \geq 2, \hspace{2mm} R^{1/4}/4 > \frac{2r^{-1/2} M \sqrt{T} + |p|T }{T}  \mbox{ and } \frac{2 c_0 T}{\sqrt{2 \pi}} \cdot \exp \left( -\frac{R^{1/2}}{32 \sigma_p^2(1-T^{-1})} \right) < \epsilon/3,
\end{equation}
where $c_0$ is as in Lemma \ref{LemmaI1}.

From Proposition \ref{KMT} we know that we can find constants $0 < C, a, \alpha < \infty$ (depending on $p$ and $H^{RW}$) and a probability space with measure $\mathbb{P}$ on which are defined a Brownian bridge $B^{\sigma_p}$ with diffusion parameter $\sigma_p$ and a family of random curves $\ell^{(T,z)}$ on $[0, T]$, which is parametrized by $z \in \mathbb{R}$ such that $\ell^{(T,z)}$  has law $\mathbb{P}^{0,T,0,z}_{H^{RW}}$ and also (\ref{S7Chebyshev}) holds. Let $W_5 = W_5(p,r, \epsilon, M, H^{RW})  \in \mathbb{N}$ be sufficiently large so that $W_5 \geq N_1$ and for $T \geq W_5$ and $R \geq rT$ we have
\begin{equation}\label{S7S1L2E2}
 T \cdot C e^{-a R^{1/4}/4  } e^{ \alpha (\log T)}e^{4M^2/r} < \epsilon/3.
\end{equation}
This specifies our choice of $W_5 $. We now proceed to prove (\ref{S7ENoBigJump}).\\

Using the shift-invariance of $\mathbb{P}^{T_0,T_1,x,y}_{H^{RW}}$ and the fact that $\ell$ is a linear interpolation of its values on we see that (\ref{S7ENoBigJump}) is equivalent to 
\begin{equation*}
 \mathbb{P}^{0, T,0,y-x}_{H^{RW}} \left( \max_{ s \in \llbracket 0, T-1 \rrbracket }  \left| \ell(s) - \ell(s+1) \right| > R^{1/4} \right) < \epsilon,
\end{equation*}
where $T = T_1 - T_0$. Since under $\mathbb{P}$ the curve $\ell^{(T,z)}$  has law $\mathbb{P}^{0,T,0,z}_{H^{RW}}$ we see that it suffices to show that for $T \geq W_5$, $z \in \mathbb{R}$ with $|z - pT| \leq 2M r^{-1/2} \sqrt{T}$, $r T \leq R \leq r^{-1}T$ and $s \in \llbracket 0, T-1 \rrbracket $ we have
\begin{equation}\label{S7S1L2E3}
 \mathbb{P} \left(   \left| \ell^{(T,z)}(s) - \ell^{(T,z)}(s+1) \right| > R^{1/4} \right) < \epsilon/T.
\end{equation}
In the sequel we fix $z,T, s, R$ as above and prove (\ref{S7S1L2E3}).

By the triangle inequality and the second inequality in (\ref{S7S1L2E1}) we have 
\begin{equation}\label{S7S1L2E4}
\begin{split}
&\mathbb{P} \left(   \left| \ell^{(T,z)}(s) - \ell^{(T,z)}(s+1) \right| > R^{1/4} \right) \leq \mathbb{P} \left(   \left| \ell^{(T,z)}(s) - \frac{s}{T} \cdot z  - B^{\sigma_p}_{s/T} \right| > R^{1/4}/4 \right) + \\
& \mathbb{P} \left(   \left| \ell^{(T,z)}(s+1) - \frac{s + 1}{T} \cdot z  - B^{\sigma_p}_{(s+1)/T} \right| > R^{1/4}/4 \right) +  \mathbb{P} \left(   \left| B^{\sigma_p}_{s/T}- B^{\sigma_p}_{(s+1)/T} \right| > R^{1/4}/4 \right) \\
&  + \mathbb{P}( |z|/T > R^{1/4}/4 ) \leq 2  \mathbb{P} \left(  \Delta(T,z)  > R^{1/4}/4 \right) +  \mathbb{P} \left(   \left| B^{\sigma_p}_{s/T}- B^{\sigma_p}_{(s+1)/T} \right| > R^{1/4}/4 \right).
\end{split}
\end{equation}

We next have from (\ref{S7Chebyshev}) with $A = R^{1/4}/4$ and (\ref{S7S1L2E2}) that 
\begin{equation}\label{S7S1L2E5}
\begin{split}
& 2  \mathbb{P} \left(  \Delta(T,z)  > R^{1/4}/4 \right) < 2\epsilon/(3T).
\end{split}
\end{equation}

From \cite[Section 5.6.B]{KS} we have that $B^{\sigma_p}$ is a Gaussian process on $[0,1]$ with
\begin{equation}\label{BBCov}
\mathbb{E}[B^{\sigma_p}_t] = 0 \mbox{ and }\mathbb{E}[B^{\sigma_p}_r B^{\sigma_p}_s] = \sigma_p^2 ( \min(r,s) - rs).
\end{equation}  
This means that $X=  B^{\sigma_p}_{s/T}- B^{\sigma_p}_{(s+1)/T}$ is a normal random variable with mean $0$ and variance $\sigma_p^2 \cdot (1 - 1/T)$. Consequently,
\begin{equation}\label{S7S1L2E6}
\begin{split}
& \mathbb{P} \left(   \left| B^{\sigma_p}_{s/T}- B^{\sigma_p}_{(s+1)/T} \right| > R^{1/4}/4 \right) = \mathbb{P}(|X| >  R^{1/4}/4 ) = 2\left[1 - \Phi\left(\frac{ R^{1/4}}{4 \sigma_p \sqrt{1 - T^{-1}}} \right) \right] \leq \\
&2 c_0 \phi \left(\frac{ R^{1/4}}{4 \sigma_p \sqrt{1 - T^{-1}}} \right) < \epsilon/ (3T),
\end{split}
\end{equation}
where in the first inequality we used  Lemma \ref{LemmaI1} and in the second we used (\ref{S7S1L2E1}). 

Combining (\ref{S7S1L2E4}), (\ref{S7S1L2E5}) and (\ref{S7S1L2E6}) we get (\ref{S7S1L2E3}).
\end{proof}


\begin{lemma}\label{S7LNotClose}[Lemma \ref{LNotClose}] Fix $k \in \mathbb{N}$, $k \geq 2$ and let $\mathfrak{L} = (L_1, \dots, L_k)$ have law $\mathbb{P}_{H^{RW}}^{1, k, T_0 ,T_1, \vec{x}, \vec{y}}$ as in Definition \ref{Pfree}, where $H^{RW}$ is as in Definition \ref{AssHR}. Fix $p \in \mathbb{R}$, $r \in (0,1)$, $t \in (0,1/3)$, $M, \epsilon > 0$. Then we can find $W_6 = W_6(k,p,r,t,M,\epsilon, H^{RW}) \in \mathbb{N}$ and $\delta = \delta(k, p, r, t,  \epsilon, H^{RW})> 0$, such that the following holds. If $T_0, T_1 \in \mathbb{Z}$ are such that $T_1 - T_0 \geq  W_6$, $r (T_1 - T_0)\leq R \leq r^{-1} (T_1 - T_0)$, $t_0 \in \llbracket T_0, T_1\rrbracket $ is such that $\min( T_1 - t_0, t_0 -T_0 )\geq t (T_1 - T_0)$ and $\vec{x}, \vec{y} \in \mathbb{R}^k$ satisfy
$$ \left|x_i  - pT_0\right|\leq M R^{1/2} \mbox{ and } \left|y_i  - pT_1 \right| \leq M R^{1/2} \mbox{ for $i = 1, \dots, k$}$$
then we have 
\begin{equation}\label{S7NEotClose}
\mathbb{P}_{H^{RW}}^{1, k, T_0 ,T_1, \vec{x}, \vec{y}} \left( \min_{1 \leq i < j \leq k} \left|L_i(t_0) - L_j(t_0) \right| \leq \delta R^{1/2}  \right) \leq \epsilon.
\end{equation}
\end{lemma}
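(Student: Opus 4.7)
The plan is to reduce the statement to a standard Gaussian anticoncentration bound via the strong KMT coupling of Proposition \ref{KMT}. Since $\mathbb{P}_{H^{RW}}^{1,k,T_0,T_1,\vec x,\vec y}$ is the law of $k$ independent $H^{RW}$ random walk bridges, I would apply Proposition \ref{KMT} separately to each bridge on a product space, obtaining mutually independent Brownian bridges $B^{\sigma_p}_1,\dots,B^{\sigma_p}_k$ with diffusion parameter $\sigma_p$ together with coupling errors $\Delta_i(T,z_i)$, where $T=T_1-T_0$ and $z_i=y_i-x_i$. The hypotheses give $|z_i-pT|\le 2Mr^{-1/2}\sqrt{T}$, so \eqref{S7Chebyshev} with $A=R^{1/4}$ yields
\begin{equation*}
\mathbb{P}\bigl(\Delta_i(T,z_i)\ge R^{1/4}\bigr)\le C e^{-a R^{1/4}} e^{\alpha \log T} e^{4M^2/r},
\end{equation*}
which is less than $\epsilon/(2k)$ once $T\ge W_6$ is chosen large (using $R\ge r T$). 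A union bound produces the good coupling event $\mathsf{G}$ of probability at least $1-\epsilon/2$ on which $\Delta_i(T,z_i)\le R^{1/4}$ for every $i$.

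On $\mathsf{G}$, writing $\tau=(t_0-T_0)/T\in[t,1-t]$ and $c_{i,j}=(x_i-x_j)(1-\tau)+(y_i-y_j)\tau$, the KMT coupling gives
\begin{equation*}
L_i(t_0)-L_j(t_0)=\sqrt{T}\bigl(B^{\sigma_p}_i(\tau)-B^{\sigma_p}_j(\tau)\bigr)+c_{i,j}+E_{i,j},\qquad |E_{i,j}|\le 2R^{1/4}.
\end{equation*}
Independence of the $B^{\sigma_p}_i$'s implies $G_{i,j}:=\sqrt{T}\bigl(B^{\sigma_p}_i(\tau)-B^{\sigma_p}_j(\tau)\bigr)$ is Gaussian with mean zero and variance $2T\sigma_p^2\tau(1-\tau)\ge 2T\sigma_p^2 t(1-t)$, so its density is pointwise bounded by $(2\sigma_p\sqrt{\pi T t(1-t)})^{-1}$. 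Since uniform density bounds for a Gaussian are insensitive to its mean, I obtain
\begin{equation*}
\mathbb{P}\bigl(\{|L_i(t_0)-L_j(t_0)|\le \delta R^{1/2}\}\cap \mathsf{G}\bigr) \le \mathbb{P}\bigl(|G_{i,j}+c_{i,j}|\le \delta R^{1/2}+2R^{1/4}\bigr) \le \frac{\delta R^{1/2}+2R^{1/4}}{\sigma_p\sqrt{\pi T t(1-t)}}.
\end{equation*}
Using $R\le r^{-1}T$, this is at most $(\delta r^{-1/2}+2r^{-1/4}T^{-1/4})/(\sigma_p\sqrt{\pi t(1-t)})$.

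Summing the previous display over the $\binom{k}{2}$ pairs $i<j$ and adding the $\epsilon/2$ from the coupling event, the claim \eqref{S7NEotClose} follows provided we first choose $\delta=\delta(k,p,r,t,\epsilon,H^{RW})>0$ small enough that $\delta k^2 r^{-1/2}/(\sigma_p\sqrt{\pi t(1-t)})\le \epsilon/4$, and then choose $W_6$ large enough that both the remainder $k^2 r^{-1/4}T^{-1/4}/(\sigma_p\sqrt{\pi t(1-t)})\le \epsilon/4$ and the earlier coupling tail bound hold. The only delicate point is making the KMT error much smaller than the probed window $\delta R^{1/2}$, which is handled by the exponent mismatch $R^{1/4}\ll R^{1/2}$; the (potentially large) deterministic shift $c_{i,j}$ causes no trouble because Gaussian anticoncentration is shift-invariant, and the uniform lower bound $\tau(1-\tau)\ge t(1-t)$ guarantees nondegeneracy of $G_{i,j}$.
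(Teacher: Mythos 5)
Your proof is correct and follows essentially the same strategy as the paper: use Proposition \ref{KMT} to couple the independent $H^{RW}$ random walk bridges to independent Brownian bridges, and then apply Gaussian anticoncentration (a uniform density bound, shift-invariant and nondegenerate because $\tau(1-\tau)\geq t(1-t)$) to the difference process, handling each pair $i<j$ by a union bound. The only cosmetic deviation is that you truncate the KMT coupling error at the $\delta$-free threshold $R^{1/4}$ and then enlarge the anticoncentration window accordingly, whereas the paper truncates directly at $\delta R^{1/2}$; both are valid and lead to the same proof.
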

\begin{proof} Let $\sigma_p$ be as in Definition \ref{AssHR} and put $\lambda = \binom{k}{2}$. Let $\delta = \delta(k, p, r, t,  \epsilon, H^{RW})> 0$ be sufficiently small so for $s \in [t, 1- t]$ 
\begin{equation}\label{S7S1L3E1}
\frac{\lambda}{\sqrt{4\pi \sigma_p^2 \cdot s(1 - s) r }} \cdot 6 \delta < \epsilon/3.
\end{equation}
This specifies our choice of $\delta$.

Let $0 < C, a, \alpha < \infty$ be as in Proposition \ref{KMT} for $p$ and $H^{RW}$ as in the present lemma. Let $W_6 = W_6(k,p,r,t,M,\epsilon, H^{RW}) \in \mathbb{N}$ be sufficiently large so that if $T \geq W_6$ and $R \geq r T$ we have 
\begin{equation}\label{S7S1L3E2}
 \lambda \cdot  C e^{-a \delta R^{1/2}  } e^{ \alpha (\log T)}e^{4M^2/r} < \epsilon/3.
\end{equation}
This specifies our choice of $W_6$.  We now proceed to prove (\ref{S7NEotClose}).\\

Let us fix $1 \leq i < j \leq k$. We will prove that if $T = T_1 - T_0 \geq W_6$ we have 
\begin{equation}\label{S7S1L3E3}
\mathbb{P}_{H^{RW}}^{1, k, T_0 ,T_1, \vec{x}, \vec{y}} \left( \left|L_i(t_0) - L_j(t_0) \right| \leq \delta R^{1/2}  \right) \leq \epsilon \cdot \lambda^{-1},
\end{equation}
which upon taking a union bound would imply (\ref{S7NEotClose}). 

By taking the product of two copies of the probability space in Proposition \ref{KMT} we obtain a probability space with measure $\mathbb{P}$ on which are defined two independent Brownian bridges $B^{\sigma_p}$, $\tilde{B}^{\sigma_p}$ with diffusion parameter $\sigma_p$ and two independent families of random curves $\ell^{(T,z)}, \tilde{\ell}^{(T,z)}$ on $[0, T]$ whose laws are $\mathbb{P}^{0,T,0,z}_{H^{RW}}$ and also (\ref{S7Chebyshev}) holds for the pairs $\ell^{(T,z)}, B^{\sigma_p}$ and $\tilde{\ell}^{(T,z)}, \tilde{B}^{\sigma_p}$. Using the shift-invariance of $\mathbb{P}^{T_0,T_1,x,y}_{H^{RW}}$ we have that $x_i + \ell^{(T,y_i - x_i)}$ has law $\mathbb{P}^{0,T,x_i,y_i}_{H^{RW}}$ and $x_j + \tilde{\ell}^{(T,y_j - x_j)}$ has law $\mathbb{P}^{0,T,x_j,y_j}_{H^{RW}}$. Setting $z_i = y_i - x_i$, $z_j = y_i - x_i$ and $S = t_0 - T_0$ we conclude that (\ref{S7S1L3E3}) is equivalent to 
\begin{equation}\label{S7S1L3E4}
\mathbb{P} \left( \left|x_i - x_j + \ell^{(T, z_i)} (S) - \tilde{\ell}^{(T, z_j)} (S) \right| \leq \delta R^{1/2}  \right) \leq \epsilon \cdot \lambda^{-1}.
\end{equation}

Setting $\Delta(T,z)=  \sup_{0 \leq t \leq T} \big| \sqrt{T} B^{\sigma_p}_{t/T} + \frac{t}{T}z - \ell^{(T,z)}(t) \big|$ and $\tilde{\Delta}(T,z)=  \sup_{0 \leq t \leq T} \big| \sqrt{T} \tilde{B}^{\sigma_p}_{t/T} + \frac{t}{T}z - \tilde{\ell}^{(T,z)}(t) \big|$ we see by the triangle inequality that
\begin{equation}\label{S7S1L3E5}
\begin{split}
&\mathbb{P} \left( \left|x_i - x_j + \ell^{(T, z_i)} (S) - \tilde{\ell}^{(T, z_j)} (S) \right| \leq \delta R^{1/2}  \right) \leq  \mathbb{P} \left( \Delta(T,z_i) \geq \delta R^{1/2}\right)  + \\
&\mathbb{P} \left( \tilde{\Delta}(T,z_j) \geq \delta R^{1/2} \right) +  \mathbb{P} \left( \left| x_i - x_j + (S/T)(z_i - z_j) + \sqrt{T}\left(B^{\sigma_p}_{S/T} -     \tilde{B}^{\sigma_p}_{S/T} \right) \right| \leq 3\delta R^{1/2} \right).
\end{split}
\end{equation}

Using that $\max(|z_i|, |z_j|) \leq 2M r^{-1/2} T^{1/2}$ by assumption, (\ref{S7Chebyshev}) and (\ref{S7S1L3E2}) we get 
\begin{equation}\label{S7S1L3E6}
\begin{split}
\mathbb{P} \left( \Delta(T,z_i) \geq \delta R^{1/2}\right)  + \mathbb{P} \left( \tilde{\Delta}(T,z_j) \geq \delta R^{1/2} \right) \leq \lambda^{-1} \cdot (2\epsilon/3).
\end{split}
\end{equation}

We finally observe that $x_i - x_j + (S/T)(z_i - z_j) + \sqrt{T}\left(B^{\sigma_p}_{S/T} -     \tilde{B}^{\sigma_p}_{S/T} \right) $ is a normal random variable with mean
$x_i - x_j + (S/T)(z_i - z_j) $ and variance $2T \sigma_p^2 (S/T) (1 - S/T)$. In making the last observation we used that $ B^{\sigma_p}$ and $\tilde{B}^{\sigma_p}$ are independent Gaussian processes satisfying (\ref{BBCov}). In particular, if $X$ is a normal with mean $\mu$ and variance $\sigma^2$ such that 
$$\mu  =  R^{-1/2} \left( x_i - x_j + (S/T)(z_i - z_j) \right) \mbox{ and } \sigma^2  = 2(T/R) \sigma_p^2 s(1 - s), \mbox{ with $s = S/T$}, $$
we have that 
\begin{equation}\label{S7S1L3E7}
\begin{split}
&  \mathbb{P} \left( \left| x_i - x_j + (S/T)(z_i - z_j) + \sqrt{T}\left(B^{\sigma_p}_{S/T} -     \tilde{B}^{\sigma_p}_{S/T} \right) \right| \leq 3\delta R^{1/2} \right) =   \mathbb{P} \left( \left| X \right| \leq 3\delta \right) = \\
&\int_{-3\delta}^{3\delta} \frac{\exp\left( - (x- \mu)^2/(2 \sigma^2) \right)}{\sqrt{2 \pi \sigma^2} } dx \leq \frac{6 \delta}{\sqrt{2 \pi \sigma^2} }  \leq \frac{6 \delta}{\sqrt{4 \pi \sigma_p^2 s (1 -s)r } } < \frac{\epsilon}{3 \lambda}.
\end{split}
\end{equation}
In the above inequalities we used that $rT \leq R \leq r^{-1}T$, $s \in [t, 1- t]$ and (\ref{S7S1L3E1}). 

Combining (\ref{S7S1L3E5}), (\ref{S7S1L3E6}) and (\ref{S7S1L3E7}) we get (\ref{S7S1L3E4}), which completes the proof of the lemma.
\end{proof}

%
\subsection{Proof of Lemmas \ref{LNoDip}, \ref{LHighBottom} and \ref{LNoParDip}}\label{Section7.2} The goal of this section is to prove Lemmas \ref{LNoDip}, \ref{LHighBottom} and \ref{LNoParDip} from Section \ref{Section4}, which are recalled here as Lemmas  \ref{S7LNoDip}, \ref{S7LHighBottom} and \ref{S7LNoParDip}, respectively, for the reader's convenience.

We now proceed with the proof of Lemma \ref{LNoDip}.
\begin{lemma}\label{S7LNoDip}[Lemma \ref{LNoDip}] Fix $k \in \mathbb{N}$ and let $\mathfrak{L} = (L_1, \dots, L_k)$ have law $\mathbb{P}_{H,H^{RW}}^{1, k, T_0 ,T_1, \vec{x}, \vec{y},\infty,g}$ as in Definition \ref{Pfree}, where we assume that $H^{RW}$ is as in Definition \ref{AssHR} and $H$ is as in Definition \ref{AssH}. Let $ p\in\mathbb{R}$, $r, \epsilon \in (0,1)$ and $M^{\mathsf{side}} > 0$ be given. Then we can find constants $W_2 = W_2(k,p,r,\epsilon, M^{\mathsf{side}}, H, H^{RW}) \in \mathbb{N}$ and $M^{\mathsf{dip}} =  M^{\mathsf{dip}}(k,p,r,\epsilon, M^{\mathsf{side}}, H, H^{RW})> 0$ so that the following holds. 

 For any $T_0, T_1 \in \mathbb{Z}$ with $ T_1 - T_0 \geq W_2$, $r (T_1 - T_0) \leq R \leq  r^{-1} (T_1 - T_0)$, $\vec{x}, \vec{y} \in \mathbb{R}^k$  that satisfy
$$ \left| x_i - pT_0 \right| \leq M^{\mathsf{side}} R^{1/2}, \left| y_i - pT_1 \right| \leq M^{\mathsf{side}} R^{1/2} \mbox{ for $i \in \llbracket 1, k \rrbracket$, }$$ 
and $g \in Y^{-}(\llbracket T_0, T_1 \rrbracket)$ we have that 
\begin{equation}\label{S7ENoDip}
\mathbb{P}_{H,H^{RW}}^{1, k, T_0 ,T_1, \vec{x}, \vec{y},\infty,g} \left( L_k(x) - p x  \geq - M^{\mathsf{dip}} R^{1/2}  \mbox{ for all $x \in [T_0, T_1]$}\right) \geq 1 - \epsilon.
\end{equation}
\end{lemma}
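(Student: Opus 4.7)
The plan is to combine the monotone coupling of Lemma \ref{MonCoup} with the partial $(H,H^{RW})$-Gibbs property and an internal induction on $k$, so as to reduce the estimate to Lemma \ref{LStayInBand} applied to a single $H^{RW}$-random walk bridge; the strong coupling of Proposition \ref{KMT} is then used to absorb the remaining top-wall interaction into the final constant $M^{\mathsf{dip}}$.

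First, by part II of Lemma \ref{MonCoup}, the measure $\mathbb{P}_{H,H^{RW}}^{1,k,T_0,T_1,\vec{x},\vec{y},\infty,g}$ is stochastically increasing in $g$. Since the event in \eqref{S7ENoDip} is increasing in $L_k$, replacing $g$ by $-\infty$ can only make it less likely, and it simultaneously kills the interaction $H(g(m+1)-L_k(m))=H(-\infty)=0$. I therefore assume $g\equiv -\infty$ throughout. I then induct on $k$: for $k=1$ the ensemble consists of a single $H^{RW}$-bridge, and Lemma \ref{LStayInBand} applied with $M$ replaced by $M^{\mathsf{side}} r^{-1/2}$ gives the claim with $M^{\mathsf{dip}}_1 = M^{\mathsf{side}}+A(\epsilon,p,H^{RW})$. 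For the inductive step, applying the hypothesis to the sub-ensemble $(L_1,\dots,L_{k-1})$ — which inherits the partial Gibbs property by Remark \ref{restrict} — produces a constant $M_{k-1}$ and an event $\mathcal{G}$ of probability at least $1-\epsilon/2$ on which $L_{k-1}(x)-px \geq -M_{k-1}R^{1/2}$ for every $x\in[T_0,T_1]$.

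Next, viewing the deterministic $g\equiv -\infty$ as an additional $(k+1)$-st curve, the partial $(H,H^{RW})$-Gibbs property applies to the single index $\{k\}$ and gives that, conditionally on $\mathcal{F}_{ext}(\{k\}\times\llbracket T_0+1,T_1-1\rrbracket)$, $L_k$ has law $\mathbb{P}_{H,H^{RW}}^{k,k,T_0,T_1,x_k,y_k,L_{k-1},-\infty}$. A variant of the coupling construction in the proof of Lemma \ref{MonCoup} — obtained by exchanging the roles of top and bottom bounding curves, with the required Radon--Nikodym monotonicity following from the convexity and monotonicity of $H$ — shows that this conditional law is stochastically increasing in the top wall $L_{k-1}$. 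Consequently, on $\mathcal{G}$ the law of $L_k$ stochastically dominates that of the single curve $\tilde L_k$ produced by replacing $L_{k-1}$ with the constant affine wall $f(s)=ps-M_{k-1}R^{1/2}$.

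The main obstacle is the resulting one-curve estimate. The distribution of $\tilde L_k$ is the free $H^{RW}$-bridge from $(T_0,x_k)$ to $(T_1,y_k)$ reweighted by $\exp\bigl(-\sum_m H(\tilde L_k(m+1)+M_{k-1}R^{1/2}-pm)\bigr)$, a factor which favors small $\tilde L_k$ and whose normalizing acceptance probability can be exponentially small in $T_1-T_0$; hence the naive bound $\mathbb{P}(\text{dip}) \leq Z^{-1}\mathbb{P}_{H^{RW}}(\text{dip})$ is useless. The resolution is to couple $\tilde L_k$ to a Brownian bridge $B^{\sigma_p}$ via Proposition \ref{KMT}: outside the event $\{\Delta(T_1-T_0,y_k-x_k) \geq R^{1/4}\}$, which by Chebyshev applied to \eqref{KMTeq} together with $|y_k-x_k-p(T_1-T_0)|\leq 2M^{\mathsf{side}}R^{1/2}$ has probability at most $\epsilon/4$, the reweighting factor becomes, up to negligible error, a functional of the affine-plus-Brownian path $x_k+\tfrac{t}{T_1-T_0}(y_k-x_k)+\sqrt{T_1-T_0}\,B^{\sigma_p}_{t/(T_1-T_0)}$. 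Using the endpoint pinning $|x_k-pT_0|,|y_k-pT_1|\leq M^{\mathsf{side}}R^{1/2}$ together with the reflection-principle bound of Lemma \ref{BBmax}, one checks that the tilt has the net effect of a uniform vertical shift of $\tilde L_k$ by $O(R^{1/2})$; this shift, combined with the $O(R^{1/2})$ fluctuation supplied by Lemma \ref{LStayInBand}, fixes the final constant $M^{\mathsf{dip}}$.
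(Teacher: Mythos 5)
Your proposal takes a genuinely different route from the paper, which does not induct on $k$ and does not use any top-wall monotonicity: instead it simultaneously shifts all entrance and exit data $\vec{x},\vec{y}$ (together with $g$) down to a staircase configuration $\vec{x}',\vec{y}'$ with gaps $\sim(B+2A)\sqrt{T}$ between consecutive levels, via the bottom-wall/endpoint monotonicity of Lemma \ref{MonCoup}, and then bounds the ratio $\mathbb{E}_{H^{RW}}[W_H\mathbf{1}\{\mathrm{dip}\}]/\mathbb{E}_{H^{RW}}[W_H]$ by bounding the numerator by $\delta$ (because the $k$-th \emph{free} bridge with endpoints $\sim pT_0-z_k$ is already low) and the denominator from below by $(1-\delta)^{k+1}$ (because at those staircase endpoints the free bridges are well-separated with high probability, making $W_H$ close to $1$).

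Your approach has a genuine gap at the final single-curve estimate. Two preliminary caveats first: the inductive hypothesis applies to the marginal of $(L_1,\dots,L_{k-1})$ only after conditioning on $L_k$ (the marginal itself is a mixture, not of the form $\mathbb{P}_{H,H^{RW}}^{1,k-1,\ldots,\infty,g'}$); and the top-wall monotonicity is not part of Lemma \ref{MonCoup}, which only parameterises by $(\vec{x},\vec{y},\vec{z})$ with top wall fixed to $\infty$ -- the convexity calculation you gesture at (the FKG-type inequality $H(a)+H(d)\geq H(b)+H(c)$ for $a\geq b,c\geq d$, $a+d=b+c$) is correct but is a new claim requiring its own lemma. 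The serious problem, however, is the last step. After replacing $L_{k-1}$ by the affine wall $f(s)=ps-M_{k-1}R^{1/2}$, note that $x_k\geq pT_0-M^{\mathsf{side}}R^{1/2}$ while typically $M_{k-1}>M^{\mathsf{side}}$, so the endpoints sit \emph{above} the wall; the weight $\exp(-\sum_m H(\tilde L_k(m+1)-f(m)))$ therefore actively pushes $\tilde L_k$ below $f$, and the dip you must rule out is not a large deviation of the free bridge but near-typical behaviour of the tilted measure. Your claim that ``the tilt has the net effect of a uniform vertical shift'' is not proved and is false in general: the weight is a nonlinear path functional (of excursions above the wall), not a shift. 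Moreover, your Chebyshev bound $\mathbb{P}(\Delta\geq R^{1/4})\leq\epsilon/4$ is derived under the coupling of the \emph{free} bridge from Proposition \ref{KMT}, and to use it under the tilted measure you would need to control the Radon--Nikodym derivative on the bad event, which brings the acceptance probability back in exactly the place you were trying to avoid it. The paper's staircase shift is designed precisely to sidestep this: after shifting, the $k$-th curve's endpoints are far below the $(k-1)$-st, so the acceptance probability is bounded away from $0$ and the dip event is a clean free-bridge tail event amenable to Lemma \ref{LStayInBand}.
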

\begin{proof} For clarity we split the proof into three steps. \\

{\bf \raggedleft Step 1.} In this step we specify the constants $W_2$ and $M^{\mathsf{dip}}$ as in the statement of the lemma.

Let $\delta = \delta(k, \epsilon) \in (0,1) $ be sufficiently small so that
\begin{equation}\label{S7S2L1E1}
 (1- \delta)^{-k - 1} \delta < \epsilon.
\end{equation}
Next, let $A = A(\delta, p, H^{RW}) > 0$ be as in Lemma \ref{S7LStayInBand} for $p, H^{RW}$ as in the present lemma and $\delta$ as above. In addition, let $B = B(k, \delta, H) > 0$ be sufficiently large so that for $T \in \mathbb{N}$  
\begin{equation}\label{S7S2L1E2}
\exp \left( - (k-1) T H \left( - B \sqrt{T}  \right) \right) \geq 1- \delta.
\end{equation}
Note that such a choice of $B$ is possible since by Definition \ref{AssH} we have that $\lim_{x \rightarrow \infty} x^2 H(-x) = 0$. 

With the above data we let $W_2 = W_1( A , p, \delta ,H^{RW}) $ as in Lemma \ref{S7LStayInBand} and 
$$M^{\mathsf{dip}} = r^{-1/2} ( r^{-1/2} M^{\mathsf{side}}  + (2k-1) \cdot A + (k-1) \cdot (B + |p|) ).$$
 This specifies our choice of $W_2$ and $ M^{\mathsf{dip}}$.\\

{\bf \raggedleft Step 2.} We put $T = T_1 - T_0$ and assume throughout that $T \geq W_2$ as in Step 1. Let $\vec{x}', \vec{y}' \in \mathbb{R}^k$ be
\begin{equation}\label{S7S2L1E3}
\begin{split}
&x_i' = pT_0 - z_i \mbox{ and } y_i' = pT_1 - z_i \mbox{ for $i \in \llbracket 1, k \rrbracket$, where }\\
&z_i = \sqrt{T} \left( r^{-1/2} M^{\mathsf{side}}+ 2(i-1) \cdot A + (i-1) \cdot (B + |p|) \right).
\end{split}
\end{equation}
and observe that $x_i' \leq x_i$, $y_i' \leq y_i$ for $i \in \llbracket 1, k \rrbracket$. In view of the latter inequalities and Lemma \ref{MonCoup} we conclude that 
\begin{equation}\label{S7S2L1E4}
\begin{split}
&\mathbb{P}_{H,H^{RW}}^{1, k, T_0 ,T_1, \vec{x}, \vec{y},\infty,g} \left( L_k(x) - p x  < - M^{\mathsf{dip}} R^{1/2}  \mbox{ for some $x \in [T_0, T_1]$}\right) \leq \\
&\mathbb{P}_{H,H^{RW}}^{1, k, T_0 ,T_1, \vec{x}', \vec{y}',\infty,-\infty} \left( L_k(x) - p x  < - M^{\mathsf{dip}} R^{1/2}  \mbox{ for some $x \in [T_0, T_1]$}\right).
\end{split}
\end{equation}

In addition, from (\ref{RND}) we have
\begin{equation}\label{S7S2L1E5}
\begin{split}
&\mathbb{P}_{H,H^{RW}}^{1, k, T_0 ,T_1, \vec{x}', \vec{y}',\infty,-\infty} \left( L_k(x) - p x  < - M^{\mathsf{dip}} R^{1/2}  \mbox{ for some $x \in [T_0, T_1]$}\right) = \\
& \frac{\mathbb{E}_{H^{RW}} \left[ W_H \cdot {\bf 1} \{ \ell_k(x) - p x  < - M^{\mathsf{dip}} R^{1/2}  \mbox{ for some $x \in [T_0, T_1]$} \} \right] }{\mathbb{E}_{H^{RW}} \left[ W_H \right]},
\end{split}
\end{equation}
where we write $\mathbb{P}_{H^{RW}}$ in place of $\mathbb{P}_{H^{RW}}^{1, k, T_0 ,T_1, \vec{x}', \vec{y}'}$, $\mathbb{E}_{H^{RW}}$ in place of $\mathbb{E}_{H^{RW}}^{1, k, T_0 ,T_1, \vec{x}', \vec{y}'}$ and $W_H$ in place of $W_{H}^{1, k, T_0 ,T_1,\infty, -\infty} ({\ell}_{1}, \dots, {\ell}_{k})$ to simplify the notation. In addition, $(\ell_1, \dots, \ell_k)$ has distribution $\mathbb{P}_{H^{RW}}$. 

We claim that 
\begin{equation}\label{S7S2L1E6}
\begin{split}
&\mathbb{E}_{H^{RW}} \left[ W_H \right] \geq (1 - \delta)^{k+1} \mbox{ and } \\
&\mathbb{E}_{H^{RW}} \left[ W_H \cdot {\bf 1} \{ \ell_k(x) - p x  < - M^{\mathsf{dip}} R^{1/2}  \mbox{ for some $x \in [T_0, T_1]$} \} \right]  \leq \delta.
\end{split}
\end{equation}
Observe that (\ref{S7S2L1E4}), (\ref{S7S2L1E5}) and (\ref{S7S2L1E6}) together imply (\ref{S7ENoDip}) in view of (\ref{S7S2L1E1}). Thus we have reduced the proof of the lemma to establishing the two inequalities in (\ref{S7S2L1E6}). We establish the second in this step, and postpone the first to the next step.\\

Using that $W_H \in [0,1]$, and that $\ell_1, \dots, \ell_k$ are independent under $\mathbb{P}_{H^{RW}}$ and have laws $\mathbb{P}_{H^{RW}}^{T_0 ,T_1, x_i', y_i'}$ for $i \in \llbracket 1, k \rrbracket$ we conclude that 
\begin{equation}\label{S7S2L1E7}
\begin{split}
&\mathbb{E}_{H^{RW}} \left[ W_H \cdot {\bf 1} \{ \ell_k(x) - p x  < - M^{\mathsf{dip}} R^{1/2}  \mbox{ for some $x \in [T_0, T_1]$} \} \right]  \leq \\
&\mathbb{P}_{H^{RW}}^{T_0, T_1, x_k', y_k'} (  \ell(x) - p x  < - M^{\mathsf{dip}} R^{1/2}   \mbox{ for some $x \in [T_0, T_1]$}  ) .
\end{split}
\end{equation}
Using the shift-invariance of $\mathbb{P}_{H^{RW}}^{T_0, T_1,x,y}$ and the definition of $x_k', y_k'$ from (\ref{S7S2L1E3}) we get 
\begin{equation}\label{S7S2L1E8}
\begin{split}
&\mathbb{P}_{H^{RW}}^{T_0, T_1, x_k', y_k'} (  \ell(x) - p x  < - M^{\mathsf{dip}} R^{1/2}   \mbox{ for some $x \in [T_0, T_1]$}  )  = \\
& \mathbb{P}_{H^{RW}}^{T_0, T_1, pT_0 , pT_1} \Big{(}  \ell(x) - p x  <  z_k  - M^{\mathsf{dip}} R^{1/2}  \mbox{ for some $x \in [T_0, T_1]$}  \Big{)} \leq \\
&  \mathbb{P}_{H^{RW}}^{T_0, T_1, pT_0 , pT_1}\left(  \ell(x) - p x < - A \sqrt{T}  \right) < \delta.
\end{split}
\end{equation}
We mention that in the first inequality we used the definition of $M^{\mathsf{dip}}$ from Step 1, equation (\ref{S7S2L1E3}) and that $R \geq r T$, while in the last inequality we used the definition of $A$ from Step 1 and the fact that $T_1 - T_0 = T \geq W_2$. Equations (\ref{S7S2L1E7}) and (\ref{S7S2L1E8}) imply the second line in (\ref{S7S2L1E6}).\\

{\bf \raggedleft Step 3.} In this step we prove the first line in (\ref{S7S2L1E6}). Let us define the events 
$$E_i = \left\{ \sup_{x \in [T_0, T_1] } \left| \ell_i(x) - p x +z_i  \right|  \leq A \sqrt{T} \right\} \mbox{ for $i \in \llbracket 1, k \rrbracket$}.$$
Using that $\ell_1, \dots, \ell_k$ are independent under $\mathbb{P}_{H^{RW}}$ and have laws $\mathbb{P}_{H^{RW}}^{T_0 ,T_1, x_i', y_i'}$ for $i \in \llbracket 1, k \rrbracket$ we conclude
\begin{equation}\label{S7S2L1E9}
\begin{split}
&\mathbb{P}_{H^{RW}} \left( \cap_{i = 1}^k E_i  \right) = \prod_{i = 1}^k\mathbb{P}_{H^{RW}} \left(  E_i  \right) =  \prod_{i = 1}^k\mathbb{P}_{H^{RW}}^{T_0 ,T_1, x_i', y_i'} \left(  \sup_{x \in [T_0, T_1] } \left| \ell(x) - p x + z_i \right|  \leq A \sqrt{T} \right) = \\
&\mathbb{P}_{H^{RW}}^{T_0 ,T_1, pT_0 , pT_1} \left(  \sup_{x \in [T_0, T_1] } \left| \ell(x) - p x \right|  \leq A \sqrt{T} \right)^k \geq (1- \delta)^{k},
\end{split}
\end{equation}
where in going from the first to the second line we used the shift invariance of $\mathbb{P}_{H^{RW}}^{T_0 ,T_1, x,y}$ and (\ref{S7S2L1E3}), while in the last inequality we used the definition of $A$ from Step 1 and the fact that $T \geq W_2$.

On the other hand, we have using (\ref{S7S2L1E3}) that on $\cap_{i = 1}^k E_i $ for $i \in \llbracket 1, k-1\rrbracket$
\begin{equation*}
\begin{split}
&\ell_{i}(x) \geq p x - z_i -A \sqrt{T} \geq p x - z_{i+1} + A \sqrt{T} + (B + |p|) \sqrt{T} \geq \\
& p (x+1) - z_{i+1} + A \sqrt{T} + B \sqrt{T} \geq \ell_{i+1}(x+1) + B \sqrt{T}.
\end{split}
\end{equation*}
The latter inequality, the monotonicity of $H$ (see Definition \ref{AssH}) and the definition of $W_H$ in (\ref{WH}) gives on $\cap_{i = 1}^k E_i $ 
\begin{equation}\label{S7S2L1E10}
W_H =  \exp \left( - \sum_{i = 1}^{k-1}  \sum_{ m = T_0}^{T_1-1} \hspace{-1mm}H (\ell_{i + 1}(m + 1) - \ell_{i}(m)) \right) \geq \exp \left( - (k-1) T H(-B \sqrt{T}) \right) \geq 1-\delta,
\end{equation}
where the last inequality used (\ref{S7S2L1E2}). Combining (\ref{S7S2L1E9}) and (\ref{S7S2L1E10}) proves the first line in (\ref{S7S2L1E6}).
\end{proof}

The following lemma can be found in \cite{CorHamA}. 
\begin{lemma}\label{Spread} \cite[Corollary 2.10]{CorHamA}. Let $U$ be an open subset of $C([0,1])$, which contains a function $f$ such that $f(0) = f(1) = 0$. If $B:[0,1] \rightarrow \mathbb{R}$ is a standard Brownian bridge then $\mathbb{P}(B[0,1] \subset U) > 0$.
\end{lemma}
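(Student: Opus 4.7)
The plan is to reduce the statement to a small-ball estimate for Brownian bridge via an application of the Cameron--Martin theorem. Since $U \subset C([0,1])$ is open in the topology of uniform convergence and contains $f$, there exists $\epsilon > 0$ such that the uniform $\epsilon$-ball around $f$, call it $\mathcal{B}_\epsilon(f)$, is contained in $U$. It therefore suffices to prove $\mathbb{P}\big(\|B - f\|_\infty < \epsilon \big) > 0$.

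The obstacle to applying Cameron--Martin directly is that $f$ is only continuous, and the Cameron--Martin space of a standard Brownian bridge is the space $H^1_0([0,1])$ of absolutely continuous functions $h$ on $[0,1]$ with $h(0) = h(1) = 0$ and $h' \in L^2([0,1])$. To overcome this I would first approximate $f$ uniformly: since continuous functions on $[0,1]$ vanishing at both endpoints are uniformly approximable by piecewise linear (equivalently, smooth) functions vanishing at both endpoints, we can select $f_\delta \in H^1_0([0,1])$ with $\|f - f_\delta\|_\infty < \epsilon/2$. On the event $\{\|B - f_\delta\|_\infty < \epsilon/2\}$ the triangle inequality yields $\|B - f\|_\infty < \epsilon$, so it suffices to show $\mathbb{P}\big(\|B - f_\delta\|_\infty < \epsilon/2\big) > 0$.

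Now I apply the Cameron--Martin theorem for the standard Brownian bridge: for any $h \in H^1_0([0,1])$, the law of $B + h$ is mutually absolutely continuous with respect to the law of $B$. Applying this with $h = f_\delta$, the event $\{\|B - f_\delta\|_\infty < \epsilon/2\}$ (which for $B + h$ reads $\{\|B\|_\infty < \epsilon/2\}$) has positive probability under the shifted law if and only if it has positive probability under the law of $B$. Thus it reduces to showing $\mathbb{P}\big(\|B\|_\infty < \epsilon/2\big) > 0$, a standard small-ball estimate for Brownian bridge that follows for instance from the explicit reflection-principle formula for $\sup_{t \in [0,1]} |B(t)|$, or simply from the fact that $B$ has full support on continuous functions vanishing at the endpoints.

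The main obstacle is the Cameron--Martin step, both in invoking the correct version for Brownian bridge (rather than Brownian motion) and in justifying the smooth approximation $f_\delta$ inside $H^1_0([0,1])$; once these are in place, the remaining ingredients are elementary. An alternative, essentially equivalent, route would be to write $B(t) = W(t) - tW(1)$ for a standard Brownian motion $W$ and apply Cameron--Martin for $W$ after absorbing the linear term, but the argument above keeps the analysis entirely within the Brownian bridge framework.
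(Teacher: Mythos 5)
Your argument is correct. The paper does not prove this lemma itself---it simply cites \cite[Corollary 2.10]{CorHamA}---and the Cameron--Martin shift (after an $H^1_0$-approximation of $f$) combined with a small-ball estimate is the standard route to the full-support property of the Brownian bridge, essentially the same argument used in that reference.
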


We now proceed with the proof of Lemma \ref{LHighBottom}.
\begin{lemma}\label{S7LHighBottom}[Lemma \ref{LHighBottom}] Fix $k \in \mathbb{N}$ and let $\mathfrak{L} = (L_1, \dots, L_k)$ have law $\mathbb{P}_{H,H^{RW}}^{1, k, T_0 ,T_1, \vec{x}, \vec{y},\infty,g}$ as in Definition \ref{Pfree}, where we assume that $H$ is as in Definition \ref{AssH}, while $H^{RW}$ is as in Definition \ref{AssHR}. Let $ p\in \mathbb{R}$, $r,\epsilon \in (0,1)$, $M^{\mathsf{bot}}, M^{\mathsf{side}} > 0$ and $t \in (0,1/3)$ be given. Then we can find a constant $W_3 = W_3(k,p,r,M^{\mathsf{side}}, M^{\mathsf{bot}},t, \epsilon, H, H^{RW}) \in \mathbb{N}$ so that the following holds. 

For any $T_0, T_1 \in \mathbb{Z}$ with $  T_1 - T_0 \geq W_3$ and $t_0, t_1 \in \llbracket T_0, T_1 \rrbracket$ with $\min(t_0 - T_0,T_1 - t_1, t_1 - t_0)  \geq t (T_1 - T_0)$, $r (T_1 - T_0) \leq R \leq  r^{-1} (T_1 - T_0)$, $\vec{x}, \vec{y} \in \mathbb{R}^k, g \in Y^{-}(\llbracket T_0, T_1 \rrbracket)$  that satisfy
$$ \left| x_i - pT_0 \right| \leq M^{\mathsf{side}} R^{1/2}, \left| y_i - pT_1 \right| \leq M^{\mathsf{side}} R^{1/2} \mbox{ for $i \in \llbracket 1, k \rrbracket$, and }$$ 
$$g(j) - pj \geq M^{\mathsf{bot}} R^{1/2} \mbox{ for some $j \in \llbracket t_0, t_1 - 1 \rrbracket$,}$$
we have that 
\begin{equation}\label{S7EHighBottom}
\mathbb{P}_{H,H^{RW}}^{1, k, T_0 ,T_1, \vec{x}, \vec{y},\infty,g} \left(L_k(x) - p x  \geq M^{\mathsf{bot}} R^{1/2} - R^{1/4} \mbox{ for some $x \in [t_0, t_1]$}\right) \geq 1 - \epsilon.
\end{equation}
\end{lemma}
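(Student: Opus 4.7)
The plan is to bound the complementary event $A = \{L_k(x) - px < M^{\mathsf{bot}} R^{1/2} - R^{1/4} \text{ for all } x \in [t_0, t_1]\}$ via a Radon--Nikodym analysis after reducing $g$ to a sparse profile using Lemma \ref{MonCoup}. First I would define $\tilde{g} \in Y^-(\llbracket T_0, T_1 \rrbracket)$ by $\tilde{g}(j) = pj + M^{\mathsf{bot}} R^{1/2}$ and $\tilde{g}(m) = -\infty$ for $m \neq j$, where $j$ is the integer supplied by the hypothesis. Since $\tilde{g} \leq g$ pointwise and the convexity/monotonicity assumptions needed for Lemma \ref{MonCoup} hold, the law $\mathbb{P}_{\tilde{g}} := \mathbb{P}_{H,H^{RW}}^{1,k,T_0,T_1,\vec{x},\vec{y},\infty,\tilde{g}}$ is stochastically dominated below by the original, yielding $\mathbb{P}_{H,H^{RW}}^{1,k,T_0,T_1,\vec{x},\vec{y},\infty,g}(A) \leq \mathbb{P}_{\tilde{g}}(A)$. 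Because $H(-\infty) = 0$, every bottom interaction factor $H(\tilde{g}(m+1) - L_k(m))$ vanishes except at $m = j-1$, so the Boltzmann weight collapses to $W_{\tilde{g}} = W_{\mathrm{top}}(L_1,\dots,L_k) \cdot \exp(-H(\tilde{g}(j) - L_k(j-1)))$, where $W_{\mathrm{top}} \in (0,1]$ is the interaction among the top $k$ curves. Then $\mathbb{P}_{\tilde{g}}(A) = Z^{-1} \mathbb{E}_{H^{RW}}[W_{\tilde{g}} \mathbf{1}_A]$ with $Z = \mathbb{E}_{H^{RW}}[W_{\tilde{g}}]$ and the expectation taken over $k$ independent free $H^{RW}$-bridges with boundary data $\vec{x}, \vec{y}$.

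Next, to control the numerator I would split $A = B_1 \sqcup B_2$ according to whether $L_k(j-1) - p(j-1) < M^{\mathsf{bot}} R^{1/2} - R^{1/4}/2$ (on $B_1$) or not. On $B_1$ the argument of the bottom $H$-factor exceeds $p + R^{1/4}/2 \geq R^{1/4}/4$ for $R$ large, so $W_{\tilde{g}} \leq \exp(-H(R^{1/4}/4)) \to 0$ by the hypothesis $\lim_{x\to\infty} H(x) = \infty$. For $B_2$, when $j \geq t_0 + 1$ the integer $j-1$ lies in $[t_0, t_1]$, so the definition of $A$ already forces $L_k(j-1) - p(j-1) < M^{\mathsf{bot}} R^{1/2} - R^{1/4}$, contradicting $B_2$; thus $B_2 = \emptyset$ in that range. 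When $j = t_0$, combining the defining inequalities of $A$ at $t_0$ and of $B_2$ at $t_0 - 1$ forces $|L_k(t_0-1) - L_k(t_0)| \geq R^{1/4}/4$ for $R$ large, and Lemma \ref{S7LNoBigJump} applied to the single free bridge $L_k$ (with rescaled parameters, e.g.\ $R' = R/16$, $r' = r/16$, $M' = 4 M^{\mathsf{side}}$) makes this probability arbitrarily small. Altogether this gives $\mathbb{E}_{H^{RW}}[W_{\tilde{g}} \mathbf{1}_A] \leq \exp(-H(R^{1/4}/4)) + \epsilon'$ with $\epsilon'$ freely choosable for $R$ large.

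For the denominator I would establish $Z \geq c_0 > 0$ with $c_0$ depending only on the parameters in the statement. Using Proposition \ref{KMT}, each free bridge $L_i$ is strongly coupled to an independent Brownian bridge $B_i^{\sigma_p}$ with KMT error of order $\log T \ll \sqrt{T}$, and by Lemma \ref{Spread} the joint law of $(B_1,\dots,B_k)$ places positive mass on any open neighborhood of a chosen continuous configuration $(f_1,\dots,f_k)$ with $f_i(0) = f_i(1) = 0$. I would choose the target so that, after rescaling by $\sqrt{T}$ and adding the linear interpolants of the boundary data, on the corresponding event (i) at every interior integer $m$ the pairwise gap $L_i(m) - L_{i+1}(m+1)$ exceeds $c\sqrt{T}$, and (ii) $L_k(j-1) \geq \tilde{g}(j) + 1$. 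The assumption $\lim_{x\to\infty} x^2 H(-x) = 0$ then makes each interior term of $W_{\mathrm{top}}$ at most $o(1/T)$, so that their sum is $o(1)$; a bounded number of $O(1)$ correction terms near the two boundaries (stemming from adverse orderings of $\vec{x}, \vec{y}$) cost only a bounded multiplicative constant. Combined with $\exp(-H(\tilde{g}(j) - L_k(j-1))) \geq \exp(-H(-1))$, this yields $W_{\tilde{g}} \geq \exp(-C')$ on a positive-probability event, hence the desired $Z \geq c_0$.

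Combining the three estimates gives $\mathbb{P}_{\tilde{g}}(A) \leq (\exp(-H(R^{1/4}/4)) + \epsilon')/c_0$, which is at most $\epsilon$ once $W_3$ is taken large enough. The main obstacle will be the construction in the third step: the boundary data $\vec{x}, \vec{y}$ can be placed adversarially within an $O(R^{1/2})$ window comparable in scale to $\sqrt{T}$ (since $R^{1/2} \leq r^{-1/2} \sqrt{T_1 - T_0}$), so the target functions $(f_1,\dots,f_k)$ must be engineered to absorb arbitrary boundary orderings, produce interior separations of order $\sqrt{T}$, and drive $L_k$ all the way up to the elevated spike at $j-1$, while the transfer of the Brownian-bridge lower bound through the KMT coupling must preserve absolute constants depending only on the parameters of the lemma.
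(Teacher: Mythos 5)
Your overall architecture matches the paper's: reduce $g$ to a one--point spike $\tilde g$ by monotone coupling, write the probability as a Radon--Nikodym ratio against $k$ free $H^{RW}$ bridges, bound the numerator by showing the single spike--interaction term $\exp(-H(\tilde g(j) - L_k(j-1)))$ is forced to be tiny on the complementary event, and lower bound the normalization $Z$ by an explicit Brownian--bridge target configuration transferred through the KMT coupling. Your $B_1/B_2$ split and appeal to Lemma \ref{S7LNoBigJump} to patch the edge case $j = t_0$ (where $j-1 \notin [t_0,t_1]$ and the definition of $A$ gives no direct handle on $L_k(j-1)$) is a genuine and careful addition; the paper's argument at (\ref{S7S2L2E7}) silently uses $j_0 - 1 \in [t_0,t_1]$ and your fix closes that gap.

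The irreparable problem in your proposal is the denominator. You keep the entrance/exit data fixed at $\vec x, \vec y$ and then try to lower bound $Z = \mathbb{E}_{H^{RW}}[W_{\tilde g}]$; but nothing in the hypotheses places $\vec x, \vec y$ in the Weyl chamber, and the allowed perturbation $|x_i - pT_0| \le M^{\mathsf{side}} R^{1/2}$ is on the same scale $\sqrt{T_1 - T_0}$ as the separations you need in the interior. If, say, $x_{i+1} - x_i \asymp \sqrt{T}$, then either the curves stay near their anchors and you pick up a factor $\exp(-H(c\sqrt T)) \to 0$ at $m = T_0$, or you ask a free bridge to drop by $\Theta(\sqrt T)$ in a single unit step, which under the measure $\mathbb{P}^{T_0,T_1,x_{i+1},y_{i+1}}_{H^{RW}}$ has probability that decays at least exponentially in $\sqrt T$. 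Either way $Z$ is not bounded below by a constant. Your closing remark that boundary--adjacent adverse orderings "cost only a bounded multiplicative constant" is therefore not correct: the cost is $T$--dependent and kills the argument. The paper's resolution, absent from your proposal, is to apply Lemma \ref{MonCoup} simultaneously to $\vec x, \vec y$ and to $g$: it replaces $\vec x, \vec y$ by a \emph{deterministic, well--ordered, and $\Theta(\sqrt T)$--spread} configuration $\vec x', \vec y'$ lying below $\vec x, \vec y$ (see (\ref{S7S2L2E2})), which preserves the direction of the inequality for the complementary event while making the $W_H$--favorable plateau configuration attainable by independent bridges with a positive constant probability. Without this second use of the monotone coupling your denominator estimate cannot be made uniform in $\vec x, \vec y$, and the proof does not close.
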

\begin{proof} For clarity we split the proof into three steps.\\

{\bf \raggedleft Step 1.} In this step we prove the lemma modulo a certain estimate, see (\ref{S7S2L2E5}), which will be established in the steps below.

Let $B = B(k, H) > 0$ be sufficiently large so that for $T \in \mathbb{N}$  
\begin{equation}\label{S7S2L2E1}
\exp \left( - k T H \left( - B \sqrt{T}  \right) \right) \geq 1/2.
\end{equation}
Note that such a choice of $B$ is possible since by Definition \ref{AssH} we have that $\lim_{x \rightarrow \infty} x^2 H(-x) = 0$.  We put $T = T_1 - T_0$ let $\vec{x}', \vec{y}' \in \mathbb{R}^k$ be defined via
\begin{equation}\label{S7S2L2E2}
\begin{split}
&x_i' = pT_0 - z_i \mbox{ and } y_i' = pT_1 - z_i \mbox{ for $i \in \llbracket 1, k \rrbracket$, where }\\
&z_i = \sqrt{T} \left( r^{-1/2}M^{\mathsf{side}}+ 6(i-1) + (i-1) \cdot (B + |p|) \right).
\end{split}
\end{equation}
By assumption we know there exists $j_0 \in \llbracket t_0, t_1 - 1\rrbracket$ such that $g(j) - pj \geq M^{\mathsf{bot}} R^{1/2}$ and we define $g' \in Y^-(\llbracket T_0, T_1 \rrbracket)$ so that 
\begin{equation}\label{S7S2L2E3}
\begin{split}
g'(j) = pj_0 + M^{\mathsf{bot}} R^{1/2} \mbox{ if } j = j_0 \mbox{ and } g'(j) = -\infty \mbox{ for $j \in \llbracket T_0 ,T_1 \rrbracket \setminus \{ j_0\}$.}
\end{split}
\end{equation}

From (\ref{S7S2L2E2}) $x_i' \leq x_i$, $y_i' \leq y_i$ for $i \in \llbracket 1, k \rrbracket$ and from (\ref{S7S2L2E3}) $g'(j) \leq g(j)$ for $j \in  \llbracket T_0 ,T_1 \rrbracket$, which by Lemma \ref{MonCoup} implies that 
\begin{equation}\label{S7S2L2E4}
\begin{split}
&\mathbb{P}_{H,H^{RW}}^{1, k, T_0 ,T_1, \vec{x}, \vec{y},\infty,g} \left(L_k(x) - p x  < M^{\mathsf{bot}} R^{1/2} - R^{1/4} \mbox{ for all $x \in [t_0, t_1]$}\right) \leq \\
&\mathbb{P}_{H,H^{RW}}^{1, k, T_0 ,T_1, \vec{x}', \vec{y}',\infty,g'} \left(L_k(x) - p x  < M^{\mathsf{bot}} R^{1/2} - R^{1/4} \mbox{ for all $x \in [t_0, t_1]$}\right) = \\
&\frac{\mathbb{E}_{H^{RW}} \left[ W_H \cdot {\bf 1} \{ \sup_{x \in [t_0, t_1]} [\ell_k(x)  - p x]  < M^{\mathsf{bot}} R^{1/2} - R^{1/4} \} \right] }{\mathbb{E}_{H^{RW}} \left[ W_H \right]},
\end{split}
\end{equation}
where we write $\mathbb{P}_{H^{RW}}$ in place of $\mathbb{P}_{H^{RW}}^{1, k, T_0 ,T_1, \vec{x}', \vec{y}'}$, $\mathbb{E}_{H^{RW}}$ in place of $\mathbb{E}_{H^{RW}}^{1, k, T_0 ,T_1, \vec{x}', \vec{y}'}$ and $W_H$ in place of $W_{H}^{1, k, T_0 ,T_1,\infty, g'} ({\ell}_{1}, \dots, {\ell}_{k})$ to simplify the notation. In addition, $(\ell_1, \dots, \ell_k)$ has distribution $\mathbb{P}_{H^{RW}}$. We mention that in deriving the last equality in (\ref{S7S2L2E4}) we used (\ref{RND}).

We claim that there exists $N_1 \in \mathbb{N}$ and $\delta > 0$, depending on $k,p,r,M^{\mathsf{side}}, M^{\mathsf{bot}},t, \epsilon, H, H^{RW}$, such that for $T = T_1 - T_0 \geq N_1$ we have 
\begin{equation}\label{S7S2L2E5}
\begin{split}
&\mathbb{E}_{H^{RW}} \left[ W_H \right] \geq \delta.
\end{split}
\end{equation}
We prove (\ref{S7S2L2E5}) in the steps below. Here we assume its validity and conclude the proof of the lemma.\\

Let $W_3 = W_3(k,p,r,M^{\mathsf{side}}, M^{\mathsf{bot}},t, \epsilon, H, H^{RW}) \in \mathbb{N}$ be sufficiently large so that $W_3 \geq N_1$ and for $T = T_1 - T_0 \geq W_3$ and $R \geq r T$ we have
\begin{equation}\label{S7S2L2E6}
\begin{split}
\exp \left(- H( R^{1/4} - |p| )  \right) \leq \delta \epsilon.
\end{split}
\end{equation}
Note that such a choice of $W_3$ is possible since by Definition \ref{AssH} we have that $\lim_{x \rightarrow \infty} H(x) = \infty$. We then observe by the definition of $W_H$ in (\ref{WH}) and the fact that $H(x) \in [0,\infty]$ 
\begin{equation}\label{S7S2L2E7}
\begin{split}
&\mathbb{E}_{H^{RW}} \left[ W_H \cdot {\bf 1} \Big{\{} \sup_{x \in [t_0, t_1]} [\ell_k(x)  - p x]  < M^{\mathsf{bot}} R^{1/2} - R^{1/4} \Big{\}} \right]  = \\
&\mathbb{E}_{H^{RW}} \Big{[} {\bf 1}  \Big{\{} \sup_{x \in [t_0, t_1]} [\ell_k(x)  - p x]  < M^{\mathsf{bot}} R^{1/2} - R^{1/4} \Big{\}} \cdot e^{- H(g'(j_0) - \ell_k(j_0-1)) } \times \\
& e^{- \sum_{i = 1}^{k-1}  \sum_{ m = T_0}^{T_1-1} \hspace{-1mm}H (\ell_{i + 1}(m + 1) - \ell_{i}(m))  }  \Big{]} \leq e^{- H(g'(j_0) - [M^{\mathsf{bot}} R^{1/2} - R^{1/4} + p(j_0-1)]) }  \leq e^{-H(R^{1/4} - |p|)} \leq \delta \epsilon. 
\end{split}
\end{equation}
We mention that in the next to last inequality we used (\ref{S7S2L2E3}) and the monotonicity of $H$, and in the last inequality we used (\ref{S7S2L2E6}).

Combining (\ref{S7S2L2E4}), (\ref{S7S2L2E5}) and (\ref{S7S2L2E7}) we get (\ref{S7EHighBottom}).\\

{\bf \raggedleft Step 2.} In this step we prove (\ref{S7S2L2E5}). Let $h: [0,1] \rightarrow \mathbb{R}$ be defined by
\begin{equation}\label{S7S2L2E8}
\begin{split}
h(x) = \begin{cases} (2U/t) x&\mbox{ if $x \in [0,t/2]$} \\ U &\mbox{ if $x \in [t/2, 1- t/2]$} \\  (2U/t)(1-x) &\mbox{ if $x \in [1-t/2,1]$},  \end{cases}
\end{split}
\end{equation}
where $U =r^{-1/2}M^{\mathsf{bot}} + r^{-1/2} M^{\mathsf{side}}+ 6k  + k \cdot (B + |p|)$. We claim that there exist $N_2 = N_2(p, H^{RW}, h)  \in \mathbb{N}$ and $\tilde{\delta} = \tilde{\delta}(p, H^{RW}, h)> 0$ such that for $T = T_1 - T_0 \geq N_2$ we have 
\begin{equation}\label{S7S2L2E9}
\begin{split}
&\mathbb{P}_{H^{RW}}^{T_0 ,T_1, pT_0, pT_1 }\left( \sup_{x \in [T_0, T_1] } \left| \ell(x) - p x - \sqrt{T} h((x - T_0)/T)  \right|  \leq 2\sqrt{T} \right) \geq \tilde{\delta}.
\end{split}
\end{equation}
We prove (\ref{S7S2L2E9}) in Step 3 below. Here we assume its validity and finish the proof of (\ref{S7S2L2E5}).\\

Let $N_1 \in \mathbb{N}$ be sufficiently large so that $N_1 \geq N_2$, $N_1 \geq (2U/t)$ and for $T \geq N_1$ we have $j_0 - T_0 \pm 1 \in [tT/2, (1-t/2)T]$. We also let $\delta = (1/2)\tilde{\delta}^k$ and proceed to prove (\ref{S7S2L2E5}) with this choice of $N_1$ and $\delta$.

Define the events
$$E_i = \left\{ \sup_{x \in [T_0, T_1] } \left| \ell_i(x) - p x +z_i - \sqrt{T} h((x - T_0)/T)  \right|  \leq 2\sqrt{T} \right\} \mbox{ for $i \in \llbracket 1, k \rrbracket$}.$$
Using that $\ell_1, \dots, \ell_k$ are independent under $\mathbb{P}_{H^{RW}}$ and have laws $\mathbb{P}_{H^{RW}}^{T_0 ,T_1, x_i', y_i'}$ for $i \in \llbracket 1, k \rrbracket$ we conclude for $T = T_1 - T_0 \geq N_1$
\begin{equation}\label{S7S2L2E11}
\begin{split}
&\mathbb{P}_{H^{RW}} \left( \cap_{i = 1}^k E_i  \right) =  \prod_{i = 1}^k\mathbb{P}_{H^{RW}}^{T_0 ,T_1, x_i', y_i'} \hspace{-1mm}\left(  \sup_{x \in [T_0, T_1] } \left| \ell(x) - p x + z_i - \sqrt{T} h((x - T_0)/T)  \right|  \leq 2\sqrt{T} \hspace{-1mm} \right) = \\
&\mathbb{P}_{H^{RW}}^{T_0 ,T_1, pT_0 , pT_1} \left(  \sup_{x \in [T_0, T_1] }\left| \ell(x) - p x - \sqrt{T} h((x - T_0)/T)  \right|  \leq 2\sqrt{T} \right)^k \geq \tilde{\delta}^{k},
\end{split}
\end{equation}
where in going from the first to the second line we used the shift invariance of $\mathbb{P}_{H^{RW}}^{T_0 ,T_1, x,y}$ and (\ref{S7S2L2E2}), while in the last inequality we used (\ref{S7S2L2E9}) and the fact that $T \geq N_1$.

On the other hand, we have using (\ref{S7S2L2E2}) that on $\cap_{i = 1}^k E_i $ for $i \in \llbracket 1, k-1\rrbracket$ and $T \geq N_1$
\begin{equation*}
\begin{split}
&\ell_{i}(x) \geq \sqrt{T} h((x-T_0)/T) +  p x - z_i - 2 \sqrt{T} \geq\sqrt{T} h((x-T_0)/T)  + p x - z_{i+1} + 4 \sqrt{T} + (B + |p|) \sqrt{T}  \\
&\geq h((x + 1 - T_0)/T)  + p (x+1) - z_{i+1} + 3 \sqrt{T} + B \sqrt{T} \geq \ell_{i+1}(x+1) + B \sqrt{T}.
\end{split}
\end{equation*}
In addition, we have on $\cap_{i = 1}^k E_i $ for $T \geq N_1$  
\begin{equation*}
\begin{split}
&\ell_{k}(j_0 - 1) \geq \sqrt{T} h((j_0 - 1-T_0)/T) +  p (j_0 - 1) - z_k - 2 \sqrt{T} =  \sqrt{T} U +  p (j_0 - 1) - z_k - 2 \sqrt{T} \geq \\
& \sqrt{T} r^{-1/2} M^{\mathsf{bot}} + 4 \sqrt{T} + B \sqrt{T} + pj_0 \geq g'(j_0) + B \sqrt{T},
\end{split}
\end{equation*}
where in the first equality we used that $(j_0 - 1-T_0)/T \in [t/2, (1-t/2)]$ (since $T \geq N_1$), in the first inequality we used (\ref{S7S2L2E2}) and the definition of $U$. In the last inequality we used (\ref{S7S2L2E3}) and the fact that $rT \leq R \leq r^{-1}T$. 

The above inequalities, the monotonicity of $H$ (see Definition \ref{AssH}) and the definition of $W_H$ in (\ref{WH}) gives on $\cap_{i = 1}^k E_i $ 
\begin{equation}\label{S7S2L2E12}
W_H =  \exp \left( - \sum_{i = 1}^{k}  \sum_{ m = T_0}^{T_1-1} \hspace{-1mm}H (\ell_{i + 1}(m + 1) - \ell_{i}(m)) \right) \geq \exp \left( - kT H(-B \sqrt{T}) \right) \geq 1/2,
\end{equation}
where $\ell_{k+1} = g'$, and the last inequality used (\ref{S7S2L2E1}). Combining (\ref{S7S2L2E11}) and (\ref{S7S2L2E12}) proves (\ref{S7S2L2E5}) with the above choice of $N_1$ and $\delta$.\\

{\bf \raggedleft Step 3.} In this step we prove (\ref{S7S2L2E9}). Let $\sigma_p$ be as in Definition \ref{AssHR} and $B^{\sigma_p}$ a Brownian bridge on $[0,1]$ with diffusion parameter $\sigma_p$. From Lemma \ref{Spread} we can find $\tilde{\delta} > 0$ such that  
\begin{equation}\label{S7S2L2E13}
\mathbb{P} \left( \sup_{x \in [0,1]} \left| B^{\sigma_p}_x - h(x) \right| < 1 \right) > 2 \tilde{\delta}.
\end{equation}
This specifies our choice of $\tilde{\delta}$. 

From Proposition \ref{KMT} we know that we can find constants $0 < C, a, \alpha < \infty$ (depending on $p$ and $H^{RW}$) and a probability space with measure $\mathbb{P}$ on which are defined a Brownian bridge $B^{\sigma_p}$ with diffusion parameter $\sigma_p$ and a family of random curves $\ell^{(T,z)}$ on $[0, T]$, which is parametrized by $z \in \mathbb{R}$ such that $\ell^{(T,z)}$  has law $\mathbb{P}^{0,T,0,z}_{H^{RW}}$ and also (\ref{S7Chebyshev}) holds. Let $N_2 = N_2(p,\tilde{\delta}, H^{RW})  \in \mathbb{N}$ be sufficiently large so that for $T \geq N_2$ we have
\begin{equation}\label{S7S2L2E14}
 C e^{-a T^{1/2}  } e^{ \alpha (\log T)} < \tilde{\delta}.
\end{equation}
This specifies our choice of $N_2$ and we proceed to prove (\ref{S7S2L2E9}). \\

By the shift-invariance of $\mathbb{P}^{T_0, T_1,x, y}_{H^{RW}}$ and the triangle inequality we have for $T = T_1 - T_0 \geq N_2$
\begin{equation*}
\begin{split}
&\mathbb{P}_{H^{RW}}^{T_0 ,T_1, pT_0, pT_1 }\left( \sup_{x \in [T_0, T_1] } \left| \ell(x) - p x - \sqrt{T} h((x - T_0)/T)  \right|  \leq 2\sqrt{T} \right)  = \\
&\mathbb{P}\left( \sup_{x \in [0, T] } \left| \ell^{(T, pT )}(x) - p x - \sqrt{T} h(x/T)  \right|  \leq 2\sqrt{T} \right) \geq   \mathbb{P} \left( \sup_{x \in [0,1]} \left| B^{\sigma_p}_x - h(x) \right| < 1 \right)  - \\
&   \mathbb{P}\left( \Delta(T, pT)  \geq  \sqrt{T} \right)  >  2\tilde{\delta} -  \mathbb{P}\left( \Delta(T, pT)  \geq  \sqrt{T} \right) > \tilde{\delta},
\end{split}
\end{equation*}
where in the next to last inequality we used (\ref{S7S2L2E13}) and in the last inequality we used (\ref{S7Chebyshev}) and (\ref{S7S2L2E14}). The last inequality gives (\ref{S7S2L2E9}).
\end{proof}


\begin{lemma}\label{S7LNoParDip}[Lemma \ref{LNoParDip}] Fix $k \in \mathbb{N}$ and let $\mathfrak{L} = (L_1, \dots, L_k)$ have law $\mathbb{P}_{H,H^{RW}}^{1, k, T_0 ,T_1, \vec{x}, \vec{y},\infty,g}$ as in Definition \ref{Pfree}, where we assume that $H$ is as in Definition \ref{AssH}, while $H^{RW}$ is as in Definition \ref{AssHR}. Let $ p \in \mathbb{R}$, $\lambda> 0$, $L \geq 1$, $ \epsilon \in (0,1)$, $M^{\mathsf{top}}  > 0 $ be given. Then we can find constants $A = A(k,p, \epsilon, H^{RW}, H) > 0$, $W_4 = W_4(k,p, \lambda, M^{\mathsf{top}}, \epsilon, H, H^{RW},L) \in \mathbb{N}$ and $M^{\mathsf{base, top}} = M^{\mathsf{base, top}} (k,p, \lambda, M^{\mathsf{top}}, \epsilon, H, H^{RW},L) > 0$ such that the following holds. 

If $T \geq W_4$, $T_0, T_1 \in \mathbb{Z}$ with $\max(|T_0|, |T_1|) \leq T \cdot L$, $\Delta T = T_1 - T_0 \geq T$, $\vec{x}, \vec{y} \in \mathbb{R}^k$ and $g \in Y^-(\llbracket T_0, T_1 \rrbracket)$ satisfy 
$$  x_i - pT_0 + \lambda T^{1/2} (T_0/T)^2 \leq M^{\mathsf{top}} T^{1/2}, y_i - pT_1 + \lambda T^{1/2}  (T_1/T)^2   \leq M^{\mathsf{top}} T^{1/2} \mbox{ for $i \in \llbracket 1, k \rrbracket$, and }$$ 
$$g(j) - pj \leq - M^{\mathsf{base, top}} T^{1/2} \mbox{ for all $j \in \llbracket T_0, T_1 - 1 \rrbracket$,}$$
then we have that 
\begin{equation}\label{S7ENoParDip}
\begin{split}
1-\epsilon \leq \mathbb{P}_{H,H^{RW}}^{1, k, T_0 ,T_1, \vec{x}, \vec{y},\infty,g} &\Bigg{(} L_1(x) - p x +  \frac{x- T_0}{T_1 - T_0}\cdot \lambda T^{1/2} (T_1/T)^2 +  \\
&   \frac{T_1 - x}{T_1 - T_0} \cdot \lambda T^{1/2} (T_0/T)^2 \leq (A +M^{\mathsf{top}}) \Delta T^{1/2}  \mbox{ for all $x \in [T_0, T_1]$}\Bigg{)}.
\end{split}
\end{equation}
\end{lemma}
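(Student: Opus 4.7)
The plan is to reduce the two-sided control of $L_1$ to a bound on a single free random walk bridge by combining the monotone coupling of Lemma \ref{MonCoup} with the Radon-Nikodym description of $\mathbb{P}_{H,H^{RW}}^{1,k,T_0,T_1,\vec{x},\vec{y},\infty,g}$ against the free bridge measure $\mathbb{P}_{H^{RW}}^{1,k,T_0,T_1,\vec{x}',\vec{y}'}$ as in (\ref{RND}). The top curve of a single free $H^{RW}$ bridge is handled by Lemma \ref{LStayInBand}; the extraction of ``single-bridge'' behavior requires lower-bounding the acceptance probability $Z$, which is where the hypothesis $g(j)\le pj - M^{\mathsf{base,top}} T^{1/2}$ enters in a crucial way.

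First I would pick the constant $B = B(k,p,H^{RW})$ large enough that Lemma \ref{LStayInBand} (applied with deviation $B\Delta T^{1/2}/4$ and probability $\tfrac{1}{4k}$) yields $\mathbb{P}(|\tilde{L}_i - \ell_i'| \le B\Delta T^{1/2}/4 \text{ on } [T_0,T_1]) \ge 1 - \tfrac{1}{4k}$ for each independent bridge $\tilde L_i$ with a slope bounded by $M := \lambda L^2 + M^{\mathsf{top}} + B(k-1)$. Then I use part II of Lemma \ref{MonCoup} to raise the entrance/exit data to the staircase
\[
x_i' = pT_0 - \lambda T^{1/2}(T_0/T)^2 + M^{\mathsf{top}} T^{1/2} + B(k-i)\Delta T^{1/2}, \qquad y_i' = pT_1 - \lambda T^{1/2}(T_1/T)^2 + M^{\mathsf{top}} T^{1/2} + B(k-i)\Delta T^{1/2},
\]
which satisfy $x_i'\ge x_i$ and $y_i'\ge y_i$, so almost surely $L_1 \le L_1^{**}$ where $L_1^{**}$ has law $\mathbb{P}_{H,H^{RW}}^{1,k,T_0,T_1,\vec{x}',\vec{y}',\infty,g}$. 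Writing $\ell(x)$ for the straight segment from $(T_0,\mathcal{P}(T_0))$ to $(T_1,\mathcal{P}(T_1))$ with $\mathcal{P}(t) = pt - \lambda T^{1/2}(t/T)^2$, and $\ell_i'(x) = \ell(x) + M^{\mathsf{top}} T^{1/2} + B(k-i)\Delta T^{1/2}$, the target inequality will follow by setting $A = A' + B(k-1)$ once I show
\[
\mathbb{P}_{H,H^{RW}}^{1,k,T_0,T_1,\vec{x}',\vec{y}',\infty,g}\Bigl(\sup_{x\in[T_0,T_1]} [L_1^{**}(x) - \ell_1'(x)] > A' \Delta T^{1/2}\Bigr) \le \epsilon
\]
for $A' = A'(\epsilon,p,H^{RW})$ from Lemma \ref{LStayInBand} applied at confidence $3\epsilon/8$.

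The main step is to lower-bound the acceptance probability $Z = \mathbb{E}_{H^{RW}}^{1,k,T_0,T_1,\vec{x}',\vec{y}'}[W_H]$. Let $G$ be the event that all $k$ independent bridges $\tilde{L}_i$ stay within $B\Delta T^{1/2}/4$ of their mean lines $\ell_i'$; by the choice of $B$ and the union bound, $\mathbb{P}_{free}(G) \ge 3/4$. On $G$, the staircase gap $\ell_i'(m) - \ell_{i+1}'(m+1) = B\Delta T^{1/2} - (\ell(m+1) - \ell(m))$ implies $\tilde{L}_{i+1}(m+1) - \tilde{L}_i(m) \le -B\Delta T^{1/2}/2 + (|p| + 2\lambda L^2)$ for $i < k$, while the hypothesis on $g$ and the choice $M^{\mathsf{base,top}} = M^{\mathsf{base,top}}(k,p,\lambda,L,M^{\mathsf{top}}, H, H^{RW})$ sufficiently large force $g(m+1) - \tilde{L}_k(m) \le -B\Delta T^{1/2}/4$. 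Hence on $G$ every argument of $H$ in $W_H$ is at most $-B\Delta T^{1/2}/4 + O(1)$, and using $\lim_{x\to\infty} x^2 H(-x) = 0$ from Definition \ref{AssH} gives $k\Delta T \cdot H(-B\Delta T^{1/2}/4) \le \log 2$ whenever $\Delta T \ge W_4(k,p,\lambda,M^{\mathsf{top}},\epsilon,H,H^{RW},L)$, yielding $W_H \ge 1/2$ on $G$ and $Z \ge 3/8$. Combining with the Radon-Nikodym formula and Lemma \ref{LStayInBand} applied to the single bridge $\tilde L_1$ (valid since $|x_1'-pT_0|, |y_1'-pT_1| \le M \Delta T^{1/2}$) gives the desired $\epsilon$-bound.

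The main obstacle is the strict constraint that $A$ depend only on $k,p,\epsilon,H^{RW},H$, while the staircase spacing must be large enough to bound $W_H$ below and the bridge-deviation event must simultaneously avoid a dependence on $L,\lambda,M^{\mathsf{top}}$. The resolution is to scale the staircase spacing in the units $\Delta T^{1/2}$ (rather than $T^{1/2}$): the resulting spacing constant $B$ depends only on $(k,p,H^{RW})$ via Lemma \ref{LStayInBand}, while the price in the upper bound is only $B(k-1)\Delta T^{1/2}$, which gets folded into $A$. All dependence on $\lambda, L, M^{\mathsf{top}}$ is absorbed into $W_4$ (by requiring $\Delta T$ large compared to $\lambda L^2$, $M^{\mathsf{top}}$, and the $H$-tail threshold) and into $M^{\mathsf{base,top}}$, as permitted by the statement.
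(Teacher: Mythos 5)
Your proposal is correct and follows essentially the same route as the paper's proof: raise the boundary data via the staircase $x_i' = \mathcal{P}(T_0) + M^{\mathsf{top}}T^{1/2} + (\mathrm{spacing})(k-i)\Delta T^{1/2}$ and invoke Lemma \ref{MonCoup}, then use the Radon–Nikodym description \eqref{RND} to reduce the probability to a ratio, lower-bound the acceptance probability by showing all $k$ free bridges remain near their mean lines (Lemma \ref{LStayInBand}) and hence $W_H$ is close to $1$ using $\lim_{x\to\infty}x^2H(-x)=0$, and upper-bound the numerator by a single-bridge application of Lemma \ref{LStayInBand}. The one point that deserves a sentence in a write-up is that $\Delta T$ and $T$ are comparable up to the factor $2L$ (since $T \le \Delta T \le |T_0|+|T_1| \le 2LT$), which is what lets $M^{\mathsf{base,top}}T^{1/2}$ (a $T^{1/2}$-scale quantity) dominate the $B\Delta T^{1/2}$-scale deviations of $\tilde L_k$; you implicitly rely on this when asserting $g(m+1)-\tilde L_k(m) \le -B\Delta T^{1/2}/4$ once $M^{\mathsf{base,top}}(k,p,\lambda,L,M^{\mathsf{top}},H,H^{RW})$ is large, and indeed $M^{\mathsf{base,top}}$ is allowed to depend on $L$, so this works out.
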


\begin{proof} For clarity we split the proof into three steps. \\

{\bf \raggedleft Step 1.} In this step we specify the choice of $A, W_4$ and $M^{\mathsf{base, top}} $ as in the statement of the lemma.

Let $\delta = \delta(k, \epsilon) \in (0,1) $ be sufficiently small so that
\begin{equation}\label{S7S2L3E1}
 (1- \delta)^{-k - 1} \delta < \epsilon.
\end{equation}
Next, let $A_1 = A(\delta, p, H^{RW}) > 0$ be as in Lemma \ref{S7LStayInBand} for $p, H^{RW}$ as in the present lemma and $\delta$ as above. In addition, let $B = B(k, \delta, H) \geq 1$ be sufficiently large so that for $S \in \mathbb{N}$  
\begin{equation}\label{S7S2L3E2}
H(-B\sqrt{S}) \leq S^{-1} B^{-2} \mbox{ and } \exp \left( - k \cdot B^{-2}  \right) \geq 1 - \delta  .
\end{equation}
Note that such a choice of $B$ is possible since by Definition \ref{AssH} we have that $\lim_{x \rightarrow \infty} x^2 H(-x) = 0$. With the above data we set $A = 2k A_1 + k ( B + |p|)$ and this specifies our choice of $A$.

Let $W_2 = W_2(\lambda L^2, p, \delta, H^{RW})$ be as in Lemma \ref{S7LStayInBand} and we set $W_4 = W_2$. Finally, we choose $M^{\mathsf{base, top}} = M^{\mathsf{base, top}}(L, A_1, B,p) > 0$ sufficiently large so that 
\begin{equation}\label{S7S2L3E3}
M^{\mathsf{base, top}} T^{1/2} \geq |p| + A_1 \Delta T^{1/2} + B \Delta T^{1/2}.
\end{equation}
The above two paragraphs specify $A, W_4$ and $M^{\mathsf{base, top}} $.\\

{\bf \raggedleft Step 2.} In this step we prove (\ref{S7ENoParDip}) for $T \geq W_4$, $A$ and $M^{\mathsf{base, top}} $ as in Step 1.

Let $\vec{x}', \vec{y}' \in \mathbb{R}^k$ be given by
\begin{equation}\label{S7S2L3E6}
\begin{split}
&x_i' = pT_0 - \lambda T^{1/2} (T_0/T)^2+ z_i \mbox{ and } y_i' = pT_1   - \lambda T^{1/2} (T_1/T)^2 + z_i \mbox{ for $i \in \llbracket 1, k \rrbracket$, where }\\
&z_i =     \Delta T^{1/2} \left( M^{\mathsf{top}} +  2(k - i) \cdot A_1 + (k - i) \cdot (B + |p|) \right).
\end{split}
\end{equation}
In (\ref{S7S2L3E6}) $A_1$ and $B$ are as in Step 1.

Observe that $x_i' \leq x_i$, $y_i' \leq y_i$ for $i \in \llbracket 1, k \rrbracket$. From Lemma \ref{MonCoup} and (\ref{RND}) we conclude that 
\begin{equation}\label{S7S2L3E7}
\begin{split}
&\mathbb{P}_{H,H^{RW}}^{1, k, T_0 ,T_1, \vec{x}, \vec{y},\infty,g} \Bigg{(} L_1(x) - p x +  \frac{x- T_0}{T_1 - T_0}\cdot \lambda T^{1/2} (T_1/T)^2 +  \\
&   \frac{T_1 - x}{T_1 - T_0} \cdot \lambda T^{1/2} (T_0/T)^2 > (A +M^{\mathsf{top}}) \Delta T^{1/2}  \mbox{ for some $x \in [T_0, T_1]$}\Bigg{)} \leq \frac{1}{ \mathbb{E}_{H^{RW}} \left[ W_H \right]} \times  \\
&\mathbb{E}_{H^{RW}} \Big[ W_H \cdot {\bf 1} \Big\{ \sup_{x \in [T_0, T_1]} \Big[ \ell_1(x) - p x +  \frac{x- T_0}{T_1 - T_0}\cdot \lambda T^{1/2} (T_1/T)^2 +  \\
&  \frac{T_1 - x}{T_1 - T_0} \cdot \lambda T^{1/2} (T_0/T)^2 \Big]  > (A +M^{\mathsf{top}}) \Delta T^{1/2} \Big\} \Big{]},
\end{split}
\end{equation}
where we write $\mathbb{P}_{H^{RW}}$ in place of $\mathbb{P}_{H^{RW}}^{1, k, T_0 ,T_1, \vec{x}', \vec{y}'}$, $\mathbb{E}_{H^{RW}}$ in place of $\mathbb{E}_{H^{RW}}^{1, k, T_0 ,T_1, \vec{x}', \vec{y}'}$ and $W_H$ in place of $W_{H}^{1, k, T_0 ,T_1,\infty, g} ({\ell}_{1}, \dots, {\ell}_{k})$ to simplify the notation. In addition, $(\ell_1, \dots, \ell_k)$ has distribution $\mathbb{P}_{H^{RW}}$. 

We claim that for $T \geq W_4$ we have 
\begin{equation}\label{S7S2L3E8}
\begin{split}
& \mathbb{E}_{H^{RW}} \left[ W_H \right] \geq (1 - \delta)^{k+1}. 
\end{split}
\end{equation}
We prove (\ref{S7S2L3E8}) in the next step. Here we assume its validity and conclude the proof of (\ref{S7ENoParDip}).\\

Using that $W_H \in [0,1]$, and that $\ell_1, \dots, \ell_k$ are independent under $\mathbb{P}_{H^{RW}}$ and have laws $\mathbb{P}_{H^{RW}}^{T_0 ,T_1, x_i', y_i'}$ for $i \in \llbracket 1, k \rrbracket$ we conclude that 
\begin{equation}\label{S7S2L3E9}
\begin{split}
&\mathbb{E}_{H^{RW}} \Big[ W_H \cdot {\bf 1} \Big\{ \sup_{x \in [T_0, T_1]} \Big[ \ell_1(x) - p x +  \frac{x- T_0}{T_1 - T_0}\cdot \lambda T^{1/2} (T_1/T)^2 +  \\
&  \frac{T_1 - x}{T_1 - T_0} \cdot \lambda T^{1/2} (T_0/T)^2 \Big]  > (A +M^{\mathsf{top}}) \Delta T^{1/2} \Big\} \Big{]} \leq \\
&\mathbb{P}_{H^{RW}}^{T_0, T_1, x_1', y_1'} \Big( \sup_{x \in [T_0, T_1]} \Big[ \ell(x) - p x +  \frac{x- T_0}{T_1 - T_0}\cdot \lambda T^{1/2} (T_1/T)^2 +  \\
&  \frac{T_1 - x}{T_1 - T_0} \cdot \lambda T^{1/2} (T_0/T)^2\Big]  > (A +M^{\mathsf{top}}) \Delta T^{1/2}  \Big{)}. 
\end{split}
\end{equation}
Using the shift-invariance of $\mathbb{P}_{H^{RW}}^{T_0, T_1,x,y}$ and the definition of $x_1', y_1'$ from (\ref{S7S2L3E6}) we get 
\begin{equation}\label{S7S2L3E10}
\begin{split}
&\mathbb{P}_{H^{RW}}^{T_0, T_1, x_1', y_1'} \Big( \sup_{x \in [T_0, T_1]} \Big[ \ell(x) - p x +  \frac{x- T_0}{T_1 - T_0}\cdot \lambda T^{1/2} (T_1/T)^2 +  \\
&  \frac{T_1 - x}{T_1 - T_0} \cdot \lambda T^{1/2} (T_0/T)^2\Big]  > (A +M^{\mathsf{top}}) \Delta T^{1/2}  \Big{)}= \\
&\mathbb{P}_{H^{RW}}^{T_0, T_1, pT_0 - \lambda T^{1/2} (T_0/T)^2, pT_1- \lambda T^{1/2} (T_0/T)^2} \Big( \sup_{x \in [T_0, T_1]} \Big[ \ell(x) - p x +  \frac{x- T_0}{T_1 - T_0}\cdot \lambda T^{1/2} (T_1/T)^2 +  \\
&  \frac{T_1 - x}{T_1 - T_0} \cdot \lambda T^{1/2} (T_0/T)^2\Big]  > (A +M^{\mathsf{top}}) \Delta T^{1/2} - z_1  \Big{)} \leq \\
&\mathbb{P}_{H^{RW}}^{T_0, T_1, pT_0 - \lambda T^{1/2} (T_1/T)^2, pT_1- \lambda T^{1/2} (T_0/T)^2} \Big( \sup_{x \in [T_0, T_1]} \Big[ \ell(x) - p x +  \frac{x- T_0}{T_1 - T_0}\cdot \lambda T^{1/2} (T_0/T)^2 +  \\
&  \frac{T_1 - x}{T_1 - T_0} \cdot \lambda T^{1/2} (T_0/T)^2\Big]  > A_1 \Delta T^{1/2} \Big{)} < \delta.
\end{split}
\end{equation}
We mention that in the first inequality we used the definition of $A$ from Step 1 and equation (\ref{S7S2L3E6}), while in the last inequality we used the definition of $A_1$ from Step 1 and the fact that $T_1 - T_0 = \Delta  T \geq T\geq W_4$. Equations (\ref{S7S2L3E7}), (\ref{S7S2L3E8}), (\ref{S7S2L3E9}) and (\ref{S7S2L3E10}) together imply (\ref{S7ENoParDip}) once we also utilize (\ref{S7S2L3E1}).\\

{\bf \raggedleft Step 3.} In this step we prove (\ref{S7S2L3E8}). Let us define the events 
$$E_i = \left\{ \sup_{x \in [T_0, T_1] } \left| \ell_i(x) - p x - z_i  \right|  \leq A_1 \Delta T^{1/2} \right\} \mbox{ for $i \in \llbracket 1, k \rrbracket$}.$$
Using that $\ell_1, \dots, \ell_k$ are independent under $\mathbb{P}_{H^{RW}}$ and have laws $\mathbb{P}_{H^{RW}}^{T_0 ,T_1, x_i', y_i'}$ for $i \in \llbracket 1, k \rrbracket$ we conclude
\begin{equation}\label{S7S2L3E11}
\begin{split}
&\mathbb{P}_{H^{RW}} \left( \cap_{i = 1}^k E_i  \right) = \prod_{i = 1}^k\mathbb{P}_{H^{RW}} \left(  E_i  \right) =  \prod_{i = 1}^k\mathbb{P}_{H^{RW}}^{T_0 ,T_1, x_i', y_i'} \left(  \sup_{x \in [T_0, T_1] } \left| \ell(x) - p x - z_i \right|  \leq A_1 \Delta T^{1/2} \right) \\
& = \mathbb{P}_{H^{RW}}^{T_0 ,T_1, pT_0 - \lambda T^{1/2} (T_0/T)^2 , pT_1 - \lambda T^{1/2} (T_1/T)^2} \left(  \sup_{x \in [T_0, T_1] } \left| \ell(x) - p x \right|  \leq A_1\Delta T^{1/2} \right)^k \geq (1- \delta)^{k},
\end{split}
\end{equation}
where in going from the first to the second line we used the shift invariance of $\mathbb{P}_{H^{RW}}^{T_0 ,T_1, x,y}$ and (\ref{S7S2L3E6}), while in the last inequality we used the definition of $A_1$ from Step 1 and the fact that $T \geq W_4$.

On the other hand, we have using (\ref{S7S2L3E6}) that on $\cap_{i = 1}^k E_i $ for $i \in \llbracket 1, k-1\rrbracket$ and $T \geq W_4$
\begin{equation*}
\begin{split}
&\ell_{i}(x) \geq p x + z_i -A_1 \Delta T^{1/2} \geq p x + z_{i+1} + A_1 \Delta T^{1/2} + (B + |p|) \Delta T^{1/2} \geq \\
& p (x+1) + z_{i+1} + A_1 \Delta T^{1/2} + B \Delta T^{1/2} \geq \ell_{i+1}(x+1) + B \Delta T^{1/2}.
\end{split}
\end{equation*}
Using (\ref{S7S2L3E6}), the fact that $g(j) - pj \leq - M^{\mathsf{base, top}} T^{1/2} \mbox{ for all $j \in \llbracket T_0, T_1 - 1 \rrbracket$}$, and the definition of  $M^{\mathsf{base, top}} $ from Step 1, we conclude that on $\cap_{i = 1}^k E_i $ for $T \geq W_4$ we have
\begin{equation*}
\begin{split}
&\ell_{k}(x) \geq p x + z_k -A_1 \Delta T^{1/2} \geq p (x +1) - M^{\mathsf{base, top}} T^{1/2} + B \Delta T^{1/2}  \geq g(x+1) + B \Delta T^{1/2}.
\end{split}
\end{equation*}
The last two inequalities, the monotonicity of $H$ (see Definition \ref{AssH}) and the definition of $W_H$ in (\ref{WH}) gives on $\cap_{i = 1}^k E_i $ 
\begin{equation}\label{S7S2L3E12}
W_H =  \exp \left( - \sum_{i = 1}^{k}  \sum_{ m = T_0}^{T_1-1} \hspace{-1mm}H (\ell_{i + 1}(m + 1) - \ell_{i}(m)) \right) \geq \exp \left( - k \Delta T H(-B \Delta T^{1/2}) \right) \geq 1-\delta,
\end{equation}
where $\ell_{k+1} = g$ and the last inequality used (\ref{S7S2L3E2}). Combining (\ref{S7S2L3E11}) and (\ref{S7S2L3E12}) proves (\ref{S7S2L3E8}).
\end{proof}

%
\subsection{Proof of Lemma \ref{LAccProb}}\label{Section7.3} In this section we prove Lemma \ref{LAccProb}, whose statement is recalled here as Lemma \ref{S7LAccProb} for the reader's convenience. In order to establish the result we will require four lemmas, which we proceed to state and prove.


\begin{lemma}\label{LStayUp} Fix $k \in \mathbb{N}$ and let $\mathfrak{L} = (L_1, \dots, L_k)$ have law $\mathbb{P}_{H,H^{RW},S}^{1, k, 0 ,T, \vec{x}, \vec{y},\infty,g}$ as in Definition \ref{Pfree}, where we assume that $H$ is as in Definition \ref{AssH}, while $H^{RW}$ satisfies the assumptions in Definition \ref{AssHR}. Let $A, M, p \in \mathbb{R}$ and $t \in (0,1/2]$ be given. Then we can find constants $U_1 = U_1(A,M,p,t,k) \in \mathbb{N}$ and $\epsilon = \epsilon(A,M,p,t,k) > 0$ so that the following holds. If $T \geq U_1$, $g \in Y^-(\llbracket 0, T \rrbracket) $, $S \subset \llbracket 0, T-1 \rrbracket$, $x_i \geq  M T^{1/2}$ and $y_i \geq p T + MT^{1/2}$ for $i = 1, \dots, k$ then 
\begin{equation}\label{EStayUp}
\mathbb{P}_{H,H^{RW},S}^{1, k, 0 ,T, \vec{x}, \vec{y},\infty,g} \left(L_i(x) - p x  \geq A T^{1/2}  \mbox{ for all $x \in [t T, (1-t)T]$ and $i \in \llbracket 1, k \rrbracket$}\right) \geq \epsilon.
\end{equation}
\end{lemma}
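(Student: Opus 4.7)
My plan proceeds in three stages: a reduction to an extremal configuration via monotone coupling, a construction of a favorable event using the KMT coupling together with the spread lemma for Brownian bridges, and a verification that on this event both the Boltzmann weight and the target event are controlled.

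First, I would invoke the grand monotone coupling of Lemma \ref{MonCoup}. The target event $E = \{L_i(x) - px \geq AT^{1/2}$ for all $x \in [tT,(1-t)T]$ and all $i \in \llbracket 1,k\rrbracket\}$ is monotone increasing in the curves $L_i$, so by part II of Lemma \ref{MonCoup} the probability in (\ref{EStayUp}) is minimized when $x_i = MT^{1/2}$, $y_i = pT+MT^{1/2}$, and $g \equiv -\infty$. It thus suffices to prove (\ref{EStayUp}) under these extremal data. Applying the Radon--Nikodym formula (\ref{RND}) together with $Z_{H,H^{RW},S}^{1,k,0,T,\vec x,\vec y,\infty,-\infty} \leq 1$ gives
$$\mathbb{P}_{H,H^{RW},S}^{1,k,0,T,\vec x,\vec y,\infty,-\infty}(E) \geq \mathbb{E}_{H^{RW}}^{1,k,0,T,\vec x,\vec y}\bigl[W_{H,S}^{1,k,0,T,\infty,-\infty}(\ell_1,\dots,\ell_k) \cdot {\bf 1}_E\bigr],$$
and with $\ell_0 = \infty$, $\ell_{k+1} = -\infty$ the weight $W_{H,S}$ reduces to $\prod_{i=1}^{k-1}\prod_{m\in S} e^{-H(\ell_{i+1}(m+1)-\ell_i(m))}$.

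Second, I would build a favorable event. Fix large $D = D(k,H,p)$ and small $\eta \in (0,1/2)$, to be chosen later, and set $c_i = A-M+2+(k-i)D$. Pick piecewise linear profiles $f_i \in C([0,1])$ with $f_i(0) = f_i(1) = 0$, $f_i \equiv c_i$ on $[t/2,1-t/2]$, and let $U_i$ be the $\eta$-neighborhood of $f_i$ in $C([0,1])$. By Lemma \ref{Spread}, $\mathbb{P}(B^{\sigma_p}_i \in U_i) > 0$ for each $i$, so $\epsilon_0 := \prod_i \mathbb{P}(B^{\sigma_p}_i \in U_i) > 0$ whenever the $B^{\sigma_p}_i$ are independent. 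Taking $k$ independent copies of the KMT probability space of Proposition \ref{KMT} at parameter $z = pT$ yields independent Brownian bridges $B^{\sigma_p}_i$ and random walk bridges $\ell^{(T,pT)}_i$ of law $\mathbb{P}^{0,T,0,pT}_{H^{RW}}$; the curves $\ell_i := MT^{1/2} + \ell^{(T,pT)}_i$ then have joint law $\mathbb{P}_{H^{RW}}^{1,k,0,T,\vec x,\vec y}$ under the extremal data. Let $G$ be the event $\{\Delta_i(T,pT) \leq T^{1/2}$ for all $i\}$; by (\ref{S7Chebyshev}) and independence, $\mathbb{P}(G^c) \leq k C e^{-aT^{1/2}+\alpha\log T}$, which vanishes as $T\to\infty$. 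Hence $F := G \cap \bigcap_i \{B^{\sigma_p}_i \in U_i\}$ satisfies $\mathbb{P}(F) \geq \epsilon_0/2$ for $T$ large.

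Third, I would verify that on $F$ both ${\bf 1}_E = 1$ and $W_{H,S} \geq 1/2$ hold. For $x \in [tT,(1-t)T]$, the KMT estimate gives $|\ell_i(x) - px - MT^{1/2} - T^{1/2} B^{\sigma_p}_{i,x/T}| \leq T^{1/2}$ and $B^{\sigma_p}_{i,x/T} \in [c_i-\eta, c_i+\eta]$, so $\ell_i(x) \geq px + (M+c_i-\eta-1)T^{1/2} \geq px + AT^{1/2}$ and $E$ occurs. Similarly, $\ell_i(x) - \ell_{i+1}(x+1) \geq (D-2\eta-2)T^{1/2} - |p| \geq (D/2)T^{1/2}$ for $D$ and $T$ large. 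Since $H$ is increasing,
$$W_{H,S}(\ell_1,\dots,\ell_k) \geq \exp\bigl(-(k-1)T\cdot H(-(D/2)T^{1/2})\bigr) \geq 1/2$$
for $T$ large enough, where the final bound invokes $\lim_{x\to\infty} x^2 H(-x) = 0$ from Definition \ref{AssH} to make $T\cdot H(-(D/2)T^{1/2}) \to 0$. Combining yields $\mathbb{E}_{H^{RW}}[W_{H,S}\cdot{\bf 1}_E] \geq \epsilon_0/4$ and hence (\ref{EStayUp}) with $\epsilon = \epsilon_0/4$ and a suitable $U_1$.

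The main technical difficulty lies in coordinating three competing scales. The separation $(D/2)T^{1/2}$ between consecutive curves must absorb both the KMT error (of order $T^{1/2}$) and the Brownian bridge oscillation $\eta T^{1/2}$, while also making the total interaction cost $T \cdot H(-(D/2)T^{1/2})$ vanish; and the spread probability $\epsilon_0$, which depends on $D,\eta$ and on $A,M,p,t,k$, must remain bounded below uniformly in $T$. The decay condition $\lim_{x\to\infty} x^2 H(-x) = 0$ in Definition \ref{AssH} and the subgaussian KMT bound of Proposition \ref{KMT} just suffice; the bound is uniform in $S \subset \llbracket 0,T-1\rrbracket$ because only the crude estimate $|S| \leq T$ is needed.
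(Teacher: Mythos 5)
Your high-level architecture is correct and matches the paper's: monotone coupling to an extremal configuration, then KMT plus the spread lemma to exhibit a favorable event of uniform positive probability on which both the target event holds and the Boltzmann weight $W_{H,S}$ is bounded below. However, there is a genuine gap in the third step, and it traces back to your choice of extremal data in the first step.

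You reduce to the case $x_i = MT^{1/2}$, $y_i = pT + MT^{1/2}$ for \emph{all} $i$, so every curve $\ell_i$ starts and ends at the same height. You then pick profiles $f_i$ with $f_i(0) = f_i(1) = 0$ that separate only in the middle third of the time window. On your favorable event $F$, the separation $\ell_i(m) - \ell_{i+1}(m+1) \geq (D/2)T^{1/2}$ therefore holds only for $m$ with $m/T \in [t/2, 1-t/2]$ (roughly). For $m$ near $0$ or $T$, the profiles $f_i$, $f_{i+1}$ are both within $O(1/T)$ of each other, and the only control you have from the KMT and spread errors is
\begin{equation*}
\ell_{i+1}(m+1) - \ell_i(m) \;\leq\; p + T^{1/2}\bigl( f_{i+1}((m+1)/T) - f_i(m/T) + 2\eta + 2 \bigr),
\end{equation*}
which can be as large as roughly $(2\eta + 2)T^{1/2}$ when $m$ is close to $0$. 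Since $\lim_{x\to\infty} H(x) = \infty$, each such $m$ can contribute a factor $\exp(-H(cT^{1/2}))$ to $W_{H,S}$ with an exponent that diverges, and there are order $tT$ such $m$; so on $F$ you have no lower bound on $W_{H,S}$, and in fact the product can tend to zero much faster than any polynomial. Your displayed inequality $W_{H,S} \geq \exp(-(k-1) T\, H(-(D/2)T^{1/2})) \geq 1/2$ assumes the $(D/2)T^{1/2}$ separation uniformly over all $m \in S$, which your construction does not provide.

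The paper's Step 2/3 avoids this by choosing the extremal entrance and exit data \emph{already separated by order $T^{1/2}$}: it sets $x_i' = -z_i$, $y_i' = pT - z_i$ with $z_i \sim (i-1)\cdot C\cdot T^{1/2}$, and then uses a \emph{single} spread profile $h$ for all curves, so that on the favorable event $\ell_i - \ell_{i+1}$ is bounded below by $\approx (z_{i+1} - z_i) - O(T^{1/2})$ uniformly over $[0,T]$, not just on the middle third. To repair your proof along its own lines, you should instead reduce to $x_i' = (M - (i-1)C)T^{1/2}$, $y_i' = pT + (M - (i-1)C)T^{1/2}$ for a sufficiently large $C$, and accordingly use profiles $f_i$ with $f_i(0) = f_i(1) = -(i-1)C$ so that the bridges are separated by $\gtrsim CT^{1/2}$ on the whole interval. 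With that modification, and $C$ large enough to absorb both the KMT error and the $\eta$ oscillation, the uniform bound on $\ell_{i+1}(m+1) - \ell_i(m)$ and the decay $\lim_{x\to\infty} x^2 H(-x) = 0$ do give $W_{H,S} \geq 1/2$ as you intended, and the rest of your argument goes through.
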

\begin{proof} 

For clarity we split the proof into three steps.\\

{\bf \raggedleft Step 1.} In this step we introduce some notation and prove the lemma modulo a certain estimate, see (\ref{XOP5}), which will be established in the next steps.

Let $B_1 = B_1(k, H) > 0$ be sufficiently large so that for all $T \in \mathbb{N}$
\begin{equation}\label{XOP1}
\exp \left( - (k-1) T H \left( - B_1 \sqrt{T}  \right) \right) \geq 1/2.
\end{equation}
Note that such a choice of $B_1$ is possible, since by Definition \ref{AssH} we have that $\lim_{x \rightarrow \infty} x^2 H(-x) = 0$. 

Let $\vec{x}', \vec{y}' \in \mathbb{R}^k$ be defined via
\begin{equation}\label{XOP2}
\begin{split}
&x_i' = - z_i \mbox{ and } y_i' = pT - z_i \mbox{ for $i \in \llbracket 1, k \rrbracket$, where } z_i = \sqrt{T} \left( |M| + (i-1) \cdot (B_1 + 6  + |p|) \right).
\end{split}
\end{equation}
Since $x_i' \leq x_i$ and $y_i' \leq y_i$ for $i \in \llbracket 1, k \rrbracket$ we have from Lemma \ref{MonCoup} that 
\begin{equation}\label{XOP3}
\begin{split}
&\mathbb{P}_{H,H^{RW},S}^{1, k, 0 ,T, \vec{x}, \vec{y},\infty,g} \left(L_i(x) - p x  \geq A T^{1/2}  \mbox{ for all $x \in [t T, (1-t)T]$ and $i \in \llbracket 1, k \rrbracket$}\right) \geq \\
&\mathbb{P}_{H,H^{RW},S}^{1, k, 0 ,T, \vec{x}', \vec{y}',\infty, -\infty} \left(L_i(x) - p x  \geq A T^{1/2}  \mbox{ for all $x \in [t T, (1-t)T]$ and $i \in \llbracket 1, k \rrbracket$}\right)  = \\
& \frac{\mathbb{E}_{H^{RW}} \left[ W_{H,S} \cdot {\bf 1} \{ \ell_i(x) - p x  \geq A T^{1/2}  \mbox{ for all $x \in [t T, (1-t)T]$ and $i \in \llbracket 1, k \rrbracket$} \} \right] }{\mathbb{E}_{H^{RW}} \left[ W_{H,S} \right]},
\end{split}
\end{equation}
where we write $\mathbb{P}_{H^{RW}}$ in place of $\mathbb{P}_{H^{RW}}^{1, k, T_0 ,T_1, \vec{x}', \vec{y}'}$, $\mathbb{E}_{H^{RW}}$ in place of $\mathbb{E}_{H^{RW}}^{1, k, T_0 ,T_1, \vec{x}', \vec{y}'}$ and $W_{H,S}$ in place of $W_{H,S}^{1, k, T_0 ,T_1,\infty, -\infty} ({\ell}_{1}, \dots, {\ell}_{k})$ to simplify the notation. In addition, $(\ell_1, \dots, \ell_k)$ has distribution $\mathbb{P}_{H^{RW}}$. We mention that in deriving the last equality in (\ref{XOP4}) we used (\ref{RND}).

We claim that there exists $\epsilon  = \epsilon(A, M, p ,t, k) > 0$ and $U_1 = U_1(A,M,p,t,k) \in \mathbb{N}$ such that for $T \geq U_1$ we have
\begin{equation}\label{XOP4}
\begin{split}
&\mathbb{E}_{H^{RW}} \left[ W_{H,S} \cdot {\bf 1} \{ \ell_i(x) - p x  \geq A T^{1/2}  \mbox{ for all $x \in [t T, (1-t)T]$ and $i \in \llbracket 1, k \rrbracket$} \} \right]  \geq \epsilon.
\end{split}
\end{equation}
Note that (\ref{XOP3}), (\ref{XOP4}) and the fact that $W_{H,S} \in (0,1]$ together imply (\ref{EStayUp}). Thus we have reduced our problem to proving (\ref{XOP4}).\\

{\bf \raggedleft Step 2.} In this step we prove (\ref{XOP4}). Let $h: [0,1] \rightarrow \mathbb{R}$ be defined by
\begin{equation}\label{XOP5}
\begin{split}
h(x) = \begin{cases} (2U/t) x&\mbox{ if $x \in [0,t/2]$} \\ U &\mbox{ if $x \in [t/2, 1- t/2]$} \\  (2U/t)(1-x) &\mbox{ if $x \in [1-t/2,1]$},  \end{cases}
\end{split}
\end{equation}
where $U =|M| + |A|+ 6k  + k \cdot (B_1 + |p|)$. We claim that there exist $N_1 = N_1(p, H^{RW}, h)  \in \mathbb{N}$ and $\tilde{\delta} = \tilde{\delta}(p, H^{RW}, h)> 0$ such that for $T  \geq N_1$ we have 
\begin{equation}\label{XOP6}
\begin{split}
&\mathbb{P}_{H^{RW}}^{0 , T, 0, pT }\left( \sup_{x \in [0,T] } \left| \ell(x) - p x - \sqrt{T} h(x/T)  \right|  \leq 2\sqrt{T} \right) \geq \tilde{\delta}.
\end{split}
\end{equation}
We prove (\ref{XOP6}) in Step 3 below. Here we assume its validity and finish the proof of  (\ref{XOP4}).\\

We take $U_1  \in \mathbb{N}$ sufficiently large so that $U_1 \geq N_1$ and for $T \geq U_1$ we have $\sqrt{T} \geq 2U/t$. We also fix $\epsilon = (1/2)\tilde{\delta}^k$ and proceed to prove (\ref{XOP4}) with this choice of $U_1$ and $\epsilon$. Define the events
$$E_i = \left\{ \sup_{x \in [0, T] } \left| \ell_i(x) - p x +z_i - \sqrt{T} h(x/T)  \right|  \leq 2\sqrt{T} \right\} \mbox{ for $i \in \llbracket 1, k \rrbracket$}.$$
Using that $\ell_1, \dots, \ell_k$ are independent under $\mathbb{P}_{H^{RW}}$ and have laws $\mathbb{P}_{H^{RW}}^{0 , T, x_i', y_i'}$ for $i \in \llbracket 1, k \rrbracket$ we conclude
\begin{equation}\label{XOP7}
\begin{split}
&\mathbb{P}_{H^{RW}} \left( \cap_{i = 1}^k E_i  \right) =  \prod_{i = 1}^k\mathbb{P}_{H^{RW}}^{0 ,T, x_i', y_i'} \hspace{-1mm}\left(  \sup_{x \in [0, T] } \left| \ell(x) - p x + z_i - \sqrt{T} h(x/T)  \right|  \leq 2\sqrt{T}  \right) = \\
&\mathbb{P}_{H^{RW}}^{0, T, 0, pT} \left(  \sup_{x \in [0, T] }\left| \ell(x) - p x - \sqrt{T} h(x/T)  \right|  \leq 2\sqrt{T} \right)^k \geq \tilde{\delta}^{k},
\end{split}
\end{equation}
where in going from the first to the second line we used the shift invariance of $\mathbb{P}_{H^{RW}}^{T_0 ,T_1, x,y}$ and (\ref{XOP2}), while in the last inequality we used (\ref{XOP6}) and the fact that $T \geq U_1$.

On the other hand, we have using (\ref{XOP2}) that on $\cap_{i = 1}^k E_i $ for $i \in \llbracket 1, k-1\rrbracket$ and $T \geq U_1$
\begin{equation*}
\begin{split}
&\ell_{i}(x) \geq \sqrt{T} h(x/T) +  p x - z_i - 2 \sqrt{T} \geq\sqrt{T} h(x/T)  + p x - z_{i+1} + 4 \sqrt{T} + (B + |p|) \sqrt{T}  \\
&\geq h((x + 1)/T)  + p (x+1) - z_{i+1} + 3 \sqrt{T} + B \sqrt{T} \geq \ell_{i+1}(x+1) + B \sqrt{T}.
\end{split}
\end{equation*}
The above inequalities, the monotonicity of $H$, the fact that $H \geq 0$ (see Definition \ref{AssH}) and the definition of $W_{H,S}$ in (\ref{WH}) gives on $\cap_{i = 1}^k E_i $ 
\begin{equation}\label{XOP8}
W_{H,S} =  \exp \left( - \sum_{i = 1}^{k-1}  \sum_{ m \in S} \hspace{-1mm}H (\ell_{i + 1}(m + 1) - \ell_{i}(m)) \right) \geq \exp \left( - (k-1)T H(-B \sqrt{T}) \right) \geq 1/2,
\end{equation}
where in the last inequality used (\ref{XOP1}). Combining (\ref{XOP7}) and (\ref{XOP8}) gives for $T \geq U_1$
\begin{equation}\label{XOP9}
\begin{split}
&\mathbb{E}_{H^{RW}} \left[ W_{H,S} \cdot {\bf 1} \{ \ell_i(x) - p x  \geq A T^{1/2}  \mbox{ for all $x \in [t T, (1-t)T]$ and $i \in \llbracket 1, k \rrbracket$} \} \right] \geq \\
&\mathbb{E}_{H^{RW}} \left[ W_{H,S} \cdot {\bf 1}_{\cap_{i = 1}^k E_i } \right] \geq \tilde{\delta}^{k}/2,
\end{split}
\end{equation}
which implies (\ref{XOP4}) with the above choice of $\epsilon$ and $U_1$. We mention that the first inequality in (\ref{XOP9}) used that on $\cap_{i = 1}^k E_i$ we have for $x \in [t, (1-t)]$ and $i \in \llbracket 1, k \rrbracket$ that 
$$\ell_i(x) - px \geq \sqrt{T} U - z_i \geq \sqrt{T} \left( |A| +  B_1 + |p| + 6  \right) \geq \sqrt{T}A.$$

{\bf \raggedleft Step 3.} In this step we prove (\ref{XOP6}). Let $\sigma_p$ be as in Definition \ref{AssHR} and $B^{\sigma_p}$ a Brownian bridge on $[0,1]$ with diffusion parameter $\sigma_p$. From Lemma \ref{Spread} we can find $\tilde{\delta} > 0$ such that  
\begin{equation}\label{XOP10}
\mathbb{P} \left( \sup_{x \in [0,1]} \left| B^{\sigma_p}_x - h(x) \right| < 1 \right) > 2 \tilde{\delta}.
\end{equation}
This specifies our choice of $\tilde{\delta}$. 

From Proposition \ref{KMT} we know that we can find constants $0 < C, a, \alpha < \infty$ (depending on $p$ and $H^{RW}$) and a probability space with measure $\mathbb{P}$ on which are defined a Brownian bridge $B^{\sigma_p}$ with diffusion parameter $\sigma_p$ and a family of random curves $\ell^{(T,z)}$ on $[0, T]$, which is parametrized by $z \in \mathbb{R}$ such that $\ell^{(T,z)}$  has law $\mathbb{P}^{0,T,0,z}_{H^{RW}}$ and also (\ref{S7Chebyshev}) holds. Let $N_1 = N_1(p,\tilde{\delta}, H^{RW})  \in \mathbb{N}$ be sufficiently large so that for $T \geq N_1$ we have
\begin{equation}\label{XOP11}
 C e^{-a T^{1/2}  } e^{ \alpha (\log T)} < \tilde{\delta}.
\end{equation}
This specifies our choice of $N_1$ and we proceed to prove (\ref{XOP6}). \\

By the triangle inequality we have for $T \geq N_1$
\begin{equation*}
\begin{split}
&\mathbb{P}_{H^{RW}}^{0 , T, 0, pT }\left( \sup_{x \in [0, T] } \left| \ell(x) - p x - \sqrt{T} h(x/T)  \right|  \leq 2\sqrt{T} \right)  = \\
&\mathbb{P}\left( \sup_{x \in [0, T] } \left| \ell^{(T, pT )}(x) - p x - \sqrt{T} h(x/T)  \right|  \leq 2\sqrt{T} \right) \geq   \mathbb{P} \left( \sup_{x \in [0,1]} \left| B^{\sigma_p}_x - h(x) \right| < 1 \right)  - \\
&   \mathbb{P}\left( \Delta(T, pT)  \geq  \sqrt{T} \right)  >  2\tilde{\delta} -  \mathbb{P}\left( \Delta(T, pT)  \geq  \sqrt{T} \right) > \tilde{\delta},
\end{split}
\end{equation*}
where in the next to last inequality we used (\ref{XOP10}) and in the last inequality we used (\ref{S7Chebyshev}) and (\ref{XOP11}). The last inequality gives (\ref{XOP6}).
\end{proof}


\begin{lemma}\label{LStayPut} Fix $k \in \mathbb{N}$ and let $\mathfrak{L} = (L_1, \dots, L_k)$ have law $\mathbb{P}_{H,H^{RW},S}^{1, k, 0 ,T, \vec{x}, \vec{y},\infty,g}$ as in Definition \ref{Pfree}, where we assume that $H$ is as in Definition \ref{AssH}, while $H^{RW}$ satisfies the assumptions in Definition \ref{AssHR}. Let $ \epsilon > 0$, $M, p \in \mathbb{R}$ be given. Then we can find constants $U_2 = U_2(M, \epsilon,p ,k) \in \mathbb{N}$ and $B = B(M, \epsilon, p ,k) \in \mathbb{R}$ so that the following holds. If $T \geq U_2$, $g \in Y^-(\llbracket 0, T \rrbracket) $, $S \subset \llbracket 0, T-1 \rrbracket$, $x_i \leq  M T^{1/2}$, $y_i \leq p T + MT^{1/2}$ for $i = 1, \dots, k$ and $g(r) \leq r p + M T^{1/2}$ for $r \in \llbracket 0, T\rrbracket$ then 
\begin{equation}\label{EStayPut}
\mathbb{P}_{H,H^{RW},S}^{1, k, 0 ,T, \vec{x}, \vec{y},\infty,g} \left(  L_i(x) - p x  \geq B T^{1/2} \mbox{ for some $x \in [0,T]$ and some $i \in \llbracket 1, k \rrbracket$}\right) \leq \epsilon.
\end{equation}
\end{lemma}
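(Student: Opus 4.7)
The plan is to mirror the structure of the proof of Lemma \ref{LStayUp}: reduce to worst-case boundary data via Lemma \ref{MonCoup}, convert the $(H,H^{RW},S)$-probability into a ratio of expectations under independent $H^{RW}$ bridges using the Radon--Nikodym identity \eqref{RND}, and bound the numerator and denominator separately. The new twist relative to Lemma \ref{LStayUp} is that we must simultaneously control the top of the ensemble and cope with a nontrivial bottom barrier $g$.

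First, I would apply Lemma \ref{MonCoup} with a constant $C = C(p,k,H,H^{RW})$ (to be fixed later) to replace the boundary data by
$$x_i'' = MT^{1/2} + (k+1-i) C T^{1/2}, \quad y_i'' = pT + MT^{1/2} + (k+1-i) C T^{1/2}, \quad g''(r) = pr + MT^{1/2}.$$
Since $x_i \leq x_i''$, $y_i \leq y_i''$ and $g \leq g''$, the modified ensemble stochastically dominates the original one. Adopting the notation of the proof of Lemma \ref{LStayUp} (writing $\mathbb{P}_{H^{RW}}, \mathbb{E}_{H^{RW}}, W_{H,S}$ for the free random walk bridge law and Boltzmann weight with respect to $\vec{x}'', \vec{y}'', g''$), the Radon--Nikodym identity \eqref{RND} gives, for $E = \{\sup_{i,x} (L_i(x) - px) \geq BT^{1/2}\}$,
$$\mathbb{P}_{H,H^{RW},S}^{1,k,0,T,\vec{x}'',\vec{y}'',\infty,g''}(E) \leq \frac{\mathbb{P}_{H^{RW}}(E)}{\mathbb{E}_{H^{RW}}[W_{H,S}]}.$$
Each free bridge $\ell_i$ runs between endpoints at height $MT^{1/2} + (k+1-i) C T^{1/2}$ above the slope-$p$ line. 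Applying Lemma \ref{S7LStayInBand} with parameter $\epsilon' > 0$ (to be chosen) yields a constant $A_1 = A_1(\epsilon', p, H^{RW})$ and, after a union bound over $i$, $\mathbb{P}_{H^{RW}}(E) \leq k\epsilon'$ provided $B \geq M + kC + A_1$.

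The main work is the denominator lower bound. Applying Lemma \ref{S7LStayInBand} once more with parameter $1/(2k)$ yields a constant $A_0 = A_0(p, H^{RW})$ such that the event
$$F = \{\sup_{x \in [0,T]} |\ell_i(x) - (px + MT^{1/2} + (k+1-i)CT^{1/2})| \leq A_0 T^{1/2} \text{ for each } i \in \llbracket 1, k\rrbracket \}$$
satisfies $\mathbb{P}_{H^{RW}}(F) \geq 1/2$ for $T$ sufficiently large. A short computation using the telescoping of the shifts shows that on $F$,
$$\ell_{i+1}(m+1) - \ell_i(m) \leq p - CT^{1/2} + 2A_0 T^{1/2} \quad \text{for } i \in \llbracket 1, k-1\rrbracket,$$
and analogously $g''(m+1) - \ell_k(m) \leq p - CT^{1/2} + A_0 T^{1/2}$. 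Fixing $C \geq 4A_0$ makes the argument of every $H$-factor at most $-CT^{1/2}/4$ once $T$ is large. The hypothesis $\lim_{x \to \infty} x^2 H(-x) = 0$ from Definition \ref{AssH} then implies $H(-CT^{1/2}/4) = o(T^{-1})$, so that $\sum_{i=1}^{k} \sum_{m \in S} H(\ell_{i+1}(m+1) - \ell_i(m)) \to 0$ as $T \to \infty$. Thus $W_{H,S} \geq 1/2$ on $F$ for all sufficiently large $T$, and therefore $\mathbb{E}_{H^{RW}}[W_{H,S}] \geq 1/4$. Finally, choosing $\epsilon' = \epsilon/(4k)$ and $B = M + kC + A_1(\epsilon/(4k), p, H^{RW})$ produces the desired bound $\mathbb{P}_{H,H^{RW},S}(E) \leq \epsilon$ for all $T \geq U_2$, where $U_2$ absorbs the two lower bounds on $T$ from Lemma \ref{S7LStayInBand} and the threshold beyond which the $H$-mass estimate kicks in.

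The main obstacle is tuning the shift constant $C$ against the tolerance $A_0$ in the definition of $F$: the shifts must be large enough that the typical $\Theta(T^{1/2})$ fluctuations of the free bridges do not destroy the pairwise separation, while simultaneously ensuring that summing $H(-\Theta(T^{1/2}))$ over $\Theta(T)$ sites in $S$ yields $o(1)$ contribution. This is precisely where the quantitative decay $x^2 H(-x) \to 0$ from Definition \ref{AssH} enters, and the same condition powers the analogous bounds in the proofs of Lemmas \ref{S7LNoDip}, \ref{S7LHighBottom}, and \ref{S7LNoParDip}.
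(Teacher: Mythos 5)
Your proposal is correct and follows essentially the same route as the paper's Lemma~\ref{LStayPut} proof: raise the boundary data (entrance values, exit values, and bottom bounding curve) to the worst case via Lemma~\ref{MonCoup}, pass to the free $H^{RW}$ bridge measure through the Radon--Nikodym identity~\eqref{RND}, bound the numerator by the bare $\mathbb{P}_{H^{RW}}$-probability of a high excursion (union bound plus Lemma~\ref{S7LStayInBand}), and lower-bound the normalizing constant by exhibiting an event of uniformly positive probability on which the curves stay separated by $\Theta(T^{1/2})$ so that the Boltzmann weight is close to $1$ using $\lim_{x\to\infty}x^2H(-x)=0$. The only cosmetic deviation is that the paper fixes the separation constant $B_1$ once and for all so that $\exp(-kT\,H(-B_1\sqrt{T}))\ge1-\delta$ holds for every $T$, whereas you fix $C$ and argue $kT\,H(-CT^{1/2}/4)=o(1)$ for large $T$; both are valid and the resulting thresholds are absorbed into $U_2$ in the same way.
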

\begin{proof} For clarity we split the proof into three steps. \\

{\bf \raggedleft Step 1.} In this step we specify the constants $U_2$ and $B$ as in the statement of the lemma.

Let $\delta = \delta(k, \epsilon) \in (0,1) $ be sufficiently small so that
\begin{equation}\label{YOP1}
 k (1- \delta)^{-k - 1} \delta < \epsilon.
\end{equation}
Next, let $A_1 = A(\delta, p, H^{RW}) > 0$ be as in Lemma \ref{S7LStayInBand} for $p, H^{RW}$ as in the present lemma and $\delta$ as above. In addition, let $B_1 = B_1(k, \delta, H) > 0$ be sufficiently large so that for $T \in \mathbb{N}$  
\begin{equation}\label{YOP2}
\exp \left( - k T H \left( - B_1 T^{1/2}  \right) \right) \geq 1- \delta.
\end{equation}
Note that such a choice of $B_1$ is possible since by Definition \ref{AssH} we have that $\lim_{x \rightarrow \infty} x^2 H(-x) = 0$. 

With the above data we let $B = |M| + (2k + 1) \cdot A_1 + (k + 1) \cdot (B_1 + |p|)$ and $U_2 = W_1( B , p, \delta ,H^{RW}) $ as in Lemma \ref{S7LStayInBand}. This specifies our choice of $U_2$ and $B$.\\

{\bf \raggedleft Step 2.} We assume throughout that $T \geq U_2$ as in Step 1. Let $\vec{x}', \vec{y}' \in \mathbb{R}^k$ be given by
\begin{equation}\label{YOP3}
\begin{split}
&x_i' =  z_i \mbox{ and } y_i' = pT + z_i \mbox{ for $i \in \llbracket 1, k \rrbracket$, where }\\
&z_i = T^{1/2} \left(|M|+ 2(k-i + 1) \cdot A_1 + (k-i + 1) \cdot (B_1 + |p|) \right).
\end{split}
\end{equation}
and observe that $x_i \leq x_i'$, $y_i \leq y_i'$ for $i \in \llbracket 1, k \rrbracket$. In view of the latter inequalities and Lemma \ref{MonCoup} we conclude that 
\begin{equation}\label{YOP4}
\begin{split}
&\mathbb{P}_{H,H^{RW},S}^{1, k, 0 ,T, \vec{x}, \vec{y},\infty,g} \left(  L_i(x) - p x  \geq B T^{1/2} \mbox{ for some $x \in [0,T]$ and some $i \in \llbracket 1, k \rrbracket$}\right) \leq \\
&\mathbb{P}_{H,H^{RW},S}^{1, k, 0 ,T, \vec{x}', \vec{y}',\infty,g} \left(  L_i(x) - p x  \geq B T^{1/2} \mbox{ for some $x \in [0,T]$ and some $i \in \llbracket 1, k \rrbracket$}\right).
\end{split}
\end{equation}

In addition, from (\ref{RND}) we have
\begin{equation}\label{YOP5}
\begin{split}
&\mathbb{P}_{H,H^{RW},S}^{1, k, 0 ,T, \vec{x}', \vec{y}',\infty,g} \left(  L_i(x) - p x  \geq B T^{1/2} \mbox{ for some $x \in [0,T]$ and some $i \in \llbracket 1, k \rrbracket$}\right) = \\
& \frac{\mathbb{E}_{H^{RW}} \left[ W_{H,S} \cdot {\bf 1} \{ \ell_i(x) - p x  \geq B T^{1/2} \mbox{ for some $x \in [0,T]$ and some $i \in \llbracket 1, k \rrbracket$} \} \right] }{\mathbb{E}_{H^{RW}} \left[ W_{H,S} \right]},
\end{split}
\end{equation}
where we write $\mathbb{P}_{H^{RW}}$ in place of $\mathbb{P}_{H^{RW}}^{1, k, T_0 ,T_1, \vec{x}', \vec{y}'}$, $\mathbb{E}_{H^{RW}}$ in place of $\mathbb{E}_{H^{RW}}^{1, k, T_0 ,T_1, \vec{x}', \vec{y}'}$ and $W_{H,S}$ in place of $W_{H,S}^{1, k, T_0 ,T_1,\infty, g} ({\ell}_{1}, \dots, {\ell}_{k})$ to simplify the notation. In addition, $(\ell_1, \dots, \ell_k)$ has distribution $\mathbb{P}_{H^{RW}}$. 

We claim that 
\begin{equation}\label{YOP6}
\begin{split}
&\mathbb{E}_{H^{RW}} \left[ W_{H,S} \right] \geq (1 - \delta)^{k+1} \mbox{ and } \\
&\mathbb{E}_{H^{RW}} \left[ W_{H,S} \cdot {\bf 1} \{ \ell_i(x) - p x  \geq B T^{1/2} \mbox{ for some $x \in [0,T]$ and some $i \in \llbracket 1, k \rrbracket$} \} \right]  \leq k\delta.
\end{split}
\end{equation}
Observe that (\ref{YOP4}), (\ref{YOP5}) and (\ref{YOP6}) together imply (\ref{EStayPut}) in view of (\ref{YOP1}). Thus we have reduced the proof of the lemma to establishing the two inequalities in (\ref{YOP6}). We establish the second in this step, and postpone the first to the next step.\\

Using that $W_{H,S} \in [0,1]$, and that $\ell_1, \dots, \ell_k$ are independent under $\mathbb{P}_{H^{RW}}$ and have laws $\mathbb{P}_{H^{RW}}^{0 ,T, x_i', y_i'}$ for $i \in \llbracket 1, k \rrbracket$ we conclude that 
\begin{equation}\label{YOP7}
\begin{split}
&\mathbb{E}_{H^{RW}} \left[ W_{H,S} \cdot {\bf 1} \{ \ell_i(x) - p x  \geq B T^{1/2} \mbox{ for some $x \in [0,T]$ and some $i \in \llbracket 1, k \rrbracket$} \} \right] \leq \\
&\sum_{i = 1}^k\mathbb{P}^{0,T, x_i', y_i'}_{H^{RW}} \left( {\bf 1} \{ \ell(x) - p x  \geq B T^{1/2} \mbox{ for some $x \in [0,T]$} \} \right) \leq \\
&\sum_{i = 1}^k\mathbb{P}^{0,T, 0, pT}_{H^{RW}} \left( {\bf 1} \{ \ell(x) - p x  \geq B T^{1/2} - z_i \mbox{ for some $x \in [0,T]$} \} \right) \leq \\
&\sum_{i = 1}^k\mathbb{P}^{0,T, 0, pT}_{H^{RW}} \left( {\bf 1} \{ \ell(x) - p x  \geq T^{1/2}(A_1 + B_1 + |p|)  \mbox{ for some $x \in [0,T]$} \} \right) \leq k \delta,
\end{split}
\end{equation}
where in going from the second to the third line we used the shift invariance of $\mathbb{P}_{H^{RW}}^{T_0 ,T_1, x, y}$ and (\ref{YOP3}), and in going from the third to the fourth line we used the definition of $B$ in Step 1 and (\ref{YOP3}). The last inequality in (\ref{YOP7}) used the definition of $A_1$ and $\delta$ from Step 1. Equation (\ref{YOP7})  implies the second line in (\ref{YOP6}).\\

{\bf \raggedleft Step 3.} In this step we prove the first line in (\ref{YOP6}). Let us define the events 
$$E_i = \left\{ \sup_{x \in [T_0, T_1] } \left| \ell_i(x) - p x - z_i  \right|  \leq A_1 T^{1/2} \right\} \mbox{ for $i \in \llbracket 1, k \rrbracket$}.$$
Using that $\ell_1, \dots, \ell_k$ are independent under $\mathbb{P}_{H^{RW}}$ and have laws $\mathbb{P}_{H^{RW}}^{0 ,T, x_i', y_i'}$ for $i \in \llbracket 1, k \rrbracket$ we conclude
\begin{equation}\label{YOP8}
\begin{split}
&\mathbb{P}_{H^{RW}} \left( \cap_{i = 1}^k E_i  \right) = \prod_{i = 1}^k\mathbb{P}_{H^{RW}} \left(  E_i  \right) =  \prod_{i = 1}^k\mathbb{P}_{H^{RW}}^{0 ,T, x_i', y_i'} \left(  \sup_{x \in [0, T] } \left| \ell(x) - p x - z_i \right|  \leq A_1 T^{1/2} \right) = \\
&\mathbb{P}_{H^{RW}}^{0 ,T, 0 , pT} \left(  \sup_{x \in [0, T] } \left| \ell(x) - p x \right|  \leq A_1 T^{1/2} \right)^k \geq (1- \delta)^{k},
\end{split}
\end{equation}
where in going from the first to the second line we used the shift invariance of $\mathbb{P}_{H^{RW}}^{T_0 ,T_1, x,y}$ and (\ref{YOP3}), while in the last inequality we used the definition of $A_1$ from Step 1 and the fact that $T \geq U_2$.

On the other hand, we have using (\ref{YOP3}) that on $\cap_{i = 1}^k E_i $ for $i \in \llbracket 1, k-1\rrbracket$
\begin{equation*}
\begin{split}
&\ell_{i}(x) \geq p x + z_i -A_1T^{1/2} \geq p x + z_{i+1} + A_1 T^{1/2} + (B_1 + |p|) T^{1/2} \geq \\
& p (x+1) + z_{i+1} + A_1 T^{1/2} + B_1 T^{1/2} \geq \ell_{i+1}(x+1) + B_1 T^{1/2}.
\end{split}
\end{equation*}
Using (\ref{YOP3}), the fact that $g(j) - pj \leq M T^{1/2} \mbox{ for all $j \in \llbracket 0, T \rrbracket$}$, and the definition of  $B $ from Step 1, we conclude that on $\cap_{i = 1}^k E_i $ for $T \geq U_2$ we have
\begin{equation*}
\begin{split}
&\ell_{k}(x) \geq p x + z_k -A_1 T^{1/2} \geq p (x +1) + |M| T^{1/2} +  A_1 T^{1/2} + B_1  T^{1/2} \geq g(x+1) + B_1 T^{1/2}.
\end{split}
\end{equation*}
The last two inequalities, the monotonicity of $H$ (see Definition \ref{AssH}) and the definition of $W_{H,S}$ in (\ref{WH}) gives on $\cap_{i = 1}^k E_i $ 
\begin{equation}\label{YOP9}
W_{H,S} =  \exp \left( - \sum_{i = 1}^{k}  \sum_{ m  \in S} H (\ell_{i + 1}(m + 1) - \ell_{i}(m)) \right) \geq \exp \left( - k T H(-B_1 T^{1/2}) \right) \geq 1-\delta,
\end{equation}
where $\ell_{k+1} = g$ and the last inequality used (\ref{YOP2}). Combining (\ref{YOP8}) and (\ref{YOP9}) proves the first line in (\ref{YOP6}).
\end{proof}


\begin{lemma}\label{LSep} Fix $k \in \mathbb{N}$ and let $\mathfrak{L} = (L_1, \dots, L_k)$ have law $\mathbb{P}_{H,H^{RW},S}^{1, k, 0 ,T, \vec{x}, \vec{y},\infty,g}$ as in Definition \ref{Pfree}, where we assume that $H$ is as in Definition \ref{AssH}, while $H^{RW}$ satisfies the assumptions in Definition \ref{AssHR}. Let $ M^{\mathsf{side}, \mathsf{top}}_{k},M^{\mathsf{side}, \mathsf{bot}}_{k}, M_{k}^{\mathsf{base}, \mathsf{top}}, M^{\mathsf{sep}}_k, p , t \in \mathbb{R}$ be given such that $M^{\mathsf{side}, \mathsf{top}}_{k} \geq M^{\mathsf{side}, \mathsf{bot}}_{k}$, $M^{\mathsf{sep}}_k > 0$, $t \in (0, 1/2]$. Then we can find constants $V_k  \in \mathbb{N}$, $\epsilon_k > 0$, $A^i_{k}, B_k^i \in \mathbb{R}$ for $i = 1,\dots, k$ all depending on $ M^{\mathsf{side}, \mathsf{top}}_{k},M^{\mathsf{side}, \mathsf{bot}}_{k}, M_{k}^{\mathsf{base}, \mathsf{top}}, M^{\mathsf{sep}}_k, p , t , k, H, H^{RW}$ so that the following all hold.
\begin{enumerate}
\item $B^1_k \geq A^1_k \geq B^2_k \geq A^2_k \geq \cdots \geq B^k_k \geq A^k_k \geq M_{k}^{\mathsf{base}, \mathsf{top}} $;
\item $A^i_k - B^{i+1}_{k} \geq M^{\mathsf{sep}}_k$ for $i = 1, \dots, k-1$ and $A^k_k - M_{k}^{\mathsf{base}, \mathsf{top}}  \geq M_k^{\mathsf{sep}}$.
\item  If $T \geq V_k$, $g \in Y^-(\llbracket 0, T \rrbracket) $, $\vec{x}, \vec{y} \in \mathbb{R}^k$ satisfy
\begin{itemize}[leftmargin=*]
\item $ M^{\mathsf{side},\mathsf{bot}}_{k} T^{1/2} \leq x_i \leq  M^{\mathsf{side},\mathsf{top}}_{k} T^{1/2}$ for $i = 1, \dots, k$;
\item  $ p T + M^{\mathsf{side},\mathsf{bot}}_{k} T^{1/2}  \leq y_i \leq p T +  M^{\mathsf{side},\mathsf{top}}_{k} T^{1/2}$ for $i = 1, \dots, k$;
\item  $g(j) \leq j p + M_{k}^{\mathsf{base},\mathsf{top}} T^{1/2}$ for $j \in \llbracket 0, T\rrbracket$,  
\end{itemize}
then for any $S \subset \llbracket 0, T-1 \rrbracket$ we have
\begin{equation}\label{ESep}
\mathbb{P}_{H,H^{RW},S}^{1, k, 0 ,T, \vec{x}, \vec{y},\infty,g} \left(  B^i_k T^{1/2} \geq L_i(x) - p x  \geq A^i_k T^{1/2} \mbox{ for all $x \in[t T, (1-t)T]$, $i \in \llbracket 1, k \rrbracket$}    \right) \geq \epsilon_k.
\end{equation}
\end{enumerate}
\end{lemma}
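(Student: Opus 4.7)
The plan is to prove the lemma by induction on $k$, using Lemmas \ref{LStayUp} and \ref{LStayPut} at each step to control one curve at a time and invoking the partial $(H,H^{RW})$-Gibbs property (Lemma \ref{HHRWSG}) to reduce to the preceding level of the induction.

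For the base case $k=1$, I would set $A^1_1 := M^{\mathsf{base,top}}_1 + M^{\mathsf{sep}}_1$, then apply Lemma \ref{LStayUp} with $M := M^{\mathsf{side,bot}}_1$ and $A := A^1_1$ to produce a constant $\epsilon^\star > 0$ for which the lower bound $L_1(x) - px \geq A^1_1 T^{1/2}$ holds on $[tT,(1-t)T]$ with probability at least $\epsilon^\star$. Then I would apply Lemma \ref{LStayPut} with $M := \max(M^{\mathsf{side,top}}_1, M^{\mathsf{base,top}}_1)$ and error parameter $\epsilon^\star/2$ to produce a constant $B$ for which $L_1(x) - px \leq B T^{1/2}$ on $[0,T]$ with probability at least $1 - \epsilon^\star/2$. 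Setting $B^1_1 := \max(B, A^1_1)$ and $\epsilon_1 := \epsilon^\star/2$, the intersection of the two events yields the conclusion.

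For the inductive step $k \geq 2$ (assuming the lemma at level $k-1$), the strategy is to separate the control of the bottom curve $L_k$ from that of the top $k-1$ curves: set $A^k_k := M^{\mathsf{base,top}}_k + M^{\mathsf{sep}}_k$ and, exactly as in the base case, apply Lemmas \ref{LStayUp} and \ref{LStayPut} to the full $k$-curve ensemble to produce a constant $B^k_k \geq A^k_k$ and a number $\epsilon^\star > 0$ such that the $\sigma(L_k)$-measurable event
\begin{equation*}
G_k := \left\{L_k(x) - px \geq A^k_k T^{1/2} \text{ for all } x \in [tT, (1-t)T]\right\} \cap \left\{L_k(j) - pj \leq B^k_k T^{1/2} \text{ for all } j \in \llbracket 0, T\rrbracket\right\}
\end{equation*}
has probability at least $\epsilon^\star/2$. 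I would then apply the induction hypothesis at level $k-1$ with parameters $M^{\mathsf{side,top}}_{k-1} := M^{\mathsf{side,top}}_k$, $M^{\mathsf{side,bot}}_{k-1} := M^{\mathsf{side,bot}}_k$, $M^{\mathsf{sep}}_{k-1} := M^{\mathsf{sep}}_k$, and the crucial choice $M^{\mathsf{base,top}}_{k-1} := B^k_k$, obtaining constants $A^i_{k-1}, B^i_{k-1}$ for $i \in \llbracket 1, k-1\rrbracket$ together with $\epsilon_{k-1}, V_{k-1}$. Setting $A^i_k := A^i_{k-1}$ and $B^i_k := B^i_{k-1}$ for $i < k$, the staircase conditions 1 and 2 at level $k$ follow from those at level $k-1$ because $A^{k-1}_k - B^k_k = A^{k-1}_{k-1} - M^{\mathsf{base,top}}_{k-1} \geq M^{\mathsf{sep}}_{k-1} = M^{\mathsf{sep}}_k$. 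By the partial $(H,H^{RW})$-Gibbs property, the conditional law of the top $k-1$ curves given $\mathcal{F}_{ext}(\llbracket 1, k-1\rrbracket \times \llbracket 1, T-1\rrbracket)$ equals $\mathbb{P}^{1,k-1,0,T,(x_1,\ldots,x_{k-1}),(y_1,\ldots,y_{k-1}),\infty,L_k}_{H,H^{RW},S}$, whose boundary data satisfy (on $G_k$) the hypotheses of the $(k-1)$-version. Hence, on $G_k$, the conditional probability that $L_i(x) - px \in [A^i_k, B^i_k] T^{1/2}$ on $[tT, (1-t)T]$ for every $i \in \llbracket 1, k-1\rrbracket$ is at least $\epsilon_{k-1}$. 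Taking expectation against $\mathbf{1}_{G_k}$ and setting $\epsilon_k := \epsilon_{k-1} \cdot \epsilon^\star / 2$, together with $V_k$ large enough to exceed $V_{k-1}$ and the thresholds appearing in Lemmas \ref{LStayUp} and \ref{LStayPut}, completes the induction.

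The main obstacle is the bookkeeping of constants through the induction: ensuring at every level that the single parameter $M$ required by each of Lemmas \ref{LStayUp} and \ref{LStayPut} simultaneously accommodates the box on $\vec{x}, \vec{y}$ and the bound on $g$; verifying the staircase inequalities persist at each level; and, since Lemma \ref{HHRWSG} is stated for the case $S = \llbracket T_0, T_1 - 1\rrbracket$, recording (as is essentially automatic from the definitions) that the partial $(\vec{H}, H^{RW})$-Gibbs property holds equally for the piecewise-defined $\vec{H}$ with $H_m = H$ on $S$ and $H_m = 0$ off $S$ corresponding to $\mathbb{P}_{H,H^{RW},S}$. The conceptual work---splitting off the bottom curve using the two input lemmas and handling the top $k-1$ curves inductively via Gibbs conditioning---is routine given the previous results.
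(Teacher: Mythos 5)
Your proof is correct, and it takes a genuinely different inductive route from the paper's. The paper's Step 2--5 inducts by shrinking the \emph{time window}: it chooses a smaller interval $[T_0, T_1] \subset [0,T]$ with $T_0 = \lceil sT \rceil$, $T_1 = \lfloor (1-s)T \rfloor$ (where $s = t/10$), applies Lemmas \ref{LStayUp} and \ref{LStayPut} to pin \emph{all} $m+1$ curves into a random band $[A \Delta T^{1/2}, B\Delta T^{1/2}]$ there, and then invokes the induction hypothesis for the top $m$ curves \emph{on the sub-window} $[T_0,T_1]$ with new, random boundary data $L_i(T_0), L_i(T_1)$. This forces the paper to rescale constants by $(1-2s)^{1/2}$ with $\pm 1$ slack at each level (see \eqref{BigT}) and to double the separation parameter via $M^{\mathsf{sep}}_m = 2M^{\mathsf{sep}}_{m+1} + 2$. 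Your argument instead keeps the time window $[0,T]$ fixed, uses Lemmas \ref{LStayUp}, \ref{LStayPut} only to pin the \emph{bottom curve} $L_k$ into a band (the event $G_k$), and then conditions on $\sigma(L_k)$ alone. Because the entrance/exit data $(x_1,\dots,x_{k-1})$, $(y_1,\dots,y_{k-1})$ for the top $k-1$ curves are the \emph{original deterministic} ones, you can re-invoke the induction hypothesis at the same scale $T$ with unchanged side parameters and with $M^{\mathsf{base,top}}_{k-1} := B^k_k$, taking $M^{\mathsf{sep}}_{k-1}$ equal to $M^{\mathsf{sep}}_k$. This removes the window-rescaling bookkeeping entirely; the staircase inequalities cascade directly (as you verify) and the ordering of constant choices is non-circular since $\epsilon^\star$ and $B^k_k$ are fixed before the level-$(k-1)$ constants are requested. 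Your approach is, if anything, cleaner; the one piece of due diligence you rightly flag---that $\mathbb{P}_{H,H^{RW},S}$ enjoys the partial $(\vec H, H^{RW})$-Gibbs property for the piecewise $\vec{H}$ determined by $S$---is also used (implicitly) in the paper's own Step 5, so it is not an additional burden of your argument.
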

\begin{proof} In the proof below, unless otherwise specified, all constants will depend on $ M^{\mathsf{side}, \mathsf{top}}_{k}$, $M^{\mathsf{side}, \mathsf{bot}}_{k}$, $M_{k}^{\mathsf{base}, \mathsf{top}}$, $M^{\mathsf{sep}}_k$, $p , t , k, H, H^{RW}$. We prove the lemma by induction on $k \in \mathbb{N}$. For clarity we split the proof into five steps.\\

{\bf \raggedleft Step 1.} In this step we prove the base case $k = 1$. From Lemma \ref{LStayUp} applied to $k = 1$, $M = M^{\mathsf{side}, \mathsf{bot}}_{1}$, $A = M^{\mathsf{base}, \mathsf{top}}_{1} + M^{\mathsf{sep}}_{1} $ and $p,t$ as in the statement of this lemma we can find $N_1 \in \mathbb{N}$ and $\delta_1 > 0$ such that for $T \geq N_1$, 
 $$\mathbb{P}_{H,H^{RW},S}^{1, 1, 0 ,T, \vec{x}, \vec{y},\infty,g} \left(  L_1(x) - p x  \geq A T^{1/2} \mbox{ for all $x \in[t T, (1-t)T]$}  \right) \geq 2\delta_1.$$
On the other hand, from Lemma \ref{LStayPut} applied to $k = 1$, $\epsilon = \delta_1$, $M = \max( M^{\mathsf{side}, \mathsf{top}}_{1}, M_{1}^{\mathsf{base},\mathsf{top}})$ and $p$ as in the statement of this lemma we can find $N_2 \in \mathbb{N}$ and $B \in \mathbb{R}$ such that 
$$\mathbb{P}_{H,H^{RW},S}^{1, 1, 0 ,T, \vec{x}, \vec{y},\infty,g} \left(  L_1(x) - p x  \geq B T^{1/2} \mbox{ for some $x \in[0, T]$}  \right) \leq \delta_1.$$
The last two inequalities imply $B \geq A$ and together yield the statement of the lemma with $V _1 := \max(N_1, N_2)$, $\epsilon_1 := \delta_1$, $A_1 := A$ and $B_1 := B$. \\

{\bf \raggedleft Step 2.} We suppose that we have established the lemma for $k = m$, where $m \in \mathbb{N}$, and in the remaining steps proceed to prove it for $k = m+1$. In this step we specify $V_{m+1}  \in \mathbb{N}$, $\epsilon_{m+1} > 0$, $A^i_{m+1}, B_{m+1}^i \in \mathbb{R}$ for $i = 1,\dots, m+1$.

Let us put $s = t/10 \in (0, 1/20]$, $T_0 = \lceil s T \rceil$, $T_1 = \lfloor (1 -s)T \rfloor$ and $\Delta T = T_1 - T_0$. From Lemma \ref{LStayUp} applied to $k = m+1$, $M = M^{\mathsf{side}, \mathsf{bot}}_{m+1}$, $A = 2\left|M^{\mathsf{base}, \mathsf{top}}_{m+1}\right| + 2 M^{\mathsf{sep}}_{m+1}  + 1$, $t = s$ and $p, H, H^{RW}$ as in the statement of this lemma we can find $N_1 \in \mathbb{N}$ and $\delta_1 > 0$ such that for $T \geq N_1$, 
 $$\mathbb{P}_{H,H^{RW},S}^{1, m+1, 0 ,T, \vec{x}, \vec{y},\infty,g} \left(  L_i(x) - p x \geq A \Delta T^{1/2} \mbox{ for all $x \in[s T, (1-s)T]$ and $i \in \llbracket 1, m+1 \rrbracket$}  \right) \geq 2\delta_1.$$
On the other hand, from Lemma \ref{LStayPut} applied to $k = m+1$, $\epsilon = \delta_1$, $M = \max( M^{\mathsf{side}, \mathsf{top}}_{m+1}, M_{m+1}^{\mathsf{base},\mathsf{top}})$ and $p, H, H^{RW}$ as in the statement of this lemma we can find $N_2 \in \mathbb{N}$ and $B \in \mathbb{R}$ such that 
$$\mathbb{P}_{H,H^{RW},S}^{1, m+1, 0 ,T, \vec{x}, \vec{y},\infty,g} \left(  L_i(x) - p x  \geq B \Delta T^{1/2} \mbox{ for some $x \in [0,T]$ and some $i \in \llbracket 1, m+1 \rrbracket$} \right) \leq \delta_1.$$
The last two inequalities imply $B \geq A \geq 0$ and for $T \geq \max(N_1, N_2)$ 
\begin{equation}\label{YR1}
\begin{split}
&\mathbb{P}_{H,H^{RW},S}^{1, m+1, 0 ,T, \vec{x}, \vec{y},\infty,g} \left( E_{m+1}  \right) \geq \delta_1, \mbox{ where $E_{m+1}$ denotes the event } \\
& \left\{  B \Delta T^{1/2} \geq L_i(x) - p x  \geq A \Delta T^{1/2} \mbox{ for all $x \in[s T, (1-s)T]$ and $i \in \llbracket 1, m+1 \rrbracket$}  \right\}.
\end{split}
\end{equation}

Let $V_m$, $\epsilon_m$, $A_m^i, B_m^i$ for $i = 1, \dots, m$  be the available by the induction hypothesis parameters for 
$$\mbox{ $ M^{\mathsf{side}, \mathsf{top}}_{m} = M_{m}^{\mathsf{base}, \mathsf{top}} = B$, $M^{\mathsf{side}, \mathsf{bot}}_{m} = A$, $ M^{\mathsf{sep}}_m =  2M^{\mathsf{sep}}_{m+1} + 2$ }, $$
$p, H ,H^{RW}$ as in the statement of this lemma and $t = s$.  We let $N_3 \in \mathbb{N}$ be large so that for $T \geq N_3$
\begin{equation}\label{BigT}
\begin{split}
&\mbox{$\Delta T \geq V_m$, \hspace{2mm} $T_0 + s  \Delta T \leq tT$, and $(1-t)T \leq T_0 + (1-s) \Delta T$},\\
& (1 -2s)^{1/2}\cdot (B_m^i +1) T^{1/2} \geq  B_m^i \Delta T^{1/2} \mbox{ and }  (1 -2s)^{1/2} \cdot (A_m^i - 1) T^{1/2} \leq  A_m^i \Delta T^{1/2} \\
& (1 -2s)^{1/2} \cdot (B +1) T^{1/2} \geq  B \Delta T^{1/2} \mbox{ and }  (1 -2s)^{1/2} \cdot (A - 1) T^{1/2} \leq  A \Delta T^{1/2}.
\end{split}
\end{equation}
We set  $A_{m+1}^i  := (1 - 2s)^{1/2}(A_{m}^i - 1) $ and $B_{m+1}^i  := (1 - 2s)^{1/2}(B_{m}^i + 1) $ $i = 1, \dots, m$ and also $A^{m+1}_{m+1} := (1- 2s)^{1/2} (A - 1)$ and $B^{m+1}_{m+1} := (1- 2s)^{1/2} (B + 1)$. Finally, we put $V_{m+1} := \max(N_1, N_2, N_3)$ and $\epsilon_{m+1} := \delta_1 \cdot \epsilon_{m}$.

The above two paragraphs specify our choice of $V_{m+1}  \in \mathbb{N}$, $\epsilon_{m+1} > 0$, $A^i_{m+1}, B_{m+1}^i \in \mathbb{R}$ for $i = 1,\dots, m+1$ and below we proceed to show that they satisfy the conditions of the lemma.\\

{\bf \raggedleft Step 3.} In this step we show that $A^i_{m+1}, B_{m+1}^i \in \mathbb{R}$ for $i = 1,\dots, m+1$ from the previous step satisfy conditions (1) and (2) in the lemma.

We first observe that by the induction hypothesis for $i = 1, \dots, m$ we have  
$$B_{m+1}^i - A^i_{m+1} =  (1 - 2s)^{1/2}(B_{m}^i  - A^i_{m} +2) \geq 2 (1 - 2s)^{1/2} \geq 0,$$
and also for $i = 1, \dots, m-1$
\begin{equation*}
\begin{split}
A_{m+1}^i - B^{i+1}_{m+1} = \hspace{2mm} & (1 - 2s)^{1/2} \cdot (A_{m}^i - B^{i}_{m+1} -2) \geq  (1 - 2s)^{1/2} \cdot \left(M^{\mathsf{sep}}_{m}  -2  \right) =  \\
&  (1 - 2s)^{1/2}\cdot 2M^{\mathsf{sep}}_{m+1}  \geq M^{\mathsf{sep}}_{m+1}. 
\end{split}
\end{equation*}
where we used $(1-2s)^{1/2} \geq 1/2$ as $s \in (0, 1/20]$. In addition, by the induction hypothesis we have
\begin{equation*}
\begin{split}
A^{m}_{m+1} - B^{m+1}_{m+1} =  \hspace{2mm} &(1 - 2s)^{1/2} \cdot (A_{m}^m -  B - 2) \geq (1 - 2s)^{1/2} \cdot (M^{\mathsf{sep}}_m  - 2)=  \\
&(1 - 2s)^{1/2}\cdot 2M^{\mathsf{sep}}_{m+1}   \geq M^{\mathsf{sep}}_{m+1}.
\end{split}
\end{equation*}
Finally, by the definition of $A_{m+1}^{m+1}$ and $B_{m+1}^{m+1}$ we have
\begin{equation*}
\begin{split}
&A^{m+1}_{m+1} - M_{m+1}^{\mathsf{base}, \mathsf{top}}  = (1 - 2s)^{1/2} \cdot\left(2\left|M^{\mathsf{base}, \mathsf{top}}_{m+1}\right| + 2M^{\mathsf{sep}}_{m+1}  \right)  - M_{m+1}^{\mathsf{base}, \mathsf{top}} \geq M^{\mathsf{sep}}_{m+1} \mbox{ and }\\
& B^{m+1}_{m+1} - A^{m+1}_{m+1} = (1 - 2s)^{1/2} \cdot (B - A + 2) \geq 0.
\end{split}
\end{equation*}
The above three equations show that conditions (1) and (2) are satisfied for our $A^i_{m+1}, B_{m+1}^i \in \mathbb{R}$ for $i = 1,\dots, m+1$. We next proceed to verify condition (3).  \\

{\bf \raggedleft Step 4.} In this step we fix $S \subset \llbracket 0, T- 1 \rrbracket$, $\vec{x}, \vec{y}, g$ as in condition (3) and prove (\ref{ESep}). Let us define the event 
\begin{equation}\label{YR2}
\begin{split}
F_{m+1} = \hspace{2mm} & \left\{  B \Delta T^{1/2} \geq L_{m+1}(x) - p x  \geq A \Delta T^{1/2} \mbox{ for all $x \in[T_0, T_1]$} \right\}  \cap \\
 &\left\{  B \Delta T^{1/2} \geq L_{i}(T_0) - p T_0 , L_{i}(T_1) - p T_1  \geq A \Delta T^{1/2} \mbox{ for all $i \in \llbracket 1, m \rrbracket$} \right\},
\end{split}
\end{equation}
and observe that $E_{m+1} \subset  F_{m+1}$ and so from (\ref{YR1}) we get
\begin{equation}\label{YR3}
\begin{split}
&\mathbb{P}_{H,H^{RW},S}^{1, m+1, 0 ,T, \vec{x}, \vec{y},\infty,g} \left( F_{m+1}  \right) \geq \delta_1.
\end{split}
\end{equation}
In addition, we define the event $G_{m+1}$ 
\begin{equation}\label{YR4}
\begin{split}
G_{m+1} = \left\{ B^i_{m+1} T^{1/2} \geq L_i(x) - p x  \geq A^i_{m+1} T^{1/2} \mbox{ for all $x \in[t T, (1-t)T]$ and $i \in \llbracket 1, m \rrbracket$}   \right\}.
\end{split}
\end{equation}
We set
$$\mathcal{F}_{m+1} = \sigma \left( L_i(s): (i,s) \in \llbracket 1, m+1 \rrbracket \times \llbracket 0, T \rrbracket \setminus \llbracket 1, m \rrbracket \times  \llbracket T_0+ 1, T_1 - 1 \rrbracket \right) ,$$
and recall that this was the external sigma algebra as in (\ref{GibbsCond}). 

We claim that for $T \geq V_{m+1}$ we have $\mathbb{P}_{H,H^{RW},S}^{1, m+1, 0 ,T, \vec{x}, \vec{y},\infty,g} $ almost surely
\begin{equation}\label{YR5}
{\bf 1}_{F_{m+1}} \cdot \mathbb{E}_{H,H^{RW},S}^{1, m+1, 0 ,T, \vec{x}, \vec{y},\infty,g}  \left[ {\bf 1}_{G_{m+1}} \vert \mathcal{F}_{m+1} \right] \geq {\bf 1}_{F_{m+1}}  \cdot \epsilon_m.
\end{equation}
We will prove (\ref{YR5}) in the next step. Here we assume its validity and conclude the proof of (\ref{ESep}).

Writing $\mathbb{E}_{m+1}$ for $\mathbb{E}_{H,H^{RW},S}^{1, m+1, 0 ,T, \vec{x}, \vec{y},\infty,g}$ we have for $T \geq V_{m+1}$ 
\begin{equation*}
\begin{split}
&\mathbb{P}_{H,H^{RW},S}^{1, m+1, 0 ,T, \vec{x}, \vec{y},\infty,g} \left( F_{m+1} \cap G_{m+1} \right) = \mathbb{E}_{m+1}  \left[ {\bf 1}_{F_{m+1}} \cdot {\bf 1}_{G_{m+1}} \right] = \mathbb{E}_{m+1}  \left[ \mathbb{E}_{m+1}  \left[ {\bf 1}_{F_{m+1}} \cdot {\bf 1}_{G_{m+1}} \vert \mathcal{F}_{m+1} \right] \right]   \\
& =  \mathbb{E}_{m+1}  \left[{\bf 1}_{F_{m+1}}  \mathbb{E}_{m+1}  \left[  {\bf 1}_{G_{m+1}} \vert \mathcal{F}_{m+1} \right] \right] \geq  \epsilon_{m}\mathbb{E}_{m+1}  \left[{\bf 1}_{F_{m+1}}  \right] \geq \epsilon_{m} \cdot \delta_1 = \epsilon_{m+1},
\end{split}
\end{equation*}
where the second equality follows from the tower property of conditional expectations, the third equality used the fact that $F_{m+1} \in \mathcal{F}_{m+1}$, the first inequality on the second line used (\ref{YR5}), the second inequality on the second line used (\ref{YR3}) and the last equality used the definition of $\epsilon_{m+1}$. Since the event in (\ref{ESep}) contains $F_{m+1} \cap G_{m+1} $ for $T \geq V_{m+1}$, here we used (\ref{BigT}), we see that the last inequality implies (\ref{ESep}).\\

{\bf \raggedleft Step 5.} In this final step we prove (\ref{YR5}). Put $S' = S \cap \llbracket T_0, T_1 - 1\rrbracket$ and observe that by the partial $(\vec{H},H^{RW})$-Gibbs property, see (\ref{GibbsEq}), applied to the function 
$$F'( \mathfrak{L}') = {\bf 1}\left\{ B^i_{m+1} T^{1/2} \geq L'_i(x) - p x  \geq A^i_{m+1} T^{1/2} \mbox{ for all $x \in[t T, (1-t)T]$ and $i \in \llbracket 1, m \rrbracket$}   \right\},$$
we have for $T \geq V_{m+1}$ that $\mathbb{P}_{H,H^{RW},S}^{1, m+1, 0 ,T, \vec{x}, \vec{y},\infty,g}$ almost surely
\begin{equation}\label{YR6}
\mathbb{E}_{H,H^{RW},S}^{1, m+1, 0 ,T, \vec{x}, \vec{y},\infty,g}  \left[ {\bf 1}_{G_{m+1}} \vert \mathcal{F}_{m+1} \right] = \mathbb{E}_{H,H^{RW},S'}^{1, m, T_0 ,T_1, \vec{x}', \vec{y}',\infty, g'}  \left[ F'( \mathfrak{L}')  \right],
\end{equation}
where $x_i' = L_i(T_0)$, $y_i' = L_{i}(T_1)$ for $i \in \llbracket 1, m \rrbracket$ and $g'(j) = L_{m+1}(j)$ for $j \in \llbracket T_0, T_1 \rrbracket$. The assumption $T \geq V_{m+1}$ was used to ensure that $tT \geq T_0$ and $(1-t)T \leq T_1$, as follows from (\ref{BigT}), and so $F'$ is a well-defined function on $Y(\llbracket 1, m \rrbracket \times \llbracket T_0, T_1 \rrbracket)$. 

We note that if $\mathfrak{L}'$ and $\mathfrak{L}''$ are respectively distributed according to $\mathbb{P}_{H,H^{RW},S'}^{1, m, T_0 ,T_1, \vec{x}', \vec{y}',\infty, g'}$ and $\mathbb{P}_{H,H^{RW},S''}^{1, m, 0 ,\Delta T, \vec{x}'', \vec{y}'',\infty, g''}$, where 
$$S'' = \{s \in \llbracket 0, \Delta T - 1 \rrbracket : s + T_0 \in S' \}, \hspace{2mm} x_i'' = x'_i -  pT_0, \hspace{2mm} y_i'' = y_i' - pT_0 \mbox{ for $i \in \llbracket 1, m \rrbracket$}, $$
$$ g''(j) = g'(j + T_0) - pT_0 \mbox{ for $j \in \llbracket 0, \Delta T\rrbracket$},$$
then $\mathfrak{L}'$ and $\mathfrak{L}''$ have the same distribution except for a reindexing and a vertical shift by $-pT_0$. The latter observation implies that 
\begin{equation}\label{YR7}
\begin{split}
\mathbb{E}_{H,H^{RW},S'}^{1, m, T_0 ,T_1, \vec{x}', \vec{y}',\infty, g'}  \left[ F'( \mathfrak{L}')  \right] = \hspace{2mm} &\mathbb{P}_{H,H^{RW},S''}^{1, m, 0 , \Delta T, \vec{x}'', \vec{y}'',\infty, g''}   \Big( B^i_{m+1} T^{1/2} \geq L''_i(x) - px  \geq A^i_{m+1} T^{1/2}  \\
&  \mbox{ for all $x \in[t T - T_0, (1-t)T - T_0]$ and $i \in \llbracket 1, m \rrbracket$} \Big).
\end{split}
\end{equation}

Next we observe by the definition of $A_{m+1}^i$ and $B_{m+1}^i$ for $i = 1, \dots, m$ and (\ref{BigT}) that
\begin{equation*}
\begin{split}
&B^i_{m+1} T^{1/2} = (1 -2s)^{1/2}\cdot (B_m^i +1) T^{1/2} \geq  B_m^i \Delta T^{1/2} \mbox{ and } \\
&A^i_{m+1} T^{1/2} = (1 -2s)^{1/2}\cdot (A_m^i - 1) T^{1/2} \leq  A_m^i \Delta T^{1/2},
\end{split}
\end{equation*}
which combined with (\ref{YR6}) and (\ref{YR7}) imply for $T \geq V_{m+1}$ that $\mathbb{P}_{H,H^{RW},S}^{1, m+1, 0 ,T, \vec{x}, \vec{y},\infty,g}$ almost surely
\begin{equation*}
\begin{split}
&\mathbb{E}_{H,H^{RW},S}^{1, m+1, 0 ,T, \vec{x}, \vec{y},\infty,g}  \left[ {\bf 1}_{G_{m+1}} \vert \mathcal{F}_{m+1} \right] \geq \mathbb{P}_{H,H^{RW},S''}^{1, m, 0 , \Delta T, \vec{x}'', \vec{y}'',\infty, g''}   \Big( B^i_{m} \Delta T^{1/2} \geq L''_i(x) - px  \geq   \\
& A^i_{m} \Delta T^{1/2} \mbox{ for all $x \in[t T - T_0, (1-t)T - T_0]$ and $i \in \llbracket 1, m \rrbracket$} \Big).
\end{split}
\end{equation*}

From (\ref{BigT}) we have $tT - T_0 \geq s  \Delta T$ and $(1-t)T - T_0 \leq (1-s) \Delta T$ and so the last inequality implies for $T \geq V_{m+1}$ that $\mathbb{P}_{H,H^{RW},S}^{1, m+1, 0 ,T, \vec{x}, \vec{y},\infty,g}$ almost surely
\begin{equation}\label{YR8}
\begin{split}
&\mathbb{E}_{H,H^{RW},S}^{1, m+1, 0 ,T, \vec{x}, \vec{y},\infty,g}  \left[ {\bf 1}_{G_{m+1}} \vert \mathcal{F}_{m+1} \right] \geq \mathbb{P}_{H,H^{RW},S''}^{1, m, 0 , \Delta T, \vec{x}'', \vec{y}'',\infty, g''}   \Big( B^i_{m} \Delta T^{1/2} \geq L''_i(x) - px  \geq   \\
& A^i_{m} \Delta T^{1/2} \mbox{ for all $x \in[ s  \Delta T, (1-s)\Delta T]$ and $i \in \llbracket 1, m \rrbracket$} \Big).
\end{split}
\end{equation}
On the event $F_{m+1}$ we have that 
$$B \Delta T^{1/2} x_i'' \geq A \Delta T^{1/2},  \hspace{2mm} B \Delta T^{1/2} \geq y_i'' - p \Delta T \geq A \Delta T^{1/2}  \mbox{ for $i \in \llbracket 1, m \rrbracket$ and }, $$
$$g''(j) - j p \leq B \Delta T^{1/2} \mbox{ for } j \in \llbracket 0, \Delta T \rrbracket. $$ 
The latter observation and the induction hypothesis imply that for $T \geq V_{m+1}$ on $F_{m+1}$ we have $\mathbb{P}_{H,H^{RW},S}^{1, m+1, 0 ,T, \vec{x}, \vec{y},\infty,g}$ almost surely
\begin{equation*}
\begin{split}
\mathbb{P}_{H,H^{RW},S''}^{1, m, 0 , \Delta T, \vec{x}'', \vec{y}'',\infty, g''}   \Big( &B^i_{m} \Delta T^{1/2} \geq L''_i(x) - px  \geq A^i_{m} \Delta T^{1/2}   \\
& \mbox{ for all $x \in[ s  \Delta T, (1-s)\Delta T]$ and $i \in \llbracket 1, m \rrbracket$} \Big) \geq \epsilon_m,
\end{split}
\end{equation*}
which together with (\ref{YR8}) imply (\ref{YR5}). This suffices for the proof.
\end{proof}


\begin{lemma}\label{LAccProbP}Fix $k \in \mathbb{N}$ and let $H^{RW}$ be as in Definition \ref{AssHR}, and $H$ as in Definition \ref{AssH}. For any $p \in \mathbb{R}$, $r \in (0,1)$ we can find $A > 0$, depending on $k, p,r ,  H^{RW}, H$, so that the following holds. 

Let $A^i, B^i \in \mathbb{R}$ for $i = 1,\dots, k$ be given such that 
$$ \mbox{ $B^1\geq A^1 \geq B^2 \geq A^2 \geq \cdots \geq B^k \geq A^k $ and $A^i - B^{i+1} \geq A$ for $i = 1, \dots, k-1$ }.$$
We can find $U_3 \in \mathbb{N}$, depending on $k, p,r, H, H^{RW}, A, A^i, B^i$ (for $i = 1, \dots, k$) such that if $T_0, T_1 \in \mathbb{Z}$, $\Delta T = T_1 - T_0 \geq U_3$, $r \Delta T \leq R \leq r^{-1} \Delta T$, $\vec{x}, \vec{y} \in \mathbb{R}^k$, $g \in Y^{-}(\llbracket T_0, T_1 \rrbracket) $ satisfy
$$ A^i R^{1/2} \leq x_i  - pT_0\leq B^i R^{1/2} \mbox{ and } A^i R^{1/2} \leq y_i  - pT_1 \leq B^i  R^{1/2} \mbox{ for $i = 1, \dots, k$, and }$$
$$ g(j) - pj \leq (A^k - A) R^{1/2} \mbox{ for $j \in \llbracket T_0, T_1 \rrbracket$, we have }$$
\begin{equation}\label{EAccProbP}
  Z_{ H,H^{RW}}\left(T_0, T_1, \vec{x}, \vec{y}, \infty, g  \right) \geq  1/2.
\end{equation}
where $Z_{ H,H^{RW}}$ is the acceptance probability of Definition \ref{Pfree}.
\end{lemma}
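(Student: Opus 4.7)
The plan is to verify $Z_{H,H^{RW}}(T_0, T_1, \vec{x}, \vec{y}, \infty, g) = \mathbb{E}_{H^{RW}}^{1,k,T_0,T_1,\vec{x},\vec{y}}[W_H] \geq 1/2$ by identifying a high-probability event on which the Boltzmann weight $W_H$ is close to $1$. Under $\mathbb{P}_{H^{RW}}^{1,k,T_0,T_1,\vec{x},\vec{y}}$ the curves $\ell_1, \ldots, \ell_k$ are independent $H^{RW}$ random walk bridges, and the hypotheses force each height profile $\ell_i - p(\cdot)$ to lie in the window $[A^i R^{1/2}, B^i R^{1/2}]$ at its two endpoints, while $g - p(\cdot)$ stays below $(A^k - A) R^{1/2}$ throughout. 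Since consecutive bands are vertically separated by at least $A R^{1/2}$, the nearest-neighbor differences $\ell_{i+1}(m+1) - \ell_i(m)$ and $g(m+1) - \ell_k(m)$ entering the Boltzmann exponent should be of order $-A R^{1/2}$, making $H$ evaluated on them very small.

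To realize this, first I would apply Lemma \ref{S7LStayInBand} to each bridge $\ell_i$ with $\epsilon = 1/(4k)$, $p$ as in the statement and $M := r^{-1/2}\max_i\max(|A^i|, |B^i|)$. This produces a constant $A_1 = A_1(1/(4k), p, H^{RW})$ depending only on $k, p, H^{RW}$, and a threshold $W_1$ absorbing the $A^i, B^i$ dependence. By independence and a union bound, the event $G$ that every $\ell_i$ stays within $A_1 \Delta T^{1/2}$ of its linear interpolation joining $(T_0, x_i)$ to $(T_1, y_i)$ has probability at least $3/4$ once $\Delta T \geq W_1$. On $G$, the profile $\ell_i(s) - ps$ sits in the widened band $[A^i R^{1/2} - A_1 \Delta T^{1/2}, B^i R^{1/2} + A_1 \Delta T^{1/2}]$ for every $s \in [T_0, T_1]$.

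Next define $A := 4 A_1 r^{-1/2} + 1$, which depends only on $k, p, r, H^{RW}$ as required. Using the separation $A^i - B^{i+1} \geq A$, the bound $\Delta T \leq R/r$, and the definition of $A$, a straightforward computation on $G$ yields
\begin{equation*}
\ell_{i+1}(m+1) - \ell_i(m) \leq p - (A/2) R^{1/2} \quad \text{and} \quad g(m+1) - \ell_k(m) \leq p - (A/2) R^{1/2},
\end{equation*}
uniformly in $i \in \llbracket 1, k-1\rrbracket$ and $m \in \llbracket T_0, T_1 - 1 \rrbracket$. Appealing to $\lim_{x\to\infty} x^2 H(-x) = 0$ from Definition \ref{AssH}, I would select $\delta > 0$ with $16 k \delta/(A^2 r) \leq \log(3/2)$, then $x_\delta$ with $H(-x) \leq \delta/x^2$ for $x \geq x_\delta$, and finally set $U_3 \geq W_1$ large enough that $\Delta T \geq U_3$ implies $(A/4) R^{1/2} \geq |p| + x_\delta$. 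By monotonicity of $H$, each of the $k\Delta T$ nontrivial terms in the Boltzmann exponent (the $i=0$ terms vanish since $H(-\infty)=0$) is then bounded by $16\delta/(A^2 R)$, so the total exponent is at most $16 k \delta/(A^2 r) \leq \log(3/2)$, giving $W_H \geq 2/3$ on $G$. Combining with $\mathbb{P}(G) \geq 3/4$ yields $Z_{H,H^{RW}} \geq (2/3)(3/4) = 1/2$, which is \eqref{EAccProbP}.

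The main obstacle is keeping $A$ independent of $A^i, B^i$ while still producing enough Brownian control to dominate the bands $[A^i R^{1/2}, B^i R^{1/2}]$. This works only because Lemma \ref{S7LStayInBand} separates the fluctuation scale $A_1$ (depending only on the target probability and on $p, H^{RW}$) from the threshold $W_1$ (which absorbs the dependence on $M$). The hypothesis $R \geq r \Delta T$ then lets us convert the Brownian scale $A_1 \Delta T^{1/2}$ into a fraction of the separation scale $R^{1/2}$, and the decay $x^2 H(-x) \to 0$ provides just enough room to make the Boltzmann exponent summable over the $\Delta T \leq r^{-1} R$ time steps while keeping $A$ bounded.
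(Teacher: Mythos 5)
Your proof is correct and follows essentially the same route as the paper's: you invoke Lemma \ref{S7LStayInBand} to put the free bridges within $A_1\Delta T^{1/2}$ of their chords with high probability, use the band separation $A^i - B^{i+1}\geq A$ together with $R\geq r\Delta T$ to make the nearest-neighbor gaps in the Boltzmann exponent uniformly of order $-R^{1/2}$, and then use $x^2 H(-x)\to 0$ to force $W_H$ close to $1$ on that event. The only cosmetic difference is that the paper folds the $H$-decay threshold into the constant $A$ (via an auxiliary $B_1$ with $\exp(-kTH(-B_1\sqrt{T}))\geq 1-\delta$), whereas you keep $A$ independent of $H$ and absorb that threshold into $U_3$ — both are within the allowed dependencies of the statement.
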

\begin{proof} For clarity we split the proof into two steps.\\

{\bf \raggedleft Step 1.} Let $\delta \in (0,1)$ be sufficiently small, depending on $k$, so that 
\begin{equation}\label{JOP1}
(1- \delta)^{k+1} \geq 1/2.
\end{equation}
Let $A_1 = A(\delta, p, H^{RW})$ be as in Lemma \ref{S7LStayInBand} and $B_1 > 0$ be sufficiently large so that for all $T \in \mathbb{N}$
\begin{equation}\label{JOP2}
\exp \left( - k T H \left( - B_1 \sqrt{T}  \right) \right) \geq 1- \delta.
\end{equation}
Note that such a choice of $B_1$ is possible, since by Definition \ref{AssH} we have that $\lim_{x \rightarrow \infty} x^2 H(-x) = 0$. We put $A = 2r^{-1/2} (A_1 + B_1 + |p|)$, which specifies our choice of $A$ in the lemma.

Let $\mathfrak{L} = (L_1, \dots, L_k)$ have law $\mathbb{P}_{H^{RW}}^{1, k, T_0 ,T_1, \vec{x}, \vec{y}}$, as in Definition \ref{Pfree}, where $T_0, T_1, \vec{x}, \vec{y}$ are as in the statement of the lemma. We write $\mathbb{P}_{H^{RW}}$ in place of $\mathbb{P}_{H^{RW}}^{1, k, T_0 ,T_1, \vec{x}, \vec{y}}$, $\mathbb{E}_{H^{RW}}$ in place of $\mathbb{E}_{H^{RW}}^{1, k, T_0 ,T_1, \vec{x}, \vec{y}}$, and $W_H$ in place of $W_H^{1, k, T_0, T_1, \infty, g}(L_1, \dots, L_k)$ to simplify the notation. 

Let us define the events 
$$E_i = \left\{  \sup_{ s \in [T_0, T_1] }  \left| L_i(s) - \frac{T_1 - s }{T_1 - T_0} \cdot x_i - \frac{s - T_0 }{T_1 - T_0} \cdot y_i  \right| \leq A_1 \Delta T^{1/2}  \right\} \mbox{ for $i \in \llbracket 1, k \rrbracket$}.$$
We claim that we can find $U_3 \in \mathbb{N}$, depending on $p, H, H^{RW}, A, A^i, B^i$ (for $i = 1, \dots, k$), such that for $\Delta T \geq U_3$ we have 
\begin{equation}\label{JOP3}
 \mathbb{P}_{H^{RW}} \left( \cap_{i = 1}^k E_i  \right) \geq (1 -\delta)^k,
\end{equation}
and $\mathbb{P}_{H^{RW}}$-almost surely
\begin{equation}\label{JOP4}
\prod_{i = 1}^k {\bf 1}_{E_i} \cdot W_H \geq (1 - \delta) \cdot \prod_{i = 1}^k {\bf 1}_{E_i}.
\end{equation}
We prove (\ref{JOP3}) and (\ref{JOP4}) in the second step. Here we assume their validity and conclude the proof of the lemma.\\

Using Definition \ref{Pfree} we have
\begin{equation*}
  Z_{ H,H^{RW}}\left(T_0, T_1, \vec{x}, \vec{y}, \infty, g  \right) = \mathbb{E}_{H^{RW}} \left[W_H \right] \geq (1- \delta)^{k+1} \geq 1/2,
\end{equation*}
where the first inequality used (\ref{JOP3}) and (\ref{JOP4}), while the second one used (\ref{JOP1}). The last inequality implies (\ref{EAccProbP}).\\

{\bf \raggedleft Step 2.} In this step we prove (\ref{JOP3}) and (\ref{JOP4}). Let $M = r^{-1/2} \cdot (|A^m| + |B^1|)$ and note that by assumption we have 
$$| x_i  - pT_0| \leq M \Delta T^{1/2} \mbox{ and }| y_i  - pT_1| \leq M \Delta T^{1/2} \mbox{ for $i \in \llbracket 1, k \rrbracket$},$$
where we used that $r \Delta T \leq R \leq r^{-1} \Delta T$. We set $U_3 = W_1 (M, p, \epsilon, H^{RW})$ as in Lemma \ref{S7LStayInBand}, which specifies our choice of $U_3$.

Using that $L_1, \dots, L_k$ are independent under $\mathbb{P}_{H^{RW}}$ and have laws $\mathbb{P}_{H^{RW}}^{T_0 ,T_1, x_i, y_i}$ for $i \in \llbracket 1, k \rrbracket$ we conclude for $\Delta T \geq U_3$
\begin{equation*}
\begin{split}
&\mathbb{P}_{H^{RW}} \left( \cap_{i = 1}^k E_i  \right) = \prod_{i = 1}^k\mathbb{P}_{H^{RW}} \left(  E_i  \right) = \\
&  \prod_{i = 1}^k\mathbb{P}_{H^{RW}}^{T_0 ,T_1, x_i, y_i} \left( \sup_{ s \in [T_0, T_1] }  \left| \ell(s) - \frac{T_1 - s }{T_1 - T_0} \cdot x_i - \frac{s - T_0 }{T_1 - T_0} \cdot y_i  \right| \leq A_1 \Delta T^{1/2}  \right)  \geq (1- \delta)^{k},
\end{split}
\end{equation*}
where in the last inequality we used the definition of $A_1$ from Step 1 and the fact that $\Delta T \geq U_3$. This proves (\ref{JOP3}).

We next observe that on $\cap_{i = 1}^k E_i $ for $i \in \llbracket 1, k-1\rrbracket$ we have
\begin{equation*}
\begin{split}
&L_{i}(s) \geq \frac{T_1 - s }{T_1 - T_0} \cdot x_i + \frac{s - T_0 }{T_1 - T_0} \cdot y_i - A_1\Delta T^{1/2} \geq ps + A^i R^{1/2}  - A_1\Delta T^{1/2} \geq \\
& ps + B^{i+1} R^{1/2}  - A_1\Delta T^{1/2} + A R^{1/2} \geq p (s + 1) + B^{i+1} R^{1/2}  + A_1\Delta T^{1/2} +B_1 \Delta T^{1/2} \geq \\
& L_{i+1}(s+1)+B_1 \Delta T^{1/2} ,
\end{split}
\end{equation*}
where we used the definition of $x_i, y_i$, $A$, $A_1$ and $r \Delta T \leq R \leq r^{-1} \Delta T$. In addition, on $\cap_{i = 1}^k E_i $ 
\begin{equation*}
\begin{split}
&L_{k}(s) \geq ps + A^k R^{1/2}  - A_1\Delta T^{1/2}\geq g(s+1) +B_1 \Delta T^{1/2},
\end{split}
\end{equation*}
where we used the definition of $g$, $A$, $A_1$ and $r \Delta T \leq R \leq r^{-1} \Delta T$.

The above inequalities, the monotonicity of $H$ (see Definition \ref{AssH}) and the definition of $W_H$ in (\ref{WH}) gives on $\cap_{i = 1}^k E_i $ 
\begin{equation*}
W_H =  \exp \left( - \sum_{i = 1}^{k}  \sum_{ m = T_0}^{T_1-1} \hspace{-1mm}H (L_{i + 1}(m + 1) - L_{i}(m)) \right) \geq \exp \left( - k \Delta T  H(-B_1\Delta T^{1/2}) \right) \geq 1/2,
\end{equation*}
where $L_{k+1} = g$, and the last inequality used (\ref{JOP2}). The last inequality gives (\ref{JOP4}).
\end{proof}


\begin{lemma}\label{S7LAccProb}[Lemma \ref{LAccProb}] Fix $k \in \mathbb{N}$ and let $\mathfrak{L} = (L_1, \dots, L_k)$ have law $\mathbb{P}_{H,H^{RW}}^{1, k, T_0 ,T_1, \vec{x}, \vec{y},\infty,g}$ as in Definition \ref{Pfree}, where we assume that $H$ is as in Definition \ref{AssH}, while $H^{RW}$ satisfies the assumptions in Definition \ref{AssHR}. Let $M^{\mathsf{side}, \mathsf{top}},M^{\mathsf{side}, \mathsf{bot}}, M^{\mathsf{base}, \mathsf{top}}, p , t \in \mathbb{R}$ be given such that $M^{\mathsf{side}, \mathsf{top}} \geq M^{\mathsf{side}, \mathsf{bot}}$, $t \in (0, 1/3)$. Then we can find constants $W_7  \in \mathbb{N}$, $\delta > 0$, all depending on $ M^{\mathsf{side}, \mathsf{top}},M^{\mathsf{side}, \mathsf{bot}}, $ $M^{\mathsf{base}, \mathsf{top}}, p , t , k, H$ and $H^{RW}$ so that the following holds.

Set $\Delta T = T_1 - T_0$ and suppose that $\vec{x}, \vec{y} \in \mathbb{R}^k$, $g \in Y^-( \llbracket T_0, T_1 \rrbracket)$ satisfy
\begin{itemize}
\item $ pT_0 +  M^{\mathsf{side},\mathsf{bot}} \Delta T^{1/2} \leq x_i \leq pT_0 + M^{\mathsf{side},\mathsf{top}} \Delta T^{1/2}$ for $i = 1, \dots, k$;
\item $ p T_1 + M^{\mathsf{side},\mathsf{bot}} \Delta T^{1/2}  \leq y_i \leq p T_1 +  M^{\mathsf{side},\mathsf{top}} \Delta T^{1/2}$ for $i = 1, \dots, k$;
\item  $g(j) \leq j p + M^{\mathsf{base},\mathsf{top}} \Delta T^{1/2}$ for $j \in \llbracket T_0, T_1\rrbracket$.
\end{itemize}
For any $\epsilon > 0$, $T_1, T_0 \in \mathbb{Z}$ with $\Delta T \geq W_7$, and $t_1, t_0 \in \llbracket T_0, T_1 \rrbracket$ with $\min(t_0 - T_0 , T_1 - t_1 , t_1 - t_0 ) \geq t \Delta T$
\begin{equation}\label{S7EAccProb}
\mathbb{P}_{H,H^{RW}}^{1, k, T_0 ,T_1, \vec{x}, \vec{y},\infty,g} \left(  Z_{H, H^{RW}}\left(t_0, t_1, \mathfrak{L}(t_0), \mathfrak{L}(t_1), \infty, g \llbracket t_0, t_1 \rrbracket  \right) \leq \delta \epsilon \right) \leq \epsilon,
\end{equation}
where $Z_{ H,H^{RW}}$ is the acceptance probability of Definition \ref{Pfree}, $g \llbracket t_0, t_1 \rrbracket $ is restriction of $g$ to $\llbracket t_0, t_1 \rrbracket$ and $\mathfrak{L}(a)  = (L_1(a), \dots, L_k(a))$ is the value of $\mathfrak{L}$ with law $\mathbb{P}_{H,H^{RW}}^{1, k, T_0 ,T_1, \vec{x}, \vec{y},\infty,g}$ at $a$.
\end{lemma}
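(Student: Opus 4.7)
The bound $\mathbb{P}_\mu(Z_{\mathrm{mid}} \leq \delta \epsilon) \leq \epsilon$ for all $\epsilon>0$ is equivalent, via Markov's inequality applied to the nonnegative random variable $1/Z_{\mathrm{mid}}$, to a uniform upper bound $\mathbb{E}_\mu[1/Z_{\mathrm{mid}}] \leq 1/\delta$. Here and below I abbreviate $\mu = \mathbb{P}_{H,H^{RW}}^{1,k,T_0,T_1,\vec{x},\vec{y},\infty,g}$ and $Z_{\mathrm{mid}}(\vec{a},\vec{b}) := Z_{H,H^{RW}}(t_0,t_1,\vec{a},\vec{b},\infty,g\llbracket t_0,t_1\rrbracket)$. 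So the plan reduces to bounding $\mathbb{E}_\mu[1/Z_{\mathrm{mid}}]$ by a constant depending only on the parameters in the statement.

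\textbf{Gibbs-Markov reduction.} Let $\nu = \mathbb{P}_{H^{RW}}^{1,k,T_0,T_1,\vec{x},\vec{y}}$ and decompose the total Boltzmann weight as $W_H^{[T_0,T_1]} = W_H^L \cdot W_H^M \cdot W_H^R$ along the three subintervals $[T_0,t_0]$, $[t_0,t_1]$, $[t_1,T_1]$. Under $\nu$ the three pieces are conditionally independent given $(\mathfrak{L}(t_0),\mathfrak{L}(t_1))$, and by definition $\mathbb{E}_\nu[W_H^M \mid \mathfrak{L}(t_0),\mathfrak{L}(t_1)] = Z_{\mathrm{mid}}(\mathfrak{L}(t_0),\mathfrak{L}(t_1))$. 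Applying the Radon-Nikodym derivative \eqref{RND} and cancelling the factor $Z_{\mathrm{mid}}$ in the inner conditional expectation yields
\[
\mathbb{E}_\mu\!\left[\frac{1}{Z_{\mathrm{mid}}(\mathfrak{L}(t_0),\mathfrak{L}(t_1))}\right] = \frac{\mathbb{E}_\nu[W_H^L W_H^R]}{\mathbb{E}_\nu[W_H^L W_H^M W_H^R]} = \frac{1}{\mathbb{E}_{\tilde{\mu}}[W_H^M]},
\]
where $\tilde{\mu} := \mathbb{P}_{H,H^{RW},S_{\mathrm{out}}}^{1,k,T_0,T_1,\vec{x},\vec{y},\infty,g}$ is the $(\vec{H},H^{RW})$-Gibbs measure from Definition \ref{Pfree} with the Hamiltonian turned on only for $m \in S_{\mathrm{out}} := \llbracket T_0,t_0 - 1\rrbracket \cup \llbracket t_1, T_1 - 1\rrbracket$. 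Conditional on $(\mathfrak{L}(t_0),\mathfrak{L}(t_1))=(\vec{a},\vec{b})$ under $\tilde{\mu}$ the middle piece is a free $H^{RW}$ random walk bridge (there is no middle interaction), so $\mathbb{E}_{\tilde{\mu}}[W_H^M\mid\vec{a},\vec{b}] = Z_{\mathrm{mid}}(\vec{a},\vec{b})$, and hence $\mathbb{E}_{\tilde{\mu}}[W_H^M] = \mathbb{E}_{\tilde{\mu}}[Z_{\mathrm{mid}}(\mathfrak{L}(t_0),\mathfrak{L}(t_1))]$.

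\textbf{Separation and conclusion.} Because $\tilde{\mu}$ is exactly of the form $\mathbb{P}_{H,H^{RW},S}$ handled by Lemma \ref{LSep}, I apply that lemma to $\tilde{\mu}$ with separation parameter $M^{\mathsf{sep}}_k$ chosen large enough that the resulting constants $A_k^i, B_k^i$ satisfy the hypotheses of Lemma \ref{LAccProbP} (specifically $A_k^i - B_k^{i+1} \geq A$ and $A_k^k - M^{\mathsf{base},\mathsf{top}} \geq A$, for the $A$ produced by Lemma \ref{LAccProbP} applied on the middle interval). The assumption $\min(t_0 - T_0, T_1 - t_1) \geq t\Delta T$ places both $t_0$ and $t_1$ inside the interior window $[T_0 + t\Delta T, T_1 - t\Delta T]$ on which Lemma \ref{LSep} gives uniform control of $L_i(\cdot) - p\cdot$; hence with $\tilde{\mu}$-probability at least $\epsilon_k > 0$ the pair $(\mathfrak{L}(t_0),\mathfrak{L}(t_1))$ meets the boundary hypotheses of Lemma \ref{LAccProbP} for the middle interval $[t_0,t_1]$, and on that event Lemma \ref{LAccProbP} gives $Z_{\mathrm{mid}} \geq 1/2$. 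Therefore $\mathbb{E}_{\tilde{\mu}}[Z_{\mathrm{mid}}] \geq \epsilon_k/2 =: \delta$, which by the previous step gives $\mathbb{E}_\mu[1/Z_{\mathrm{mid}}] \leq 1/\delta$ and completes the argument; the threshold $W_7$ is the maximum of the $V_k$ from Lemma \ref{LSep} and the $U_3$ from Lemma \ref{LAccProbP}. The main obstacle is the observation that switching off the middle interaction in $\mu$ still produces a partial-interaction Gibbs measure to which Lemma \ref{LSep} applies verbatim; once this is recognized, the remainder is bookkeeping to match the separation constants between Lemma \ref{LSep} and Lemma \ref{LAccProbP}.
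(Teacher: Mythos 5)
Your proof is correct and takes essentially the same route as the paper's: both pass to the auxiliary measure $\tilde\mu = \mathbb{P}_{H,H^{RW},S}$ with the middle interactions switched off, lower-bound $\mathbb{E}_{\tilde\mu}[Z_{\mathrm{mid}}]$ by combining Lemma \ref{LSep} (applied to $\tilde\mu$, which is licensed since that lemma allows an arbitrary interaction set $S$) with Lemma \ref{LAccProbP}, and transfer this back to $\mu$ through the Radon--Nikodym identity relating $\mu$, $\tilde\mu$, and $Z_{\mathrm{mid}}$. Your packaging of the final step as Markov's inequality applied to $1/Z_{\mathrm{mid}}$ is a clean presentational variant of the paper's direct estimate of $\mathbb{P}_\mu\bigl(Z_{\mathrm{mid}} \leq h_1 h_2 \epsilon\bigr)$ from the same identity.
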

\begin{proof}
For clarity we split the proof into two steps.\\

{\bf \raggedleft Step 1.} In this step we prove (\ref{S7EAccProb}), modulo a certain claim, see (\ref{OP1}), which is established in the second step below.

Let us set $S = \llbracket T_0, t_0 \rrbracket \cup \llbracket t_1 , T_1-1 \rrbracket$. Let $\tilde{\mathfrak{L}} = (\tilde{L}_1, \dots, \tilde{L}_k)$ be a $\llbracket 1, k\rrbracket$-indexed discrete line ensemble on $\llbracket T_0, T_1 \rrbracket$ that has law $\mathbb{P}_{H,H^{RW},S}^{1, k, T_0 ,T_1, \vec{x}, \vec{y},\infty,g}$. For brevity, we write $\mathbb{P}_{\mathfrak{L}}$ for $\mathbb{P}_{H,H^{RW}}^{1, k, T_0 ,T_1, \vec{x}, \vec{y},\infty,g}$ and $\mathbb{P}_{\tilde{\mathfrak{L}}}$ for $\mathbb{P}_{H,H^{RW},S}^{1, k, T_0 ,T_1, \vec{x}, \vec{y},\infty,g}$.

We claim that there exist constants $h_1, h_2 > 0$ and $N_1 \in \mathbb{N}$, all depending on $ M^{\mathsf{side}, \mathsf{top}},M^{\mathsf{side}, \mathsf{bot}}, $ $M^{\mathsf{base}, \mathsf{top}}, p , t , k, H$ and $H^{RW}$, such that for $\Delta T \geq N_1$ we have 
\begin{equation}\label{OP1}
\mathbb{P}_{\tilde{\mathfrak{L}}} \left(   Z_{H, H^{RW}}\left(t_0, t_1, \tilde{\mathfrak{L}}(t_0), \tilde{\mathfrak{L}}(t_1), \infty, g \llbracket t_0, t_1 \rrbracket  \right)  \geq h_1 \right)  \geq h_2.
\end{equation}
We prove (\ref{OP1}) in the second step. Here we assume its validity and conclude the proof of (\ref{S7EAccProb}).\\

Let $S' =\llbracket T_0, t_0 \rrbracket \cup \llbracket t_1 , T_1 \rrbracket$ and denote by $\mathbb{P}_{\mathfrak{L}'}$ and $\mathbb{P}_{\tilde{\mathfrak{L}}'}$ the laws of the $Y(\llbracket 1, k \rrbracket \times S')$-valued random variables obtained from restricting $\mathfrak{L}$ and $\tilde{\mathfrak{L}}$ (distributed according to $\mathbb{P}_{\mathfrak{L}}$  and  $\mathbb{P}_{\tilde{\mathfrak{L}}}$ respectively) to the set $\llbracket 1, k \rrbracket \times S'$. We observe from Definition \ref{Pfree} that $\mathbb{P}_{{\mathfrak{L}}'}$ is absolutely continuous with respect to $\mathbb{P}_{\tilde{\mathfrak{L}}'}$ and its Radon-Nikodym derivative is given by
\begin{equation}\label{OP2}
\frac{d\mathbb{P}_{\mathfrak{L}'}}{d\mathbb{P}_{\tilde{\mathfrak{L}}'}}\left(\mathfrak B\right) = (Z')^{-1} Z_{H,H^{RW}}\left(t_0,t_1,\mathfrak{B}\left(t_0\right),\mathfrak{B}\left(t_1\right), \infty,g\llbracket t_0,t_1\rrbracket\right),
\end{equation}
with $Z' = \mathbb{E}_{\tilde{\mathfrak{L}}'}\left[Z_{ H,H^{RW}}\left(t_0, t_1, \mathfrak B(t_0), \mathfrak B(t_1), \infty, g \llbracket t_0, t_1\rrbracket \right)\right]$ (here $\mathfrak B$ is $\mathbb{P}_{\tilde{\mathfrak{L}}'}$-distributed).

Now note that $Z_{H,H^{RW}}\left(t_0, t_1, \mathfrak B(t_0), \mathfrak B(t_1), \infty, g\llbracket t_0, t_1\rrbracket\right)$ is a bounded, measurable function of $\left((\mathfrak B(t_0), \mathfrak B(t_1)\right)$ (in view of Lemma \ref{ContinuousGibbsCond}). In addition, the law of $\left((\mathfrak B(t_0), \mathfrak B(t_1)\right)$ under $\mathbb{P}_{\tilde{\mathfrak{L}}'}$ is the same as that of $\big(\tilde{\mathfrak{L}}(t_0), \tilde{\mathfrak{L}}(t_1)\big)$ by way of the restriction. It follows from (\ref{OP1})
\begin{equation}\label{OP3}
\begin{split}
Z' = \hspace{2mm} &\mathbb{E}_{\tilde{\mathfrak{L}}'}\left[Z_{H,H^{RW}}\left(t_0, t_1, \mathfrak B(t_0), \mathfrak B(t_1), \infty ,g\llbracket t_0, t_1\rrbracket\right)\right] = \\
& \mathbb{E}_{\tilde{\mathfrak{L}}}\left[Z_{H,H^{RW}}\left(t_0, t_1, \tilde{\mathfrak{L}}(t_0), \tilde{\mathfrak{L}}(t_1),  \infty ,g\llbracket t_0, t_1\rrbracket \right)\right]\geq h_1 h_2.
\end{split}
\end{equation}	
Similarly, the law of $\left(\mathfrak B(t_0), \mathfrak{B}(t_1)\right)$ under $\mathbb{P}_{\mathfrak{L}'}$ is the same as that of $\left(\mathfrak{L}(t_0), \mathfrak{L}(t_1)\right)$ under $\mathbb{P}_{\mathfrak{L}}$. Hence
\begin{equation}\label{OP4}
\begin{split}
&\mathbb{P}_{\mathfrak{L}}\left(Z_{H,H^{RW}}(t_0, t_1, \mathfrak{L}(t_0), \mathfrak{L}(t_1), \infty, g\llbracket t_0, t_1\rrbracket)\leq  h_1 h_2 \epsilon\right)=\\
&\mathbb{P}_{\mathfrak{L}'}\left(Z_{H,H^{RW}} \left(t_0, t_1, \mathfrak B(t_0), \mathfrak B(t_1), \infty, g\llbracket t_0, t_1\rrbracket \right)\leq h_1 h_2 \epsilon \right).
\end{split}
\end{equation}
Writing $E=\{\mathfrak{B} \in  Y(\llbracket 1, k \rrbracket \times S'): Z_{H,H^{RW}}\left(t_0, t_1, \mathfrak B(t_0), \mathfrak B(t_1), \infty, g\llbracket t_0, t_1\rrbracket \right)\leq h_1 h_2 \epsilon \}$ we get
\begin{equation*}
\begin{split}
\mathbb{P}_{\mathfrak{L}'}(E)= &\int_{Y(\llbracket 1, k \rrbracket \times S')} \hspace{-10mm}{\bf 1}_E  d\mathbb{P}_{\mathfrak{L}'}(\mathfrak{B}) = \frac{1}{Z'}\int_{Y(\llbracket 1, k \rrbracket \times S')}\hspace{-10mm}{\bf 1}_E  \cdot  Z_{H,H^{RW}}\left(t_0, t_1, \mathfrak B(t_0), \mathfrak B(t_1), \infty, g\llbracket t_0, t_1\rrbracket\right)  d\mathbb{P}_{\tilde{\mathfrak{L}}'}(\mathfrak{B}) \leq \\
& \frac{1}{Z'}\int_{Y(\llbracket 1, k \rrbracket \times S')}\hspace{-10mm}{\bf 1}_E  \cdot  h_1 h_2 \epsilon  d\mathbb{P}_{\tilde{\mathfrak{L}}'}(\mathfrak{B})  \leq \frac{1}{Z'} \cdot h_1 h_2 \epsilon  \leq \epsilon,
\end{split}
\end{equation*}
where in the second equality we used (\ref{OP2}), in going from the first to the second line we used the definition of $E$, and in the last inequality we used (\ref{OP3}). The last inequality and (\ref{OP4}) together imply (\ref{S7EAccProb}) with $\delta = h_1 h_2$ and $W_7 = N_1$.\\

{\bf \raggedleft Step 2.} In this step we prove (\ref{OP1}). Let $A > 0$ be as in Lemma \ref{LAccProbP} for $p,k, H^{RW}, H$ as in the statement of this lemma and $r = t$.  Suppose further that $\epsilon_{k}$, $V_{k}$ and $A_{k}^{i}, B_{k}^i$ are as in Lemma \ref{LSep} for $k,p,t, H, H^{RW}$ as in the statement of this lemma
$$ M^{\mathsf{side}, \mathsf{top}}_{k} = M^{\mathsf{side}, \mathsf{top}}, \hspace{2mm}    M_{k}^{\mathsf{base}, \mathsf{top}} = M^{\mathsf{base}, \mathsf{top}} , \hspace{2mm} M^{\mathsf{side}, \mathsf{bot}}_{k} = M^{\mathsf{side}, \mathsf{bot}}, \hspace{2mm} M^{\mathsf{sep}}_k = A .$$
With the above notation we define $h_1 = 1/2$ and $h_2 = \epsilon_k$. We also let $N_1 \in \mathbb{N}$ be sufficiently large so that $N_1 \geq V_k$, and $t N_1 \geq U_3$, where $U_3$ is as in Lemma \ref{LAccProbP} for $k,p,H, H^{RW}$ as in the present lemma, $r = t$ and $A^i = A_k^i$, $B^i = B^i_k$ for $i = 1, \dots, k$.  
This specifies the choice of $h_1, h_2$ and $N_1$ for which we prove (\ref{OP1}).\\

Let us set $\hat{S} = \llbracket 0, t_0 -T_0 \rrbracket \cup \llbracket t_1 - T_0 , \Delta T - 1 \rrbracket$, and define $\vec{x}', \vec{y}' \in \mathbb{R}^k$, $g' \in Y^-( \llbracket 0, \Delta T \rrbracket)$ through
$$x_i' = x_i - pT_0, \hspace{2mm} y_i' = y_i - pT_0 \mbox{ for $i  \in \llbracket 1,k \rrbracket$ and } g'(j) = g(j + T_0) - pT_0 \mbox{ for $j \in \llbracket 0, \Delta T \rrbracket$}.$$
With the above data we let $\hat{\mathfrak{L}} = (\hat{L}_1, \dots, \hat{L}_k)$ be a $\llbracket 1, k\rrbracket$-indexed discrete line ensemble on $\llbracket 0, \Delta T \rrbracket$ that has law $\mathbb{P}_{H,H^{RW},\hat{S}}^{1, k, 0 ,\Delta T, \vec{x}', \vec{y}',\infty,g'}$. For simplicity we write $\mathbb{P}_{\hat{\mathfrak{L}}}$ in place of $\mathbb{P}_{H,H^{RW},\hat{S}}^{1, k, 0 ,\Delta T, \vec{x}', \vec{y}',\infty,g'}$.

Observe that the law of $\hat{\mathfrak{L}}$ under $\mathbb{P}_{\hat{\mathfrak{L}}}$ is the same as that of $\tilde{\mathfrak{L}}$ under $\mathbb{P}_{\tilde{\mathfrak{L}}}$ except for a reindexing and a vertical shift by $-pT_0$. The latter implies that 
\begin{equation}\label{OP5}
\begin{split}
&\mathbb{P}_{\tilde{\mathfrak{L}}} \left(   Z_{H, H^{RW}}\left(t_0, t_1, \tilde{\mathfrak{L}}(t_0), \tilde{\mathfrak{L}}(t_1), \infty, g \llbracket t_0, t_1 \rrbracket  \right)  \geq 1/2 \right)  = \\
&\mathbb{P}_{\hat{\mathfrak{L}}} \left(   Z_{H, H^{RW}}\left(s_0, s_1, \hat{\mathfrak{L}}(s_0), \hat{\mathfrak{L}}(s_1), \infty, g' \llbracket s_0, s_1 \rrbracket  \right)  \geq 1/2 \right),
\end{split}
\end{equation}
where $s_0 = t_0 - T_0$ and $s_1 = t_1 - T_0$. In addition, from (\ref{ESep}), with $S = \hat{S}$, we have for $\Delta T \geq N_1$ 
\begin{equation*}
\begin{split}
&\mathbb{P}_{\hat{\mathfrak{L}}}\left(  B^i_k \Delta T^{1/2} \geq \hat{L}_i(x) - p x  \geq A^i_k \Delta T^{1/2} \mbox{ for all $x \in[t \Delta T, (1-t)\Delta T]$ and $i \in \llbracket 1, k \rrbracket$}    \right) \geq \epsilon_k,
\end{split}
\end{equation*}
where we used that $\vec{x}'$, $\vec{y}'$, $g'$ satisfy the conditions of Lemma \ref{LSep} for the above $ M^{\mathsf{side},\mathsf{top}}_{k}$, $M^{\mathsf{side}, \mathsf{bot}}_{k}$, $M_{k}^{\mathsf{base}, \mathsf{top}}$, $M^{\mathsf{sep}}_k$, $p , t \in \mathbb{R}$ by construction.

The last inequality together with the fact that $s_0 \geq t \Delta T$ and $s_1 \leq (1-t) \Delta T$ (this follows from our assumptions that $t_0 - T_0 \geq t \Delta T$, $T_1 - t_1 \geq t \Delta T$) give for $\Delta T \geq N_1$
\begin{equation}\label{OP6}
\begin{split}
&\mathbb{P}_{\hat{\mathfrak{L}}}\left(  B^i_k \Delta T^{1/2} \geq \hat{L}_i(x) - p x  \geq A^i_k \Delta T^{1/2} \mbox{ for $x \in \{s_0, s_1\}$ and $i \in \llbracket 1, k \rrbracket$}    \right) \geq \epsilon_k.
\end{split}
\end{equation}

We further note that  by our choice of $A$ above, and Lemma \ref{LAccProbP} (with the parameters we specified above) we have for $\Delta T \geq N_1$ that
\begin{equation}\label{OP7}
\begin{split}
&\left\{  B^i_k \Delta T^{1/2} \geq \hat{L}_i(x) - p x  \geq A^i_k \Delta T^{1/2} \mbox{ for $x \in \{s_0, s_1\}$ and $i \in \llbracket 1, k \rrbracket$}    \right\} \subset \\
& \left\{Z_{H, H^{RW}}\left(s_0, s_1, \hat{\mathfrak{L}}(s_0), \hat{\mathfrak{L}}(s_1), \infty, g' \llbracket s_0, s_1 \rrbracket  \right)  \geq 1/2  \right\}.
\end{split}
\end{equation}
We mention that in (\ref{OP7}), $(\hat{L}_1(s_0), \dots, \hat{L}_k(s_0)) $ and $(\hat{L}_1(s_1), \dots, \hat{L}_k(s_1)) $ play the roles of $\vec{x}, \vec{y} $ in (\ref{EAccProbP});  in (\ref{OP7}), $s_0, s_1$ play the roles of $T_0, T_1$ in (\ref{EAccProbP}); in (\ref{OP7}), $g' \llbracket s_0, s_1 \rrbracket$ plays the role of $g$ in (\ref{EAccProbP});  in (\ref{OP7}), $ \Delta T$ plays the role of $R$ in (\ref{EAccProbP}). We also mention that $1 \geq (s_1 - s_0) / \Delta T  \geq t$ since $s_1 - s_0 = t_1 - t_0 \geq t \Delta T$ by definition.

Equations (\ref{OP5}), (\ref{OP6}) and (\ref{OP7}) together imply (\ref{OP1}). This suffices for the proof.

\end{proof}

%
\subsection{Proof of Lemma \ref{scaledavoidBB}}\label{Section7.4} In this section we prove Lemma \ref{scaledavoidBB}, whose statement is recalled here as Lemma \ref{S7scaledavoidBB} for the reader's convenience. In order to establish the result we will require the following result, which can be found in \cite{CorHamA}.
\begin{lemma}\label{NoTouch} \cite[Corollary 2.9]{CorHamA}. Fix a continuous function $f: [0,1] \rightarrow \mathbb{R}$ such that $f(0) > 0$ and $f(1) > 0$. Let $B$ be a standard Brownian bridge and let $C = \{ B(t) > f(t) \mbox{ for some $t \in [0,1]$}\}$ (crossing) and $T = \{ B(t) = f(t) \mbox{ for some } t\in [0,1]\}$ (touching). Then $\mathbb{P}(T \cap C^c) = 0.$
\end{lemma}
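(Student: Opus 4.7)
The plan is to rewrite the event $T \cap C^c$ in terms of the maximum of $B - f$ over rational subintervals of $(0,1)$, and then to show that each such maximum has an atomless distribution by a Cameron--Martin shift argument.

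On the event $T \cap C^c$ the continuous function $B - f$ is non-positive on $[0,1]$ and attains the value $0$ at some touch point $t_0$. Because $B(0) = B(1) = 0$ while $f(0), f(1) > 0$, necessarily $t_0 \in (0,1)$, so one may choose rationals $a, b$ with $0 < a < t_0 < b < 1$ and observe that $\max_{t \in [a,b]}(B(t) - f(t)) = 0$ on this event (the upper bound comes from $B \leq f$ everywhere on $T\cap C^c$, the lower bound from $t_0 \in [a,b]$). Consequently
\[
T \cap C^c \;\subseteq\; \bigcup_{\substack{a,b \in \mathbb{Q} \cap (0,1) \\ a<b}} \bigl\{ S_{a,b} = 0 \bigr\}, \qquad S_{a,b} := \max_{t\in[a,b]}\bigl(B(t) - f(t)\bigr),
\]
and by countable additivity it suffices to show $\mathbb{P}(S_{a,b} = 0) = 0$ for every fixed rational pair $a < b$ in $(0,1)$.

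For such a pair I would construct a continuous piecewise linear $\phi : [0,1] \to \mathbb{R}$ with $\phi(0) = \phi(1) = 0$ and $\phi \equiv 1$ on $[a,b]$. This $\phi$ is Lipschitz with $\phi' \in L^\infty([0,1]) \subset L^2([0,1])$ and vanishes at both endpoints, hence lies in the Cameron--Martin space of the standard Brownian bridge; the Cameron--Martin theorem therefore gives that the laws of $B$ and of $B + \epsilon \phi$ on $C([0,1])$ are mutually absolutely continuous for every $\epsilon \in \mathbb{R}$. Since $\phi \equiv 1$ on $[a,b]$ one has the pathwise identity $S_{a,b}(B + \epsilon \phi) = S_{a,b}(B) + \epsilon$. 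Pushing the two equivalent laws forward under $S_{a,b}$ shows that the distribution $\nu$ of $S_{a,b}$ on $\mathbb{R}$ is mutually absolutely continuous with each of its real translates $\nu(\,\cdot\, - \epsilon)$. Hence the set of atoms of $\nu$ is invariant under every real translation, so it is either empty or uncountable; only the former is consistent with $\nu$ being a probability measure, so $\nu$ is atomless. In particular $\mathbb{P}(S_{a,b} = 0) = 0$, and a countable union over rational pairs $(a,b)$ completes the proof.

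The main obstacle is verifying the Cameron--Martin step cleanly: one must check that the explicit $\phi$ indeed lies in the bridge's Cameron--Martin space and that the mutual absolute continuity of process laws pushes forward to mutual absolute continuity of the scalar distributions $\nu$ and $\nu(\,\cdot\, - \epsilon)$. Both are standard facts, but they are the only non-elementary inputs; the reduction to a single rational subinterval and the translation-invariance-of-atoms pigeonhole are bookkeeping.
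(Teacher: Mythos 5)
Your proof is correct. Note that the paper does not actually prove this lemma -- it simply cites \cite[Corollary 2.9]{CorHamA} -- so there is no in-paper argument to compare against; what you have written is a complete, self-contained substitute. The skeleton (observe that $T\cap C^c$ forces $\sup(B-f)=0$, localize the maximizer to a compactly contained rational interval, and reduce to atomlessness of $S_{a,b}=\max_{[a,b]}(B-f)$) is the standard route, and your atomlessness argument via a Cameron--Martin shift is sound: the piecewise linear $\phi$ vanishing at $0$ and $1$ with $\phi\equiv 1$ on $[a,b]$ does lie in the bridge's Cameron--Martin space, the identity $S_{a,b}(B+\epsilon\phi)=S_{a,b}(B)+\epsilon$ pushes the equivalence of laws forward to equivalence of $\nu$ with all of its translates, and the ``atoms form a translation-invariant set, hence empty or uncountable'' pigeonhole finishes it. The one place a naive version of this argument would fail -- taking $[a,b]=[0,1]$, where no admissible shift can be identically $1$ -- is exactly what your localization to $a,b\in\mathbb{Q}\cap(0,1)$ with $a<t_0<b$ avoids, and your justification that the touch point $t_0$ is interior (since $B$ vanishes at the endpoints while $f$ is strictly positive there) is what makes that localization legitimate.
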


\begin{lemma}\label{S7scaledavoidBB}[Lemma \ref{scaledavoidBB}]
Fix $k\in\mathbb{N}$, $p \in \mathbb{R}$, $\alpha > 0$, and $a,b\in\mathbb{R}$ with $a<b$. Let $f_{\infty}:[a-1,b+1]\to(-\infty,\infty]$, $g_{\infty}:[a-1,b+1]\to[-\infty,\infty)$ be continuous functions such that $f_{\infty}(t) > g_{\infty}(t)$ for all $t\in[a-1,b+1]$. Let $\vec{x},\vec{y}\in W_k^\circ$ be such that $f_{\infty}(a) > x_1$, $f_{\infty}(b) > y_1$, $g_{\infty}(a) < x_k$, and $g_{\infty}(b) < y_k$. Let $a_N = \lfloor aN^\alpha\rfloor N^{-\alpha}$ and $b_N = \lceil bN^\alpha\rceil N^{-\alpha}$, and let $f_N : [a-1,b+1]\to(-\infty,\infty]$ and $g_N : [a-1,b+1]\to[-\infty,\infty)$ be continuous functions such that $f_N\to f_{\infty}$ and $g_N\to g_{\infty}$ uniformly on $[a-1,b+1]$. If $f_{\infty} \equiv \infty$ the last statement means that $f_N \equiv \infty$ for all large enough $N$ and if $g_{\infty} \equiv - \infty$ the latter means that $g_N \equiv -\infty$ for all large enough $N$. 

Lastly, let $\vec{x}\,^N, \vec{y}\,^N \in \mathbb{R}^k$, write $\tilde{x}^N_i =  N^{-\alpha/2} \sigma_p^{-1} (x_i^N - pa_N N^{\alpha})$, $\tilde{y}^N_i = N^{-\alpha/2} \sigma_p^{-1} (y_i^N - pb_N N^{\alpha})$, and suppose that $\tilde{x}^N_i \to x_i$ and $\tilde{y}^N_i \to y_i$ as $N\to\infty$ for each $i\in\llbracket 1,k\rrbracket$. 

Let $\mathcal{Y}^N$ have laws $\mathbb{P}^{\mathsf{scaled}}_N$ as in Definition \ref{scaledRW} for $\vec{x} = \vec{x}^N, \vec{y} = \vec{y}^N$, $a = a_N$, $b = b_N$, $f,g$ given by
\begin{equation*}
\begin{split}
&N^{-\alpha/2} \sigma_p^{-1}( f(xN^{\alpha})  - p x N^{\alpha}) = f_N(x) \mbox{ and } N^{-\alpha/2} \sigma_p^{-1}( g(xN^{\alpha})  - p x N^{\alpha}) = g_N(x) \mbox{ for $x \in [a_N,  b_N]$}.
\end{split}
\end{equation*}
Let $\mathcal{Z}^N = \mathcal{Y}^N|_{\llbracket 1, k\rrbracket \times[a,b]}$, i.e. $\mathcal{Z}^N$ is a sequence of random variables on $C(\llbracket 1, k \rrbracket \times [a,b])$ obtained by projecting $\mathcal{Y}^N$ to $\llbracket 1, k\rrbracket \times[a,b]$, then the laws of $\mathcal{Z}^N$ converge weakly to $\mathbb{P}^{a,b,\vec{x},\vec{y},f_{\infty},g_{\infty}}_{avoid}$ as $N\to\infty$.
\end{lemma}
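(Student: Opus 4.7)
The approach is to express $\mathbb{P}^{\mathsf{scaled}}_N$ as a Radon-Nikodym reweighing of the scaled free random walk bridge measure and pass to the limit. For any bounded continuous $F:C(\llbracket 1,k\rrbracket \times [a,b]) \to \mathbb{R}$, Definition \ref{Pfree} gives
\begin{equation*}
\mathbb{E}^{\mathsf{scaled}}_N\bigl[F(\mathcal{Z}^N)\bigr] = \frac{\mathbb{E}^{\mathsf{free}}_N\bigl[F(\mathcal{Z}^N)\cdot W_N\bigr]}{\mathbb{E}^{\mathsf{free}}_N\bigl[W_N\bigr]},
\end{equation*}
where $\mathbb{P}^{\mathsf{free}}_N$ denotes the law of $k$ independent scaled $H^{RW}$ random walk bridges from $(a_N,\tilde{x}^N_i)$ to $(b_N,\tilde{y}^N_i)$, and $W_N$ is the (scaled) Boltzmann weight encoding the interaction with $f_N$, $g_N$ and between consecutive curves. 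Thus, convergence of $\mathcal{Z}^N$ to $\mathbb{P}^{a,b,\vec{x},\vec{y},f_{\infty},g_{\infty}}_{avoid}$ reduces to showing (i) $\mathbb{P}^{\mathsf{free}}_N$ converges weakly to the law $\mathbb{P}^{a,b,\vec{x},\vec{y}}_{free}$ of $k$ independent standard Brownian bridges on $[a,b]$, and (ii) $W_N$ behaves in the limit like the indicator of the avoiding event $E = \{f_\infty > \mathcal{B}_1 > \cdots > \mathcal{B}_k > g_\infty \text{ on } [a,b]\}$.

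For (i), apply Proposition \ref{KMT} separately to each of the $k$ independent bridges, coupling each unscaled walk to a Brownian bridge of diffusion parameter $\sigma_p$. After scaling vertically by $\sigma_p^{-1}N^{-\alpha/2}$ and horizontally by $N^{-\alpha}$, the KMT error is of order $N^{-\alpha/2}\log N$ and vanishes uniformly on compact sets; since $\tilde{x}^N_i\to x_i$, $\tilde{y}^N_i\to y_i$ and $a_N\to a$, $b_N\to b$, the rescaled curves converge in distribution to independent standard Brownian bridges from $(a,x_i)$ to $(b,y_i)$. This simultaneously yields tightness of $\mathcal{Z}^N$.

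For (ii), the Boltzmann weight has the form $W_N = \exp\bigl(-\sum_{i=0}^{k}\sum_{m=\lfloor a_N N^\alpha\rfloor}^{\lceil b_N N^\alpha\rceil-1} H(L_{i+1}(m+1) - L_i(m))\bigr)$ with $L_0 = f$ and $L_{k+1} = g$, involving $O(N^\alpha)$ summands per pair $(i,i+1)$. Under the rescaling, each argument takes the form $p + \sigma_p N^{\alpha/2}\bigl(\mathcal{Y}^N_{i+1}((m+1)N^{-\alpha}) - \mathcal{Y}^N_i(mN^{-\alpha})\bigr)$. On the event that the limit lies strictly inside $E$ with a positive gap $\delta$ between consecutive curves and between the extreme curves and $f_\infty$, $g_\infty$, each argument is eventually at most $-\delta\sigma_p N^{\alpha/2}/2$; by the assumption $\lim_{x\to\infty} x^2 H(-x) = 0$ from Definition \ref{AssH}, the total exponent satisfies $N^\alpha\cdot H(-c N^{\alpha/2}) = o(1)$, so $W_N\to 1$. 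Conversely, on any configuration whose limit crosses a neighbor (or $f_\infty/g_\infty$) on a set of positive $N^{-\alpha}$-spaced sites, some summand is positive of order $N^{\alpha/2}$ and $\lim_{x\to\infty}H(x) = \infty$ forces $W_N\to 0$. Lemma \ref{NoTouch}, applied to each consecutive pair and to the top/bottom curve against $f_\infty$, $g_\infty$, shows the limiting free Brownian measure places zero mass on the boundary of $E$, so these two regimes cover almost all configurations.

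Passing to the limit via the bounded convergence theorem, the denominator converges to $\mathbb{P}^{a,b,\vec{x},\vec{y}}_{free}(E)$, which is strictly positive because $\vec{x},\vec{y}\in\weyl_k$ with $f_\infty(a)>x_1,\,g_\infty(a)<x_k$ and similarly at $b$ (so a standard application of Lemma \ref{Spread} yields positivity), and the numerator converges to $\mathbb{E}^{a,b,\vec{x},\vec{y}}_{free}[F(\mathcal{B})\mathbf{1}_E]$. Their ratio is $\mathbb{E}^{a,b,\vec{x},\vec{y},f_\infty,g_\infty}_{avoid}[F]$ by Definition \ref{DefAvoidingLaw}, proving the claim. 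The main technical obstacle will be step (ii), specifically the need to rule out, uniformly in $N$, non-negligible Boltzmann contributions from configurations where consecutive scaled curves are within $O(N^{-\alpha/2})$ of one another (equivalently, within $O(1)$ of each other on the unscaled lattice): this requires a modulus-of-continuity control of the type afforded by Lemma \ref{MOCLemmaS4} to prevent wild lattice-scale oscillations, together with the monotone coupling of Lemma \ref{MonCoup} to compare with tractable extremal configurations.
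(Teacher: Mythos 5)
Your proposal follows the same strategy as the paper's proof: represent $\mathbb{P}^{\mathsf{scaled}}_N$ as a Boltzmann reweighing of the free rescaled random walk bridges, show the free measure converges to Brownian bridges via the KMT coupling (Proposition \ref{KMT}), and show the Boltzmann weight converges almost surely to the indicator of the avoidance event by splitting into strict-avoidance, crossing, and touching regimes and appealing to Lemma \ref{NoTouch}, then conclude with the bounded convergence theorem and positivity of the denominator from Lemma \ref{Spread}. This is precisely the argument in the paper.

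However, the closing paragraph of your proposal, where you flag a ``main technical obstacle'' requiring modulus-of-continuity control via Lemma \ref{MOCLemmaS4} and monotone coupling via Lemma \ref{MonCoup}, indicates you have not quite recognized that your own argument is already complete. There is no remaining gap to address: once one invokes the Skorokhod representation theorem to upgrade weak convergence of the rescaled free bridges to almost-sure uniform convergence, the Boltzmann weight $W_N\in[0,1]$ converges pointwise almost surely (by Lemma \ref{NoTouch}, the touching event has probability zero, and on the remaining full-measure set the weight tends to $1$ or $0$), and the bounded convergence theorem finishes the job. Configurations where consecutive rescaled curves come within $O(N^{-\alpha/2})$ of each other are exactly the touching configurations in the limit, which Lemma \ref{NoTouch} already eliminates; there is no need to rule them out ``uniformly in $N$''. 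The worry about ``wild lattice-scale oscillations'' is likewise moot: uniform convergence of $\mathcal{L}^N_i$ to the (uniformly continuous) Brownian limit automatically controls the one-step discrete shift $m\mapsto m+1$, since $N^{-\alpha}\to 0$. Neither Lemma \ref{MOCLemmaS4} nor Lemma \ref{MonCoup} is needed here, and the paper makes no use of them at this point.
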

\begin{proof} For clarity we split the proof into three steps.\\

{\bf \raggedleft Step 1.} Let $x,y \in \mathbb{R}$ and $x^N, y^N \in \mathbb{R}$ be such that 
\begin{equation}\label{ZOP1}
\begin{split}
&\lim_{N \rightarrow \infty} \tilde{x}^N = x \mbox{ and }\lim_{N \rightarrow \infty}  \tilde{y}^N = y, \mbox{ where } \\
&\mbox{ $\tilde{x}^N = N^{-\alpha/2} \sigma_p^{-1} (x^N - p a_N N^{\alpha})$ and $\tilde{y}^N = N^{-\alpha/2} \sigma_p^{-1} (y^N - p b_N N^{\alpha})$.}
\end{split}
\end{equation}
Suppose that $\ell^N$ has law $\mathbb{P}_{H^{RW}}^{\lfloor a N^{\alpha} \rfloor , \lceil b N^{\alpha} \rceil, x^N, y^N}$, put  $a_N = \lfloor aN^\alpha\rfloor N^{-\alpha}$, $b_N = \lceil bN^\alpha\rceil N^{-\alpha}$
$$h^N(x) = N^{-\alpha/2} \sigma_p^{-1} \cdot ( \ell^N(xN^{\alpha}) - p x N^{\alpha}) \mbox{ for $x \in [a_N, b_N]$},$$
and let $\tilde{h}^N \in C([a,b])$ be the restriction of $h^N$ to $[a,b]$. In this step we prove that $\tilde{h}^N$ converge weakly to $\mathbb{P}^{a,b, x,y}_{free}$ -- the law of a Brownian bridge (from $B(a) = x$ to $B(b) = y$) with diffusion parameter $1$ as in Section \ref{Section2.1}. 

Let us write $z^N = y^N-x^N$ and $T_N = (b_N-a_N)N^\alpha$. Let $\tilde{B}$ be a standard Brownian bridge on $[0,1]$, and define random variables $B^N$, $B$ taking values in $C([a_N,b_N])$, $C([a,b])$, respectively, via
\begin{align*}
B^N(t) &= \sqrt{b_N-a_N}\cdot\tilde{B}\left(\frac{t-a_N}{b_N-a_N}\right) + \frac{t-a_N}{b_N-a_N}\cdot \tilde{y}^N + \frac{b_N-t}{b_N-a_N}\cdot \tilde{x}^N,\\
B(t) &= \sqrt{b-a}\cdot \tilde{B}\left(\frac{t-a}{b-a}\right) + \frac{t-a}{b-a}\cdot y + \frac{b-t}{b-a}\cdot x.
\end{align*}
We observe that $B$ has law $\mathbb{P}^{a,b,x,y}_{free}$ and $B^N|_{[a,b]}\implies B$ as $N\to\infty$. By \cite[Theorem 3.1]{Bill}, to show that $\tilde{h}^N\implies B$, it suffices to find a sequence of probability spaces supporting ${h}^N,B^N$ so that
\begin{equation}\label{scaledRWweak}
\rho(B^N, {h}^N) := \sup_{t\in[a_N,b_N]} |B^N(t) - {h}^N(t)| \implies 0 \quad \mathrm{as} \quad N\to\infty.
\end{equation}
From Theorem \ref{KMT} for each $N\in\mathbb{N}$ there is a probability space supporting $B^N$ and ${h}^N$, as well as constants $C',a',\alpha' > 0$, depending on $H^{RW}$ and $p$, such that
\begin{equation}\label{scaledRWcheb}
\mathbb{E}\left[e^{a'\Delta_N}\right] \leq C' e^{\alpha' \log N} e^{|z^N-pT_N|^2/N^\alpha},
\end{equation}
where $\Delta_N = N^{\alpha/2} \sigma_p \cdot \rho(B^N, {h}^N) $. Since 
$$\lim_{N \rightarrow \infty}N^{-\alpha/2} (z^N - pT_N) = \sigma_p (y - x)$$
by assumption, we can find $N_0\in\mathbb{N}$ and $A>0$ so that $|z-pT_N| \leq AN^{\alpha/2}$ for $N\geq N_0$. Then for any fixed $\epsilon > 0$ and $N\geq N_0$, Chebyshev's inequality and \eqref{scaledRWcheb} give
$$\mathbb{P}(\rho(B^N,h^N) > \epsilon) \leq C' e^{-a' \epsilon \sigma_p N^{\alpha/2}}e^{\alpha' \log N} e^{A^2}.$$
The right hand side tends to 0 as $N\to\infty$, implying \eqref{scaledRWweak}.\\

{\bf \raggedleft Step 2.} Let $\mathfrak{L}^N = (L_1^N, \dots, L_k^N)$ have law $\mathbb{P}_{H^{RW}}^{1,k , \lfloor a N^{\alpha} \rfloor, \lceil b N^{\alpha} \rceil, \vec{x}^N, \vec{y}^N}$, and let $\mathcal{L}^N = (\mathcal{L}^N_1, \dots, \mathcal{L}^N_k)$ be the $\llbracket 1, k\rrbracket$-indexed line ensemble on $[a,b]$, defined via
\begin{equation}\label{ZOP2}
\mathcal{L}^N_i(t) = N^{-\alpha/2} \sigma_p^{-1} \cdot \left( L^N_i(tN^{\alpha}) - p s N^{\alpha} \right), \quad t\in [a,b].
\end{equation}
We further write $W_H(\mathfrak{L}^N) $ in place of $W_{H}^{1, k,  \lfloor a N^{\alpha} \rfloor ,\lceil b N^{\alpha} \rceil ,F_N,G_N} (L^N_{1}, \dots, L^N_{k})$ as in (\ref{WH}), where 
\begin{equation*}
\begin{split}
&N^{-\alpha/2} \sigma_p^{-1}( F_N(xN^{\alpha})  - p x N^{\alpha}) = f_N(x) \mbox{, } N^{-\alpha/2} \sigma_p^{-1}( G_N(xN^{\alpha})  - p x N^{\alpha}) = g_N(x) \mbox{ for $x \in [a_N,  b_N]$}.
\end{split}
\end{equation*}
Let $\mathcal{B} = (B_1, \dots, B_k)$ have law $\mathbb{P}_{free}^{1,k , a,  b, \vec{x}, \vec{y}}$ as in Section \ref{Section2.1} and define the event 
\begin{equation}\label{ZOP3}
E_{\mathsf{avoid}} = \{ g_{\infty}(x) < B_k(x) < \cdots < B_1(x) < f_{\infty}(x) \mbox{ for all $x \in [a, b]$} \}.
\end{equation}
We claim that for any bounded continuous $H: C(\llbracket 1, k \rrbracket \times [a,b]) \rightarrow \mathbb{R}$ we have 
\begin{equation}\label{ZOP4}
\lim_{N \rightarrow \infty} \mathbb{E} \left[ H(\mathcal{L}^N) \cdot W_H(\mathfrak{L}^N)  \right] =  \mathbb{E} \left[ H(\mathcal{B}) \cdot {\bf 1}_{E_{\mathsf{avoid}}} \right].
\end{equation}
We prove (\ref{ZOP4}) in the next step. Here we assume its validity and conclude the proof of the lemma.\\

Note that by our assumptions on $f_{\infty},g_{\infty}, \vec{x}, \vec{y}$ we can find $\epsilon > 0$ and real-valued $h_1,\dots,h_k \in C([a,b])$, depending on $a,b,\vec{x},\vec{y},f_{\infty},g_{\infty}$, with $h_i(a) = x_i$, $h_i(b)=y_i$ for $i\in\llbracket 1,k\rrbracket$, such that if $u_i:[a,b]\to\mathbb{R}$ are continuous functions with $\rho(u_i,h_i) = \sup_{x\in[a,b]} |u_i(x)-h_i(x)| < \epsilon$, then
\begin{equation*}
f(x) - \epsilon > u_1(x) + \epsilon > u_1(x) - \epsilon > \cdots > u_k(x) + \epsilon > u_k(x) - \epsilon > g(x) + \epsilon
\end{equation*}
for all $x\in[a,b]$. By Lemma \ref{Spread}, we have
\begin{equation}\label{BBwindow}
\mathbb{P}^{a,b,\vec{x},\vec{y}}_{free}(E_{\mathsf{avoid}}) \geq \mathbb{P}^{a,b,\vec{x},\vec{y}}_{free}\left(\sup_{x \in [a,b]} |B_i (x) - h_i(x)|  < \epsilon \mbox{ for } i\in\llbracket 1,k\rrbracket \right) > 0.
\end{equation}
Fixing any bounded continuous $H: C(\llbracket 1, k \rrbracket \times [a,b]) \rightarrow \mathbb{R}$ we conclude that 
\begin{equation}\label{ZOP5}
\lim_{N \rightarrow \infty} \mathbb{E} \left[ H(\mathcal{Z}^N) \right]  \hspace{-1mm} = \hspace{-1mm} \lim_{N \rightarrow \infty}  \frac{\mathbb{E} \left[ H(\mathcal{L}^N) W_H(\mathfrak{L}^N)  \right]}{\mathbb{E} \left[  W_H(\mathfrak{L}^N)  \right]}  \hspace{-1mm} =  \hspace{-1mm} \frac{\mathbb{E} \left[ H(\mathcal{B}) \cdot {\bf 1}_{E_{\mathsf{avoid}}} \right]}{\mathbb{E} \left[ {\bf 1}_{E_{\mathsf{avoid}}} \right]}  \hspace{-1mm} =   \hspace{-1mm} \mathbb{E}^{a,b,\vec{x},\vec{y},f_{\infty},g_{\infty}}_{avoid} \left[H(\mathcal{Q}) \right],
\end{equation}
where $\mathcal{Q} = (\mathcal{Q}_1, \dots, \mathcal{Q}_k)$ has law $\mathbb{E}^{a,b,\vec{x},\vec{y},f_{\infty},g_{\infty}}_{avoid}$. Let us elaborate on (\ref{ZOP5}) briefly. The first equality follows from (\ref{RND}). The second equality follows from the convergence of both numerator and denominator on the left to the ones on the right, by (\ref{ZOP4}), and the fact that the denominator on the right is positive by (\ref{BBwindow}). The last equality follows from Definition \ref{DefAvoidingLaw}. Equation (\ref{ZOP5}) now implies the statement of the lemma.\\

{\bf \raggedleft Step 3.} From our work in Step 1, we have that $\mathcal{L}^N$ converges weakly to $\mathcal{B}$ as $N \rightarrow \infty$. By the Skorokhod representation theorem \cite[Theorem 6.7]{Bill}, we can also assume that $\mathcal{L}^N$ and $\mathcal{B}$ are all defined on the same probability space with measure $\mathbb{P}$ and the convergence is $\mathbb{P}$-almost sure. Here we are implicitly using Lemma \ref{Polish}  from which we know that the random variables $\mathcal{L}^N$ and $\mathcal{B}$ take value in a Polish space so that the Skorokhod representation theorem is applicable. 

If $\omega \in E_{\mathsf{avoid}}$, we can find $\delta(\omega) > 0$ such that 
\begin{equation*}
\begin{split}
&g_{\infty}(x)  + \delta(\omega) < B_k(x) - \delta(\omega) < B_k(x) + \delta(\omega) < B_{k-1} (x) - \delta(\omega) < \\
&B_{k-1} (x) + \delta(\omega)  < \cdots < B_1(x) - \delta(\omega)  <  B_1(x) + \delta(\omega)  < f_{\infty}(x)  - \delta(\omega) \mbox{ for all $x \in [a, b]$}. 
\end{split}
\end{equation*}
Combining the last inequalities with the definition of $W_H(\mathfrak{L}^N)$ from (\ref{WH}), the fact that $H(x) \geq 0$ is increasing, cf. Definition \ref{AssH}, we conclude that for all large $N$, depending on $\omega \in E_{\mathsf{avoid}}$, we have
\begin{equation}\label{ZOP6}
1 \geq W_H(\mathfrak{L}^N) \geq \exp \left( - (k+1) \cdot \left( \lceil bN^{\alpha} \rceil - \lfloor a N^{\alpha} \rfloor \right) \cdot H(- \delta (\omega) \sigma_p  N^{\alpha/2}) \right) \rightarrow 1 \mbox{ as } N \rightarrow \infty,
\end{equation}
where in the last convergence we used that $\lim_{x\rightarrow \infty} x^2 H(-x) = 0$, as follows from Definition \ref{AssH}. On the other hand, on the event 
$$E_{\mathsf{cross}} = \cup_{i = 0}^{k} \{ B_i(x) < B_{i+1}(x) \mbox{ for some $x \in [a, b]$} \},$$
where $B_0 = f_\infty$ and $B_{k+1} = g_{\infty}$ we know that there exists $\delta(\omega) > 0$, $x \in [a,b]$, $i \in \llbracket 0, k \rrbracket$ such that 
$$B_i(x) < B_{i+1}(x) - \delta (\omega).$$
Consequently, for all large $N$, depending on $\omega \in E_{\mathsf{cross}}$ we have
\begin{equation}\label{ZOP7}
0 \leq W_H(\mathfrak{L}^N) \leq \exp \left( - H( \delta (\omega) \sigma_p  N^{\alpha/2}) \right) \rightarrow 0 \mbox{ as } N \rightarrow \infty,
\end{equation}
where in the last convergence we used that $\lim_{x\rightarrow \infty}  H(x) = \infty$, as follows from Definition \ref{AssH}. 

We finally have from Lemma \ref{NoTouch} and (\ref{BBDef}) that $\mathbb{P} ( E_{\mathsf{avoid}} \cup E_{\mathsf{cross}} ) = 1$ (here we used the fact that $B_1, \dots, B_k$ are independent Brownian bridges and so $B_i - B_{i+1}$ is also a Brownian bridge with $\sqrt{2}$ times the diffusion parameter). Combining the last observation with (\ref{ZOP6}) and (\ref{ZOP7}) we conclude that $\mathbb{P}$-almost surely we have 
$$\lim_{N \rightarrow \infty}  H(\mathcal{L}^N) \cdot W_H(\mathfrak{L}^N) = H(\mathcal{B}) \cdot {\bf 1}_{E_{\mathsf{avoid}}},$$
which implies (\ref{ZOP4}) by the bounded convergence theorem.
\end{proof}

\bibliographystyle{amsalpha}
\bibliography{PD}

\end{document}